\pdfoutput=1
\RequirePackage{ifpdf}
\ifpdf 
\documentclass[pdftex]{sigma}
\else
\documentclass{sigma}
\fi

\usepackage[all]{xy}

\numberwithin{equation}{section}
\newtheorem{Theorem}{Theorem}[section]
\newtheorem{Lemma}[Theorem]{Lemma}
\newtheorem{Proposition}[Theorem]{Proposition}
{ \theoremstyle{definition}
\newtheorem{Definition}[Theorem]{Definition}
\newtheorem{Example}[Theorem]{Example}
\newtheorem{Remark}[Theorem]{Remark}
\newtheorem{Claim}[Theorem]{Claim}
}

\begin{document}

\allowdisplaybreaks

\newcommand{\arXivNumber}{1505.00469}

\renewcommand{\PaperNumber}{086}

\FirstPageHeading

\ShortArticleName{BiHom-Associative Algebras, BiHom-Lie Algebras and BiHom-Bialgebras}

\ArticleName{BiHom-Associative Algebras, BiHom-Lie Algebras\\ and BiHom-Bialgebras}

\Author{Giacomo {GRAZIANI}~$^{\dag^1}\!$, Abdenacer {MAKHLOUF}~$^{\dag^2}\!$,  Claudia {MENINI}~$^{\dag^3}\!$ and Florin {PANAITE}~$^{\!\dag^4}\!$}

\AuthorNameForHeading{G.~Graziani, A.~Makhlouf, C.~Menini and F.~Panaite}

\Address{$^{\dag^1}$~Universit\'{e} Joseph Fourier Grenoble I Institut Fourier,\\
\hphantom{$^{\dag^1}$}~100, Rue des Maths BP74 38402 Saint-Martin-d'H\`{e}res, France}
\EmailDD{\href{mailto:Giacomo.Graziani@ujf-grenoble.fr}{Giacomo.Graziani@ujf-grenoble.fr}}

\Address{$^{\dag^2}$~Universit\'{e} de Haute Alsace,
Laboratoire de Math\'{e}matiques, Informatique et Applications,\\
\hphantom{$^{\dag^2}$}~4, Rue des fr\`{e}res Lumi\`{e}re, F-68093 Mulhouse, France}
\EmailDD{\href{Abdenacer.Makhlouf@uha.fr}{Abdenacer.Makhlouf@uha.fr}}

\Address{$^{\dag^3}$~University of Ferrara, Department of Mathematics,\\
\hphantom{$^{\dag^3}$}~Via Machiavelli 30, Ferrara, I-44121, Italy}
\EmailDD{\href{mailto:men@unife.it}{men@unife.it}}

\Address{$^{\dag^4}$~Institute of Mathematics of the Romanian Academy,\\
\hphantom{$^{\dag^4}$}~PO-Box 1-764, RO-014700 Bucharest, Romania}
\EmailDD{\href{florin.panaite@imar.ro}{florin.panaite@imar.ro}}

\ArticleDates{Received May 12, 2015, in f\/inal form October 13, 2015; Published online October 25, 2015}

\Abstract{A~BiHom-associative algebra is a (nonassociative) algebra~$A$ endowed with
two commuting multiplicative linear maps $\alpha , \beta\colon A\rightarrow A$
such that $\alpha (a)(bc)=(ab)\beta (c)$, for all $a, b, c\in A$. This
concept arose in the study of algebras in so-called group Hom-categories. In
this paper, we introduce as well BiHom-Lie algebras (also by using the
categorical approach) and BiHom-bialgebras. We discuss these new structures
by presenting some basic properties and constructions (representations,
twisted tensor products, smash products etc).}

\Keywords{BiHom-associative algebra; BiHom-Lie algebra; BiHom-bialgebra; representation; twisting; smash product}

\Classification{17A99; 18D10; 16T99}

\section{Introduction}

 The origin of Hom-structures may be found in the physics
literature around 1990, concerning $q$-deformations of algebras of vector
f\/ields, especially Witt and Virasoro algebras, see for instance~\cite{AizawaSato,ChaiIsKuLuk,CurtrZachos1,Liu1}.
Hartwig, Larsson and Silvestrov studied this kind of algebras in~\cite{HLS,LS1} and called them Hom-Lie algebras because they involve a
homomorphism in the def\/ining identity. More precisely, a Hom-Lie algebra is
a linear space $L$ endowed with two linear maps $[-]\colon L\otimes L\rightarrow L$
and $\alpha\colon L\rightarrow L$ such that $[-]$ is skew-symmetric and  $\alpha$ is an algebra endomorphism
with respect to the bracket   satisfying the
so-called Hom-Jacobi identity
\begin{gather*}
[\alpha (x),[y,z]]+[\alpha (y),[z,x]]+[\alpha (z),[x,y]]=0,\qquad \forall
\, x,y,z\in L.
\end{gather*}
Since any associative algebra becomes  a Lie algebra by taking the commutator
$[a, b]=ab-ba$, it was natural to look for a Hom-analogue of this property.
This was accomplished in~\cite{ms1}, where the concept of Hom-associative
algebra was introduced, as being a linear space $A$ endowed with a
multiplication $\mu \colon A\otimes A\rightarrow A$, $\mu (a\otimes b)=ab$, and a
linear map $\alpha \colon A\rightarrow A$ satisfying the so-called
Hom-associativity condition
\begin{gather*}
 \alpha (a)(bc)=(ab)\alpha (c), \qquad \forall\, a, b, c\in A.
\end{gather*}
If $A$ is Hom-associative then $(A, [a, b]=ab-ba, \alpha)$ becomes a Hom-Lie
algebra, denoted  by~$L(A)$. Notice that Hom-Lie algebras, in this paper, were considered
without the assumption of multiplicativity of $\alpha$.

In subsequent literature (see for instance \cite{yau2}) were studied
subclasses of these classes of algebras where
the linear maps $\alpha $ involved in the def\/inition of a~Hom-Lie algebra or Hom-associative algebra are required  to be multiplicative, that is
$\alpha ([x,y])=[\alpha(x),\alpha(y)]$ for all \mbox{$x, y\in L$}, respectively
$\alpha (ab)=\alpha (a)\alpha (b)$ for all $a, b\in A$, and these subclasses
were called multiplicative Hom-Lie algebras, respectively
multiplicative Hom-associative algebras. Since we will always assume multiplicativity of the maps
$\alpha $ and to simplify terminology, we will call Hom-Lie or Hom-associative algebras what was called
above multiplicative Hom-Lie or Hom-associative algebras.

{\sloppy The Hom-analogues of coalgebras, bialgebras and Hopf algebras have been
introduced in~\mbox{\cite{ms3,ms4}}. The original def\/inition of a
Hom-bialgebra involved two linear maps, one twisting the associativity
condition and the other one the coassociativity condition. Later, two
directions of study on Hom-bialgebras were developed, one in which the two
maps coincide (these are still called Hom-bialgebras) and another one,
started in~\cite{stef}, where the two maps are assumed to be inverse to each
other (these are called monoidal Hom-bialgebras).

}

In the last years, many concepts and properties from classical algebraic
theories have been extended to the framework of Hom-structures, see for
instance \cite{AC1,AC2,said,stef,chenwangzhang,hassanzadeh,LB,
mp1,ms3,ms4,sheng,yau1,yau2}.

The main tool for constructing examples of Hom-type algebras is the
so-called ``twisting principle''
introduced by D.~Yau for Hom-associative algebras and extended afterwards to
other types of Hom-algebras. For instance, if~$A$ is an associative algebra
and $\alpha \colon A\rightarrow A$ is an algebra map, then $A$ with the new
multiplication def\/ined by $a\ast b=\alpha (a)\alpha (b)$ is a~Hom-associative algebra, called the \emph{Yau twist }of~$A$.

A categorical interpretation of Hom-associative algebras has been given by
Caenepeel and Goyvaerts in~\cite{stef}. First, to any monoidal category $\mathcal{C}$ they associate a new monoidal category $\widetilde{\mathcal{H}}(\mathcal{C})$, called a Hom-category, whose objects are pairs consisting of
an object of $\mathcal{C}$ and an automorphism of this object ($\widetilde{\mathcal{H}}(\mathcal{C})$ has nontrivial associativity constraint even if
the one of $\mathcal{C}$ is trivial). By taking $\mathcal{C}$ to be ${_{\Bbbk }\mathcal{M}}$, the category of linear spaces over a base f\/ield $\Bbbk $, it turns out that an algebra in the (symmetric) monoidal category $\widetilde{\mathcal{H}}({_{\Bbbk }\mathcal{M}})$ is the same thing as a
Hom-associative algebra $(A,\mu ,\alpha )$ with bijective $\alpha $. The
bialgebras in $\widetilde{\mathcal{H}}({_{\Bbbk }\mathcal{M}})$ are the monoidal
Hom-bialgebras we mentioned before.

In \cite{giacomo}, the f\/irst author extended the construction of the
Hom-category $\widetilde{\mathcal{H}}(\mathcal{C})$ to include the action of a
given group~$\mathcal{G}$. Namely, given a monoidal category~$\mathcal{C}$,
a group $\mathcal{G}$, two elements $c,d\in Z(\mathcal{G})$ and $\nu $ an
automorphism of the unit object of $\mathcal{C}$, the group Hom-category $\mathcal{H}^{{c,d,\nu}}(\mathcal{G},\mathcal{C})$ has as objects pairs $(A,f_{A})$, where $A$ is an object in $\mathcal{C}$ and $f_{A}\colon \mathcal{G} \rightarrow \operatorname{Aut}_{\mathcal{C}}(A)$ is a group homomorphism. The associativity
constraint of $\mathcal{H}^{{c,d,\nu}}(\mathcal{G},\mathcal{C})$ is
naturally def\/ined by means of~$c$,~$d$,~$\nu $ (see Claim~\ref{Cl:monoidal} and Theorem~\ref{Th:Monoidal}) and it is, in general, non trivial. A~braided structure
is also def\/ined on $\mathcal{H}^{{c,d,\nu }}(\mathcal{G},\mathcal{C})$ (see Claim~\ref{cl:braid} and Theorem~\ref{Theo:braidcat}) turning it into a~braided
category which is symmetric whenever $\mathcal{C}$ is. When $\mathcal{G}=\mathbb{Z}$, $c=d=1_{\mathbb{Z}}$ and $\nu
=\operatorname{id}_{\mathbf{1}}$ one gets the category~$\mathcal{H}(\mathcal{C})$ from~\cite{stef}, while for $c=1_{\mathbb{Z}}$, $d=-1_{\mathbb{Z}}$ and $\nu =\operatorname{id}_{\mathbf{1}}$ one gets the category~$\widetilde{\mathcal{H}}(\mathcal{C})$.

We f\/irst look at the case when $\mathcal{\mathcal{G}}=\mathbb{Z\times Z}$, $c=(1,0) $, $d=(0,1) $, $\nu =\operatorname{id}_{\mathbf{1}}$ and
$\mathcal{C}={_{\Bbbk }\mathcal{M}}$.

If $M\in {_{\Bbbk }\mathcal{M}}$, a group homomorphism $f_{M}\colon
\mathbb{Z}
\times
\mathbb{Z}
\rightarrow \operatorname{Aut}_{\Bbbk }(M) $ is completely determined
by
\begin{gather*}
f_{M}((1,0)) =\alpha _{M}\qquad \text{and}\qquad
f_{M}((0,1)) =\beta _{M}^{-1}.
\end{gather*}
Thus, an object in $\mathcal{H}(
\mathbb{Z}
\times
\mathbb{Z}
,{_{\Bbbk }\mathcal{M}})$ identif\/ies with a triple $(
M,\alpha _{M},\beta _{M}) $, where $\alpha _{M},\beta _{M}\in \operatorname{Aut}_{\Bbbk }(M) $ and $\alpha _{M}\circ \beta _{M}=\beta
_{M}\circ \alpha _{M}$. For $( X,\alpha _{X},\beta _{X}) $, $( Y,\alpha _{Y},\beta _{Y}) $, $( Z,\alpha _{Z},\beta
_{Z}) $ objects in the category $\mathcal{H}^{(1,0),(0,1) ,1}(
\mathbb{Z}
\times
\mathbb{Z}
,{_{\Bbbk }\mathcal{M}})$, the associativity constraint in $\mathcal{H}^{(1,0) ,(0,1) ,1}(
\mathbb{Z}
\times
\mathbb{Z}
,{_{\Bbbk }\mathcal{M}})$ is given by
\begin{gather*}
\big( \overline{a}^{c,d,\nu }\big) _{( X,\alpha _{X},\beta
_{X}) ,( Y,\alpha _{Y},\beta _{Y}) ,( Z,\alpha
_{Z},\beta _{Z}) }=a_{X,Y,Z}\circ \big[ ( \alpha _{X}\otimes
Y ) \otimes \beta _{Z}^{-1}\big] ,
\end{gather*}
and the braiding is
\begin{gather*}
\overline{\gamma }_{( X,\alpha _{X},\beta _{X}) ,( Y,\alpha
_{Y},\beta _{Y}) }^{{c,d,\nu}}=\tau \big[ \big( \alpha _{X}\beta
_{X}^{-1}\big) \otimes \big( \alpha _{Y}^{-1}\beta _{Y}\big) \big] ,
\end{gather*}
where $\tau \colon X\otimes Y\rightarrow Y\otimes X$ denotes the usual f\/lip in the
category of linear spaces. Note that $\overline{\gamma }$ is a symmetric
braiding. Being $\mathcal{H}^{(1,0) ,(0,1) ,1}
(\mathbb{Z}
\times
\mathbb{Z}
,{_{\Bbbk }\mathcal{M}})$ an additive braided monoidal category,
all the concepts of algebra, Lie algebra and so on, can be introduced in
this case.

By writing down the axioms for an algebra in
$\mathcal{H}^{(1,0) ,(0,1) ,1}
(
\mathbb{Z}
\times
\mathbb{Z}
,{_{\Bbbk }\mathcal{M}})$
and discarding the invertibility of $\alpha $ and~$\beta $ if not needed, we arrived at the following concept. A~BiHom-associative algebra over $\Bbbk $ is a linear space $A$ endowed with a
multiplication $\mu \colon A\otimes A\rightarrow A$, $\mu (a\otimes b)=ab$, and
two commuting multiplicative linear maps $\alpha ,\beta \colon A\rightarrow A$
satisfying what we call the BiHom-associativity condition
\begin{gather*}
\alpha (a)(bc)=(ab)\beta (c),\qquad \forall \, a,b,c\in A.
\end{gather*}

Thus, a BiHom-associative algebra with \textit{bijective} structure maps is
exactly an algebra in $\mathcal{H}^{(1,0) ,(0,1) ,1}
(\mathbb{Z}
\times
\mathbb{Z}
,{_{\Bbbk }\mathcal{M}})$.

Obviously, a BiHom-associative algebra for which $\alpha =\beta $ is just a
Hom-associative algebra.

The remarkable fact is that the twisting principle may be also applied: if $A
$ is an associative algebra and $\alpha ,\beta \colon A\rightarrow A$ are two
commuting algebra maps, then $A$ with the new multiplication def\/ined by $
a\ast b=\alpha (a)\beta (b)$ is a BiHom-associative algebra, called the
\emph{Yau twist} of~$A$. As a matter of fact, although we arrived at the
concept of BiHom-associative algebra via the categorical machinery presented
above, it is the possibility of twisting the multiplication of an
associative algebra by \textit{two} commuting algebra endomorphisms that led
us to believe that BiHom-associative algebras are interesting objects in
their own. One can think of this as follows. Take again an associative
algebra $A$ and $\alpha ,\beta \colon A\rightarrow A$ two commuting algebra
endomorphisms; def\/ine a new multiplication on~$A$ by $a\ast b=\alpha
(a)\beta (b)$. Then it is natural to ask the following question: what kind
of structure is $( A,\ast )$? Example~\ref{giacomoex} in this paper shows
that, in general, $( A,\ast) $ is \textit{not} a Hom-associative
algebra, so the theory of Hom-associative algebras is \textit{not} general
enough to cover this natural operation of twisting the multiplication of an
associative algebra by \textit{two} maps; but this operation f\/its in the
framework of BiHom-associative algebras. The Yau twisting of an associative
algebra by two maps should thus be considered as the ``natural'' example of a~BiHom-associative algebra. We would like to emphasize that for this
operation the two maps are \textit{not} assumed to be bijective, so the
resulting BiHom-associative algebra has possibly \textit{non bijective}
structure maps and as such it \textit{cannot} be regarded, to our knowledge,
as an algebra in a monoidal category.

Take now the group $\mathcal{G}$ to be arbitrary. It is natural to describe how an
algebra in the monoidal category $\mathcal{H}^{c,d,\nu }(\mathcal{G},{_{\Bbbk }\mathcal{M}})$
looks like. By writing down the axioms, it turns out (see Claim~\ref{Cl:Alg2} and Remark \ref{remada})
that an algebra in such
a category is a BiHom-associative algebra with bijective structure maps
(the associativity of the algebra in the category is equivalent to the BiHom-associativity condition)
having some extra structure (like an action of the group on the algebra). So, morally,
the group $\mathcal{G}=\mathbb{Z}\times \mathbb{Z}$ leads to BiHom-associative algebras
but any other group would not lead to something like a ``higher'' structure than BiHom-associative algebras
(for instance, one cannot have something like TriHom-associative algebras).

We initiate in this paper the study of what we will call BiHom-structures.
The next structure we introduce is that of a BiHom-Lie algebra; for this, we
use also a categorical approach. Unlike the Hom case, to obtain a BiHom-Lie
algebra from a BiHom-associative algebra we need the structure maps $\alpha $
and $\beta $ to be bijective; the commutator is def\/ined by the formula $[a,b]=ab-\alpha ^{-1}\beta (b)\alpha \beta ^{-1}(a)$. Nevertheless, just as
in the Hom-case, the Yau twist works:
if $( L,[ -]) $ is a Lie algebra over a
f\/ield $\Bbbk $ and $\alpha ,\beta\colon L\rightarrow L$ are two commuting multiplicative linear
maps and we def\/ine the linear map $ \{ - \} \colon L\otimes L\rightarrow L$,
$ \{ a,b \} = [ \alpha  ( a ) ,\beta  ( b ) ]$, for all $a,b\in L$,
then $L_{( \alpha ,\beta ) }:=(L, \{ -\}, \alpha ,
\beta )$ is a BiHom-Lie algebra, called the {\em Yau twist} of $( L,[ -] ) $.

We def\/ine representations of
BiHom-associative algebras and BiHom-Lie algebras and f\/ind some of their
basic properties. Then we introduce BiHom-coassociative coalgebras and
BiHom-bialgebras together with some of the usual ingredients (comodules,
duality, convolution product, primitive elements, module and comodule
algebras). We def\/ine antipodes for a certain class of BiHom-bialgebras,
called monoidal BiHom-bialgebras, leading thus to the concept of monoidal
BiHom-Hopf algebras. We def\/ine smash products, as particular cases of
twisted tensor products, introduced in turn as a particular case of twisting
a~BiHom-associative algebra by what we call a~BiHom-pseudotwistor. We write
down explicitly such a smash product, obtained from an action of a~Yau
twist of the quantum group~$U_{q}(\mathfrak{sl}_{2})$ on a Yau twist of the
quantum plane $\mathbb{A}_{q}^{2|0}$.

As a f\/inal remark, let us note that one could introduce a less restrictive concept of BiHom-associative
algebra by dropping the assumptions that~$\alpha $ and~$\beta $ are multiplicative
and/or that they commute (note that all the examples of $q$-deformations of Witt or Virasoro algebras are not multiplicative). Unfortunately, by dropping any of these assumptions, one loses
the main class of examples, the Yau twists, in the sense that if~$A$ is an
associative algebra and $\alpha, \beta \colon A\rightarrow A$ are two arbitrary linear maps, and we def\/ine as before
$a*b=\alpha (a)\beta (b)$, then $(A, *)$ in general is not a BiHom-associative algebra even in this
more general sense.

\section[The category $\mathcal{H}(\mathcal{G},\mathcal{C})$]{The category $\boldsymbol{\mathcal{H}(\mathcal{G},\mathcal{C})}$}

Our aim in this section is to introduce so-called group
Hom-categories; proofs of the results in this section may be found in \cite{giacomo}.

\begin{Definition}
Let $\mathcal{G}$ be a group and let $\mathcal{C}$ be a category. The
\textit{group Hom-category} $\mathcal{H}(\mathcal{G},\mathcal{C})$ associated to $\mathcal{G}$ and $\mathcal{C}$ is the category having as objects pairs $(A,f_{A})$, where $A\in \mathcal{C}$ and $f_{A}$ is a~group homomorphism $\mathcal{G\rightarrow }\operatorname{Aut}_{\mathcal{C}}(A)$. A morphism $\xi \colon
(A,f_{A})\rightarrow (B,f_{B})$ in $\mathcal{H}(\mathcal{G},\mathcal{C})$ is a~morphism $\xi \colon A\rightarrow B$ in $\mathcal{C}$ such that $f_{B}(g) \circ \xi =\xi \circ f_{A}(g)$, for all $g\in \mathcal{G}$.
\end{Definition}

\begin{Definition}
A \textit{monoidal category} (see \cite[Chapter~XI]{Kassel}) is a category $\mathcal{C}$ endowed with an object $\mathbf{1}\in \mathcal{C}$ (called
\textit{unit}), a functor $\otimes \colon \mathcal{C}\times \mathcal{C}\rightarrow
\mathcal{C}$ (called \textit{tensor product}) and functorial isomorphisms $a_{X,Y,Z}\colon (X\otimes Y)\otimes Z\rightarrow  X\otimes (Y\otimes Z)$, $l_{X}\colon
\mathbf{1}\otimes X\rightarrow X$, $r_{X}\colon X\otimes \mathbf{1}\rightarrow X$,
for every $X$, $Y$, $Z$ in~$\mathcal{C}$. The functorial isomorphisms~$a$ are
called the \textit{associativity constraints} and satisfy
the pentagon axiom, that is
\begin{gather*}
(U\otimes a_{V,W,X})\circ a_{U,V\otimes W,X}\circ (a_{U,V,W}\otimes
X)=a_{U,V,W\otimes X}\circ a_{U\otimes V,W,X}  
\end{gather*}
holds true, for every $U$, $V$, $W$, $X$ in~$\mathcal{C}$. The isomorphisms~$l$ and~$r $ are called the \textit{unit constraints} and they obey the Triangle
Axiom, that is
\begin{gather*}
(V\otimes l_{W})\circ a_{V,\mathbf{1},W}=r_{V}\otimes W,\qquad \text{for every $V$, $W$ in $\mathcal{C}$}.  
\end{gather*}

A \textit{monoidal functor} $(F,\phi _{2},\phi _{0})\colon (\mathcal{C},\otimes ,\mathbf{1},a,l,r )\rightarrow (\mathcal{C}^{\prime }
,\otimes ^{\prime },\mathbf{1}^{\prime },a^{\prime },l^{\prime
},r^{\prime })$ between two monoidal categories consists of a
functor $F\colon \mathcal{C}\rightarrow \mathcal{C}^{\prime }$, an isomorphism $
\phi _{2}(U,V)\colon F(U)\otimes ^{\prime }F(V)\rightarrow F(U\otimes V)$, natural
in $U,V\in \mathcal{C}$, and an isomorphism $\phi _{0}\colon \mathbf{1}^{\prime
}\rightarrow F(\mathbf{1})$ such that the diagram
\begin{gather*}
\xymatrixcolsep{67pt}\xymatrixrowsep{35pt}\xymatrix{ (F(U)\!\otimes'\!
F(V))\!\otimes'\! F(W) \ar[d]|{a'_{F(U),F(V),F(W)}} \ar[r]^{\phi_2(U,V)\otimes'
F(W)} & F(U\!\otimes\! V)\!\otimes'\! F(W) \ar[r]^{\phi_2(U\otimes V,W)} &
F((U\!\otimes\! V)\!\otimes\! W) \ar[d]|{F(a_{ U,V, W})} \\ F(U)\!\otimes'\!
(F(V)\!\otimes'\! F(W)) \ar[r]^{F(U)\otimes' \phi_2(V,W)} & F(U)\!\otimes'\!
F(V\!\otimes\! W) \ar[r]^{\phi_2(U,V\otimes W)} & F(U\!\otimes\! (V\!\otimes\! W)) }
\end{gather*}
is commutative, and the following conditions are satisf\/ied
\begin{gather*}
{F(l_{U})}\circ {\phi _{2}(\mathbf{1},U)}\circ ({\phi _{0}\otimes }^{\prime }
{F(U)})={l}^{\prime }{_{F(U)}},
\qquad
{F(r_{U})}\circ {\phi _{2}(U,\mathbf{1})}\circ ({F(U)\otimes }^{\prime }{\phi _{0}})={r}^{\prime }{_{F(U)}}.
\end{gather*}
\end{Definition}

\begin{Claim}
\label{Cl:monoidal} Let $\mathcal{G}$ be a group and let $( \mathcal{C}
,\otimes ,\mathbf{1},a,l,r) $ be a monoidal category. Given any pair
of objects $(A,f_{A}),(B,f_{B})\in \mathcal{H}(\mathcal{G},\mathcal{C})$, consider the
map $f_{A}\otimes f_{B}\colon \mathcal{G}\rightarrow\operatorname{Aut}_{\mathcal{C}
}(A\otimes B)$ defined by setting
\begin{gather*}
( f_{A}\otimes f_{B}) (g) =f_{A}(g)
\otimes f_{B}(g),  
\end{gather*}
for all $g\in \mathcal{G}$. Then $f_{A}\otimes f_{B}$ is a group
homomorphism and hence
\begin{gather*}
( A\otimes B,f_{A}\otimes f_{B}) \in \mathcal{H}(\mathcal{G},\mathcal{C}).
\end{gather*}
Moreover, if $\phi\colon (A,f_{A})\rightarrow (\tilde{A},f_{\tilde{A}})$ and $\xi
\colon (B,f_{B})\rightarrow (\tilde{B},f_{\tilde{B}})$ are morphisms in $\mathcal{H}(\mathcal{G},\mathcal{C})$, then
\begin{gather*}
\phi \otimes \xi \colon \  ( A\otimes B,f_{A}\otimes f_{B} ) \rightarrow \big(\tilde{A}\otimes \tilde{B},f_{\tilde{A}}\otimes f_{\tilde{B}}\big)
\end{gather*}
is a morphism in $\mathcal{H}(\mathcal{G},\mathcal{C})$.

Let $Z(\mathcal{G}) $ be the center of $\mathcal{G}$ and let $
c\in Z(\mathcal{G}) $. Then we can consider the functorial
isomorphism $\varphi ( c) \colon \operatorname{Id}_{\mathcal{H}(\mathcal{G},\mathcal{C})
}\rightarrow \operatorname{Id}_{\mathcal{H}(\mathcal{G},\mathcal{C})}$ def\/ined by setting
\begin{gather*}
\varphi (c) (A,f_{A})=f_{A}(c), \qquad \text{for every  $(A,f_{A})$ in $\mathcal{H}(\mathcal{G},\mathcal{C})$}.
\end{gather*}
Also, let $\widehat{\operatorname{Id}_{\mathbf{1}}}\colon \mathcal{G}\rightarrow \operatorname{Aut}_{\mathcal{C}}( \mathbf{1}) $ denote the constant map equal
to $\operatorname{Id}_{\mathbf{1}}$.

Let $c,d\in Z(\mathcal{G}) $ and let $\nu \in \operatorname{Aut}_{\mathcal{C}}( \mathbf{1}) $. We set
\begin{gather*}
\overline{a}^{c,d,\nu }=a\circ \big[ \big( \varphi (c) \otimes
\operatorname{Id}_{\mathcal{H}(\mathcal{G},\mathcal{C})}\big) \otimes \varphi  (
d ) \big],
\qquad
\overline{l}^{c,d,\nu }=\varphi \big( d^{-1}\big) \circ l\circ \big( \nu
\otimes \operatorname{Id}_{\mathcal{H}(\mathcal{G},\mathcal{C})}\big),
\\
\overline{r}^{c,d,\nu }=\varphi (c) \circ r\circ \big( \operatorname{Id}_{\mathcal{H}(\mathcal{G},\mathcal{C})}\otimes \nu \big) .
\end{gather*}
\end{Claim}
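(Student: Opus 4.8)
The plan is to verify the three well-definedness assertions bundled into the claim: that $f_A\otimes f_B$ is again a group homomorphism into automorphisms, that $\phi\otimes\xi$ is again a morphism of the Hom-category, and that $\varphi(c)$ is a genuine functorial isomorphism. Each reduces to bifunctoriality of $\otimes$ together with the defining properties of group homomorphisms, and the centrality hypothesis $c\in Z(\mathcal{G})$ will enter only in the last part.

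First I would check that $f_A\otimes f_B$ is a group homomorphism. For $g,h\in\mathcal{G}$, since $f_A$ and $f_B$ are homomorphisms and $\otimes$ is a bifunctor, one computes $(f_A\otimes f_B)(gh)=f_A(gh)\otimes f_B(gh)=(f_A(g)\circ f_A(h))\otimes(f_B(g)\circ f_B(h))=(f_A(g)\otimes f_B(g))\circ(f_A(h)\otimes f_B(h))$, which is exactly $(f_A\otimes f_B)(g)\circ(f_A\otimes f_B)(h)$; moreover $(f_A\otimes f_B)(e)=\operatorname{Id}_A\otimes\operatorname{Id}_B=\operatorname{Id}_{A\otimes B}$. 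Each value $f_A(g)\otimes f_B(g)$ is an automorphism of $A\otimes B$, with inverse $f_A(g^{-1})\otimes f_B(g^{-1})$, so the map lands in $\operatorname{Aut}_{\mathcal{C}}(A\otimes B)$ and $(A\otimes B,f_A\otimes f_B)$ is an object of $\mathcal{H}(\mathcal{G},\mathcal{C})$. For the morphism part, using bifunctoriality once more together with the hypotheses $f_{\tilde{A}}(g)\circ\phi=\phi\circ f_A(g)$ and $f_{\tilde{B}}(g)\circ\xi=\xi\circ f_B(g)$, one gets $(f_{\tilde{A}}(g)\otimes f_{\tilde{B}}(g))\circ(\phi\otimes\xi)=(f_{\tilde{A}}(g)\circ\phi)\otimes(f_{\tilde{B}}(g)\circ\xi)=(\phi\circ f_A(g))\otimes(\xi\circ f_B(g))=(\phi\otimes\xi)\circ(f_A(g)\otimes f_B(g))$ for all $g$, which is precisely the condition for $\phi\otimes\xi$ to be a morphism in $\mathcal{H}(\mathcal{G},\mathcal{C})$.

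The most delicate point, and the only place where $c\in Z(\mathcal{G})$ is used, is showing that $\varphi(c)$ is a well-defined functorial isomorphism from $\operatorname{Id}_{\mathcal{H}(\mathcal{G},\mathcal{C})}$ to itself. I would argue in three steps. The component $f_A(c)$ must be a morphism $(A,f_A)\to(A,f_A)$ in the Hom-category, i.e.\ it must commute with every $f_A(g)$; this is exactly where centrality enters, since $f_A(g)\circ f_A(c)=f_A(gc)=f_A(cg)=f_A(c)\circ f_A(g)$, and the condition would fail for a non-central $c$. Naturality is immediate: for any morphism $\xi\colon(A,f_A)\to(B,f_B)$ the identity $f_B(c)\circ\xi=\xi\circ f_A(c)$ is just the morphism condition evaluated at $g=c$. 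Finally each component is invertible with inverse $f_A(c^{-1})$, so $\varphi(c)$ is an isomorphism of functors.

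As for $\overline{a}^{c,d,\nu}$, $\overline{l}^{c,d,\nu}$ and $\overline{r}^{c,d,\nu}$, these are simply \emph{defined} as composites of the already-established natural isomorphisms $\varphi(c)$, $\varphi(d)$, $\varphi(d^{-1})$ with the given constraints $a,l,r$ and the automorphism $\nu$, so nothing further is to be verified at this stage; that they satisfy the pentagon and triangle axioms is deferred to Theorem~\ref{Th:Monoidal}. I expect the verification that $f_A(c)$ lies in $\operatorname{Aut}_{\mathcal{C}}(A)$ \emph{as a morphism of the Hom-category} to be the conceptual crux, since it is exactly what pins down why the construction requires $c$ (and $d$) to lie in the center of $\mathcal{G}$.
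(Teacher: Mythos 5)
Your proof is correct and is precisely the routine verification this claim requires: bifunctoriality of $\otimes$ (the interchange law) gives both that $f_A\otimes f_B$ is a group homomorphism and that $\phi\otimes\xi$ is a morphism in $\mathcal{H}(\mathcal{G},\mathcal{C})$, while centrality of $c$ enters exactly where you place it, namely to make each component $f_A(c)$ a morphism $(A,f_A)\to(A,f_A)$, with naturality being the morphism condition at $g=c$ and invertibility coming from $f_A(c^{-1})$. The paper itself gives no proof of this claim (it defers all proofs of this section to Graziani's thesis), and your argument coincides with the standard verification that any such proof must carry out, so there is nothing to add.
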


\begin{Theorem}
\label{Th:Monoidal}In the setting of Claim~{\rm \ref{Cl:monoidal}}, the category
\begin{gather*}
\mathcal{H}^{{c,d,\nu}}(\mathcal{G},\mathcal{C})=\big( \mathcal{H}(\mathcal{G},
\mathcal{C}),\otimes ,\big( \mathbf{1,}\widehat{\operatorname{Id}_{\mathbf{1}}}
\big) ,\overline{a}^{c,d,\nu },\overline{l}^{c,d,\nu },\overline{r}
^{c,d,\nu }\big)
\end{gather*}
is monoidal.
\end{Theorem}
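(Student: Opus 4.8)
The plan is to verify the three defining requirements of a monoidal category for the proposed data: that $(\mathbf{1},\widehat{\operatorname{Id}_{\mathbf{1}}})$ is a legitimate object and that $\overline{a}^{c,d,\nu}$, $\overline{l}^{c,d,\nu}$, $\overline{r}^{c,d,\nu}$ are functorial isomorphisms in $\mathcal{H}(\mathcal{G},\mathcal{C})$, then the pentagon axiom for $\overline{a}^{c,d,\nu}$, and finally the triangle axiom. First I would note that the constant map $\widehat{\operatorname{Id}_{\mathbf{1}}}$ is trivially a group homomorphism, so $(\mathbf{1},\widehat{\operatorname{Id}_{\mathbf{1}}})$ is an object. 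Next, by Claim~\ref{Cl:monoidal}, $\otimes$ is a bifunctor on $\mathcal{H}(\mathcal{G},\mathcal{C})$, and the original constraints $a_{X,Y,Z}$, $l_X$, $r_X$ are already morphisms in $\mathcal{H}(\mathcal{G},\mathcal{C})$: this is precisely the naturality of $a$, $l$, $r$ in $\mathcal{C}$ applied to the components $f_X(g)$ of the group actions (for $\overline{l}^{c,d,\nu}$ one uses that the action on $\mathbf{1}$ is the identity, so $\nu\otimes\operatorname{Id}_{\mathcal{H}(\mathcal{G},\mathcal{C})}$ is a morphism). Since $\varphi(c)$ and $\varphi(d)$ are functorial automorphisms of $\operatorname{Id}_{\mathcal{H}(\mathcal{G},\mathcal{C})}$ — each component $f_A(c)$ being a morphism in the category because $c$ is central, so that $f_A(c)$ commutes with every $f_A(g)$ — the composites defining $\overline{a}^{c,d,\nu}$, $\overline{l}^{c,d,\nu}$, $\overline{r}^{c,d,\nu}$ are again functorial isomorphisms in $\mathcal{H}(\mathcal{G},\mathcal{C})$.

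Before attacking the coherence axioms I would record the algebraic properties of $\varphi$ on which the computation rests. From the definition $f_{A\otimes B}=f_A\otimes f_B$ in Claim~\ref{Cl:monoidal} one gets the monoidality relation $\varphi(c)_{A\otimes B}=\varphi(c)_A\otimes\varphi(c)_B$; moreover $\varphi$ is multiplicative and invertible, $\varphi(cd)=\varphi(c)\circ\varphi(d)$ and $\varphi(c)^{-1}=\varphi(c^{-1})$; and, crucially, $\varphi(c)$ commutes with $\varphi(d)$ componentwise, since $c,d\in Z(\mathcal{G})$ forces $f_A(c)f_A(d)=f_A(cd)=f_A(dc)=f_A(d)f_A(c)$.

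The main work, and the step I expect to be the principal obstacle, is the pentagon axiom
\begin{gather*}
\big(U\otimes\overline{a}^{c,d,\nu}_{V,W,X}\big)\circ\overline{a}^{c,d,\nu}_{U,V\otimes W,X}\circ\big(\overline{a}^{c,d,\nu}_{U,V,W}\otimes X\big)=\overline{a}^{c,d,\nu}_{U,V,W\otimes X}\circ\overline{a}^{c,d,\nu}_{U\otimes V,W,X}.
\end{gather*}
Here I would substitute $\overline{a}^{c,d,\nu}=a\circ[(\varphi(c)\otimes\operatorname{Id}_{\mathcal{H}(\mathcal{G},\mathcal{C})})\otimes\varphi(d)]$ into both sides and then systematically use the naturality of $a$ to slide every occurrence of $\varphi(c)$ and $\varphi(d)$ past the associativity constraints, until all five copies of $a$ appear in the pattern of the original pentagon for $\mathcal{C}$. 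After cancelling them via that pentagon, what remains on each side is a tensor word built only from the components $f_\bullet(c)$ and $f_\bullet(d)$; showing these two words coincide is exactly where the monoidality relation $\varphi(c)_{U\otimes V}=\varphi(c)_U\otimes\varphi(c)_V$ (and its $d$-analogue) and the componentwise commuting of $\varphi(c)$ with $\varphi(d)$ are needed. The bookkeeping of these insertions is the delicate part, and centrality of $c$ and $d$ is precisely what makes the leftover decorations match.

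Finally, the triangle axiom $(V\otimes\overline{l}^{c,d,\nu}_W)\circ\overline{a}^{c,d,\nu}_{V,\mathbf{1},W}=\overline{r}^{c,d,\nu}_V\otimes W$ follows by the same method: substitute the definitions, observe that in $\overline{a}^{c,d,\nu}_{V,\mathbf{1},W}$ the middle decoration acts on the unit object $(\mathbf{1},\widehat{\operatorname{Id}_{\mathbf{1}}})$, whose action is the identity and so drops out, then use naturality of $a$, $l$, $r$ to collect the $\varphi(c)$, $\varphi(d^{-1})$ and $\nu$ factors and reduce to the original triangle axiom of $\mathcal{C}$. The surviving $\varphi(d^{-1})$ coming from $\overline{l}^{c,d,\nu}$ and $\varphi(c)$ coming from $\overline{r}^{c,d,\nu}$ then match by the multiplicativity of $\varphi$, completing the verification that $\mathcal{H}^{c,d,\nu}(\mathcal{G},\mathcal{C})$ is monoidal.
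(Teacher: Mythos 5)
Your proposal is correct, but there is nothing in the paper to compare it against: the paper explicitly defers all proofs of this section to~\cite{giacomo}, so your direct verification must stand on its own --- and it does. The structure is right: centrality of $c$ and $d$ is what makes each component $f_A(c)$, $f_A(d)$ commute with every $f_A(g)$ and hence a morphism of $\mathcal{H}(\mathcal{G},\mathcal{C})$, so that $\varphi(c)$, $\varphi(d)$ are functorial automorphisms of the identity and $\overline{a}^{c,d,\nu}$, $\overline{l}^{c,d,\nu}$, $\overline{r}^{c,d,\nu}$ are functorial isomorphisms; the pentagon and triangle then reduce, via naturality of $a$, $l$, $r$ and the coherence of $\mathcal{C}$, to an equality of ``decorations''. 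One point is misattributed, though harmlessly: the componentwise commutation of $\varphi(c)$ with $\varphi(d)$ is \emph{not} what makes the leftover words in the pentagon match. If you carry out the computation, both sides reduce to $a_{U,V,W\otimes X}\circ a_{U\otimes V,W,X}$ precomposed with $\big(\big(f_U\big(c^2\big)\otimes f_V(c)\big)\otimes f_W(d)\big)\otimes f_X\big(d^2\big)$; each tensor factor carries powers of $c$ only (the first two) or powers of $d$ only (the last two), so the matching uses only the homomorphism property $f_U(c)\circ f_U(c)=f_U\big(c^2\big)$, $f_X(d)\circ f_X(d)=f_X\big(d^2\big)$ and the rule $f_{A\otimes B}=f_A\otimes f_B$; the elements $c$ and $d$ never act on the same factor, so their commutation is never invoked. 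Centrality enters only where you first (correctly) placed it, namely in guaranteeing that the constraints are morphisms of the category. Likewise your description of the triangle is accurate: the $f_W(d)$ coming from $\overline{a}^{c,d,\nu}_{V,\mathbf{1},W}$ cancels the $f_W\big(d^{-1}\big)$ coming from $\overline{l}^{c,d,\nu}_W$, the action on $(\mathbf{1},\widehat{\operatorname{Id}_{\mathbf{1}}})$ drops out, and the $f_V(c)$ from the associator is absorbed into $\overline{r}^{c,d,\nu}_V$ after applying the triangle axiom and the naturality of $r$ in $\mathcal{C}$.
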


From now on, when $( \mathcal{C},\otimes ,\mathbf{1},a,l,r) $ is
a monoidal category, $\mathcal{G}$ is a group, $c,d\in Z( \mathcal{G}
) $ and $\nu \in \operatorname{Aut}_{\mathcal{C}}( \mathbf{1}) $,
we will indicate the monoidal category def\/ined in Theorem~\ref{Th:Monoidal}
by $\mathcal{H}^{c,d,\nu }(\mathcal{G},\mathcal{C})$. In the case when $c=d=
\mathbf{1}_{\mathcal{G}}$ and $\nu =\operatorname{Id}_{\mathbf{1}}$, we will
simply write $\mathcal{H}(\mathcal{G},\mathcal{C})$.

\begin{Theorem}
\label{thm:monoidaliso}Let $( \mathcal{C},\otimes ,\mathbf{1}
,a,l,r) $ be a monoidal category and~$\mathcal{G}$ a group. Then the
identity functor $\mathcal{I}\colon \mathcal{H}^{c,d,\nu }(\mathcal{G},\mathcal{C})
\rightarrow \mathcal{H}(\mathcal{G},\mathcal{C})$ is a~monoidal isomorphism via
\begin{gather*}
 \phi _{0}=\nu ^{-1}\colon \ \big(\mathbf{1},\widehat{\operatorname{Id}_{\mathbf{1}}}
\big)\rightarrow \big(\mathbf{1},\widehat{\operatorname{Id}_{\mathbf{1}}}\big)\qquad \text{and}
\qquad \phi _{2} ( (A,f_{A}),(B,f_{B}) ) =f_{A}\big(c^{-1}\big)\otimes
f_{B}(d),
\end{gather*}
for every $(A,f_{A}),(B,f_{B})\in \mathcal{H}^{c,d,\nu }(\mathcal{G},\mathcal{C})$.
\end{Theorem}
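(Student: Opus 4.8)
The plan is to verify directly that the triple $(\mathcal{I},\phi_2,\phi_0)$ satisfies the axioms of a monoidal functor and that $\phi_0$, $\phi_2$ are isomorphisms; since a monoidal functor whose underlying functor is an isomorphism of categories and whose structure morphisms are isomorphisms is automatically a monoidal isomorphism, this suffices. Note first that $\mathcal{I}$ is the identity on the common underlying category $\mathcal{H}(\mathcal{G},\mathcal{C})$ and on the common tensor functor, so the two monoidal structures differ only through their constraints. I would begin by checking that $\phi_0=\nu^{-1}$ and each $\phi_2((A,f_A),(B,f_B))=f_A(c^{-1})\otimes f_B(d)$ are genuine morphisms in $\mathcal{H}(\mathcal{G},\mathcal{C})$: this is where the hypothesis $c,d\in Z(\mathcal{G})$ enters, since $f_A(c^{-1})\circ f_A(g)=f_A(c^{-1}g)=f_A(gc^{-1})=f_A(g)\circ f_A(c^{-1})$ (and similarly for $d$) guarantees that $\phi_2$ commutes with $(f_A\otimes f_B)(g)=f_A(g)\otimes f_B(g)$, while for $\phi_0$ the condition is vacuous because $\widehat{\operatorname{Id}_{\mathbf{1}}}(g)=\operatorname{Id}_{\mathbf{1}}$. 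Both are clearly invertible, with inverses $\nu$ and $f_A(c)\otimes f_B(d^{-1})$. Naturality of $\phi_2$ then follows by a one-line computation: for morphisms $\xi$, $\eta$ in $\mathcal{H}(\mathcal{G},\mathcal{C})$ the defining relations $f_{\tilde A}(g)\circ\xi=\xi\circ f_A(g)$ let me slide $f_{\tilde A}(c^{-1})$ and $f_{\tilde B}(d)$ past $\xi\otimes\eta$, using functoriality of $\otimes$.

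The main step is the hexagon compatibility for $(\mathcal{I},\phi_2,\phi_0)$. For objects $U=(A,f_A)$, $V=(B,f_B)$, $W=(C,f_C)$ I would expand both composites explicitly, using $\overline{a}^{c,d,\nu}_{U,V,W}=a_{A,B,C}\circ[(f_A(c)\otimes\operatorname{id}_B)\otimes f_C(d)]$ on the source side and the identities $(f_A\otimes f_B)(c^{-1})=f_A(c^{-1})\otimes f_B(c^{-1})$ and $(f_B\otimes f_C)(d)=f_B(d)\otimes f_C(d)$ when evaluating $\phi_2$ on tensor products. Collecting the automorphism factors on each of the three tensor legs and simplifying by the homomorphism property of the $f$'s (so that factors like $f_A(c)\circ f_A(c^{-1})\circ f_A(c^{-1})$ collapse to $f_A(c^{-1})$, while centrality lets $f_B(d)$ and $f_B(c^{-1})$ be combined into $f_B(c^{-1}d)$), I expect both composites to reduce to the common tensor morphism with legs $f_A(c^{-1})$, $f_B(c^{-1}d)$, $f_C(d^2)$, composed in one case after and in the other case before $a_{A,B,C}$, with the two bracketings $((-\otimes-)\otimes-)$ and $(-\otimes(-\otimes-))$ matching the source and target of $a$. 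The two sides are then identified by the naturality of the original associativity constraint $a$. This leg-by-leg bookkeeping is the most delicate part of the argument, and everything hinges on $c$ and $d$ being central so that the factors on the middle leg may be freely reordered.

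Finally I would treat the two unit axioms. Writing $\overline{l}^{c,d,\nu}_U=f_A(d^{-1})\circ l_A\circ(\nu\otimes\operatorname{id}_A)$ and $\overline{r}^{c,d,\nu}_U=f_A(c)\circ r_A\circ(\operatorname{id}_A\otimes\nu)$, and using $\phi_2(\mathbf{1},U)=\operatorname{Id}_{\mathbf{1}}\otimes f_A(d)$, $\phi_2(U,\mathbf{1})=f_A(c^{-1})\otimes\operatorname{Id}_{\mathbf{1}}$ together with $\phi_0=\nu^{-1}$, the two chains to be evaluated are $\overline{l}^{c,d,\nu}_U\circ\phi_2(\mathbf{1},U)\circ(\nu^{-1}\otimes\operatorname{id}_A)$ and $\overline{r}^{c,d,\nu}_U\circ\phi_2(U,\mathbf{1})\circ(\operatorname{id}_A\otimes\nu^{-1})$. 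In each case the $\nu$ and $\nu^{-1}$ on the unit leg annihilate, and after applying naturality of $l$ (resp.\ $r$) to pull $f_A(d)$ (resp.\ $f_A(c^{-1})$) through, it cancels the surviving $f_A(d^{-1})$ (resp.\ $f_A(c)$) by the homomorphism property, leaving exactly the target unit constraint $l_A$ (resp.\ $r_A$). This verifies all the monoidal-functor axioms, and since $\phi_0$, $\phi_2$ are isomorphisms and $\mathcal{I}$ is an isomorphism of categories, $\mathcal{I}$ is a monoidal isomorphism.
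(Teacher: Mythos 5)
Your proof is correct: the compatibility diagram for the associativity constraints does collapse, leg by leg, to the tensor morphism with legs $f_A(c^{-1})$, $f_B(c^{-1}d)$, $f_C(d^2)$ appearing once after and once before $a_{A,B,C}$ (the centrality of $c$ and $d$ being what lets the middle-leg factors be reordered), so naturality of $a$ finishes that axiom, and the unit axioms cancel exactly as you describe via naturality of $l$ and $r$ and the annihilation of $\nu$ against $\nu^{-1}$. Note that the paper itself gives no proof of this theorem, deferring to the first author's thesis \cite{giacomo}, so there is nothing in the paper to compare against; your direct verification of the monoidal-functor axioms (preceded by the check that $\phi_0$ and $\phi_2$ are genuine morphisms of $\mathcal{H}(\mathcal{G},\mathcal{C})$, which is where centrality first enters) is the natural and complete argument.
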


\begin{Definition}[see \cite{Kassel}]
\label{def braiding} A \textit{braided monoidal category}
$(\mathcal{C},\otimes ,\mathbf{1},a,l,r,\gamma )$ is a monoidal category $(\mathcal{C},\otimes ,\mathbf{1,}a,l,r)$ equipped with a \textit{braiding} $\gamma $, that is, an isomorphism $\gamma _{U,V}\colon U\otimes V\rightarrow
V\otimes U$, natural in $U,V\in \mathcal{C}$, satisfying, for all $U,V,W\in
\mathcal{C}$, the hexagon axioms
\begin{gather*}
a_{V,W,U}\circ \gamma _{U,V\otimes W}\circ a_{U,V,W}=(V\otimes \gamma
_{U,W})\circ a_{V,U,W}\circ (\gamma _{U,V}\otimes W),  
\\
a_{W,U,V}^{-1}\circ \gamma _{U\otimes V,W}\circ a_{U,V,W}^{-1}=(\gamma
_{U,W}\otimes V)\circ a_{U,W,V}^{-1}\circ (U\otimes \gamma _{V,W}).
\end{gather*}
A braided monoidal category is called \textit{symmetric} if we further have $
\gamma _{V,U}\circ \gamma _{U,V}=\operatorname{Id}_{U\otimes V}$ for every $U,V\in
\mathcal{C}$. A \textit{braided monoidal functor} is a monoidal functor $F\colon
\mathcal{C}\rightarrow \mathcal{C}^{\prime }$ such that
\begin{gather*}
F( \gamma _{U,V}) \circ \phi _{2}(U,V)=\phi _{2}(V,U)\circ \gamma
_{F(U) ,F(V) }^{\prime }, \qquad \text{for  every} \ \ U,
V\in {\mathcal{C}}.  
\end{gather*}
\end{Definition}

\begin{Claim}
\label{cl:braid}Let $\mathcal{G}$ be a group and let $( \mathcal{C},\otimes ,\mathbf{1},a,l,r,\gamma) $ be a braided monoidal category.
Let $c,d\in Z(\mathcal{G}) $ and let $\nu \in \operatorname{Aut}_{\mathcal{C}}( \mathbf{1}) $. We will introduce a braided
structure on the monoidal category $\mathcal{H}^{{c,d,\nu}}(\mathcal{G},\mathcal{C})$ by setting, for every $(A,f_{A})$ and $( B,f_{B}) $
in $\mathcal{H}(\mathcal{G},\mathcal{C})$,
\begin{gather*}
\overline{\gamma }_{(A,f_{A}), ( B,f_{B} ) }^{{c,d,\nu}}=\gamma
_{A,B}\circ \big( f_{A}(cd)\otimes f_{B}\big(c^{-1}d^{-1}\big)\big) .
\end{gather*}
\end{Claim}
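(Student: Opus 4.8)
The plan is to avoid treating $\overline{\gamma}^{c,d,\nu}$ as an ad hoc formula and instead to recognize it as the braiding obtained by \emph{transporting} the evident braiding of $\mathcal{H}(\mathcal{G},\mathcal{C})$ (the trivial case $c=d=\mathbf{1}_{\mathcal{G}}$, $\nu=\operatorname{Id}_{\mathbf 1}$) along the monoidal isomorphism $\mathcal{I}$ of Theorem~\ref{thm:monoidaliso}. So first I would check that $\mathcal{H}(\mathcal{G},\mathcal{C})$ is itself braided, with braiding simply the underlying $\gamma_{A,B}$. The only point needing verification is that $\gamma_{A,B}$ is a morphism in $\mathcal{H}(\mathcal{G},\mathcal{C})$, i.e.\ $(f_B(g)\otimes f_A(g))\circ\gamma_{A,B}=\gamma_{A,B}\circ(f_A(g)\otimes f_B(g))$ for every $g\in\mathcal{G}$; this is immediate from naturality of $\gamma$ in $\mathcal{C}$ applied to the pair $\bigl(f_A(g),f_B(g)\bigr)$. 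Naturality of $\gamma$ and the two hexagon axioms are then inherited verbatim from $\mathcal{C}$, because for trivial data the constraints $\overline a,\overline l,\overline r$ reduce to $a,l,r$.

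Next I would invoke the standard transport-of-structure principle: if $(F,\phi_0,\phi_2)$ is a monoidal isomorphism and the target category is braided via $\gamma'$, then the source acquires a unique braiding $\overline\gamma$ making $F$ a braided monoidal functor in the sense of Definition~\ref{def braiding}, determined by $F(\overline\gamma_{U,V})=\phi_2(V,U)\circ\gamma'_{F(U),F(V)}\circ\phi_2(U,V)^{-1}$. Applying this to $\mathcal{I}$ with $\phi_2\bigl((A,f_A),(B,f_B)\bigr)=f_A(c^{-1})\otimes f_B(d)$ from Theorem~\ref{thm:monoidaliso}, and using that $\mathcal{I}$ is the identity on underlying morphisms, I obtain
\[
\overline\gamma_{(A,f_A),(B,f_B)}=\bigl(f_B(c^{-1})\otimes f_A(d)\bigr)\circ\gamma_{A,B}\circ\bigl(f_A(c)\otimes f_B(d^{-1})\bigr).
\]
Sliding $f_B(c^{-1})\otimes f_A(d)$ through $\gamma_{A,B}$ by naturality and then composing the automorphisms on $A$ and on $B$ separately collapses this to $\gamma_{A,B}\circ\bigl(f_A(dc)\otimes f_B(c^{-1}d^{-1})\bigr)$; finally $dc=cd$ since $c,d\in Z(\mathcal{G})$, which recovers exactly the stated formula and simultaneously shows it is a braiding.

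Should one prefer a self-contained verification, one can instead check the four defining properties directly: (i) $\overline\gamma$ is a morphism in $\mathcal{H}(\mathcal{G},\mathcal{C})$, which uses naturality of $\gamma$ together with centrality of $c,d$ to commute $f(cd)$ and $f(c^{-1}d^{-1})$ past $f(g)$; (ii) invertibility, clear since $\gamma_{A,B}$ and all the $f(\cdot)$ are isomorphisms; (iii) naturality in both arguments, using the morphism condition $f_{A'}(g)\circ\xi=\xi\circ f_A(g)$; and (iv) the two hexagon axioms relative to the constraint $\overline a^{c,d,\nu}=a\circ[(\varphi(c)\otimes\operatorname{Id})\otimes\varphi(d)]$ of Claim~\ref{Cl:monoidal}.

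The main obstacle is step (iv), the hexagon axioms. In the transport approach this difficulty is absorbed into the transport principle, so the work reduces to the bookkeeping above; the genuinely delicate point there is to track how the automorphisms $f_?(c)$ and $f_?(d)$ inserted by $\overline a^{c,d,\nu}$ match those inserted by $\overline\gamma$. In the direct approach one must expand each hexagon in $\mathcal{C}$, repeatedly apply naturality of $\gamma$ to move the structure automorphisms to the outside, invoke the hexagon axioms of $\mathcal{C}$ for the bare $\gamma$ and $a$, and then check that the surviving automorphisms agree on the two sides; this final matching is precisely where centrality of $c,d$ (hence $cd=dc$ and the freedom to commute group elements) is indispensable. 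I expect the transport route to be markedly shorter and less error-prone.
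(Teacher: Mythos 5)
Your proposal is correct, but it takes a genuinely different route from the one the paper follows. The paper (deferring details to the thesis~\cite{giacomo}) presents the formula for $\overline{\gamma }^{c,d,\nu}$ first and then asserts directly that it satisfies the axioms of a braiding compatible with $\overline{a}^{c,d,\nu}$ (Theorem~\ref{Theo:braidcat}); the statement that the identity functor is a \emph{braided} monoidal isomorphism (Theorem~\ref{teo:htuttiiso}) appears only afterwards, as a separate result. You reverse this logical order: you first check the easy fact that the trivial-parameter category $\mathcal{H}(\mathcal{G},\mathcal{C})$ is braided by the bare $\gamma_{A,B}$ (naturality of $\gamma$ in $\mathcal{C}$ gives the morphism condition, and the constraints collapse to those of $\mathcal{C}$ since $\varphi(1_{\mathcal{G}})=\operatorname{Id}$), and then transport that braiding backwards along the monoidal isomorphism $\mathcal{I}$ of Theorem~\ref{thm:monoidaliso} --- which precedes Claim~\ref{cl:braid} in the paper, so there is no circularity. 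Your computation is right: transporting via $\phi_2\bigl((A,f_A),(B,f_B)\bigr)=f_A(c^{-1})\otimes f_B(d)$ gives $\bigl(f_B(c^{-1})\otimes f_A(d)\bigr)\circ\gamma_{A,B}\circ\bigl(f_A(c)\otimes f_B(d^{-1})\bigr)$, and sliding the left factor through $\gamma_{A,B}$ by naturality and composing $f_A(d)f_A(c)=f_A(dc)=f_A(cd)$, $f_B(c^{-1})f_B(d^{-1})=f_B(c^{-1}d^{-1})$ recovers exactly the stated formula, with centrality used exactly where you say. What your route buys: it explains \emph{where} the formula comes from rather than verifying it ex post, it yields Theorem~\ref{teo:htuttiiso} as an automatic by-product (the transported braiding is by construction the unique one making $\mathcal{I}$ braided in the sense of Definition~\ref{def braiding}), and it avoids any hexagon chase specific to this category. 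What it costs: the hexagon verification is not eliminated but relocated into the general transport-of-structure principle, which --- while entirely standard for monoidal isomorphisms --- is itself proved by the same kind of diagram chase (naturality of $\phi_2$, the compatibility pentagon of a monoidal functor, naturality of the target braiding) that the direct approach performs concretely. Both routes are complete and correct; yours is shorter given the standard lemma, the paper's is self-contained.
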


\begin{Theorem}
\label{Theo:braidcat}In the setting of Claim~{\rm \ref{cl:braid}}, the category
\begin{gather*}
\big(\mathcal{H}(\mathcal{\mathcal{G}},\mathcal{C}),\otimes ,\big( \mathbf{1},
\widehat{\operatorname{Id}_{\mathbf{1}}}\big) ,\overline{a}^{{c,d,\nu}},
\overline{l}^{{c,d,\nu}},\overline{r}^{{c,d,\nu}},\overline{\gamma }
^{{c,d,\nu}}\big)
\end{gather*}
is a braided monoidal category.
\end{Theorem}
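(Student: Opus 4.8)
The plan is to obtain the braided structure on $\mathcal{H}^{c,d,\nu}(\mathcal{G},\mathcal{C})$ by \emph{transporting} a braiding along the monoidal isomorphism supplied by Theorem~\ref{thm:monoidaliso}, rather than checking the hexagon axioms of Definition~\ref{def braiding} directly against the twisted constraint $\overline{a}^{c,d,\nu}$ by brute force. The guiding observation is that the formula for $\overline{\gamma}^{c,d,\nu}$ in Claim~\ref{cl:braid} is precisely the one forced by demanding that the identity functor $\mathcal{I}\colon \mathcal{H}^{c,d,\nu}(\mathcal{G},\mathcal{C})\to \mathcal{H}(\mathcal{G},\mathcal{C})$ be a braided monoidal functor, where the target $\mathcal{H}(\mathcal{G},\mathcal{C})$ carries the braiding inherited verbatim from $\mathcal{C}$.

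First I would treat the untwisted case $c=d=\mathbf{1}_{\mathcal{G}}$, $\nu=\operatorname{Id}_{\mathbf{1}}$. Here $\varphi(\mathbf{1}_{\mathcal{G}})(A,f_{A})=f_{A}(\mathbf{1}_{\mathcal{G}})=\operatorname{Id}_{A}$, so the constraints $\overline{a}$, $\overline{l}$, $\overline{r}$ reduce componentwise to $a$, $l$, $r$, and $\mathcal{H}(\mathcal{G},\mathcal{C})$ inherits its monoidal structure directly from $\mathcal{C}$. I claim that $\gamma_{A,B}$ itself is a braiding on $\mathcal{H}(\mathcal{G},\mathcal{C})$. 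That $\gamma_{A,B}$ defines a morphism $(A\otimes B,f_{A}\otimes f_{B})\to (B\otimes A,f_{B}\otimes f_{A})$ in the category, i.e.\ that $(f_{B}(g)\otimes f_{A}(g))\circ\gamma_{A,B}=\gamma_{A,B}\circ(f_{A}(g)\otimes f_{B}(g))$ for all $g\in\mathcal{G}$, is exactly naturality of $\gamma$ in $\mathcal{C}$; naturality of $\gamma$ in $\mathcal{H}(\mathcal{G},\mathcal{C})$ likewise descends from naturality in $\mathcal{C}$; and the two hexagon axioms hold because the diagrams involved are the $\mathcal{C}$-diagrams for $a$ and $\gamma$, which commute since $\mathcal{C}$ is braided.

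Next I would invoke the standard transport-of-structure principle: if $F$ is a monoidal isomorphism onto a braided monoidal category, then there is a unique braiding on the source making $F$ a braided monoidal functor, and its hexagon axioms follow automatically from those of the target via the monoidal coherence (the $\phi_{2}$-compatibility with associativity constraints) of $F$. Applying this to $F=\mathcal{I}$ from Theorem~\ref{thm:monoidaliso}, with $\phi_{2}((A,f_{A}),(B,f_{B}))=f_{A}(c^{-1})\otimes f_{B}(d)$, the braided-functor condition $\mathcal{I}(\overline{\gamma})\circ\phi_{2}=\phi_{2}\circ\gamma$ reads $\overline{\gamma}_{(A,f_{A}),(B,f_{B})}\circ(f_{A}(c^{-1})\otimes f_{B}(d))=(f_{B}(c^{-1})\otimes f_{A}(d))\circ\gamma_{A,B}$. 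Using naturality of $\gamma$ in $\mathcal{C}$ on the right-hand side to rewrite it as $\gamma_{A,B}\circ(f_{A}(d)\otimes f_{B}(c^{-1}))$, then solving for $\overline{\gamma}$ and multiplying by $(f_{A}(c^{-1})\otimes f_{B}(d))^{-1}=f_{A}(c)\otimes f_{B}(d^{-1})$, one finds $\overline{\gamma}_{(A,f_{A}),(B,f_{B})}=\gamma_{A,B}\circ(f_{A}(dc)\otimes f_{B}(c^{-1}d^{-1}))$; since $c,d\in Z(\mathcal{G})$ we have $dc=cd$, recovering exactly the formula of Claim~\ref{cl:braid}.

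The only point demanding care — and essentially the only place centrality of $c$, $d$ is used beyond what already entered Theorem~\ref{thm:monoidaliso} — is the short algebraic manipulation isolating $\overline{\gamma}$ and confirming it is a genuine morphism of $\mathcal{H}(\mathcal{G},\mathcal{C})$; the hexagon identities come for free from transport. If one prefers a self-contained argument avoiding the transport lemma as a black box, the alternative is to substitute the explicit formulas for $\overline{\gamma}^{c,d,\nu}$ and $\overline{a}^{c,d,\nu}$ into the two hexagon identities of Definition~\ref{def braiding} and reduce them, step by step, to the hexagon identities for $\gamma$ in $\mathcal{C}$, repeatedly using naturality of $\gamma$, multiplicativity of each homomorphism $f_{A}$, and the relation $cd=dc$. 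This route is more laborious but entirely mechanical, and I would expect the bookkeeping of the $f_{A}(\cdot)$ factors across the associators to be the main source of friction.
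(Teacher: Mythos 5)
Your proposal is correct, but a comparison with ``the paper's proof'' has to be qualified: the paper gives no proof of Theorem~\ref{Theo:braidcat} at all --- it explicitly defers all proofs in this section to the thesis~\cite{giacomo} --- so the implicit expected argument is the direct one you only sketch at the end, namely checking naturality and the two hexagon axioms of Definition~\ref{def braiding} for $\overline{\gamma}^{c,d,\nu}$ against the twisted associator $\overline{a}^{c,d,\nu}$. Your main route is genuinely different and structurally cleaner. You first show that the braiding $\gamma$ of $\mathcal{C}$ descends verbatim to the untwisted category $\mathcal{H}(\mathcal{G},\mathcal{C})$ (the condition for $\gamma_{A,B}$ to be a morphism there is exactly naturality of $\gamma$ in $\mathcal{C}$, and the constraints reduce componentwise to those of $\mathcal{C}$), and then transport this braiding along the monoidal isomorphism $\mathcal{I}$ of Theorem~\ref{thm:monoidaliso}; this is legitimately available, since Theorem~\ref{thm:monoidaliso} precedes Theorem~\ref{Theo:braidcat} and concerns only the monoidal (not braided) structure, so there is no circularity. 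Your computation solving $\overline{\gamma}\circ\big(f_A\big(c^{-1}\big)\otimes f_B(d)\big)=\big(f_B\big(c^{-1}\big)\otimes f_A(d)\big)\circ\gamma_{A,B}$ via naturality of $\gamma$, the group-homomorphism property of $f_A$, $f_B$, and centrality of $c$, $d$ is right, and it recovers exactly $\gamma_{A,B}\circ\big(f_A(cd)\otimes f_B\big(c^{-1}d^{-1}\big)\big)$, the formula of Claim~\ref{cl:braid}; naturality and the hexagons for $\overline{\gamma}^{c,d,\nu}$ then follow from the standard transport-of-structure argument, whose only input (the $\phi_2$-coherence with the associators) is supplied by Theorem~\ref{thm:monoidaliso}. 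What your approach buys: the braiding formula is derived rather than guessed-and-verified, and Theorem~\ref{teo:htuttiiso} (stated separately in the paper as a further result) drops out as an immediate byproduct. What the direct hexagon-checking route buys: it is self-contained, needing neither the transport lemma nor Theorem~\ref{thm:monoidaliso}, at the cost of the $f_A(\cdot)$-bookkeeping you correctly identify as the main source of friction.
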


From now on, when $( \mathcal{C},\otimes ,\mathbf{1},a,l,r,\gamma
) $ is a braided monoidal category and $\mathcal{G}$ is a group, we
will still denote the braided monoidal structure def\/ined in Theorem~\ref{Theo:braidcat} with $\mathcal{H}^{{c,d,\nu}}(\mathcal{G},\mathcal{C})$.
In the case when $c=d=\mathbf{1}_{\mathcal{G}}$ and $\nu =\operatorname{id}_{\mathbf{1}}$, we will simply write respectively $\mathcal{H}(\mathcal{G},\mathcal{C})$
instead of $\mathcal{H}^{{c,d,\nu}}(\mathcal{G},\mathcal{C})$ and $\gamma
_{(A,f_{A}),(B,f_{B})}$ instead of $\overline{\gamma }_{(A,f_{A}),(B,f_{B})}^{{c,d,\nu}}$.

\begin{Theorem}
\label{teo:htuttiiso}Let $\mathcal{G}$ be a group and let $( \mathcal{C}
,\otimes ,\mathbf{1},a,l,r,\gamma ) $ be a braided monoidal category.
Then the identity functor $\mathcal{I}\colon \mathcal{H}^{c,d,\nu }(\mathcal{G},
\mathcal{C})\rightarrow \mathcal{H}(\mathcal{G},\mathcal{C})$ is a braided monoidal
isomorphism via
\begin{gather*}
\phi _{0}=\nu ^{-1}\colon \ \big(\mathbf{1},\widehat{\operatorname{Id}_{\mathbf{1}}}
\big)\rightarrow (\mathbf{1},\widehat{\operatorname{Id}_{\mathbf{1}}})\qquad \text{and}
\qquad\phi _{2} ( (A,f_{A}),(B,f_{B}) ) =f_{A}\big(c^{-1}\big)\otimes
f_{B}(d),
\end{gather*}
for every $(A,f_{A}),(B,f_{B})\in \mathcal{H}^{c,d,\nu }(\mathcal{G},\mathcal{C})$.
\end{Theorem}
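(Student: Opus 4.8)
The plan is to build directly on Theorem~\ref{thm:monoidaliso}, which already establishes that the identity functor $\mathcal{I}$, equipped with precisely the same $\phi_0 = \nu^{-1}$ and $\phi_2((A,f_A),(B,f_B)) = f_A(c^{-1}) \otimes f_B(d)$, is a monoidal isomorphism between $\mathcal{H}^{c,d,\nu}(\mathcal{G},\mathcal{C})$ and $\mathcal{H}(\mathcal{G},\mathcal{C})$. Hence the entire monoidal part of the claim is available for free, and the only thing left to verify is the single braiding-compatibility condition from Definition~\ref{def braiding}, namely that
\begin{gather*}
\mathcal{I}\big(\overline{\gamma}^{c,d,\nu}_{U,V}\big) \circ \phi_2(U,V) = \phi_2(V,U) \circ \gamma_{\mathcal{I}(U),\mathcal{I}(V)}
\end{gather*}
holds for all objects $U = (A,f_A)$ and $V = (B,f_B)$, where $\overline{\gamma}^{c,d,\nu}$ is the braiding in the source $\mathcal{H}^{c,d,\nu}(\mathcal{G},\mathcal{C})$ and $\gamma$ (in the simplified notation) is the braiding in the target $\mathcal{H}(\mathcal{G},\mathcal{C})$, which by Claim~\ref{cl:braid} taken with $c = d = \mathbf{1}_{\mathcal{G}}$ and $\nu = \operatorname{id}_{\mathbf{1}}$ reduces to the underlying braiding $\gamma_{A,B}$ of $\mathcal{C}$.

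The computation then proceeds as follows. On the left-hand side I would substitute the formula from Claim~\ref{cl:braid}, namely $\overline{\gamma}^{c,d,\nu}_{(A,f_A),(B,f_B)} = \gamma_{A,B} \circ (f_A(cd) \otimes f_B(c^{-1}d^{-1}))$, together with $\phi_2((A,f_A),(B,f_B)) = f_A(c^{-1}) \otimes f_B(d)$; since $\mathcal{I}$ acts as the identity on morphisms, and using functoriality of $\otimes$ to merge the two tensor factors, the left-hand side becomes $\gamma_{A,B} \circ \big( (f_A(cd) \circ f_A(c^{-1})) \otimes (f_B(c^{-1}d^{-1}) \circ f_B(d)) \big)$. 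Because $f_A$ and $f_B$ are group homomorphisms and $c, d \in Z(\mathcal{G})$, the two inner composites collapse via $cdc^{-1} = d$ and $c^{-1}d^{-1}d = c^{-1}$ to $f_A(d)$ and $f_B(c^{-1})$, so the left-hand side equals $\gamma_{A,B} \circ (f_A(d) \otimes f_B(c^{-1}))$. On the right-hand side, $\phi_2((B,f_B),(A,f_A)) = f_B(c^{-1}) \otimes f_A(d)$, giving $(f_B(c^{-1}) \otimes f_A(d)) \circ \gamma_{A,B}$.

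The two expressions then coincide by a single application of naturality of the braiding $\gamma$ in $\mathcal{C}$: for the morphisms $f_A(d)\colon A \to A$ and $f_B(c^{-1})\colon B \to B$, naturality yields exactly $\gamma_{A,B} \circ (f_A(d) \otimes f_B(c^{-1})) = (f_B(c^{-1}) \otimes f_A(d)) \circ \gamma_{A,B}$, which closes the verification. The work here is essentially bookkeeping; the only points demanding care are keeping straight which braiding lives in the source and which in the target category, and using the centrality of $c$ and $d$ to simplify the group-element products. Once these are handled, the braided-functor axiom reduces to naturality of $\gamma$, which is the conceptual heart of the statement and the step I expect to carry the whole argument.
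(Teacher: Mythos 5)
The paper itself gives no proof of this theorem (all results of Section~2, including Theorem~\ref{teo:htuttiiso}, are deferred to the thesis~\cite{giacomo}), so there is no in-paper argument to compare against; judged on its own, your proof is correct and is the natural one. Reducing everything to Theorem~\ref{thm:monoidaliso} and then checking the single braided-functor axiom of Definition~\ref{def braiding} is exactly what is needed, and your computation is sound: the left-hand side collapses to $\gamma_{A,B}\circ\big(f_A(cdc^{-1})\otimes f_B(c^{-1}d^{-1}d)\big)=\gamma_{A,B}\circ\big(f_A(d)\otimes f_B(c^{-1})\big)$ using that $f_A$, $f_B$ are group homomorphisms and $c,d\in Z(\mathcal{G})$, and this equals $\big(f_B(c^{-1})\otimes f_A(d)\big)\circ\gamma_{A,B}$ by naturality of $\gamma$ in $\mathcal{C}$, which is the right-hand side.
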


\begin{Remark}
Let $\mathcal{G}$ be a torsion-free abelian group. Corollary~4 in \cite{ABM-HomLie} states that, up to a~braided monoidal category
isomorphism, there is a unique braided monoidal structure (actually symmetric) on the category
of representations over the group algebra $\Bbbk[\mathcal{G}]$, considered monoidal via a structure induced by
that of vector spaces over the f\/ield~$\Bbbk $. Thus Theorem~\ref{teo:htuttiiso} can be deduced from this result whenever $\mathcal{G}$ is a torsion-free abelian group. We should remark that this result in~\cite{ABM-HomLie} stems from the fact that the third Harrison cohomology group
$H_{\mathrm{Harr}}^{3}(\mathcal{G},\Bbbk ,\mathbb{G}_{m})$ has, in this case, just one element.
If $\mathcal{G}$ is not a torsion-free abelian group then this might not happen. As one of the referees pointed out, in the case when $\Bbbk=\mathbb{C} $ and $\mathcal{G}=C_2$ then $H_{\mathrm{Harr}}^{3}(\mathcal{G},\Bbbk ,\mathbb{G}_{m})$ has exactly two elements and so in this case there are two distinct equivalence classes of braided monoidal structures on the category
of representations over the group algebra~$\Bbbk[\mathcal{G}]$, considered monoidal via a structure induced by
that of vector spaces over the f\/ield~$\Bbbk $. This does not contradict our Theorem~\ref{teo:htuttiiso}. In fact, there might exist braided monoidal structures dif\/ferent from the ones considered in the statement of Theorem~\ref{teo:htuttiiso}.
\end{Remark}

\begin{Claim}
\label{Cl:alg}Let $ ( \mathcal{C},\otimes ,\mathbf{1},a,l,r ) $ be
a monoidal category and $\mathcal{G}$ a group, let $c,d\in Z ( \mathcal{G} ) $ and $\nu \in \operatorname{Aut}_{\mathcal{C}} ( \mathbf{1} ) $. A \textit{unital algebra} in $\mathcal{H}^{c,d,\nu }(\mathcal{G},\mathcal{C})$ is a triple $( (A,f_{A}),\mu ,u) $ where

\begin{itemize}\itemsep=0pt
\item[1)] $(A,f_{A})\in \mathcal{H}(\mathcal{G},\mathcal{C})$;

\item[2)] $\mu \colon (A\otimes A, f_A\otimes f_A)\rightarrow (A, f_A)$ is a
morphism in $\mathcal{H}(\mathcal{G},\mathcal{C})$;

\item[3)] $u\colon (\mathbf{1},\widehat{\operatorname{Id}_{\mathbf{1}}})\rightarrow (A,
f_A)$ is a morphism in $\mathcal{H}(\mathcal{G},\mathcal{C})$;

\item[4)] $\mu \circ  ( \mu \otimes A ) =\mu \circ  ( A\otimes
\mu  ) \circ \overline{a}_{A,A,A}^{c,d,\nu }$;

\item[5)] $\mu \circ  ( u\otimes A ) \circ  ( \overline{l}_{A}^{c,d,\nu } ) ^{-1}=\operatorname{Id}_{A}$;

\item[6)] $\operatorname{Id}_{A}=\mu \circ  ( A\otimes u ) \circ  (
\overline{r}_{A}^{c,d,\nu } ) ^{-1}$.
\end{itemize}
\end{Claim}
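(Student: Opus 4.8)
The plan is to read the claim as an unwinding of the standard notion of a monoid (unital algebra object) in the monoidal category $\mathcal{H}^{c,d,\nu}(\mathcal{G},\mathcal{C})$ built in Theorem~\ref{Th:Monoidal}. Recall that in any monoidal category $(\mathcal{D},\otimes,\mathbf{1}_{\mathcal{D}},a,l,r)$ a unital algebra is an object $A$ together with morphisms $\mu\colon A\otimes A\to A$ and $u\colon\mathbf{1}_{\mathcal{D}}\to A$ subject to $\mu\circ(\mu\otimes A)=\mu\circ(A\otimes\mu)\circ a_{A,A,A}$ together with $\mu\circ(u\otimes A)=l_{A}$ and $\mu\circ(A\otimes u)=r_{A}$; since $l_{A}$ and $r_{A}$ are isomorphisms the last two rewrite as $\mu\circ(u\otimes A)\circ l_{A}^{-1}=\operatorname{Id}_{A}$ and $\mu\circ(A\otimes u)\circ r_{A}^{-1}=\operatorname{Id}_{A}$. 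So the whole task reduces to substituting the explicit data of $\mathcal{H}^{c,d,\nu}(\mathcal{G},\mathcal{C})$ into this template.

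First I would note that the construction in Theorem~\ref{Th:Monoidal} only twists the constraints: the underlying category, the tensor functor and the morphisms of $\mathcal{H}^{c,d,\nu}(\mathcal{G},\mathcal{C})$ are exactly those of $\mathcal{H}(\mathcal{G},\mathcal{C})$. Hence an object is a pair $(A,f_{A})$ (this is condition~1)), the tensor square of $(A,f_{A})$ is $(A\otimes A,f_{A}\otimes f_{A})$ by Claim~\ref{Cl:monoidal}, and the unit object is $(\mathbf{1},\widehat{\operatorname{Id}_{\mathbf{1}}})$. Requiring $\mu$ and $u$ to be morphisms in the category therefore means precisely that $f_{A}(g)\circ\mu=\mu\circ(f_{A}(g)\otimes f_{A}(g))$ and $f_{A}(g)\circ u=u$ for all $g\in\mathcal{G}$, which is what conditions~2) and~3) record. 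Plugging the explicit constraints $\overline{a}^{c,d,\nu}$, $\overline{l}^{c,d,\nu}$, $\overline{r}^{c,d,\nu}$ from Claim~\ref{Cl:monoidal} into the three monoid axioms then reproduces conditions~4), 5) and~6) verbatim.

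Because the argument is a direct translation of the universal definition, I do not expect a genuine obstacle. The only point deserving a moment's care is to confirm that the tensor product, the unit object and the morphisms of $\mathcal{H}^{c,d,\nu}(\mathcal{G},\mathcal{C})$ are the untwisted ones, so that conditions~2) and~3) come out free of $c$, $d$, $\nu$; this is immediate from the fact that in Theorem~\ref{Th:Monoidal} the symbols $c$, $d$, $\nu$ enter only through $\overline{a}^{c,d,\nu}$, $\overline{l}^{c,d,\nu}$, $\overline{r}^{c,d,\nu}$.
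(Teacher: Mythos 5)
Your reading is correct and matches the paper's intent exactly: Claim~\ref{Cl:alg} is nothing more than the standard definition of a unital algebra (monoid object) specialized to $\mathcal{H}^{c,d,\nu}(\mathcal{G},\mathcal{C})$, where only the constraints $\overline{a}^{c,d,\nu}$, $\overline{l}^{c,d,\nu}$, $\overline{r}^{c,d,\nu}$ are twisted while the underlying category, tensor product and unit object remain those of $\mathcal{H}(\mathcal{G},\mathcal{C})$, which is precisely why conditions 1)--3) are untwisted and 4)--6) carry the twisted constraints. The paper (deferring to \cite{giacomo}) treats this as the same direct unwinding, so there is nothing to add.
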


\begin{Definition}
Given a monoidal category $\mathcal{M}$, a quadruple $(A,\mu ,u,c)$ is
called a \textit{braided unital algebra} in $\mathcal{M}$ if (for
simplicity, we will omit to write the associators):

\begin{itemize}\itemsep=0pt
\item $(A,\mu ,u)$ is a unital algebra in $\mathcal{M}$;

\item $(A,c)$ is a braided object in $\mathcal{M}$, i.e., $c\colon A\otimes
A\rightarrow A\otimes A$ is invertible and satisf\/ies the Yang--Baxter equation
\begin{gather*}
(c\otimes A) (A\otimes c) (c\otimes A)
=(A\otimes c) (c\otimes A) (A\otimes c) ;
\end{gather*}

\item the following conditions hold:
\begin{gather*}
c(\mu \otimes A)=(A\otimes \mu )(c\otimes A)(A\otimes c), \qquad 
c(A\otimes \mu )=(\mu \otimes A)(A\otimes c) (c\otimes A),\\ 
c(u\otimes A)l_{A}^{-1}= ( A\otimes u ) r_{A}^{-1},\qquad
c(A\otimes u)r_{A}^{-1}= ( u\otimes A ) l_{A}^{-1}.  
\end{gather*}
\end{itemize}

A braided unital algebra is called \textit{symmetric} whenever $c^{2}=\operatorname{Id}_{A}$.
\end{Definition}

\begin{Definition}
\label{def:Lie}Given an additive monoidal category $\mathcal{M}$, a \textit{braided Lie algebra} in $\mathcal{M}$ consists of a triple $( L,c,[
-] \colon L\otimes L\rightarrow L) $ where $( L,c) $ is a~braided object and the following equalities hold true:
\begin{gather}
[-] =-[-] \circ c \quad \text{(skew-symmetry)};\nonumber \\
[-] \circ ( L\otimes [-] ) \circ \big[
\operatorname{Id}_{L\otimes  ( L\otimes L ) }+ ( L\otimes c )
a_{L,L,L} ( c\otimes L ) a_{L,L,L}^{-1}\nonumber\\
\qquad{} +a_{L,L,L} ( c\otimes
L ) a_{L,L,L}^{-1} ( L\otimes c ) \big] =0 \quad \text{(Jacobi condition)} ; \nonumber \\
c\circ  ( L\otimes [-]  ) a_{L,L,L}= (  [ -
 ] \otimes L ) a_{L,L,L}^{-1} ( L\otimes c )
a_{L,L,L} ( c\otimes L ) ;  \label{cucu} \\
c\circ  ( [-] \otimes L ) a_{L,L,L}^{-1}= (
L\otimes [-]  ) a_{L,L,L} ( c\otimes L )
a_{L,L,L}^{-1}\left( L\otimes c\right) .  \label{lala}
\end{gather}
Let $\mathcal{M}$ be an additive braided monoidal category. A \textit{Lie
algebra} in $\mathcal{M}$ consists of a pair $ ( L,[-]
\colon L\otimes L\rightarrow L ) $ such that $ ( L,c_{L,L},[-]
 ) $ is a braided Lie algebra in the additive monoidal category $\mathcal{M}$, where $c_{L,L}$ is the braiding~$c$ of~$\mathcal{M}$ evaluated
on $L$ (note that in this case the conditions~(\ref{cucu}) and~(\ref{lala})
are automatically satisf\/ied).
\end{Definition}

\begin{Claim}
\label{Cl:LIE}Given a symmetric algebra $ ( A,\mu , u, c ) $, one
has that $[-] :=\mu \circ ( \operatorname{Id}_{A\otimes
A}-c) $ def\/ines a braided Lie algebra structure on $A$ (see \cite[Construction~2.16]{GV}).

In a symmetric monoidal category $( \mathcal{C},\otimes ,\mathbf{1},a,l,r,c)$, it is well known that any unital algebra $( A,\mu
,u) $ gives rise to a braided unital algebra $( A,\mu
,u,c_{A,A})$.
\end{Claim}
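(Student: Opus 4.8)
The claim consists of two assertions, each a verification of the relevant axioms; I treat them separately and discard associators throughout, as permitted in the definitions.

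For the second assertion, put $c:=c_{A,A}$. Since $\mathcal{C}$ is symmetric, $c$ is invertible with $c^{2}=\operatorname{Id}_{A\otimes A}$, and the hexagon axioms yield the Yang--Baxter equation, so $(A,c)$ is a symmetric braided object; it remains to verify the four compatibility conditions. The plan is to obtain each of them from naturality of the braiding together with a hexagon axiom. Naturality of $\gamma$ at the pair $(\mu,\operatorname{Id}_{A})$ gives $c\circ(\mu\otimes A)=(A\otimes\mu)\circ c_{A\otimes A,A}$, and the second hexagon rewrites $c_{A\otimes A,A}=(c\otimes A)(A\otimes c)$, producing the first condition $c(\mu\otimes A)=(A\otimes\mu)(c\otimes A)(A\otimes c)$. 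The second condition follows symmetrically from naturality at $(\operatorname{Id}_{A},\mu)$ and the first hexagon, which gives $c_{A,A\otimes A}=(A\otimes c)(c\otimes A)$. The two unit conditions come from naturality of $\gamma$ with respect to $u\colon\mathbf{1}\to A$ together with the standard coherence $c_{\mathbf{1},A}=r_{A}^{-1}l_{A}$ relating the braiding to the unit constraints. Hence $(A,\mu,u,c_{A,A})$ is a braided unital algebra, and it is symmetric because $c^{2}=\operatorname{Id}$.

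For the first assertion, set $[-]:=\mu\circ(\operatorname{Id}_{A\otimes A}-c)$. As $(A,c)$ is a braided object by hypothesis, it remains to check skew-symmetry, conditions~\eqref{cucu} and~\eqref{lala}, and the Jacobi condition. Skew-symmetry is immediate from $c^{2}=\operatorname{Id}$:
\[
[-]\circ c=\mu\circ(\operatorname{Id}_{A\otimes A}-c)\circ c=\mu\circ\big(c-c^{2}\big)=\mu\circ(c-\operatorname{Id}_{A\otimes A})=-[-].
\]
For~\eqref{cucu} and~\eqref{lala}, the plan is to substitute $[-]=\mu(\operatorname{Id}-c)$ and reduce both identities to the two multiplicative compatibility conditions $c(\mu\otimes A)=(A\otimes\mu)(c\otimes A)(A\otimes c)$ and $c(A\otimes\mu)=(\mu\otimes A)(A\otimes c)(c\otimes A)$ of the braided algebra, together with the Yang--Baxter equation and $c^{2}=\operatorname{Id}$; the associators merely rearrange tensor legs and do not obstruct this reduction.

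The Jacobi condition is the heart of the matter and the step I expect to be the main obstacle. The plan is to expand $[-]\circ(L\otimes[-])$ on each of the three summands inside the bracket; after inserting $[-]=\mu(\operatorname{Id}-c)$ each summand becomes a sum of terms of the shape $\mu\circ(\mu\otimes A)$ precomposed with a word in the $c$'s and associators. Using associativity $\mu(\mu\otimes A)=\mu(A\otimes\mu)\,a_{A,A,A}$ to bring every term to a common associativity pattern, then the two multiplicative compatibility conditions to commute each surviving $c$ past $\mu$, and finally $c^{2}=\operatorname{Id}$ to collapse the resulting pairs, the terms cancel and the total vanishes. The delicate part is purely bookkeeping: tracking on which tensor legs each $c$ acts and inserting the associators so that terms arising from different summands are brought into the same form before cancelling. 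This is precisely the computation packaged in \cite[Construction~2.16]{GV}, to which the verification may be referred.
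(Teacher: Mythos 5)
Your proposal is correct and follows essentially the same route as the paper: the paper itself offers no computation, deferring the first assertion to \cite[Construction~2.16]{GV} and declaring the second ``well known,'' and your verification of the second assertion (naturality of the braiding plus the hexagon axioms and the unit coherence $c_{\mathbf{1},A}=r_A^{-1}l_A$) together with the direct skew-symmetry check is exactly the standard argument being invoked. Deferring the Jacobi cancellation to the same GV reference puts you on identical footing with the paper's own treatment.
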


\section{Generalized Hom-structures}

Let $\Bbbk $ be a f\/ield and let ${_{\Bbbk }\mathcal{M}}$ be the
category of linear spaces regarded as a braided monoidal category in the
usual way. Then, for every group $\mathcal{G}$, the category $\mathcal{H}(\mathcal{G},{_{\Bbbk }\mathcal{M}})$ identif\/ies with the category $\Bbbk  [
\mathcal{G} ] $-$\text{Mod}$ of left modules over the group algebra $\Bbbk
 [ \mathcal{G} ] $.

Let $c,d\in Z(\mathcal{G}) $ and $\nu $ an automorphism of $\Bbbk $ regarded as linear space over~$\Bbbk $, that is~$\nu $ is the
multiplication by an element of $\Bbbk {\setminus}  \{ 0 \} $ that we
will also denote by~$\nu $. Note that, given $X,Y,Z\in \Bbbk  [ \mathcal{G} ] $-$\text{Mod}$, we have
\begin{gather*}
\overline{a}_{X,Y,Z}^{c,d,\nu } (  ( x\otimes y ) \otimes
z ) =c\cdot x\otimes  ( y\otimes d\cdot z ) , \qquad \text{for every} \ \
x\in X,y\in Y,z\in Z,
\\
\overline{l}_{X}^{c,d,\nu } ( t\otimes x ) =d^{-1}\cdot  ( \nu
tx ) \qquad \text{and} \qquad \overline{r}_{X}^{c,d,\nu } ( x\otimes t )
=c\cdot  ( \nu tx ) ,\\
\qquad{}\text{for every} \ \ t\in \Bbbk \ \ \text{and} \ \ x\in
X,
\end{gather*}
so that
\begin{gather*}
\big( \overline{l}_{A}^{c,d,\nu }\big) ^{-1} ( x ) = ( \nu
^{-1}\otimes d\cdot x ) \qquad \text{and} \qquad \big( \overline{r}^{c,d,\nu
}\big) ^{-1} ( x ) =\big( c^{-1}\cdot x\otimes \nu ^{-1}\big) , \\
\qquad{}
\text{for every} \ \ x\in X.
\end{gather*}
The unit object of $\mathcal{H}^{c,d,\nu }(\mathcal{G},{_{\Bbbk }\mathcal{M}}
)$ is $ \{ 1_{\Bbbk } \}$ regarded as a left $\Bbbk  [ \mathcal{G} ] $-module in the trivial way.

\begin{Claim}
\label{Cl:Alg2}In view of \ref{Cl:alg}, a unital algebra in $\mathcal{H}^{c,d,\nu }(\mathcal{G},{_{\Bbbk }\mathcal{M}})$ is a
triple $ ( (A,f_{A}),\mu ,u ) $, where

\begin{itemize}\itemsep=0pt
\item[1)] $A\in \Bbbk  [ \mathcal{G} ]$-$\text{Mod}$;

\item[2)] $\mu \colon A\otimes A\rightarrow A$ is a morphism in $\Bbbk  [
\mathcal{G} ] $-$\text{Mod}$, i.e., $g\cdot (ab)= ( g\cdot a )  (
g\cdot b ) $, for every $g\in \mathcal{G}$, $a,b\in A$;

\item[3)] $u\colon  \{ 1_{\Bbbk } \} \rightarrow A$ is a morphisms in $\Bbbk  [ \mathcal{G}] $-$\text{Mod}$, i.e., $g\cdot u ( 1_{\Bbbk
} ) =u ( 1_{\Bbbk } ) $, for every $g\in \mathcal{G}$;

\item[4)] $( x\cdot y) \cdot z=( c\cdot x) \cdot [
y\cdot ( d\cdot z) ] $, for every $x,y,z\in A$, which is
equivalent to
\begin{gather*}
 ( c\cdot x ) \cdot  ( y\cdot z ) = ( x\cdot
y ) \cdot \big( d^{-1}\cdot z\big), \qquad \forall \, x,y,z\in A;
\end{gather*}

\item[5)] $u ( \nu ^{-1} ) \cdot  ( d\cdot x ) =x$, for
every $x\in A$;

\item[6)] $ ( c^{-1}\cdot x ) \cdot u ( \nu ^{-1} ) =x$,
for every $x\in A$.
\end{itemize}
\end{Claim}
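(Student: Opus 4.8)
The plan is to prove Claim~\ref{Cl:Alg2}, which unwinds the abstract
definition of a unital algebra in $\mathcal{H}^{c,d,\nu}(\mathcal{G},
{_{\Bbbk}\mathcal{M}})$ given in Claim~\ref{Cl:alg} into the six explicit
conditions stated above, using the concrete formulas for the constraints
$\overline{a}^{c,d,\nu}$, $\overline{l}^{c,d,\nu}$, $\overline{r}^{c,d,\nu}$
and their inverses that were recorded just before the statement. This is
essentially a translation exercise: I would treat each of the six items in
Claim~\ref{Cl:alg} in turn and rewrite it using these formulas.

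First, items $(1)$, $(2)$, $(3)$ are immediate from the identification of
$\mathcal{H}(\mathcal{G},{_{\Bbbk}\mathcal{M}})$ with $\Bbbk[\mathcal{G}]$-$\text{Mod}$.
Being an object of $\mathcal{H}(\mathcal{G},{_{\Bbbk}\mathcal{M}})$ is exactly
being a $\Bbbk[\mathcal{G}]$-module, since a group homomorphism $f_A\colon
\mathcal{G}\to\operatorname{Aut}_\Bbbk(A)$ is the same as a left $\Bbbk[\mathcal{G}]$-action;
I would simply write $g\cdot a:=f_A(g)(a)$. That $\mu$ is a morphism in
$\mathcal{H}(\mathcal{G},{_{\Bbbk}\mathcal{M}})$ means it commutes with the
$\mathcal{G}$-actions on $A\otimes A$ and $A$, which is precisely
$g\cdot(ab)=(g\cdot a)(g\cdot b)$; similarly $u$ being a morphism from the unit
object $\{1_\Bbbk\}$ (with trivial action) forces $g\cdot u(1_\Bbbk)=u(1_\Bbbk)$.

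Next, the heart of the computation lies in items $(4)$, $(5)$, $(6)$, where
the nontrivial constraints enter. For item $(4)$ I would apply the associativity
axiom $\mu\circ(\mu\otimes A)=\mu\circ(A\otimes\mu)\circ\overline{a}^{c,d,\nu}_{A,A,A}$
to a generic element $(x\otimes y)\otimes z$ and substitute
$\overline{a}^{c,d,\nu}_{A,A,A}((x\otimes y)\otimes z)=c\cdot x\otimes(y\otimes
d\cdot z)$; the left side evaluates to $(xy)z$ and the right side to
$(c\cdot x)\,[y\,(d\cdot z)]$, giving the first displayed equation in $(4)$.
The equivalent reformulation $(c\cdot x)(yz)=(xy)(d^{-1}\cdot z)$ then follows by
replacing $x$ by $x$, $z$ by $d^{-1}\cdot z$ and using that the action by $c$
(respectively $d$) is invertible and commutes with $\mu$ (which is exactly item
$(2)$). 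For items $(5)$ and $(6)$ I would use the unit axioms together with the
inverse-constraint formulas $(\overline{l}^{c,d,\nu}_A)^{-1}(x)=\nu^{-1}\otimes
d\cdot x$ and $(\overline{r}^{c,d,\nu}_A)^{-1}(x)=c^{-1}\cdot x\otimes\nu^{-1}$:
axiom $(5)$ of Claim~\ref{Cl:alg} sends $x$ to $\mu(u(\nu^{-1})\otimes(d\cdot x))
=u(\nu^{-1})\cdot(d\cdot x)$, which must equal $x$, and symmetrically axiom $(6)$
yields $(c^{-1}\cdot x)\cdot u(\nu^{-1})=x$.

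The main obstacle, though minor, is bookkeeping: one must be careful that the
scalar automorphism $\nu$ (multiplication by a nonzero element of $\Bbbk$) passes
through the tensor factors and through $u$ correctly, since $u$ is $\Bbbk$-linear
so $u(\nu^{-1})=\nu^{-1}u(1_\Bbbk)$, and that the commuting central elements $c,d$
act as the module endomorphisms $f_A(c)=\alpha$, $f_A(d)=\beta^{-1}$ compatibly
with $\mu$. Once the inverse formulas for the constraints are in hand, every step
is a direct substitution, so no genuine difficulty arises beyond verifying the
equivalence inside item $(4)$, which uses only invertibility of the central
actions and multiplicativity of $\mu$.
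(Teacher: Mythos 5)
Your proposal is correct and follows essentially the same route as the paper, which presents Claim~\ref{Cl:Alg2} as a direct unwinding of Claim~\ref{Cl:alg} using the explicit formulas for $\overline{a}^{c,d,\nu}$, $(\overline{l}^{c,d,\nu})^{-1}$ and $(\overline{r}^{c,d,\nu})^{-1}$ recorded at the start of Section~3, exactly as you do. The only cosmetic remark is that the equivalence of the two forms in item~4) needs nothing beyond the substitution $z\mapsto d^{-1}\cdot z$ and invertibility of the $d$-action; multiplicativity of $\mu$ is not actually used there.
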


Note that when $c=d=1_{\mathcal{G}}$ and $\nu =1_{\Bbbk }$, it turns out
that $A$ is simply a $\Bbbk  [ \mathcal{G} ]$-module algebra.

\begin{Example}
\label{Ex:zetzet}Let $M$ be a ${\Bbbk }$-linear space and $\mathcal{G}=
\mathbb{Z}\times \mathbb{Z}$. Then a group morphism $f_{M}\colon
\mathbb{Z}
\times
\mathbb{Z}
\rightarrow \operatorname{Aut}_{\Bbbk }(M) $ is completely determined
by
\begin{gather*}
f_{M}((1,0)) =\alpha _{M}\qquad \text{and}\qquad
f_{M}((0,1)) =\beta _{M}^{-1}.
\end{gather*}
Thus an object in $\mathcal{H}(
\mathbb{Z}
\times
\mathbb{Z}
,{_{\Bbbk }\mathcal{M}})$ identif\/ies with a triple $(
M,\alpha _{M},\beta _{M}) $, where $\alpha _{M},\beta _{M}\in \operatorname{Aut}_{\Bbbk }(M) $ and $\alpha _{M}\circ \beta _{M}=\beta
_{M}\circ \alpha _{M}$. Also, a morphism $f\colon  ( M,\alpha _{M},\beta
_{M} ) \rightarrow  ( N,\alpha _{N},\beta _{N} ) $ is just a
linear map $f\colon M\rightarrow N$ such that $f\circ \alpha _{M}=\alpha _{N}\circ
f$ and $f\circ \beta _{M}=\beta _{N}\circ f$. Moreover, the tensor product,
in the category, of the objects $ ( M,\alpha _{M},\beta _{M} ) $
and $ ( N,\alpha _{N},\beta _{N} ) $ is the object $(M\otimes
N,\alpha _{M}\otimes \alpha _{N},\beta _{M}\otimes \beta _{N})$.

We set $c=(1,0) $, $d=(0,1) $ and $\nu =1_{\Bbbk }$.
For $ ( X,\alpha _{X},\beta _{X} ) $, $ ( Y,\alpha _{Y},\beta
_{Y} ) $, $ ( Z,\alpha _{Z},\beta _{Z} ) $ objects in $
\mathcal{H}^{(1,0) ,(0,1) ,1}(
\mathbb{Z}
\times
\mathbb{Z}
,{_{\Bbbk }\mathcal{M}})$, the associativity constraints in $
\mathcal{H}^{(1,0) ,(0,1) ,1}(
\mathbb{Z}
\times
\mathbb{Z}
,{_{\Bbbk }\mathcal{M}})$ are given by
\begin{gather*}
 \big( \overline{a}^{c,d,\nu }\big) _{( X,\alpha _{X},\beta
_{X}) ,( Y,\alpha _{Y},\beta _{Y}) ,( Z,\alpha
_{Z},\beta _{Z}) }\colon \ (X\otimes Y)\otimes Z\rightarrow X\otimes (Y\otimes
Z), \\
 \big( \overline{a}^{c,d,\nu }\big) _{( X,\alpha _{X},\beta
_{X}) ,( Y,\alpha _{Y},\beta _{Y}) ,( Z,\alpha
_{Z},\beta _{Z}) }=a_{X,Y,Z}\circ \big[  ( \alpha _{X}\otimes
Y ) \otimes \beta _{Z}^{-1}\big] ,
\end{gather*}
and the braiding is
\begin{gather*}
\overline{\gamma }_{( X,\alpha _{X},\beta _{X}) ,( Y,\alpha
_{Y},\beta _{Y}) }^{{c,d,\nu}}=\tau \big[ \big( \alpha _{X}\beta
_{X}^{-1}\big) \otimes \big( \alpha _{Y}\beta _{Y}^{-1}\big) ^{-1}
\big] =\tau \big[ \big( \alpha _{X}\beta _{X}^{-1}\big) \otimes \big(
\alpha _{Y}^{-1}\beta _{Y}\big) \big] ,
\end{gather*}
where $\tau \colon X\otimes Y\rightarrow Y\otimes X$ denotes the usual f\/lip in the
category of linear spaces. Note that $\overline{\gamma }$ is a symmetric braiding.

Then, in view of~\ref{Cl:Alg2}, an algebra in $\mathcal{H}^{(1,0) ,(0,1) ,1}(
\mathbb{Z}
\times
\mathbb{Z}
,{_{\Bbbk }\mathcal{M}})$ is a triple $(( A,\alpha
,\beta ) ,\mu ,u) $, where
\begin{itemize}\itemsep=0pt
\item[1)] $\alpha ,\beta \in \operatorname{Aut}_{\Bbbk } ( A ) $ and $\alpha \circ \beta =\beta \circ \alpha$;

\item[2)] $\mu \colon  ( A\otimes A,\alpha \otimes \alpha ,\beta \otimes \beta
 ) \rightarrow  ( A,\alpha ,\beta  ) $ is a morphism in $\Bbbk
[
\mathbb{Z}
\times
\mathbb{Z}
] $-$\text{Mod}$, i.e., $\alpha (a\cdot b)=\alpha  ( a ) \cdot \alpha
(b) $ and $\beta (a\cdot b)=\beta (a) \cdot \beta
(b) $ for every $a,b\in A$;

\item[3)] $u\colon  \{ 1_{\Bbbk } \} \rightarrow  ( A,\alpha ,\beta
 ) $ is a morphisms in $\Bbbk  [
\mathbb{Z}
\times
\mathbb{Z}
] $-$\text{Mod}$, i.e., $\alpha  ( u ( 1_{\Bbbk } )  )
=u ( 1_{\Bbbk } ) $ and $\beta  ( u ( 1_{\Bbbk } )
 ) =u ( 1_{\Bbbk } )$;

\item[4)] $\alpha (x) \cdot  ( y\cdot z ) = (
x\cdot y ) \cdot \beta (z) $, for every $x,y,z\in A;$

\item[5)] $u ( 1_{\Bbbk } ) \cdot  ( \beta ^{-1}(x)
 ) =x$, for every $x\in A$, which is equivalent to $u( 1_{\Bbbk
} ) \cdot x=\beta (x) $, for every $x\in A$;

\item[6)] $ ( \alpha ^{-1}(x)  ) \cdot u ( 1_{\Bbbk
} ) =x$, for every $x\in A$, which is equivalent to $x\cdot u (
1_{\Bbbk } ) =\alpha (x) $, for every $x\in A$.
\end{itemize}
\end{Example}

Inspired by Example~\ref{Ex:zetzet}, we introduce the following concept.

\begin{Definition}
Let ${\Bbbk}$ be a f\/ield. A \textit{BiHom-associative algebra} over $\Bbbk $
is a 4-tuple $( A,\mu ,\alpha ,\beta )$, where $A$ is
a $\Bbbk $-linear space, $\alpha \colon A\rightarrow A$, $\beta \colon A\rightarrow A$
and $\mu \colon A\otimes A\rightarrow A$ are linear maps, with notation $\mu
 ( a\otimes a^{\prime } ) =aa^{\prime }$, satisfying the following
conditions, for all $a,a^{\prime },a^{\prime \prime }\in A$:
\begin{gather*}
\alpha \circ \beta =\beta \circ \alpha , \\
\alpha  ( aa^{\prime } ) =\alpha (a) \alpha  (
a^{\prime } ) \qquad \text{and} \qquad \beta  ( aa^{\prime } ) =\beta
(a) \beta  ( a^{\prime } )  \quad \text{(multiplicativity)},  \\ 
\alpha (a)  ( a^{\prime }a^{\prime \prime } ) = (
aa^{\prime } ) \beta  ( a^{\prime \prime } )  \quad \text{(BiHom-associativity)}.  
\end{gather*}

We call $\alpha $ and $\beta $ (in this order) the \textit{structure maps}
of~$A$.

A morphism $f\colon (A, \mu _A , \alpha _A, \beta _A)\rightarrow (B, \mu _B ,
\alpha _B, \beta _B)$ of BiHom-associative algebras is a linear map $f\colon A\rightarrow B$ such that $\alpha _B\circ f=f\circ \alpha _A$, $\beta
_B\circ f=f\circ \beta _A$ and $f\circ \mu_A=\mu _B\circ (f\otimes f)$.

A BiHom-associative algebra $(A, \mu, \alpha, \beta )$ is called \textit{unital} if there exists an element $1_A\in A$ (called a \textit{unit}) such
that $\alpha (1_A)=1_A$, $\beta (1_A)=1_A$ and
\begin{gather*}
 a1_A=\alpha (a) \qquad \text{and}\qquad 1_Aa=\beta (a), \qquad \forall \, a\in A.
\end{gather*}

A morphism of unital BiHom-associative algebras $f\colon A\rightarrow B$ is called
\textit{unital} if $f(1_A)=1_B$.
\end{Definition}

\begin{Remark}
A Hom-associative algebra $ ( A,\mu ,\alpha ) $ can be regarded as
the BiHom-associative algebra $ ( A,\mu ,\alpha ,\alpha ) $.
\end{Remark}

\begin{Remark}\label{remada}
A BiHom-associative algebra with \textit{bijective} structure maps is
exactly an algebra in $\mathcal{H}^{(1,0) ,(0,1) ,1}
(
\mathbb{Z}
\times
\mathbb{Z}
,{_{\Bbbk }\mathcal{M}})$.
On the other hand, in the setting of Claim~\ref{Cl:Alg2}, if we def\/ine the maps
$\alpha , \beta \colon A\rightarrow A$ by $\alpha (a)=c\cdot a$ and $\beta (a)=d^{-1}\cdot a$, for
all $a\in A$, the axiom $2)$ in Claim~\ref{Cl:Alg2} implies that $\alpha $ and $\beta $ are multiplicative and then
the axiom $4)$ in Claim~\ref{Cl:Alg2} says that $(A, \mu , \alpha , \beta )$ is a~BiHom-associative algebra.
\end{Remark}

\begin{Example}
We give now two families of examples of 2-dimensional unital BiHom-associative algebras, that are obtained by a computer algebra system. Let $\{e_1,e_2\}$ be a basis; for $i=1,2$ the maps $\alpha_i$, $\beta_i $ and the multiplication $\mu_i$ are def\/ined by
\begin{gather*}
  \alpha_1(e_1)=e_1, \qquad \alpha_1(e_2)=\frac{2 a}{b-1}e_1-e_2, \\
  \beta_1(e_1)=e_1, \qquad \beta_1(e_2)=-ae_1+b e_2, \\
  \mu_1(e_1,e_1)=e_1, \qquad \mu_1(e_1,e_2)= -ae_1+b e_2, \\
  \mu_1(e_2,e_1)=\frac{2 a}{b-1}e_1-e_2, \qquad \mu_1(e_2,e_2)=-\frac{
a^2(b-2)}{(b-1)^2}e_1+ae_2,
\end{gather*}
and
\begin{gather*}
  \alpha_2(e_1)=e_1,\qquad \alpha_2(e_2)=\frac{b(1-a)}{a}e_1+a e_2, \\
 \beta_2(e_1)=e_1,\qquad \beta_2(e_2)=be_1+(1-a) e_2, \\
  \mu_2(e_1,e_1)=e_1, \qquad \mu_2(e_1,e_2)= be_1+(1-a) e_2, \\
 \mu_2(e_2,e_1)=\frac{b(1-a)}{a}e_1+ae_2, \qquad \mu_2(e_2,e_2)=\frac{b}{a}e_2,
\end{gather*}
where $a$, $b$ are parameters in ${\Bbbk}$, with $b\neq 1$ in the f\/irst case
and $a\neq 0$ in the second. In both cases, the unit is~$e_1$.
\end{Example}

\begin{Claim}\label{cl:YAU}
In view of Theorem \ref{thm:monoidaliso}, if $( A,\mu ,\alpha ,\beta
) $ is a BiHom-associative algebra, and~$\alpha $ and~$\beta $ are
invertible, then $( A,\mu \circ ( \alpha ^{-1}\otimes \beta
^{-1}), \operatorname{Id}_{A},\operatorname{Id}_{A}) $ is a~BiHom-associative algebra, i.e., the multiplication $\mu \circ ( \alpha
^{-1}\otimes \beta ^{-1}) $ is associative in the usual sense.
\end{Claim}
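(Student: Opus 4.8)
The plan is to exploit the categorical statement in Theorem~\ref{thm:monoidaliso} together with the identification in Remark~\ref{remada}. By that remark, a BiHom-associative algebra $(A,\mu,\alpha,\beta)$ with bijective $\alpha,\beta$ is precisely an algebra in $\mathcal{H}^{(1,0),(0,1),1}(\mathbb{Z}\times\mathbb{Z},{_{\Bbbk}\mathcal{M}})$, whose underlying object carries $f_A((1,0))=\alpha$ and $f_A((0,1))=\beta^{-1}$. Theorem~\ref{thm:monoidaliso} asserts that the identity functor $\mathcal{I}$ to $\mathcal{H}(\mathbb{Z}\times\mathbb{Z},{_{\Bbbk}\mathcal{M}})$ is a monoidal isomorphism with $\phi_2((A,f_A),(A,f_A))=f_A(c^{-1})\otimes f_A(d)=\alpha^{-1}\otimes\beta^{-1}$. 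Since a monoidal functor sends algebras to algebras, the object $\mathcal{I}(A)$ becomes an algebra in the target via the transported multiplication $\mu\circ\phi_2=\mu\circ(\alpha^{-1}\otimes\beta^{-1})$. As the associativity constraint of $\mathcal{H}(\mathbb{Z}\times\mathbb{Z},{_{\Bbbk}\mathcal{M}})$ (the case $c=d=1$, $\nu=1$) is the ordinary one on $\Bbbk$-vector spaces, an algebra there is associative in the usual sense, which is exactly the assertion.

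For a self-contained verification I would instead compute directly. Writing $a\ast b=\mu(\alpha^{-1}(a)\otimes\beta^{-1}(b))=\alpha^{-1}(a)\beta^{-1}(b)$ and using that $\alpha^{-1}$ and $\beta^{-1}$ are again multiplicative (being inverses of commuting multiplicative bijections), one expands
\begin{gather*}
(a\ast b)\ast c=\big(\alpha^{-2}(a)\,\alpha^{-1}\beta^{-1}(b)\big)\beta^{-1}(c),\qquad
a\ast(b\ast c)=\alpha^{-1}(a)\big(\alpha^{-1}\beta^{-1}(b)\,\beta^{-2}(c)\big).
\end{gather*}
Applying the BiHom-associativity identity $\alpha(x)(yz)=(xy)\beta(z)$ with $x=\alpha^{-2}(a)$, $y=\alpha^{-1}\beta^{-1}(b)$, $z=\beta^{-2}(c)$ identifies $\alpha(x)(yz)$ with $a\ast(b\ast c)$ and $(xy)\beta(z)$ with $(a\ast b)\ast c$, so $(a\ast b)\ast c=a\ast(b\ast c)$.

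Neither route presents a genuine obstacle: the direct computation collapses to a single invocation of BiHom-associativity after pushing the inverse structure maps through the products, while in the categorical route the only point requiring care is reading off the correct form of $\phi_2$ from Theorem~\ref{thm:monoidaliso}---that is, checking that $f_A(c^{-1})=\alpha^{-1}$ and $f_A(d)=\beta^{-1}$ for $c=(1,0)$ and $d=(0,1)$---after which the conclusion is immediate from the fact that monoidal functors preserve algebra objects.
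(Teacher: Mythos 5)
Your categorical argument is exactly the paper's intended proof: the Claim is stated "in view of Theorem~\ref{thm:monoidaliso}" with no further justification, and your reading of $\phi_2((A,f_A),(A,f_A))=f_A(c^{-1})\otimes f_A(d)=\alpha^{-1}\otimes\beta^{-1}$ and of the triviality of the constraints in $\mathcal{H}(\mathbb{Z}\times\mathbb{Z},{_{\Bbbk}\mathcal{M}})$ is correct. Your direct computation (pushing the multiplicative inverses $\alpha^{-1}$, $\beta^{-1}$ through the products and invoking BiHom-associativity once with $x=\alpha^{-2}(a)$, $y=\alpha^{-1}\beta^{-1}(b)$, $z=\beta^{-2}(c)$) is also correct and supplies the elementary verification the paper omits.
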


On the other hand, if $(A, \mu \colon A\otimes A\rightarrow A)$ is an associative algebra and $\alpha , \beta \colon A\rightarrow A$ are commuting algebra endomorphisms, then one can easily check that $ ( A,\mu
\circ  ( \alpha \otimes \beta  ) ,\alpha ,\beta  ) $ is a~BiHom-associative algebra, denoted by $A_{(\alpha ,\beta )}$ and called the
\textit{Yau twist} of $(A, \mu )$.

In view of Claim~\ref{cl:YAU}, a BiHom-associative algebra with bijective structure maps is a Yau twist of an associative algebra.

The Yau twisting procedure for BiHom-associative algebras admits a more
general form, which we state in the next result (the proof is
straightforward and left to the reader).

\begin{Proposition}
\label{yaugeneral} Let $(D, \mu , \tilde{\alpha }, \tilde{\beta })$ be a
BiHom-associative algebra and $\alpha , \beta \colon D\rightarrow D$ two
multiplicative linear maps such that any two of the maps $\tilde{\alpha }$,
$\tilde{\beta }$, $\alpha$, $\beta $ commute. Then $(D, \mu \circ (\alpha
\otimes \beta )$, $\tilde{\alpha }\circ \alpha , \tilde{\beta }\circ \beta )$
is also a BiHom-associative algebra, denoted by $D_{(\alpha , \beta )}$.
\end{Proposition}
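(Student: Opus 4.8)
The plan is to verify directly the three defining conditions of a BiHom-associative algebra for the proposed structure. Write $a\ast b:=\mu(\alpha(a)\otimes\beta(b))=\alpha(a)\beta(b)$ for the new multiplication (juxtaposition denoting the original $\mu$), and set $\alpha':=\tilde\alpha\circ\alpha$, $\beta':=\tilde\beta\circ\beta$ for the new structure maps. Throughout I will use freely that any two of $\tilde\alpha,\tilde\beta,\alpha,\beta$ commute, and that each of these four maps is multiplicative with respect to $\mu$ (for $\tilde\alpha,\tilde\beta$ because they are the structure maps of $D$, for $\alpha,\beta$ by hypothesis).

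First I would check that $\alpha'$ and $\beta'$ commute: both $\alpha'\beta'=\tilde\alpha\alpha\tilde\beta\beta$ and $\beta'\alpha'=\tilde\beta\beta\tilde\alpha\alpha$ rearrange to $\tilde\alpha\tilde\beta\alpha\beta$ using the pairwise commutativity, so they agree. Next I would check multiplicativity of $\alpha'$ and $\beta'$ with respect to $\ast$. Since $\tilde\alpha$ and $\alpha$ are both $\mu$-multiplicative, so is their composite $\alpha'$; then $\alpha'(a\ast b)=\alpha'(\alpha(a)\beta(b))=\alpha'\alpha(a)\cdot\alpha'\beta(b)$, while $\alpha'(a)\ast\alpha'(b)=\alpha\alpha'(a)\cdot\beta\alpha'(b)$, and these coincide after moving the outer maps past one another by commutativity. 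The computation for $\beta'$ is identical.

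The only step with any content is BiHom-associativity, i.e.\ $\alpha'(a)\ast(b\ast c)=(a\ast b)\ast\beta'(c)$. Expanding the left-hand side and using $\mu$-multiplicativity of $\beta$ gives $\alpha\tilde\alpha\alpha(a)\cdot(\beta\alpha(b)\cdot\beta\beta(c))$; since $\alpha\tilde\alpha=\tilde\alpha\alpha$, the first factor is $\tilde\alpha(\alpha^2(a))$, so the expression has the shape $\tilde\alpha(x)(yz)$ with $x=\alpha^2(a)$, $y=\beta\alpha(b)$, $z=\beta^2(c)$. Applying the original BiHom-associativity $\tilde\alpha(x)(yz)=(xy)\tilde\beta(z)$ turns it into $(\alpha^2(a)\cdot\beta\alpha(b))\cdot\tilde\beta\beta^2(c)$. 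Expanding the right-hand side similarly via $\mu$-multiplicativity of $\alpha$ gives $(\alpha^2(a)\cdot\alpha\beta(b))\cdot\beta\tilde\beta\beta(c)$, and the two agree because $\beta\alpha=\alpha\beta$ identifies the middle factors and $\tilde\beta\beta^2=\beta\tilde\beta\beta$ identifies the last ones.

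I expect no genuine obstacle here: the argument is a reduction of the new BiHom-associativity to the old one by transporting the twisting maps through the products, and the main point requiring care is purely bookkeeping — tracking which of the commutation relations is used at each rearrangement, and in particular noticing that invertibility of $\tilde\alpha,\tilde\beta,\alpha,\beta$ is never needed, since one applies the original identity $\tilde\alpha(x)(yz)=(xy)\tilde\beta(z)$ to chosen arguments rather than inverting any map.
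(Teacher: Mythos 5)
Your proof is correct and is exactly the direct verification the paper has in mind (the paper states the proof is ``straightforward and left to the reader''): commutativity and multiplicativity of the new structure maps follow from pairwise commutativity, and the new BiHom-associativity reduces to the old one applied to $x=\alpha^{2}(a)$, $y=\beta\alpha(b)$, $z=\beta^{2}(c)$, with no invertibility needed. For context, the paper later also recovers this proposition as a special case of its BiHom-pseudotwistor theorem (taking $T=\alpha\otimes\beta$ with companions $\operatorname{id}_D\otimes\operatorname{id}_D\otimes\beta$ and $\alpha\otimes\operatorname{id}_D\otimes\operatorname{id}_D$), but that is a repackaging of the same computation.
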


\begin{Example}\label{giacomoex}
We present an example of a BiHom-associative algebra that cannot be
expressed as a Hom-associative algebra. Let $\Bbbk $ be a f\/ield and $A=\Bbbk
 [ X ] $. Let $\alpha \colon A\rightarrow A$ be the algebra map def\/ined
by setting $\alpha  ( X ) =X^{2}$ and let $\beta =\operatorname{Id}
_{\Bbbk  [ X ] }$. Then we can consider the BiHom-associative
algebra $A_{(\alpha ,\beta )}= ( A,\mu \circ  ( \alpha \otimes \beta  ) ,\alpha
,\beta  ) $, where $\mu \colon A\otimes A\rightarrow A$ is the usual
multiplication. For every $a,a^{\prime }\in A$ set
\begin{gather*}
a\ast a^{\prime }=\mu \circ  ( \alpha \otimes \beta  )  (
a\otimes a^{\prime } ) =\alpha (a) a^{\prime }.
\end{gather*}
Let us assume that there exists $\theta \in \mathrm{End} ( \Bbbk  [ X
 ]  ) $ such that $ ( A,\mu \circ  ( \alpha \otimes \beta
 ) ,\theta  ) $ is a Hom-associative algebra. Then we should have
that
\begin{gather}
\theta (X) \ast  ( X\ast X ) = ( X\ast X )
\ast \theta (X) .  \label{form: gamma}
\end{gather}
Write
\begin{gather*}
\theta (X) =\sum_{i=0}^{n}a_{i}X^{i}, \qquad \text{where $a_{i}\in
\Bbbk$  for every $i=0, 1,\ldots ,n$ and $a_{n}\neq 0$}.
\end{gather*}
Since
\begin{gather*}
X\ast X=\alpha (X) X=X^{3},
\end{gather*}
(\ref{form: gamma}) rewrites as
\begin{gather*}
\sum_{i=0}^{n}a_{i}X^{i}\ast X^{3}=X^{3}\ast \sum_{i=0}^{n}a_{i}X^{i},
\end{gather*}
and hence as
\begin{gather*}
\sum_{i=0}^{n}a_{i}\alpha (X) ^{i}X^{3}=\alpha (X)
^{3}\sum_{i=0}^{n}a_{i}X^{i},
\end{gather*}
i.e.,
\begin{gather*}
\sum_{i=0}^{n}a_{i}X^{2i+3}=\sum_{i=0}^{n}a_{i}X^{6+i},
\end{gather*}
which implies that
\begin{gather*}
2n+3=6+n, \qquad \text{i.e.}, \quad n=3, \quad \text{and hence}
\\
a_{0}X^{3}+a_{1}X^{5}+a_{2}X^{7}+a_{3}X^{9}=a_{0}X^{6}+a_{1}X^{7}+a_{2}X^{8}+a_{3}X^{9},
\end{gather*}
so that
\begin{gather*}
\theta (X) =a_{3}X^{3}.
\end{gather*}
Let us set $c=a_{3}$ and let us check the equality
\begin{gather*}
\theta \big( X^{2}\big) \ast  ( X\ast X ) =\big( X^{2}\ast
X\big) \ast \theta (X) .
\end{gather*}
The left-hand side is
\begin{gather*}
\theta \big( X^{2}\big) \ast  ( X\ast X ) =c^{2}X^{6}\ast
X^{3}=\alpha \big( c^{2}X^{6}\big) X^{3}=c^{2}X^{15}.
\end{gather*}
The right-hand side is
\begin{gather*}
\big( X^{2}\ast X\big) \ast \theta (X) =\big( \alpha \big(
X^{2}\big) X\big) \ast \theta (X) =X^{5}\ast \theta  (
X ) =cX^{10}X^{3}=cX^{13}.
\end{gather*}
Thus the equality does not hold.
\end{Example}

\begin{Remark}
\label{remtensprod} Given two algebras $ ( A,\mu _{A},1_{A} ) $ and
$ ( B,\mu _{B},1_{B} ) $ in a braided monoidal category $ (
\mathcal{C},\otimes ,\mathbf{1},a,l,r,c ) $, it is well known that $
A\otimes B$ becomes also an algebra in the category, with multiplication $
\mu _{A\otimes B}$ def\/ined by
\begin{gather*}
\mu _{A\otimes B}=( \mu _{A}\otimes \mu _{B})\circ
a_{A,A,B\otimes B}^{-1} \circ  ( A\otimes a_{A,B,B})\\
\hphantom{\mu _{A\otimes B}=}{}\circ (
A\otimes ( c_{B,A}\otimes B) ) \circ \big( A\otimes
a_{B,A,B}^{-1}\big) \circ a_{A,B,A\otimes B}.
\end{gather*}

In the case of our category $\mathcal{H}^{c,d,\nu }(\mathcal{G},{_{\Bbbk }
\mathcal{M}})$, we have, for every $x,y\in A$, $x^{\prime },y^{\prime }\in B$:
\begin{gather*}
 \mu _{A\otimes B}  (  ( x\otimes y ) \otimes  ( x^{\prime
}\otimes y^{\prime } )  )
 = ( ( \mu _{A}\otimes \mu _{B} ) \circ a_{A,A,B\otimes
B}^{-1}\circ  ( A\otimes a_{A,B,B} )   \\
\qquad\quad{}
 \circ  ( A\otimes  ( c_{B,A}\otimes B )  ) \circ \big(
A\otimes a_{B,A,B}^{-1}\big) \circ a_{A,B,A\otimes B}) (  (
x\otimes y ) \otimes  ( x^{\prime }\otimes y^{\prime } )
 ) \\
\qquad {}
 = \big( ( \mu _{A}\otimes \mu _{B} ) \circ a_{A,A,B\otimes
B}^{-1}\circ  ( A\otimes a_{A,B,B} ) \circ  ( A\otimes  (
c_{B,A}\otimes B )  ) \\
\qquad\quad{}
 \circ \big( A\otimes a_{B,A,B}^{-1}\big)\big)  ( cx\otimes  (
y\otimes  ( dx^{\prime }\otimes dy^{\prime } )  )  ) \\
\qquad{}
=\big(  ( \mu _{A}\otimes \mu _{B} ) \circ a_{A,A,B\otimes
B}^{-1}\circ  ( A\otimes a_{A,B,B} )  \big) \big( cx\otimes
\big( \big( c^{-1}x^{\prime }\otimes dy\big) \otimes y^{\prime }\big)
\big) \\
\qquad{}
 = \big(  ( \mu _{A}\otimes \mu _{B} ) \circ a_{A,A,B\otimes
B}^{-1}\big)  ( cx\otimes  ( x^{\prime }\otimes  ( dy\otimes
dy^{\prime } )  )  ) \\
\qquad {}
 =  ( \mu _{A}\otimes \mu _{B} )  (  ( x\otimes x^{\prime
} ) \otimes  ( y\otimes y^{\prime } )  ) = ( x\cdot
x^{\prime } ) \otimes  ( y\cdot y^{\prime } ) .
\end{gather*}

In particular, if $(A,\alpha _{A},\beta _{A})$ and $(B,\alpha _{B},\beta
_{B})$ are two algebras in $\mathcal{H}^{(1,0) ,(0,1) ,1}(
\mathbb{Z}
\times
\mathbb{Z}
,{_{\Bbbk }\mathcal{M}})$, their braided tensor product $A
\underline{\otimes }B$ in the category is the algebra $(A\otimes B,\alpha
_{A}\otimes \alpha _{B},\beta _{A}\otimes \beta _{B})$, whose multiplication
is given by $(a\otimes b)(a^{\prime }\otimes b^{\prime })=aa^{\prime
}\otimes bb^{\prime }$, for all $a,a^{\prime }\in A$ and $b,b^{\prime }\in B$.
\end{Remark}

\begin{Remark}
If $(A, \mu _A, \alpha _A, \beta _A)$ and $(B, \mu _B, \alpha _B, \beta _B)$
are two BiHom-associative algebras over a f\/ield~$\Bbbk$, then $(A\otimes B,
\mu _{A\otimes B}, \alpha _A\otimes \alpha _B, \beta _A\otimes \beta _B)$ is
a BiHom-associative algebra (called the tensor product of $A$ and $B$),
where $\mu _{A\otimes B}$ is the usual multiplication: $(a\otimes
b)(a^{\prime }\otimes b^{\prime })=aa^{\prime }\otimes bb^{\prime }$. If $A$
and $B$ are unital with units $1_A$ and respectively $1_B$ then $A\otimes B$
is also unital with unit $1_A\otimes 1_B$. This is consistent with Remark~\ref{remtensprod}.
\end{Remark}

\begin{Example}
\label{Ex:lie}In view of Def\/inition~\ref{def:Lie}, a \textit{Lie algebra}
in $\mathcal{H}^{c,d,\nu }(\mathcal{G},{_{\Bbbk }\mathcal{M}})$ is a pair $ ( (L,f_{L}),[-]  ) $, where

\begin{itemize}\itemsep=0pt
\item[1)] $(L,f_{L})\in \Bbbk  [ \mathcal{G} ]$-$\text{Mod}$;

\item[2)] $[-] \colon L\otimes L\rightarrow L$ is a morphism in $\Bbbk
[ \mathcal{G}]$-$\text{Mod}$;

\item[3)] $[-] =-[-] \circ $ $\overline{\gamma }_{L,L}$;

\item[4)]
\begin{gather*}
[-] \circ ( L\otimes [-] ) +
[-] \circ ( L\otimes [-] ) \circ (
L\otimes \overline{\gamma }_{L,L}) \overline{a}_{L,L,L}(
\overline{\gamma }_{L,L}\otimes L) \overline{a}_{L,L,L}^{-1}\\
\qquad{} +[-] \circ  ( L\otimes [-]  ) \overline{a}
_{L,L,L}( \overline{\gamma }_{L,L}\otimes L) \overline{a}
_{L,L,L}^{-1}( L\otimes \overline{\gamma }_{L,L}) =0,
\end{gather*}
where $\overline{\gamma }_{L,L}=\tau \circ  ( f_{L}(cd)\otimes
f_{L}(c^{-1}d^{-1}) ) $ and $\tau $ is the usual f\/lip.
\end{itemize}

We will write down 4) explicitly. We have
\begin{gather*}
\big( \big( L\otimes \overline{\gamma }_{L,L}\big) \overline{a}
_{L,L,L}\big( \overline{\gamma }_{L,L}\otimes L\big) \overline{a}
_{L,L,L}^{-1}\big) ( x\otimes  ( y\otimes z )  )   \\
\qquad{}
=\big( L\otimes \overline{\gamma }_{L,L}\big) \overline{a}_{L,L,L}\big(
\overline{\gamma }_{L,L}\otimes L\big) \big( \big( c^{-1}x\otimes
y\big) \otimes d^{-1}z\big) \\
\qquad{}
=\big( L\otimes \overline{\gamma }_{L,L}\big) \overline{a}_{L,L,L}\big(
\big( c^{-1}d^{-1}y\otimes cdc^{-1}x\big) \otimes d^{-1}z\big) \\
\qquad{}=\big( L\otimes \overline{\gamma }_{L,L}\big) \big(
cc^{-1}d^{-1}y\otimes \big( cdc^{-1}x\otimes dd^{-1}z\big) \big) \\
\qquad{}
=cc^{-1}d^{-1}y\otimes \big( c^{-1}d^{-1}dd^{-1}z\otimes cdcdc^{-1}x\big)
\\
\qquad{}
=d^{-1}y\otimes \big( c^{-1}d^{-1}z\otimes dcdx\big),
\end{gather*}
therefore
\begin{gather*}
[-] \circ  ( L\otimes [-]  ) \big( \big(
L\otimes \overline{\gamma }_{L,L}\big) \overline{a}_{L,L,L}\big(
\overline{\gamma }_{L,L}\otimes L\big) \overline{a}_{L,L,L}^{-1}\big)
 ( x\otimes ( y\otimes z )  ) \\
 \qquad{}
=\big[ d^{-1}y,\big[ c^{-1}d^{-1}z,cd^{2}x\big] \big],
\end{gather*}
and
\begin{gather*}
\big( \overline{a}_{L,L,L}\big( \overline{\gamma }_{L,L}\otimes L\big)
\overline{a}_{L,L,L}^{-1}\big( L\otimes \overline{\gamma }_{L,L}\big)
\big)  ( x\otimes  ( y\otimes z )  ) \\
\qquad{} =\overline{a}_{L,L,L}\big( \overline{\gamma }_{L,L}\otimes L\big)
\overline{a}_{L,L,L}^{-1}\big( x\otimes \big( c^{-1}d^{-1}z\otimes
cdy\big) \big) \\
\qquad{}
=\overline{a}_{L,L,L}\big( \overline{\gamma }_{L,L}\otimes L\big) \big(
c^{-1}x\otimes \big( c^{-1}d^{-1}z\otimes d^{-1}cdy\big) \big) \\
\qquad{}
=\overline{a}_{L,L,L}\big( \overline{\gamma }_{L,L}\otimes L\big) \big(
\big( c^{-1}x\otimes c^{-1}d^{-1}z\big) \otimes cy\big) \\
\qquad{}
=\overline{a}_{L,L,L}\big( \big( c^{-2}d^{-2}z\otimes cdc^{-1}x\big)
\otimes cy\big) \\
\qquad{}
=\big( \big( c^{-1}d^{-2}z\otimes dx\big) \otimes cdy\big),
\end{gather*}
hence
\begin{gather*}
[-] \circ  ( L\otimes [-]  ) \big(
\overline{a}_{L,L,L}\big( \overline{\gamma }_{L,L}\otimes L\big)
\overline{a}_{L,L,L}^{-1}\big( L\otimes \overline{\gamma }_{L,L}\big)
\big)  ( x\otimes ( y\otimes z )  )
=\big[ c^{-1}d^{-2}z, [ dx,cdy ] \big].
\end{gather*}
Thus 4) is equivalent to
\begin{gather*}
 [ x,[y,z]  ] +\big[ d^{-1}y,\big[
c^{-1}d^{-1}z,cd^{2}x\big] \big] +\big[ c^{-1}d^{-2}z, [ dx,cdy] \big] =0, \qquad \text{for every} \ \ x,y,z\in L,
\end{gather*}
which is equivalent to
\begin{gather*}
\big[ d^{-2}x,\big[ d^{-1}y,cz\big] \big] +\big[ d^{-2}y,\big[
d^{-1}z,cx\big] \big] +\big[ d^{-2}z,\big[ d^{-1}x,cy\big] \big]
=0,\qquad \text{for every} \ \ x,y,z\in L.
\end{gather*}

Thus, a Lie algebra\ in $\mathcal{H}^{c,d,\nu }(\mathcal{G},{_{\Bbbk }\mathcal{M}})$ is a pair $( L,[-]) $, where
\begin{itemize}\itemsep=0pt
\item[1)] $L\in \Bbbk  [ \mathcal{G} ]$-${\rm Mod}$;

\item[2)] $g[x,y] =[gx,gy] $, for every $x,y\in L$;

\item[3)] $[x,y] =- [ c^{-1}d^{-1}y,cdx ] $, for every $
x,y\in L$, i.e., $[ x,cdy] =-[ y,cdx] $, for every $x,y\in L$ (skew-symmetry);

\item[4)] $[ d^{-2}x,[ d^{-1}y,cz] ] +[ d^{-2}y,[ d^{-1}z,cx] ] +[ d^{-2}z,[ d^{-1}x,cy] ] =0$, for every $x,y,z\in L$ (Jacobi condition).
\end{itemize}

In particular, a Lie algebra in $\mathcal{H}^{(1,0) ,(0,1) ,1}(
\mathbb{Z}
\times
\mathbb{Z}
,{_{\Bbbk }\mathcal{M}})$ is a pair $(( L,\alpha ,\beta) ,[-] ) $, where
\begin{itemize}\itemsep=0pt
\item[1)] $\alpha ,\beta \in \operatorname{Aut}_{\Bbbk }(L) $ and $\alpha \circ \beta =\beta \circ \alpha$;

\item[2)] $[-] \colon ( L\otimes L,\alpha \otimes \alpha ,\beta
\otimes \beta ) \rightarrow ( L,\alpha ,\beta ) $ is a
morphism in $\Bbbk [
\mathbb{Z}
\times
\mathbb{Z}
]$-${\rm Mod}$, i.e., $\alpha  [ a,b ] = [ \alpha (a)
,\alpha (b)  ] $ and $\beta [a,b] = [ \beta
(a) ,\beta (b)  ] $, for every $a,b\in L$;

\item[3)] $[ a,\alpha \beta ^{-1}(b) ] =-[
b,\alpha \beta ^{-1}(a) ] $, for every $a, b\in L$, which
is equivalent to $[ \beta (a) ,\alpha (b)
] =-[ \beta (b) ,\alpha (a)] $,
for every $a, b\in L$;

\item[4)] $[ \beta ^{2}x,[ \beta y,\alpha z] ] +[
\beta ^{2}y,[ \beta z,\alpha x] ] +[ \beta ^{2}z,[
\beta x,\alpha y] ] =0$, for every $x,y,z\in L$.
\end{itemize}
\end{Example}

Inspired by Example \ref{Ex:lie}, we introduce the following concept.

\begin{Definition}
A \textit{BiHom-Lie algebra} over a f\/ield $\Bbbk $ is a 4-tuple $( L,[-] ,\alpha ,\beta ) $, where $L$ is a $\Bbbk $-linear
space, $\alpha \colon L\rightarrow L$, $\beta \colon L\rightarrow L$ and $[-]
\colon L\otimes L\rightarrow L$ are linear maps, with notation $[-]
( a\otimes a^{\prime }) =[ a,a^{\prime }] $,
satisfying the following conditions, for all $a,a^{\prime },a^{\prime \prime
}\in L$:
\begin{gather*}
\alpha \circ \beta =\beta \circ \alpha , \\
\alpha ([ a^{\prime },a^{\prime \prime }])=[ \alpha (
a^{\prime }) ,\alpha ( a^{\prime \prime }) ]\qquad
\text{and} \qquad \beta ([ a^{\prime },a^{\prime \prime }])=[
\beta ( a^{\prime }) ,\beta ( a^{\prime \prime }) ], \\
 [ \beta (a) ,\alpha  ( a^{\prime } )  ] =-
[ \beta ( a^{\prime }) ,\alpha (a) ]
\qquad \text{(skew-symmetry)}, \\
\big[ \beta ^{2}(a) , [ \beta  ( a^{\prime } )
,\alpha  ( a^{\prime \prime } )  ] \big] +\big[ \beta
^{2} ( a^{\prime } ) , [ \beta  ( a^{\prime \prime } )
,\alpha (a)  ] \big] +\big[ \beta ^{2} ( a^{\prime
\prime } ) , [ \beta (a) ,\alpha  ( a^{\prime
} ) ] \big] =0  \\
\qquad{} \text{(BiHom-Jacobi condition)}.
\end{gather*}

We call $\alpha $ and $\beta $ (in this order) the \textit{structure maps}
of~$L$.
A~morphism $f\colon  ( L,[-] ,\alpha ,\beta  )\rightarrow
 ( L^{\prime },[-]^{\prime },\alpha ^{\prime },\beta
^{\prime } )$ of BiHom-Lie algebras is a linear map $f\colon L\rightarrow
L^{\prime }$ such that $\alpha ^{\prime }\circ f=f\circ \alpha $, $\beta
^{\prime }\circ f=f\circ \beta $ and $f([x,y])=[f(x),
f(y)]^{\prime }$, for all $x, y\in L$.
\end{Definition}

Thus,  a Lie algebra in $\mathcal{H}^{(1,0) ,(0,1) ,1}(
\mathbb{Z}
\times
\mathbb{Z}
,{_{\Bbbk }\mathcal{M}})$ is exactly a BiHom-Lie algebra with \textit{bijective} structure maps.

\begin{Remark}
Obviously, a Hom-Lie algebra $( L,[-] ,\alpha ) $ is
a particular case of a BiHom-Lie algebra, namely $( L, [-]
,\alpha ,\alpha ) $. Conversely, a BiHom-Lie algebra $( L, [
-] ,\alpha ,\alpha ) $ with bijective $\alpha $ is the Hom-Lie
algebra $( L,[-] ,\alpha) $.
\end{Remark}

 In view of Claim~\ref{Cl:LIE}, we have:
\begin{Proposition}
\label{croset} If $ ( A,\mu ,\alpha ,\beta
 ) $ is a BiHom-associative algebra with bijective $\alpha $ and $\beta
$, then, for every $a,a^{\prime }\in A$, we can set
\begin{gather*}
[ a,a^{\prime }] =aa^{\prime }-\big(\alpha ^{-1}\beta  (
a^{\prime } )\big)\big(\alpha \beta ^{-1}(a)\big) .
\end{gather*}
Then $ ( A,[-] ,\alpha ,\beta  ) $ is a BiHom-Lie
algebra, denoted by~$L(A)$.
\end{Proposition}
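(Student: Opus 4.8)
The plan is to derive everything from the categorical construction rather than to verify the four BiHom-Lie axioms by hand. Since $\alpha$ and $\beta$ are bijective, Remark~\ref{remada} tells us that $(A,\mu,\alpha,\beta)$ is precisely an algebra in the monoidal category $\mathcal{H}^{(1,0),(0,1),1}(\mathbb{Z}\times\mathbb{Z},{_{\Bbbk}\mathcal{M}})$, and by Example~\ref{Ex:zetzet} this category is symmetric, with braiding on $A$ given by $\overline{\gamma}_{A,A}=\tau\circ((\alpha\beta^{-1})\otimes(\alpha^{-1}\beta))$. First I would invoke Claim~\ref{Cl:LIE}: in a symmetric monoidal category every (unital) algebra is in particular a symmetric braided algebra with braiding the ambient $c=\overline{\gamma}_{A,A}$, and then $[-]:=\mu\circ(\operatorname{Id}_{A\otimes A}-\overline{\gamma}_{A,A})$ defines a braided Lie algebra structure on $A$.

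The second step is a one-line computation identifying this abstract bracket with the stated formula. Applying $\overline{\gamma}_{A,A}$ to $a\otimes a'$ gives $(\alpha^{-1}\beta(a'))\otimes(\alpha\beta^{-1}(a))$, so
\[
[a,a']=\mu\circ\big(\operatorname{Id}_{A\otimes A}-\overline{\gamma}_{A,A}\big)(a\otimes a')=aa'-\big(\alpha^{-1}\beta(a')\big)\big(\alpha\beta^{-1}(a)\big),
\]
exactly the expression in the statement. Finally, by Definition~\ref{def:Lie} a braided Lie algebra whose braiding is the ambient symmetric braiding is a Lie algebra in the category, and Example~\ref{Ex:lie} (together with the identification recorded just after the definition of BiHom-Lie algebra) shows that a Lie algebra in $\mathcal{H}^{(1,0),(0,1),1}(\mathbb{Z}\times\mathbb{Z},{_{\Bbbk}\mathcal{M}})$ is nothing but a BiHom-Lie algebra with bijective structure maps. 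This yields the conclusion, the structure maps of $L(A)$ being exactly $\alpha$ and $\beta$.

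The one point needing care is that Claim~\ref{Cl:LIE} is stated for a \emph{symmetric algebra}, which carries a unit $u$, whereas the proposition makes no unitality assumption on $A$. The obstacle is only apparent: the bracket $[-]=\mu\circ(\operatorname{Id}-c)$ and the verification of skew-symmetry, the Jacobi condition and the compatibility identities~(\ref{cucu}), (\ref{lala}) use only associativity of $\mu$ and the braided-algebra relations between $\mu$ and $c$, never the unit, so the construction of \cite[Construction~2.16]{GV} applies verbatim to our possibly non-unital $A$. I expect the genuinely laborious part, the BiHom-Jacobi identity, to be precisely what the categorical argument disposes of for free; a direct check of multiplicativity and skew-symmetry is immediate (the latter not even using BiHom-associativity), whereas verifying Jacobi by hand would require repeated use of the BiHom-associativity condition $\alpha(a)(a'a'')=(aa')\beta(a'')$ and is best avoided.
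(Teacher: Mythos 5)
Your proof is correct and follows essentially the paper's own route: the paper states Proposition \ref{croset} with precisely the justification ``In view of Claim \ref{Cl:LIE}, we have:'', i.e., it derives the result, just as you do, from the categorical commutator construction combined with the identifications of Remark \ref{remada}, Example \ref{Ex:zetzet} and Example \ref{Ex:lie}. Your additional observation that the unit is never used in the bracket construction of Claim \ref{Cl:LIE} (so the possibly non-unital case is covered) correctly fills in a point the paper leaves implicit.
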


The proofs of the following three results are straightforward and left to
the reader.

\begin{Proposition}
Let $ ( L,[-]  ) $ be an ordinary Lie algebra over a
field $\Bbbk $ and let $\alpha ,\beta \colon L\rightarrow L$ two commuting linear
maps such that $\alpha  (  [ a,a^{\prime } ]  ) = [
\alpha (a) ,\alpha  ( a^{\prime } )  ]$ and $\beta
 (  [ a,a^{\prime } ]  ) = [ \beta (a)
,\beta  ( a^{\prime } )  ]$, for all $a,a^{\prime }\in L$.
Define the linear map $ \{ - \} \colon L\otimes L\rightarrow L$,
\begin{gather*}
 \{ a,b \} = [ \alpha (a) ,\beta (b) ] ,\qquad \text{for all} \ \ a,b\in L.
\end{gather*}
Then $L_{( \alpha ,\beta ) }:=(L, \{-\}, \alpha ,
\beta )$ is a BiHom-Lie algebra, called the \textit{Yau twist} of $( L,[-]) $.
\end{Proposition}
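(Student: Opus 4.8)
The plan is to verify directly the four defining axioms of a BiHom-Lie algebra for the $4$-tuple $(L, \{-\}, \alpha, \beta)$, where $\{a,b\} = [\alpha(a), \beta(b)]$. The commutativity $\alpha \circ \beta = \beta \circ \alpha$ is one of the hypotheses, so the first axiom is immediate. For the multiplicativity axiom I would compute $\alpha(\{a,b\}) = \alpha([\alpha(a), \beta(b)]) = [\alpha^2(a), \alpha\beta(b)]$, using that $\alpha$ preserves the original bracket, and compare with $\{\alpha(a), \alpha(b)\} = [\alpha^2(a), \beta\alpha(b)]$; these agree because $\alpha\beta = \beta\alpha$. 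The analogous computation for $\beta$ works the same way.

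For skew-symmetry I would expand $\{\beta(a), \alpha(a')\} = [\alpha\beta(a), \beta\alpha(a')] = [\alpha\beta(a), \alpha\beta(a')]$ (collapsing by commutativity) and likewise $\{\beta(a'), \alpha(a)\} = [\alpha\beta(a'), \alpha\beta(a)]$; the skew-symmetry of the original Lie bracket then gives exactly $\{\beta(a), \alpha(a')\} = -\{\beta(a'), \alpha(a)\}$, which is the required identity.

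The only step requiring a little care is the BiHom-Jacobi condition, and I expect it to be the main (though still routine) obstacle. I would expand the first cyclic term $\{\beta^2(a), \{\beta(a'), \alpha(a'')\}\}$, push $\alpha$ and $\beta$ through using multiplicativity, and then collapse all the resulting composites via $\alpha\beta = \beta\alpha$. Every inner and outer factor reduces to $\alpha\beta^2$ applied to one of $a, a', a''$, so this term becomes $[\alpha\beta^2(a), [\alpha\beta^2(a'), \alpha\beta^2(a'')]]$. The same reduction applied to the two remaining cyclic terms yields $[\alpha\beta^2(a'), [\alpha\beta^2(a''), \alpha\beta^2(a)]]$ and $[\alpha\beta^2(a''), [\alpha\beta^2(a), \alpha\beta^2(a')]]$. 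Their sum is precisely the classical Jacobi identity in $L$ evaluated at the three elements $\alpha\beta^2(a), \alpha\beta^2(a'), \alpha\beta^2(a'')$, hence it vanishes. This completes the verification and shows that $(L, \{-\}, \alpha, \beta)$ is a BiHom-Lie algebra.
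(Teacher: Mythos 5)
Your proof is correct, and it is exactly the "straightforward" direct verification that the paper has in mind — the paper states this result with the remark that the proof is left to the reader, so there is no alternative argument to compare against. All four axioms check out as you describe; in particular your reduction of each cyclic BiHom-Jacobi term to a bracket of the form $[\alpha\beta^{2}(a),[\alpha\beta^{2}(a'),\alpha\beta^{2}(a'')]]$ (using multiplicativity of $\alpha$, $\beta$ and $\alpha\beta=\beta\alpha$) is accurate, and the sum then vanishes by the classical Jacobi identity applied to $\alpha\beta^{2}(a)$, $\alpha\beta^{2}(a')$, $\alpha\beta^{2}(a'')$.
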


\begin{Claim}
More generally, let $( L,[-] ,\alpha ,\beta ) $ be a
BiHom-Lie algebra and $\alpha^{\prime }, \beta^{\prime }\colon L\rightarrow L$
linear maps such that $\alpha ^{\prime }([ a, b])=[ \alpha
^{\prime }(a) ,\alpha ^{\prime }(b) ]$ and $\beta ^{\prime }([ a, b])=[ \beta ^{\prime }(a)
,\beta ^{\prime }(b) ]$ for all $a, b\in L$, and any two
of the maps $\alpha, \beta , \alpha ^{\prime }, \beta ^{\prime }$ commute.
Then $( L,[-]_{(\alpha^{\prime },\beta^{\prime }) }:=[
-]\circ(\alpha^{\prime }\otimes\beta^{\prime }),
\alpha\circ\alpha^{\prime },\beta\circ\beta^{\prime }) $ is a~BiHom-Lie algebra.
\end{Claim}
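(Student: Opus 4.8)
The plan is to check directly the four defining axioms of a BiHom-Lie algebra for the 4-tuple $(L, \{-\}, \tilde{\alpha}, \tilde{\beta})$, where I write $\{a, b\} = [\alpha'(a), \beta'(b)]$ for the twisted bracket $[-]_{(\alpha', \beta')}$ and $\tilde{\alpha} = \alpha \circ \alpha'$, $\tilde{\beta} = \beta \circ \beta'$ for the new structure maps. Since the hypothesis says any two of $\alpha, \beta, \alpha', \beta'$ commute, all four commute pairwise, so $\tilde{\alpha} \circ \tilde{\beta} = \alpha \alpha' \beta \beta' = \beta \beta' \alpha \alpha' = \tilde{\beta} \circ \tilde{\alpha}$, which gives the commutativity of the structure maps at once. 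For multiplicativity I expand $\tilde{\alpha}(\{a, b\}) = \alpha \alpha'([\alpha'(a), \beta'(b)])$ and push $\alpha'$ and then $\alpha$ inside the bracket using that both are multiplicative for $[-]$; comparing with $\{\tilde{\alpha}(a), \tilde{\alpha}(b)\} = [\alpha' \alpha \alpha'(a), \beta' \alpha \alpha'(b)]$ and reordering the commuting operators shows the two coincide. The argument for $\tilde{\beta}$ is identical.

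For the skew-symmetry axiom $\{\tilde{\beta}(a), \tilde{\alpha}(a')\} = -\{\tilde{\beta}(a'), \tilde{\alpha}(a)\}$, I first rewrite the left-hand side as $[\alpha' \beta \beta'(a), \beta' \alpha \alpha'(a')]$ and, using commutativity, regroup it as $[\beta(\alpha' \beta'(a)), \alpha(\alpha' \beta'(a'))]$. This is precisely the original skew-symmetry $[\beta(x), \alpha(y)] = -[\beta(y), \alpha(x)]$ applied to $x = \alpha' \beta'(a)$ and $y = \alpha' \beta'(a')$; undoing the regrouping on the right-hand side then produces $-\{\tilde{\beta}(a'), \tilde{\alpha}(a)\}$, as required.

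The main obstacle is the BiHom-Jacobi condition, which I will reduce to the BiHom-Jacobi identity of the original structure. Computing a typical cyclic summand $\{\tilde{\beta}^{2}(a), \{\tilde{\beta}(a'), \tilde{\alpha}(a'')\}\}$, I evaluate the inner bracket to $[\alpha' \beta \beta'(a'), \beta' \alpha \alpha'(a'')]$, then apply $\beta'$ (multiplicatively) and the outer bracket, and after reordering the commuting operators obtain $[\beta^{2}(X), [\beta(Y), \alpha(Z)]]$ with $X = \alpha'(\beta')^{2}(a)$, $Y = \alpha'(\beta')^{2}(a')$, $Z = \alpha'(\beta')^{2}(a'')$. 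The crucial observation is that the three cyclic summands of the new Jacobi expression produce exactly the three cyclic summands $[\beta^{2}(X), [\beta(Y), \alpha(Z)]]$, $[\beta^{2}(Y), [\beta(Z), \alpha(X)]]$, $[\beta^{2}(Z), [\beta(X), \alpha(Y)]]$ of the original BiHom-Jacobi identity, evaluated at one and the same triple $(X, Y, Z)$; their sum therefore vanishes by the BiHom-Jacobi identity of $(L, [-], \alpha, \beta)$.

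The delicate part throughout is the bookkeeping of operator compositions: one must repeatedly invoke multiplicativity to move $\alpha, \beta, \alpha', \beta'$ across the bracket and the pairwise commutativity to bring the resulting words into the standard forms $\beta^{2}(\cdot)$, $\beta(\cdot)$, $\alpha(\cdot)$ demanded by the untwisted axioms. Once the substitution $X = \alpha'(\beta')^{2}(a)$, and cyclically, is identified, each of the three nontrivial conditions collapses to its original counterpart and no further input is needed.
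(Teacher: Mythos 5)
Your proof is correct: the paper leaves this Claim to the reader as a straightforward verification, and your direct check of the four axioms — in particular the key substitution $X=\alpha'(\beta')^{2}(a)$, $Y=\alpha'(\beta')^{2}(a')$, $Z=\alpha'(\beta')^{2}(a'')$ that collapses the twisted BiHom-Jacobi expression onto the original one evaluated at a single triple — is exactly the intended argument.
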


\begin{Proposition}
Let $( A,\mu ) $ be an associative algebra and $\alpha ,\beta
\colon A\rightarrow A$ two commuting algebra isomorphisms. Then $L( A_{(
\alpha ,\beta ) }) =L( A) _{( \alpha ,\beta
) }$, as BiHom-Lie algebras.
\end{Proposition}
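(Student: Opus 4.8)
The plan is to observe that both sides are $4$-tuples whose underlying space is $A$ and whose structure maps are the same pair $(\alpha,\beta)$; hence it suffices to check that the two brackets coincide. First I would record that the two constructions are legitimate: since $\alpha$ and $\beta$ are bijective, the Yau twist $A_{(\alpha,\beta)}=(A,\ast,\alpha,\beta)$ with $a\ast a'=\alpha(a)\beta(a')$ is a BiHom-associative algebra with bijective structure maps, so Proposition~\ref{croset} applies and produces $L(A_{(\alpha,\beta)})$; and since $\alpha,\beta$ are commuting algebra endomorphisms of $A$, they preserve the commutator of $L(A)$ and commute, so the Yau-twist-of-a-Lie-algebra construction applies and produces $L(A)_{(\alpha,\beta)}$.

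Next I would compute the bracket of $L(A_{(\alpha,\beta)})$ from the formula of Proposition~\ref{croset}, being careful that the product appearing there is the twisted one $\ast$, not the original $\mu$. Writing $[a,a']_{1}=a\ast a'-\big(\alpha^{-1}\beta(a')\big)\ast\big(\alpha\beta^{-1}(a)\big)$ and expanding, the first term is $\alpha(a)\beta(a')$, while the second term is $\alpha\big(\alpha^{-1}\beta(a')\big)\,\beta\big(\alpha\beta^{-1}(a)\big)$. Using that $\alpha$ and $\beta$ commute (so that $\alpha\alpha^{-1}\beta=\beta$ and $\beta\alpha\beta^{-1}=\alpha$), this collapses to $\beta(a')\alpha(a)$, giving $[a,a']_{1}=\alpha(a)\beta(a')-\beta(a')\alpha(a)$.

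Then I would compute the bracket of $L(A)_{(\alpha,\beta)}$, which by the Yau twist formula for Lie algebras is $\{a,a'\}=[\alpha(a),\beta(a')]$, the bracket on the right being the commutator of $L(A)$; this is exactly $\alpha(a)\beta(a')-\beta(a')\alpha(a)$. Comparing with the previous paragraph, the two brackets agree, and since the structure maps and underlying spaces already agree, the two BiHom-Lie algebras are identical.

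There is essentially no hard step here; the whole content is bookkeeping. The one place where care is genuinely required is the second paragraph: one must remember that the multiplication in the formula of Proposition~\ref{croset} is the twisted product $\ast$ of $A_{(\alpha,\beta)}$ and not the ambient associative product $\mu$, and then use the commutativity of $\alpha,\beta$ to simplify the composite $\alpha^{-1}\beta$ and $\alpha\beta^{-1}$ terms. Once this is done correctly the two formulas visibly coincide.
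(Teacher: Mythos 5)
Your proof is correct and is precisely the ``straightforward'' verification the paper has in mind (the paper explicitly leaves this proposition's proof to the reader): expand the bracket of Proposition~\ref{croset} in the twisted product $a\ast a'=\alpha(a)\beta(a')$, use commutativity of $\alpha,\beta$ to reduce $\alpha\bigl(\alpha^{-1}\beta(a')\bigr)\,\beta\bigl(\alpha\beta^{-1}(a)\bigr)$ to $\beta(a')\alpha(a)$, and match it with $\{a,a'\}=[\alpha(a),\beta(a')]$. Your attention to the fact that the product in Proposition~\ref{croset} is $\ast$ rather than $\mu$, and that the structure maps and underlying spaces already agree, covers all the points that need checking.
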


\begin{Remark}
Let $\mathcal{G}$ be a group and $c,d\in Z(\mathcal{G}) $, $\nu \in \operatorname{Aut}_{\mathcal{C}} ( \mathbf{1} ) $. It is
straightforward to prove that the category $\mathcal{H}^{c,d,\nu }(\mathcal{G},{_{\Bbbk }\mathcal{M}})$ fulf\/ills the assumption of \cite[Theorem~6.4]{AM}. Hence, for any Lie algebra $ ( L,[-] ) $ in $\mathcal{H}^{c,d,\nu }(\mathcal{G},{_{\Bbbk }\mathcal{M}})$, we can consider the universal enveloping bialgebra $\overline{\mathcal{U}}( ( L,[-] ) ) $ as introduced
in~\cite{AM}. By \cite[Remark~6.5]{AM}, $\overline{\mathcal{U}}(  ( L,[-]  )  ) $ as a bialgebra is a quotient of the tensor
bialgebra $\overline{T}L$. The morphism giving the projection is induced by
the canonical projection $p\colon TL\rightarrow \mathcal{U} ( L,[-]
 ) $ def\/ining the universal enveloping algebra. At algebra level we have
\begin{gather*}
\mathcal{U} ( L,[-]  )  = \frac{TL}{ \big( [ x,y] -x\otimes y+\overline{\gamma }_{L,L}( x\otimes y) \,|\, x,y\in
L\big) } \\
\hphantom{\mathcal{U} ( L,[-]  ) }{}
 = \frac{TL}{\big( [x,y] -x\otimes y+\big(
f_{L}(c^{-1}d^{-1}) ( y ) \big) \otimes f_{L}(cd)(x)
\,|\, x,y\in L\big) }.
\end{gather*}
By Theorem \ref{teo:htuttiiso}, the identity functor $\mathcal{I}\colon \mathcal{H}^{c,d,\nu }(\mathcal{G},{_{\Bbbk }\mathcal{M}})\rightarrow
\mathcal{H}(\mathcal{G},{_{\Bbbk }\mathcal{M}})$ is a braided monoidal
isomorphism. Let $F\colon \mathcal{H}(\mathcal{G},{_{\Bbbk }\mathcal{M}})
\rightarrow  {_{\Bbbk }\mathcal{M}}$ be the forgetful functor. Then $
F\circ \mathcal{I}$ is a monoidal functor $\mathcal{H}^{c,d,\nu }(\mathcal{G}
,{_{\Bbbk }\mathcal{M}})\rightarrow {_{\Bbbk }\mathcal{M}}$ to
which we can apply \cite[Theorem~8.5]{AM} to get that $\mathcal{H}^{c,d,\nu }(\mathcal{G},{_{\Bbbk }\mathcal{M}})$ is what is called in~\cite{AM}
a Milnor--Moore category. This
implies that, by \cite[Theorem~7.2]{AM}, we have an isomorphism $ ( L,[-] ) \rightarrow \mathcal{P}\overline{\mathcal{U}}(
( L,[-] )) $, where $\mathcal{P}\overline{\mathcal{U}}( ( L,[-] ) ) $ denotes the
primitive part of $\overline{\mathcal{U}}( ( L,[-]
) ) $. That is, half of the Milnor--Moore theorem holds.

The case $\mathcal{G}=\mathbb{Z}$ can be found in \cite[Remark~9.10]{AM}.

In the particular case of a Lie algebra $ (  ( L,\alpha ,\beta
 ) ,[-]  ) $ in $\mathcal{H}^{(1,0)
,(0,1) ,1}(\mathbb{Z}\times \mathbb{Z},{_{\Bbbk }
\mathcal{M}})$ we have that
\begin{gather*}
\mathcal{U} ( L,[-]  ) =\frac{TL}{ \big(  [ x,y] -x\otimes y+ ( \alpha ^{-1}\beta ) ( y )
\otimes  ( \alpha \beta ^{-1} ) (x) \,|\, x,y\in L\big) }.
\end{gather*}

Enveloping algebras of Hom-Lie algebras where introduced in \cite{Y} (see
also \cite[Section~8]{stef}).
\end{Remark}

\section{Representations}

From now on, we will always work over a base f\/ield $\Bbbk$. All
algebras, linear spaces etc. will be over $\Bbbk $; unadorned $\otimes $
means~$\otimes_{\Bbbk}$. For a comultiplication $\Delta \colon C\rightarrow
C\otimes C$ on a linear space~$C$, we use a Sweedler-type notation $\Delta
(c)=c_1\otimes c_2$, for $c\in C$. Unless otherwise specif\/ied, the
(co)algebras ((co)associative or not) that will appear in what follows are
\emph{not} supposed to be (co)unital, and a multiplication $\mu \colon V\otimes
V\rightarrow V$ on a linear space~$V$ is denoted by juxtaposition: $\mu
(v\otimes v^{\prime })=vv^{\prime }$. For the composition of two maps~$f$
and~$g$, we will write either $g\circ f$ or simply~$gf$. For the identity
map on a linear space~$V$ we will use the notation~$\operatorname{id}_V$.

\begin{Definition}
Let $(A, \mu _A , \alpha _A, \beta _A)$ be a BiHom-associative algebra. A
\textit{left $A$-module} is a triple $(M, \alpha _M, \beta _M)$, where $M$
is a linear space, $\alpha _M, \beta _M\colon M \rightarrow M$ are linear maps and
we have a linear map $A\otimes M\rightarrow M$, $a\otimes m\mapsto a\cdot m$, such that, for all $a, a^{\prime }\in A$, $m\in M$, we have
\begin{gather}
 \alpha _M\circ \beta _M=\beta _M\circ \alpha _M, \\
 \alpha _M(a\cdot m)=\alpha _A(a)\cdot \alpha _M(m),  \label{ghommod11} \\
 \beta _M(a\cdot m)=\beta _A(a)\cdot \beta _M(m),  \label{ghommod12} \\
 \alpha _A(a)\cdot (a^{\prime }\cdot m)=(aa^{\prime })\cdot \beta _M(m).
\label{ghommod2}
\end{gather}

If $(M, \alpha _M, \beta _M)$ and $(N, \alpha _N, \beta _N)$ are left $A$-modules (both $A$-actions denoted by $\cdot$), a morphism of left $A$-modules $f\colon M\rightarrow N$ is a linear map satisfying the conditions $\alpha _N\circ f=f\circ \alpha _M$, $\beta _N\circ f=f\circ \beta _M$ and $f(a\cdot m)=a\cdot f(m)$, for all $a\in A$ and $m\in M$.

If $(A, \mu _A, \alpha _A, \beta _A, 1_A)$ is a unital BiHom-associative
algebra and $(M, \alpha _M, \beta _M)$ is a left $A$-module, them $M$ is
called \textit{unital} if $1_A\cdot m=\beta _M(m)$, for all $m\in M$.
\end{Definition}

\begin{Remark}
If $(A, \mu , \alpha , \beta )$ is a BiHom-associative algebra, then $(A,
\alpha , \beta )$ is a left $A$-module with action def\/ined by $a\cdot b=ab$,
for all $a, b\in A$.
\end{Remark}

\begin{Lemma}\label{endo} Let $(E, \mu , 1_E)$ be an associative unital algebra and $u,
v\in E$ two invertible elements such that $uv=vu$. Define the linear maps $\tilde{\alpha }, \tilde{\beta }\colon E\rightarrow E$, $\tilde{\alpha }(a)=uau^{-1} $, $\tilde{\beta }(a)=vav^{-1}$, for all $a\in E$, and the
linear map $\tilde{\mu }\colon E\otimes E\rightarrow E$, $\tilde{\mu }(a\otimes
b):=a*b=uau^{-1}bv^{-1}$, for all $a, b\in E$. Then $(E, \tilde{\mu },
\tilde{\alpha }, \tilde{\beta })$ is a unital BiHom-associative algebra with
unit~$v$, denoted by $E(u, v)$.
\end{Lemma}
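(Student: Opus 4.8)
**

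The plan is to verify directly each of the defining axioms of a unital BiHom-associative algebra for the structure $(E,\tilde\mu,\tilde\alpha,\tilde\beta)$, using only the associativity and unitality of $(E,\mu,1_E)$ together with the hypotheses that $u,v$ are invertible and commute. Since everything is built from conjugation and multiplication in an honest associative algebra, each verification reduces to a short manipulation of products in $E$; the only care needed is to keep track of where the factors $u^{\pm1}$ and $v^{\pm1}$ land.

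First I would check that $\tilde\alpha$ and $\tilde\beta$ commute: since $\tilde\alpha(a)=uau^{-1}$ and $\tilde\beta(a)=vav^{-1}$, one computes $\tilde\alpha\tilde\beta(a)=uvav^{-1}u^{-1}$ and $\tilde\beta\tilde\alpha(a)=vuau^{-1}v^{-1}$, and these agree precisely because $uv=vu$ (hence also $u^{-1}v^{-1}=v^{-1}u^{-1}$). Next I would verify multiplicativity of $\tilde\alpha$ with respect to $\tilde\mu$: I compute
\begin{gather*}
\tilde\alpha(a*b)=u\big(uau^{-1}bv^{-1}\big)u^{-1},\qquad
\tilde\alpha(a)*\tilde\alpha(b)=u\big(uau^{-1}\big)u^{-1}\big(ubu^{-1}\big)v^{-1},
\end{gather*}
and after using $uv=vu$ to move the conjugating factors of $u$ past $v^{-1}$, both sides collapse to the same word in $E$. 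The computation for $\tilde\beta$ is entirely analogous, again invoking only $uv=vu$.

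The heart of the argument is the BiHom-associativity axiom $\tilde\alpha(a)*(b*c)=(a*b)*\tilde\beta(c)$. For the left-hand side I expand $b*c=ubu^{-1}cv^{-1}$ and then $\tilde\alpha(a)*(b*c)=u\big(uau^{-1}\big)u^{-1}\big(ubu^{-1}cv^{-1}\big)v^{-1}$, while for the right-hand side I expand $a*b=uau^{-1}bv^{-1}$ and then $(a*b)*\tilde\beta(c)=u\big(uau^{-1}bv^{-1}\big)u^{-1}\big(vcv^{-1}\big)v^{-1}$. I expect this to be the main obstacle, purely in the sense of bookkeeping: one must simplify both sides to a common normal form, and the reason they agree is that the interior factors $u^{-1}u$ and $v^{-1}v$ cancel while the outer $u$'s and $v$'s can be commuted past one another using $uv=vu$. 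Both sides should reduce to the same expression $u^2au^{-1}bcv^{-2}$ (up to the commuting rearrangement), and associativity in $E$ is what lets me regroup the products freely.

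Finally, for unitality I would check that $v$ is a unit. Compute $a*v=uau^{-1}vv^{-1}=uau^{-1}=\tilde\alpha(a)$ and $v*a=uvu^{-1}av^{-1}$; using $uv=vu$ this becomes $vuu^{-1}av^{-1}=vav^{-1}=\tilde\beta(a)$, so indeed $a*v=\tilde\alpha(a)$ and $v*a=\tilde\beta(a)$ as required. I would also note $\tilde\alpha(v)=uvu^{-1}=v$ and $\tilde\beta(v)=vvv^{-1}=v$, again by $uv=vu$, so $v$ is fixed by both structure maps. This completes the verification of all the axioms, and since each step uses nothing beyond associativity, unitality, and the commuting invertibility of $u$ and $v$, the argument is routine once the normal-form cancellations are carried out carefully.
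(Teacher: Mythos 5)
Your proposal is correct and follows essentially the same route as the paper: a direct verification of commutativity of $\tilde{\alpha}$ and $\tilde{\beta}$, their multiplicativity with respect to $*$, the BiHom-associativity identity, and the unit axioms for $v$, all by expanding words in $E$ and using $uv=vu$. One small slip worth fixing: the common value of $\tilde{\alpha}(a)*(b*c)$ and $(a*b)*\tilde{\beta}(c)$ is $u^{2}au^{-1}bu^{-1}cv^{-2}$, not $u^{2}au^{-1}bcv^{-2}$ --- the factor $u^{-1}$ sitting between $b$ and $c$ cannot be removed since $b$, $c$ are arbitrary; your expansions of both sides are correct, and they do coincide (on the right-hand side one uses $v^{-1}u^{-1}v=u^{-1}$), so the argument goes through unchanged.
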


\begin{proof}
Obviously $\tilde{\alpha }\circ \tilde{\beta }=\tilde{\beta }\circ \tilde{\alpha }$ because $uv=vu$. Then, for all $a, b, c\in E$:
\begin{gather*}
\tilde{\alpha }(a)*\tilde{\alpha }(b) = \big(uau^{-1}\big)*\big(ubu^{-1}\big)
 = uuau^{-1}u^{-1}ubu^{-1}v^{-1} \\
\hphantom{\tilde{\alpha }(a)*\tilde{\alpha }(b)}{}
=uuau^{-1}bu^{-1}v^{-1} =\tilde{\alpha }\big(uau^{-1}bv^{-1}\big) =\tilde{\alpha }(a*b),
\\
\tilde{\beta }(a)*\tilde{\beta }(b) = \big(vav^{-1}\big)*\big(vbv^{-1}\big) = uvav^{-1}u^{-1}vbv^{-1}v^{-1} \\
\hphantom{\tilde{\beta }(a)*\tilde{\beta }(b)}{}
 = uvau^{-1}bv^{-1}v^{-1}  =\tilde{\beta }\big(uau^{-1}bv^{-1}\big) =\tilde{\beta }(a*b),
\\
\tilde{\alpha }(a)*(b*c) = \big(uau^{-1}\big)*\big(ubu^{-1}cv^{-1}\big)
=uuau^{-1}u^{-1}ubu^{-1}cv^{-1}v^{-1} \\
\hphantom{\tilde{\alpha }(a)*(b*c)}{}
=uuau^{-1}bv^{-1}u^{-1}vcv^{-1}v^{-1}
=\big(uau^{-1}bv^{-1}\big)*\big(vcv^{-1}\big) =(a*b)*\tilde{\beta }(c),
\end{gather*}
so $(E, \tilde{\mu }, \tilde{\alpha },
\tilde{\beta })$ is indeed a
BiHom-associative algebra. To prove that $v$ is the unit, we compute
\begin{gather*}
\tilde{\alpha }(v)=uvu^{-1}=v, \qquad
\tilde{\beta }(v)=vvv^{-1}=v, \\
a*v=uau^{-1}vv^{-1}=uau^{-1}=\tilde{\alpha }(a), \qquad
v*a=uvu^{-1}av^{-1}=vav^{-1}=\tilde{\beta }(a),
\end{gather*}
f\/inishing the proof.
\end{proof}

\begin{Proposition}
Let $(A, \mu _A, \alpha _A, \beta _A)$ be a BiHom-associative algebra, $M$ a
linear space and $\alpha _M, \beta _M\colon M\rightarrow M$ two commuting linear
isomorphisms. Consider the associative unital algebra $E=\operatorname{End}(M)$ with its
usual structure, denote $u:=\alpha _M$, $v:=\beta _M$, and construct the
BiHom-associative algebra $(E, \tilde{\mu }, \tilde{\alpha }, \tilde{\beta }
)= \operatorname{End}(M)(\alpha _M, \beta _M)$ as in Lemma~{\rm \ref{endo}}. Then setting a
structure of a left $A$-module on $(M, \alpha _M, \beta _M)$ is equivalent
to giving a morphism of BiHom-associative algebras $\varphi \colon (A, \mu _A,
\alpha _A, \beta _A)\rightarrow (E, \tilde{\mu }, \tilde{\alpha }, \tilde{\beta })$. If $A$ is moreover unital with unit $1_A$, then the module $(M,
\alpha _M, \beta _M)$ is unital if and only if the morphism $\varphi $ is
unital.
\end{Proposition}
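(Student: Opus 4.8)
The plan is to establish a dictionary between left $A$-module structures on $(M,\alpha_M,\beta_M)$ and unital BiHom-associative algebra morphisms $\varphi\colon A\to E(u,v)$, where $E=\operatorname{End}(M)$, $u=\alpha_M$, $v=\beta_M$. Recall that by Lemma~\ref{endo}, the twisted multiplication on $E$ is $\varphi*\psi=u\varphi u^{-1}\psi v^{-1}$, the structure maps are $\tilde\alpha(\varphi)=u\varphi u^{-1}$ and $\tilde\beta(\varphi)=v\varphi v^{-1}$, and the unit is $v=\beta_M$. Given a left $A$-module structure, I would define $\varphi(a)\in\operatorname{End}(M)$ by $\varphi(a)(m)=a\cdot m$; conversely, given a morphism $\varphi$, I would define $a\cdot m:=\varphi(a)(m)$. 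The bulk of the proof is then to verify that each axiom for a module translates precisely into one axiom for a morphism of BiHom-associative algebras.

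Concretely, I would proceed axiom by axiom. The compatibility $\alpha_M(a\cdot m)=\alpha_A(a)\cdot\alpha_M(m)$ from~(\ref{ghommod11}) should be shown equivalent to $\tilde\alpha\circ\varphi=\varphi\circ\alpha_A$: rewriting the module condition as $u\,\varphi(a)=\varphi(\alpha_A(a))\,u$, i.e.\ $\varphi(\alpha_A(a))=u\varphi(a)u^{-1}=\tilde\alpha(\varphi(a))$, since $u=\alpha_M$ is invertible. Entirely analogously, (\ref{ghommod12}) corresponds to $\tilde\beta\circ\varphi=\varphi\circ\beta_A$. The key computation is the multiplicativity axiom: I would show that the BiHom-associativity module condition~(\ref{ghommod2}), namely $\alpha_A(a)\cdot(a'\cdot m)=(aa')\cdot\beta_M(m)$, is equivalent to $\varphi(aa')=\varphi(a)*\varphi(a')$. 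Unwinding the right-hand side of the desired morphism condition gives $(\varphi(a)*\varphi(a'))(m)=u\,\varphi(a)\,u^{-1}\,\varphi(a')\,v^{-1}(m)$, and one checks that applying $v=\beta_M$ to both sides of~(\ref{ghommod2}) and using (\ref{ghommod11})--(\ref{ghommod12}) to move $\alpha_M,\beta_M$ past the action reproduces exactly this expression. I expect this to be the only step requiring genuine care, since it is where the twisted multiplication $*$ interacts with the two module compatibilities and the invertibility of $u,v$; the remaining identities are purely formal once one tracks where each $u^{\pm1},v^{\pm1}$ lands.

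For the unital statement, suppose $A$ has unit $1_A$. The module $M$ is unital precisely when $1_A\cdot m=\beta_M(m)$ for all $m$, i.e.\ $\varphi(1_A)=\beta_M=v$; since $v$ is exactly the unit of $E(u,v)$ identified in Lemma~\ref{endo}, this says $\varphi(1_A)=1_{E(u,v)}$, which is precisely unitality of the morphism $\varphi$. Thus the equivalence in the unital case drops out immediately from the identification already made, with no further computation. Finally I would note that the two constructions $(\text{module})\mapsto\varphi$ and $\varphi\mapsto(\text{module})$ are mutually inverse by definition of $\varphi(a)(m)=a\cdot m$, which closes the argument; the main obstacle, as indicated, is organizing the $u^{\pm1},v^{\pm1}$ bookkeeping in the multiplicativity step cleanly.
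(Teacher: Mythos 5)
Your proposal is correct and follows essentially the same route as the paper's proof: the same dictionary $a\cdot m=\varphi(a)(m)$, the same axiom-by-axiom translation of \eqref{ghommod11}, \eqref{ghommod12} into $\tilde{\alpha}\circ\varphi=\varphi\circ\alpha_A$, $\tilde{\beta}\circ\varphi=\varphi\circ\beta_A$, and the same key computation identifying $(\varphi(a)*\varphi(a'))(m)=\varphi(\alpha_A(a))\big(\varphi(a')\big(\beta_M^{-1}(m)\big)\big)$ with \eqref{ghommod2}, followed by the identical treatment of the unital case via $\varphi(1_A)=\beta_M=v$. One small precision: in the multiplicativity step the correct operation is the substitution $m\mapsto\beta_M^{-1}(m)$ (a change of variables using invertibility of $\beta_M$) together with \eqref{ghommod11} only, rather than literally applying $\beta_M$ to both sides of \eqref{ghommod2} (which would drag in $\beta_A$ via \eqref{ghommod12}); this is how the paper phrases it and is what your bookkeeping amounts to.
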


\begin{proof}
The correspondence is given as follows: the module structure $A\otimes
M\rightarrow M$ is def\/ined by setting $a\otimes m\mapsto a\cdot m$ if and
only if $a\cdot m=\varphi (a)(m)$, for all $a\in A$, $m\in M$. It is easy to
see that conditions~(\ref{ghommod11}) and~(\ref{ghommod12}) are
equivalent to $\tilde{\alpha }\circ \varphi =\varphi \circ
\alpha _A$ and respectively $\tilde{\beta }\circ \varphi =\varphi \circ \beta _A$. We
prove that, assuming~(\ref{ghommod11}) and~(\ref{ghommod12}), we have that~(\ref{ghommod2}) is equivalent to $\varphi \circ \mu _A=\tilde{\mu }\circ
(\varphi \otimes \varphi )$. Note f\/irst that~(\ref{ghommod11}) may be
written as $\alpha _M\circ \varphi (a)=\varphi (\alpha _A(a))\circ \alpha _M$, for all $a\in A$, or equivalently $\alpha _M\circ \varphi (a)\circ \alpha
_M^{-1}=\varphi (\alpha _A(a))$, for all $a\in A$. Thus, for all $a, b\in A$, we have
\begin{gather*}
\tilde{\mu }\circ (\varphi \otimes \varphi )(a\otimes b)=\varphi
(a)*\varphi (b)
=\alpha _M\circ \varphi (a)\circ \alpha _M^{-1}\circ \varphi (b)\circ
\beta _M^{-1}
=\varphi (\alpha _A(a))\circ \varphi (b)\circ \beta _M^{-1}.
\end{gather*}
Hence, we have
\begin{gather*}
\varphi \circ \mu _A=\tilde{\mu }\circ (\varphi \otimes \varphi)
\Longleftrightarrow   \varphi (ab)=\varphi (a)*\varphi (b), \qquad \forall \, a, b\in A, \\
\hphantom{\varphi \circ \mu _A=\tilde{\mu }\circ (\varphi \otimes \varphi) }{}
 \Longleftrightarrow  \varphi (ab)(n)=(\varphi (a)*\varphi (b))(n),
\qquad \forall \, a, b\in A, n\in M, \\
\hphantom{\varphi \circ \mu _A=\tilde{\mu }\circ (\varphi \otimes \varphi) }{}
 \Longleftrightarrow (ab)\cdot n=(\varphi (\alpha _A(a))\circ \varphi
(b)\circ \beta _M^{-1})(n), \qquad \forall \, a, b\in A, n\in M, \\
\hphantom{\varphi \circ \mu _A=\tilde{\mu }\circ (\varphi \otimes \varphi) }{}
 \Longleftrightarrow (ab)\cdot \beta _M(m)=(\varphi (\alpha _A(a))\circ
\varphi (b))(m), \qquad \forall \, a, b\in A, m\in M, \\
\hphantom{\varphi \circ \mu _A=\tilde{\mu }\circ (\varphi \otimes \varphi) }{}
 \Longleftrightarrow   (ab)\cdot \beta _M(m)=\alpha _A(a)\cdot (b\cdot m),
\qquad \forall \, a, b\in A, m\in M,
\end{gather*}
which is exactly~(\ref{ghommod2}).

Assume that $A$ is unital with unit $1_A$. The fact that~$\varphi $ is
unital is equivalent to $\varphi (1_A)=\beta _M$, which is equivalent to $1_A\cdot m=\beta _M(m)$, for all $m\in M$, which is equivalent to saying
that the module~$M$ is unital.
\end{proof}

We recall the following concept from~\cite{sheng} (see also~\cite{said} on
this subject).

\begin{Definition}[\cite{sheng}] Let $(L, [-] ,\alpha )$ be a Hom-Lie algebra. A
\textit{representation} of $L$ is a triple $(M, \rho , A)$, where $M$ is a
linear space, $A\colon M\rightarrow M$ and $\rho \colon L\rightarrow \operatorname{End}(M)$ are linear
maps such that, for all $x, y\in L$, the following conditions are satisf\/ied:
\begin{gather*}
 \rho (\alpha (x))\circ A=A\circ \rho (x), \qquad
 \rho ([x,y])\circ A=\rho (\alpha (x))\circ \rho (y)-\rho
(\alpha (y))\circ \rho (x).
\end{gather*}
\end{Definition}

\begin{Remark}
Let $(L, [-] ,\alpha )$ be a Hom-Lie algebra, $M$ a linear
space, $A\colon M\rightarrow M$ and $\rho \colon L\rightarrow \operatorname{End}(M)$ linear maps such
that $A$ is bijective. We can consider the Hom-associative algebra $\operatorname{End}(M)(A, A)$ as in Lemma~\ref{endo}, and then the Hom-Lie algebra $L(\operatorname{End}(M)(A, A))$. Then one can check that $(M, \rho , A)$ is a
representation of $L$ if and only if $\rho $ is a morphism of Hom-Lie
algebras from $L$ to $L(\operatorname{End}(M)(A, A))$.
\end{Remark}

Inspired by this remark, we can introduce now the following concept:

\begin{Definition}
Let $(L, [-] ,\alpha , \beta )$ be a BiHom-Lie algebra. A~\textit{representation} of $L$ is a 4-tuple $(M, \rho , \alpha _M, \beta _M
) $, where~$M$ is a~linear space, $\alpha _M, \beta _M\colon M\rightarrow M$ are
two commuting linear maps and $\rho \colon L\rightarrow \operatorname{End}(M)$ is a linear map
such that, for all $x, y\in L$, we have
\begin{gather}
 \rho (\alpha (x))\circ \alpha _M=\alpha _M\circ \rho (x),  \label{lierep1}
\\
 \rho (\beta (x))\circ \beta _M=\beta _M\circ \rho (x),  \label{lierep2} \\
 \rho ( [\beta (x), y ])\circ \beta _M=\rho (\alpha \beta
(x))\circ \rho (y)- \rho (\beta (y))\circ \rho (\alpha (x)).  \label{lierep3}
\end{gather}
\end{Definition}

A f\/irst indication that this is indeed the appropriate concept of
representation for BiHom-Lie algebras is provided by the following result
(extending the corresponding one for Hom-associative algebras in~\cite{bakayoko}), whose proof is straightforward and left to the reader.

\begin{Proposition}
Let $(A, \mu _A, \alpha _A, \beta _A)$ be a BiHom-associative algebra with
bijective structure maps and $(M, \alpha _M, \beta _M)$ a left $A$-module,
with action $A\otimes M\rightarrow M$, $a\otimes m\mapsto a\cdot m$. Then we
have a~representation $(M, \rho , \alpha _M, \beta _M)$ of the BiHom-Lie
algebra~$L(A)$, where $\rho \colon L(A)\rightarrow \operatorname{End}(M)$ is the linear map
defined by $\rho (a)(m)=a\cdot m$, for all $a\in A$, $m\in M$.
\end{Proposition}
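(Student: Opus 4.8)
The plan is to transport every module axiom into an identity of endomorphisms of $M$ via the defining relation $\rho(a)(m)=a\cdot m$, and then read off the three conditions (\ref{lierep1})--(\ref{lierep3}) of a representation. First I would record the operator form of the three module axioms. Applying $\rho$ to (\ref{ghommod11}) and (\ref{ghommod12}) gives, for all $a\in A$,
\begin{gather*}
\alpha_M\circ\rho(a)=\rho(\alpha_A(a))\circ\alpha_M\qquad\text{and}\qquad \beta_M\circ\rho(a)=\rho(\beta_A(a))\circ\beta_M,
\end{gather*}
which are precisely (\ref{lierep1}) and (\ref{lierep2}). Likewise, (\ref{ghommod2}) rewrites as the single operator identity
\begin{gather*}
\rho(\alpha_A(a))\circ\rho(a')=\rho(aa')\circ\beta_M,\qquad\forall\,a,a'\in A.\qquad (\star)
\end{gather*}

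Thus (\ref{lierep1}) and (\ref{lierep2}) hold at once, and all the content lies in checking (\ref{lierep3}). For this I would start from the bracket of $L(A)$ supplied by Proposition~\ref{croset}, $[a,a']=aa'-(\alpha_A^{-1}\beta_A(a'))(\alpha_A\beta_A^{-1}(a))$, and specialise to $a=\beta_A(x)$. The one simplification needed is $\alpha_A\beta_A^{-1}(\beta_A(x))=\alpha_A(x)$ (here bijectivity and commutativity of $\alpha_A,\beta_A$ enter), giving
\begin{gather*}
[\beta_A(x),y]=\beta_A(x)\,y-\big(\alpha_A^{-1}\beta_A(y)\big)\,\alpha_A(x).
\end{gather*}
Now I would apply $\rho$, compose with $\beta_M$ on the right, and use $(\star)$ on each of the two resulting terms: taking $a=\beta_A(x)$, $a'=y$ turns the first term into $\rho(\alpha_A\beta_A(x))\circ\rho(y)$, while taking $a=\alpha_A^{-1}\beta_A(y)$, $a'=\alpha_A(x)$ and using $\alpha_A(\alpha_A^{-1}\beta_A(y))=\beta_A(y)$ turns the second into $\rho(\beta_A(y))\circ\rho(\alpha_A(x))$. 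Subtracting yields exactly
\begin{gather*}
\rho([\beta_A(x),y])\circ\beta_M=\rho(\alpha_A\beta_A(x))\circ\rho(y)-\rho(\beta_A(y))\circ\rho(\alpha_A(x)),
\end{gather*}
which is (\ref{lierep3}).

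I do not expect a genuine obstacle: once the identity $(\star)$ is isolated, the verification of (\ref{lierep3}) is forced. The only points deserving attention are the bookkeeping of the inverse maps $\alpha_A^{-1},\beta_A^{-1}$ appearing in the bracket of $L(A)$ --- which is exactly why bijectivity of the structure maps is hypothesised --- and the observation that the commuting of $\alpha_M$ and $\beta_M$, required for $(M,\alpha_M,\beta_M)$ to be a representation, is already part of the module data. No unitality is needed anywhere.
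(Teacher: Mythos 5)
Your proof is correct: conditions (\ref{lierep1}) and (\ref{lierep2}) are just the operator forms of (\ref{ghommod11}) and (\ref{ghommod12}), and your identity $(\star)$, applied to the two terms of $[\beta_A(x),y]=\beta_A(x)y-(\alpha_A^{-1}\beta_A(y))\alpha_A(x)$ (using commutativity and bijectivity of $\alpha_A$, $\beta_A$ to simplify $\alpha_A\beta_A^{-1}\beta_A(x)=\alpha_A(x)$ and $\alpha_A\alpha_A^{-1}\beta_A(y)=\beta_A(y)$), yields (\ref{lierep3}) exactly. The paper states this result with the proof left to the reader, and your argument is precisely the straightforward verification intended.
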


A second indication is provided by the fact that, under certain
circumstances, we can construct the semidirect product (the Hom-case is done
in~\cite{sheng}).

\begin{Proposition}
Let $(L, [-] ,\alpha , \beta )$ be a BiHom-Lie algebra and $(M,
\rho , \alpha _M, \beta _M )$ a representation of $L$, with notation $\rho
(x)(a)=x\cdot a$, for all $x\in L$, $a\in M$. Assume that the maps $\alpha $
and $\beta _M$ are bijective. Then $L\ltimes M:=(L\oplus M, [-],
\alpha \oplus \alpha _M, \beta \oplus \beta _M)$ is a BiHom-Lie algebra
$($called the semidirect product$)$, where $\alpha \oplus \alpha _M,
\beta \oplus \beta _M\colon  L\oplus M \rightarrow L\oplus M$ are defined by $(\alpha \oplus \alpha _M)(x, a)=(\alpha (x), \alpha _M(a))$ and $(\beta
\oplus \beta _M)(x, a)=(\beta (x), \beta _M(a))$, and, for all $x, y\in L$
and $a, b\in M$, the bracket $[-]$ is defined by
\begin{gather*}
  [ (x, a), (y, b) ]=\big( [ x, y ], x\cdot b-\alpha
^{-1}\beta (y)\cdot\alpha _M\beta _M^{-1}(a)\big).
\end{gather*}
\end{Proposition}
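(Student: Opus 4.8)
The plan is to verify directly that the 4-tuple $L \ltimes M = (L \oplus M, [-], \alpha \oplus \alpha_M, \beta \oplus \beta_M)$ satisfies all four BiHom-Lie algebra axioms: commutativity of the structure maps, multiplicativity of each structure map with respect to the bracket, skew-symmetry, and the BiHom-Jacobi condition. The commutativity $(\alpha \oplus \alpha_M) \circ (\beta \oplus \beta_M) = (\beta \oplus \beta_M) \circ (\alpha \oplus \alpha_M)$ is immediate, since it reduces componentwise to $\alpha\beta = \beta\alpha$ on $L$ (given) and $\alpha_M \beta_M = \beta_M \alpha_M$ on $M$ (part of the definition of a representation). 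So the real work is in the remaining three axioms, where the $M$-component of the bracket, namely $x\cdot b - \alpha^{-1}\beta(y)\cdot \alpha_M\beta_M^{-1}(a)$, must be manipulated using the representation identities \eqref{lierep1}, \eqref{lierep2}, \eqref{lierep3}.

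First I would check multiplicativity. For $\alpha \oplus \alpha_M$ applied to $[(x,a),(y,b)]$, the $L$-component gives $\alpha([x,y]) = [\alpha(x),\alpha(y)]$ by multiplicativity of $\alpha$ on $L$; the $M$-component requires showing $\alpha_M\big(x\cdot b - \alpha^{-1}\beta(y)\cdot \alpha_M\beta_M^{-1}(a)\big)$ equals the $M$-component of $[(\alpha(x),\alpha_M(a)),(\alpha(y),\alpha_M(b))]$. The first summand is handled by \eqref{lierep1} in the form $\alpha_M(x\cdot b) = \alpha(x)\cdot \alpha_M(b)$; the second summand needs \eqref{lierep1} together with the commutation relations among $\alpha$, $\beta$ on $L$ and $\alpha_M$, $\beta_M$ on $M$, and the intertwining of $\rho$ with these maps. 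Multiplicativity of $\beta \oplus \beta_M$ is entirely analogous, using \eqref{lierep2} in place of \eqref{lierep1}. Next I would verify skew-symmetry, i.e.\ $[(\beta\oplus\beta_M)(x,a), (\alpha\oplus\alpha_M)(y,b)] = -[(\beta\oplus\beta_M)(y,b),(\alpha\oplus\alpha_M)(x,a)]$; on the $L$-component this is the skew-symmetry of the bracket on $L$, and on the $M$-component it becomes an identity that, after substituting and using bijectivity of $\alpha$ and $\beta_M$, should collapse using \eqref{lierep1} and \eqref{lierep2}.

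The main obstacle will be the BiHom-Jacobi condition, since it is the most computationally involved and is precisely where the intertwined action identity \eqref{lierep3} must be deployed. The $L$-component of the Jacobi sum vanishes because $L$ itself is a BiHom-Lie algebra; the content is showing that the $M$-component, a sum of three terms each of the form $\beta^2(\cdot)\cdot(\cdots) - (\text{action of bracket on }M)$, cancels. I would expand the three cyclic terms of the BiHom-Jacobi expression, carefully tracking how $\alpha$, $\beta$ act on the $L$-slots and how $\alpha_M$, $\beta_M$ act on the $M$-slots, then group the terms so that the ``pure action'' contributions assemble into an application of \eqref{lierep3} (which rewrites $\rho([\beta(x),y])\circ\beta_M$ in terms of a commutator of $\rho$-values) while the remaining contributions cancel by the BiHom-Jacobi identity already holding in $L$ acting on $M$. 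The bijectivity hypotheses on $\alpha$ and $\beta_M$ are needed to invert the structure maps appearing inside the $M$-component of the bracket. Since the problem states the proof is straightforward and left to the reader, I expect that, once the bookkeeping of the maps is organized correctly, each of the three axioms reduces cleanly to the corresponding representation identity, and I would present the verification axiom by axiom in the order above.
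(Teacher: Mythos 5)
Your proposal is correct and follows exactly the direct componentwise verification that the paper intends --- the paper's own proof is simply ``follows by a direct computation that is left to the reader,'' and your plan (structure-map commutativity, multiplicativity via \eqref{lierep1}--\eqref{lierep2}, skew-symmetry, then BiHom-Jacobi via \eqref{lierep3}) carries through. One small remark: in the BiHom-Jacobi step the whole $M$-component cancels by three applications of \eqref{lierep3} alone (one for each of the module entries $a$, $b$, $c$, taking $u=\alpha^{-1}\beta(x)$, $v=\beta(y)$, $m=\alpha_M(c)$ and cyclic analogues), with the BiHom-Jacobi identity of $L$ needed only for the $L$-component; likewise skew-symmetry collapses from the commutation relations alone, so the bookkeeping is even lighter than your sketch anticipates.
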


\begin{proof}
Follows by a direct computation that is left to the reader.
\end{proof}

\begin{Proposition}
Let $(L, [-] ,\alpha , \beta )$ be a BiHom-Lie algebra such that
the map $\beta $ is surjective, $M$ a linear space, $\alpha _M, \beta
_M\colon M\rightarrow M$ two commuting linear isomorphisms and $\rho \colon L\rightarrow
\operatorname{End}(M)$ a~linear map. Then $(M, \rho , \alpha _M, \beta _M )$ is a~representation of $L$ if and only if $\rho $ is a morphism of BiHom-Lie
algebras from~$L$ to $L(\operatorname{End}(M)(\alpha _M, \beta _M))$.
\end{Proposition}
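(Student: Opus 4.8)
The plan is to identify the target BiHom-Lie algebra explicitly and then peel off the three morphism axioms one at a time, matching them to the three defining identities of a representation. First I would unwind $L(\operatorname{End}(M)(\alpha_M,\beta_M))$. Applying Lemma~\ref{endo} with $u=\alpha_M$, $v=\beta_M$ produces the BiHom-associative algebra $E:=\operatorname{End}(M)(\alpha_M,\beta_M)$, whose structure maps $\tilde\alpha(f)=\alpha_M f\alpha_M^{-1}$ and $\tilde\beta(f)=\beta_M f\beta_M^{-1}$ are bijective (because $\alpha_M,\beta_M$ are isomorphisms) and whose product is $f*g=\alpha_M f\alpha_M^{-1}g\beta_M^{-1}$. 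Hence Proposition~\ref{croset} applies, and $L(E)$ carries the bracket $[f,g]_E=f*g-(\tilde\alpha^{-1}\tilde\beta(g))*(\tilde\alpha\tilde\beta^{-1}(f))$.

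The first two morphism conditions, $\tilde\alpha\circ\rho=\rho\circ\alpha$ and $\tilde\beta\circ\rho=\rho\circ\beta$, unwind to $\alpha_M\rho(x)\alpha_M^{-1}=\rho(\alpha(x))$ and $\beta_M\rho(x)\beta_M^{-1}=\rho(\beta(x))$, which are exactly (\ref{lierep1}) and (\ref{lierep2}); this is the same bookkeeping already used in the module/morphism correspondence for BiHom-associative algebras. So all the real content sits in the bracket condition $\rho([x,y])=[\rho(x),\rho(y)]_E$.

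The key computation is this. Substituting the product formula into the bracket and using that $\alpha_M,\beta_M$ commute, one obtains $[f,g]_E=\alpha_M f\alpha_M^{-1}g\beta_M^{-1}-\beta_M g\alpha_M\beta_M^{-2}f\alpha_M^{-1}$ for all $f,g\in\operatorname{End}(M)$. Now I would put $f=\rho(x)$, $g=\rho(y)$ and apply (\ref{lierep1}) and (\ref{lierep2}) to collapse the conjugations (e.g.\ $\alpha_M\rho(x)\alpha_M^{-1}=\rho(\alpha(x))$ and $\beta_M\rho(y)=\rho(\beta(y))\beta_M$); all stray powers of $\alpha_M,\beta_M$ then cancel by commutativity, leaving $[\rho(x),\rho(y)]_E=\rho(\alpha(x))\rho(y)\beta_M^{-1}-\rho(\beta(y))\beta_M^{-1}\rho(\alpha(x))$. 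Thus, granting (\ref{lierep1}) and (\ref{lierep2}), the bracket condition is equivalent to the identity $(\star)$: $\rho([x,y])=\rho(\alpha(x))\rho(y)\beta_M^{-1}-\rho(\beta(y))\beta_M^{-1}\rho(\alpha(x))$, for all $x,y\in L$.

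Finally I would match $(\star)$ with (\ref{lierep3}). Replacing $x$ by $\beta(x)$ in $(\star)$ and right-multiplying by $\beta_M$ turns the first term into $\rho(\alpha\beta(x))\rho(y)$, while the second becomes $\rho(\beta(y))\,\beta_M^{-1}\rho(\alpha\beta(x))\beta_M$; the identity $\beta_M^{-1}\rho(\alpha\beta(x))\beta_M=\rho(\alpha(x))$ — a consequence of (\ref{lierep2}) together with $\alpha\beta=\beta\alpha$ — turns this into $\rho(\beta(y))\rho(\alpha(x))$, which is precisely (\ref{lierep3}). This gives $(\star)\Rightarrow$(\ref{lierep3}) with no extra hypothesis, i.e.\ the morphism$\Rightarrow$representation implication for free. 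For the converse I would run the argument backwards: given (\ref{lierep3}), any $x'\in L$ can be written $x'=\beta(x)$ since $\beta$ is surjective, and feeding this $x$ into (\ref{lierep3}) and dividing by $\beta_M$ reproduces $(\star)$ at $x'$. I expect the main obstacle to be exactly the bookkeeping of the conjugating factors in the bracket: the middle factor $\beta_M^{-1}$ in $(\star)$ does not sit where (\ref{lierep3}) naively suggests, and it is the surjectivity of $\beta$ — needed only in the representation$\Rightarrow$morphism direction — that lets the $\beta$-shift absorb this discrepancy.
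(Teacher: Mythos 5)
Your proposal is correct and follows essentially the same route as the paper's proof: both unwind the bracket of $L(\operatorname{End}(M)(\alpha _M, \beta _M))$ via Lemma~\ref{endo} and Proposition~\ref{croset}, dispose of the first two morphism conditions as direct translations of \eqref{lierep1} and \eqref{lierep2}, and reduce the bracket condition to \eqref{lierep3} by the substitution $x\mapsto \beta (x)$ combined with surjectivity of $\beta $. The only (cosmetic) difference is that you evaluate the bracket at general $(\rho (x), \rho (y))$ and isolate the intermediate identity $(\star)$ before shifting by $\beta $, whereas the paper computes $[\rho (\beta (x)), \rho (y)]\circ \beta _M$ directly.
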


\begin{proof}
Obviously, (\ref{lierep1}) and (\ref{lierep2}) are respectively equivalent
to $\tilde{\alpha }\circ \rho =\rho \circ \alpha $ and $\tilde{\beta }\circ
\rho =\rho \circ \beta $, so we only need to prove that, assuming~(\ref{lierep1}) and~(\ref{lierep2}),~(\ref{lierep3}) is equivalent to $\rho ([x,y])= [\rho (x), \rho (y) ]$ for all $x, y\in L$.
First we write down explicitly the bracket of $L(\operatorname{End}(M)(\alpha _M, \beta
_M))$. In view of Proposition~\ref{croset}, this bracket looks as follows, for $f, g\in \operatorname{End}(M)$:
\begin{gather*}
 [f, g ] = f*g-\big(\tilde{\alpha }^{-1}\tilde{\beta }(g)\big)*\big(\tilde{\alpha }\tilde{\beta }^{-1}(f)\big) \\
\hphantom{[f, g]}{}
=f*g-\big(\tilde{\alpha }^{-1}\big(\beta _M\circ g\circ \beta _M^{-1}\big)\big)* \big(\tilde{\alpha }\big(\beta _M^{-1}\circ f\circ \beta _M\big)\big) \\
\hphantom{[f, g]}{}
= f*g-\big(\alpha _M^{-1}\circ \beta _M\circ g\circ \beta _M^{-1}\circ \alpha
_M\big)* \big(\alpha _M\circ \beta _M^{-1}\circ f\circ \beta _M\circ \alpha _M^{-1}\big)
\\
\hphantom{[f, g]}{}
=\alpha _M\circ f\circ \alpha _M^{-1}\circ g\circ \beta _M^{-1} \\
\hphantom{[f, g]=}{}- \alpha _M\circ \alpha _M^{-1}\circ \beta _M\circ g\circ \beta
_M^{-1}\circ \alpha _M\circ \alpha _M^{-1}\circ \alpha _M\circ \beta
_M^{-1}\circ f\circ \beta _M\circ \alpha _M^{-1}\circ \beta _M^{-1} \\
\hphantom{[f, g]}{}
= \alpha _M\circ f\circ \alpha _M^{-1}\circ g\circ \beta _M^{-1}- \beta
_M\circ g\circ \beta _M^{-1}\circ \alpha _M\circ \beta _M^{-1}\circ f\circ
\beta _M\circ \alpha _M^{-1}\circ \beta _M^{-1}.
\end{gather*}
Let $x, y\in L$; we take $f=\rho (\beta (x))$, $g=\rho (y)$. We obtain
\begin{gather*}
 [\rho (\beta (x)), \rho (y) ]\circ \beta _M =
 \alpha _M\circ \rho
(\beta (x))\circ \alpha _M^{-1} \circ \rho (y) \\
\hphantom{[\rho (\beta (x)), \rho (y) ]\circ \beta _M =}{}
 - \beta _M\circ \rho (y)\circ \beta _M^{-1}\circ \alpha _M\circ \beta
_M^{-1}\circ \rho (\beta (x))\circ \beta _M\circ \alpha _M^{-1} \\
\hphantom{[\rho (\beta (x)), \rho (y) ]\circ \beta _M}{}
\overset{\eqref{lierep1}, \;\eqref{lierep2}}{=} \rho (\alpha \beta
(x))\circ \rho (y)- \rho (\beta (y))\circ \alpha _M\circ \rho (x)\circ
\alpha _M^{-1} \\
\hphantom{[\rho (\beta (x)), \rho (y) ]\circ \beta _M}{}
\overset{\eqref{lierep1}}{=} \rho (\alpha \beta (x))\circ \rho (y)- \rho
(\beta (y))\circ \rho (\alpha (x)),
\end{gather*}
which is the right-hand side of (\ref{lierep3}). So, we have that~(\ref{lierep3}) holds if and only if $\rho ([\beta (x), y])= [\rho (\beta (x)), \rho (y)]$ for all $x, y\in L$, which is equivalent
to $\rho ([a,b])= [\rho (a), \rho (b)]$, for all $a,
b\in L$, because $\beta $ is surjective.
\end{proof}

\begin{Proposition}
Let $(L, [-] ,\alpha , \beta )$ be a BiHom-Lie algebra and
define the linear map $\operatorname{ad} \colon L\rightarrow \operatorname{End}(L)$, $\operatorname{ad} (x)(y)=[x,y]
$, for all $x, y\in L$. If the maps $\alpha $ and $\beta $ are bijective,
then $(L, \operatorname{ad},\alpha , \beta )$ is a representation of $L$.
\end{Proposition}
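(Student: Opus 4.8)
The plan is to verify the three defining identities \eqref{lierep1}, \eqref{lierep2} and \eqref{lierep3} of a representation for the $4$-tuple $(L, \operatorname{ad}, \alpha, \beta)$, in which $M = L$, $\alpha_M = \alpha$, $\beta_M = \beta$ and $\rho = \operatorname{ad}$. The maps $\alpha_M, \beta_M$ commute (as $\alpha$ and $\beta$ do) and are bijective by hypothesis, so the data is of the required form.

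The first two identities are immediate and I would dispose of them at once. Evaluating $\operatorname{ad}(\alpha(x)) \circ \alpha$ at $y$ produces $[\alpha(x), \alpha(y)]$, whereas $\alpha \circ \operatorname{ad}(x)$ produces $\alpha([x,y])$; these coincide by the multiplicativity $\alpha([x,y]) = [\alpha(x), \alpha(y)]$, which gives \eqref{lierep1}. Identity \eqref{lierep2} follows in exactly the same way from $\beta([x,y]) = [\beta(x), \beta(y)]$.

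All the content sits in \eqref{lierep3}. Evaluating both of its sides at an arbitrary $z \in L$ and unravelling the definition of $\operatorname{ad}$, the identity to be proved becomes
\begin{gather*}
[[\beta(x), y], \beta(z)] = [\alpha\beta(x), [y,z]] - [\beta(y), [\alpha(x), z]].
\end{gather*}
To produce this I would insert into the BiHom-Jacobi condition the substitution $a = \alpha\beta^{-1}(x)$, $a' = \beta^{-1}(y)$, $a'' = \alpha^{-1}(z)$, which is legitimate because $\alpha$ and $\beta$ are bijective. Since $\alpha$ and $\beta$ commute, the first of the three Jacobi summands collapses precisely to $[\alpha\beta(x), [y,z]]$.

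It then remains to recognize the other two summands. For the second I would apply skew-symmetry $[\beta(u), \alpha(v)] = -[\beta(v), \alpha(u)]$ to its inner bracket, turning it into $-[\beta(y), [\alpha(x), z]]$. For the third I would first use multiplicativity to rewrite its inner bracket as $\alpha([x, \beta^{-1}(y)])$, and then combine $[\beta(x), y] = \beta([x, \beta^{-1}(y)])$ with one more application of skew-symmetry to identify that summand with $-[[\beta(x), y], \beta(z)]$. Feeding these three identifications back into the vanishing Jacobi sum yields the displayed equation, and hence \eqref{lierep3}. The only genuine obstacle is the bookkeeping: choosing the substitution so that one term lands on the nose, and then carefully tracking the powers of $\alpha^{\pm 1}$ and $\beta^{\pm 1}$ while invoking skew-symmetry and multiplicativity to massage the remaining two terms into the desired shape; no conceptual difficulty arises beyond this manipulation of the structure maps.
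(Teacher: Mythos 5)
Your proof is correct and follows essentially the same route as the paper: both reduce \eqref{lierep3} to the BiHom-Jacobi condition applied to $a=\alpha\beta^{-1}(x)$, $a'=\beta^{-1}(y)$, $a''=\alpha^{-1}(z)$, using skew-symmetry and multiplicativity to match the three Jacobi summands with the terms of the evaluated identity. The only difference is cosmetic: you work forward from the Jacobi identity to the target equation, whereas the paper massages the two sides of \eqref{lierep3} until they become the Jacobi terms.
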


\begin{proof}
The conditions (\ref{lierep1}) and (\ref{lierep2}) are equivalent to $\alpha
([a,b])=  [\alpha (a), \alpha (b) ]$ and $\beta ( [a, b])= [\beta (a), \beta (b)]$ for all $a, b\in L$, so we
only need to prove~(\ref{lierep3}). Note f\/irst that the skew-symmetry
condition implies
\begin{gather*}
 \operatorname{ad}(x)(y)=-\big[\alpha ^{-1}\beta (y), \alpha \beta ^{-1}(x)\big],
\qquad\forall \, x, y\in L.
\end{gather*}
We compute the left-hand side of (\ref{lierep3}) applied to $z\in L$:
\begin{gather*}
(\operatorname{ad}( [\beta (x), y ])\circ \beta )(z) = \operatorname{ad}( [\beta (x), y ])(\beta (z))
 = -\big[\alpha ^{-1}\beta ^2(z), \alpha \beta ^{-1}([\beta (x), y])\big] \\
\hphantom{(\operatorname{ad}( [\beta (x), y ])\circ \beta )(z)}{}
=-\big[\beta ^2(\alpha ^{-1}(z)), \big[\alpha (x), \alpha \beta ^{-1}(y)\big]\big] \\
\hphantom{(\operatorname{ad}( [\beta (x), y ])\circ \beta )(z)}{}
 = -\big[\beta ^2(\alpha ^{-1}(z)), \big[\beta (\alpha \beta ^{-1}(x)),
\alpha (\beta ^{-1}(y))\big]\big].
\end{gather*}
We compute the right-hand side of~(\ref{lierep3}) applied to $z\in L$:
\begin{gather*}
 (\operatorname{ad}(\alpha \beta (x))\circ \operatorname{ad}(y))(z)-(\operatorname{ad}(\beta (y))\circ
\operatorname{ad}(\alpha (x))(z)\\
 \qquad{} = \operatorname{ad}(\alpha \beta (x))\big({-}\big[\alpha ^{-1}\beta (z), \alpha \beta ^{-1}(y)
\big]\big)- \operatorname{ad}(\beta (y))\big({-}\big[\alpha ^{-1}\beta (z), \alpha ^2\beta ^{-1}(x)
\big]\big) \\
\qquad{} = \big[\alpha ^{-1}\beta \big(\big[\alpha ^{-1}\beta (z), \alpha \beta
^{-1}(y)\big]\big), \alpha \beta ^{-1}\alpha \beta (x)\big] \\
\qquad\quad{}
 - \big[\alpha ^{-1}\beta \big(\big[\alpha ^{-1}\beta (z), \alpha ^2\beta
^{-1}(x)\big]\big), \alpha \beta ^{-1}\beta (y)\big] \\
\qquad{}
 = \big[\beta \big(\big[\alpha ^{-2}\beta (z), \beta ^{-1}(y)\big]\big), \alpha
^2(x)\big]- \big[\beta \big(\big[\alpha ^{-2}\beta (z), \alpha \beta ^{-1}(x)
\big]\big), \alpha (y)\big] \\
\qquad{}
 \overset{\text{skew-symmetry}}{=} -\big[\beta \alpha (x), \big[\alpha ^{-1}\beta
(z), \alpha \beta ^{-1}(y)\big]\big]+\big[\beta (y), \big[\alpha
^{-1}\beta (z), \alpha ^2\beta ^{-1}(x)\big]\big] \\
\qquad{}
 =  [\beta \alpha (x),  [y, z ] ]+\big[\beta (y), \big[
\beta \alpha ^{-1}(z), \alpha ^2\beta ^{-1}(x)\big]\big] \\
\qquad{}
 = \big[\beta ^2\big(\alpha \beta ^{-1}(x)\big), \big[\beta \big(\beta ^{-1}(y)\big),
\alpha \big(\alpha ^{-1}(z)\big)\big]\big]\!
 +\!\big[\beta ^2(\beta ^{-1}(y)), \big[\beta \big(\alpha ^{-1}(z)\big), \alpha
\big(\alpha \beta ^{-1}(x)\big)\big]\big],
\end{gather*}
and (\ref{lierep3}) holds because of the BiHom-Jacobi identity applied to
the elements $a=\alpha \beta ^{-1}(x)$, $a^{\prime}=\beta^{-1}(y)$ and $a^{\prime
\prime}=\alpha^{-1}(z)$.
\end{proof}

\section{BiHom-coassociative coalgebras and BiHom-bialgebras}

We introduce now the dual concept to the one of BiHom-associative
algebra.

\begin{Definition}
A \textit{BiHom-coassociative coalgebra} is a 4-tuple $(C, \Delta, \psi ,
\omega )$, in which $C$ is a linear space, $\psi , \omega \colon C\rightarrow C$
and $\Delta \colon C\rightarrow C\otimes C$ are linear maps, such that
\begin{gather*}
 \psi \circ \omega =\omega \circ \psi , \qquad
 (\psi \otimes \psi )\circ \Delta = \Delta \circ \psi , \qquad
 (\omega \otimes \omega )\circ \Delta = \Delta \circ \omega , \\
 (\Delta \otimes \psi )\circ \Delta = (\omega \otimes \Delta )\circ \Delta .
\end{gather*}

We call $\psi $ and $\omega $ (in this order) the \textit{structure maps} of
$C$.

A morphism $g\colon (C, \Delta _C , \psi _C, \omega _C)\rightarrow (D, \Delta _D ,
\psi _D, \omega _D)$ of BiHom-coassociative coalgebras is a~linear map $g\colon C\rightarrow D$ such that $\psi _D\circ g=g\circ \psi _C$, $\omega _D\circ
g=g\circ \omega _C$ and $(g\otimes g)\circ \Delta _C=\Delta _D\circ g$.

A BiHom-coassociative coalgebra $(C, \Delta, \psi , \omega )$ is called
\textit{counital} if there exists a linear map $\varepsilon\colon C\rightarrow
\Bbbk$ (called a~\textit{counit}) such that
\begin{gather*}
  \varepsilon\circ \psi= \varepsilon, \qquad \varepsilon\circ \omega=
\varepsilon, \qquad
  (\operatorname{id}_C\otimes \varepsilon) \circ \Delta=\omega \qquad \text{and} \qquad (
\varepsilon\otimes \operatorname{id}_C)\circ \Delta=\psi.
\end{gather*}

A morphism of counital BiHom-coassociative coalgebras $g\colon C\rightarrow D$ is
called \textit{counital} if $\varepsilon_D\circ g=\varepsilon_C$, where $\varepsilon _C$ and $\varepsilon _D$ are the counits of~$C$ and~$D$,
respectively.
\end{Definition}

\begin{Remark}
If $(C, \Delta _C, \psi _C, \omega _C)$ and $(D, \Delta _D, \psi _D, \omega
_D)$ are two BiHom-coassociative coalgebras, then $(C\otimes D, \Delta
_{C\otimes D}, \psi _C\otimes \psi _D, \omega _C\otimes \omega _D)$ is also
a BiHom-coassociative coalgebra (called the tensor product of $C$ and $D$),
where $\Delta _{C\otimes D}\colon C\otimes D\rightarrow C\otimes D\otimes C\otimes
D$ is def\/ined by $\Delta (c\otimes d)=c_1\otimes d_1\otimes c_2\otimes d_2$,
for all $c\in C$, $d\in D$. If $C$ and $D$ are counital with counits~$\varepsilon _C$ and respectively~$\varepsilon _D$, then $C\otimes D$ is also
counital with counit $\varepsilon _C\otimes \varepsilon _D$.
\end{Remark}

\begin{Definition}
Let $(C,\Delta _{C},\psi _{C},\omega _{C})$ be a BiHom-coassociative
coalgebra. A \textit{right $C$-comodule} is a triple $(M,\psi _{M},\omega
_{M})$, where $M$ is a linear space, $\psi _{M},\omega _{M}\colon M\rightarrow M$
are linear maps and we have a linear map (called a coaction) $\rho
\colon M\rightarrow M\otimes C$, with notation $\rho (m)=m_{(0)}\otimes m_{(1)}$,
for all $m\in M$, such that the following conditions are satisf\/ied
\begin{gather*}
\begin{split}
&  \psi _{M}\circ \omega _{M}=\omega _{M}\circ \psi _{M},  \qquad 
 (\psi _{M}\otimes \psi _{C})\circ \rho =\rho \circ \psi _{M},\qquad
 (\omega _{M}\otimes \omega _{C})\circ \rho =\rho \circ \omega _{M},\\
& (\omega _{M}\otimes \Delta _{C})\circ \rho =(\rho \otimes \psi _{C})\circ
\rho .  
\end{split}
\end{gather*}

If $(M, \psi _M, \omega _M)$ and $(N, \psi _N, \omega _N)$ are right $C$-comodules with coactions~$\rho _M$ and respecti\-ve\-ly~$\rho _N$, a morphism
of right $C$-comodules $f\colon M\rightarrow N$ is a linear map satisfying the
conditions $\psi _N\circ f=f\circ \psi _M$, $\omega _N\circ f=f\circ \omega
_M$ and $\rho _N\circ f=(f\otimes \operatorname{id}_C)\circ \rho _M$.

If $(C, \Delta _C, \psi _C, \omega _C, \varepsilon _C)$ is a counital
BiHom-coassociative coalgebra and $(M, \psi _M, \omega _M)$ is a right $C$-comodule with coaction $\rho $, then $M$ is called \textit{counital} if $(\operatorname{id}_M\otimes \varepsilon _C)\circ \rho =\omega _M$.
\end{Definition}

\begin{Remark}
If $(C, \Delta, \psi , \omega )$ is a BiHom-coassociative coalgebra, then $(C, \psi , \omega )$ is a right $C$-comodule, with coaction $\rho =\Delta $.
\end{Remark}

We discuss now the duality between BiHom-associative and BiHom-coassociative
structures.

\begin{Theorem}
Let $(C,\Delta, \psi , \omega )$ be a BiHom-coassociative coalgebra. Then
its dual linear space is provided with a structure of BiHom-associative
algebra $(C^*,\Delta^*,\omega^*,\psi^*)$, where $\Delta^*$, $\psi^*$, $\omega^*$
are the transpose maps. Moreover, the BiHom-associative algebra~$C^*$ is
unital whenever the BiHom-coassociative coalgebra~$C$ is counital.
\end{Theorem}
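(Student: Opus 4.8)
The plan is to equip $C^*$ with the convolution product and deduce the four BiHom-associative axioms by dualizing the four coalgebra axioms, with structure maps $\alpha=\omega^*$ and $\beta=\psi^*$ (note the order swap relative to the coalgebra). For $f,g\in C^*$ and $c\in C$ I would write the product as $(fg)(c)=f(c_1)g(c_2)$, so that $\Delta^*$, precomposed with the canonical inclusion $C^*\otimes C^*\hookrightarrow (C\otimes C)^*$, is exactly convolution; writing it this way avoids the infinite-dimensional subtlety that $(C\otimes C)^*\neq C^*\otimes C^*$. The transposes are $(\psi^* f)(c)=f(\psi(c))$ and $(\omega^* f)(c)=f(\omega(c))$.

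First, the commutation $\omega^*\circ\psi^*=\psi^*\circ\omega^*$ is immediate from $\psi\circ\omega=\omega\circ\psi$ by evaluating at an arbitrary $c$. Next, for multiplicativity of $\omega^*$, evaluating $\omega^*(fg)$ at $c$ gives $(fg)(\omega(c))=f(\omega(c)_1)g(\omega(c)_2)$, and the coalgebra axiom $(\omega\otimes\omega)\circ\Delta=\Delta\circ\omega$ rewrites $\Delta(\omega(c))=\omega(c_1)\otimes\omega(c_2)$, so this equals $(\omega^* f)(c_1)(\omega^* g)(c_2)=\bigl(\omega^*(f)\,\omega^*(g)\bigr)(c)$; multiplicativity of $\psi^*$ is identical, using $(\psi\otimes\psi)\circ\Delta=\Delta\circ\psi$.

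The crux is BiHom-associativity $\omega^*(f)(gh)=(fg)\psi^*(h)$. Evaluating the left-hand side at $c$ yields $f(\omega(c_1))\,g((c_2)_1)\,h((c_2)_2)$, i.e.\ $(f\otimes g\otimes h)$ applied to $(\omega\otimes\Delta)\Delta(c)$; evaluating the right-hand side yields $f((c_1)_1)\,g((c_1)_2)\,h(\psi(c_2))$, i.e.\ $(f\otimes g\otimes h)$ applied to $(\Delta\otimes\psi)\Delta(c)$. The coassociativity axiom $(\Delta\otimes\psi)\circ\Delta=(\omega\otimes\Delta)\circ\Delta$ says precisely that these two elements of $C\otimes C\otimes C$ coincide, so pairing with $f\otimes g\otimes h$ gives equality for all $f,g,h$. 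This is the only step requiring genuine care: it is where the order swap in $(\omega^*,\psi^*)$ is forced, and keeping $\psi$ and $\omega$ on the correct tensor slots is the main bookkeeping obstacle, but there is no analytic difficulty.

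Finally, for the unital claim I would show that a counit $\varepsilon$ of $C$ is a unit of $C^*$. The identities $\omega^*(\varepsilon)=\varepsilon$ and $\psi^*(\varepsilon)=\varepsilon$ follow from $\varepsilon\circ\omega=\varepsilon$ and $\varepsilon\circ\psi=\varepsilon$. For the unit axioms, $(f\varepsilon)(c)=f(c_1)\varepsilon(c_2)=f\bigl((\operatorname{id}_C\otimes\varepsilon)\Delta(c)\bigr)=f(\omega(c))=(\omega^* f)(c)$, so $f\,\varepsilon=\omega^*(f)=\alpha(f)$; dually $(\varepsilon f)(c)=\varepsilon(c_1)f(c_2)=f\bigl((\varepsilon\otimes\operatorname{id}_C)\Delta(c)\bigr)=f(\psi(c))=(\psi^* f)(c)$, so $\varepsilon\,f=\psi^*(f)=\beta(f)$. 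These are exactly the unital BiHom-associative axioms $a1_A=\alpha(a)$ and $1_Aa=\beta(a)$, completing the proof.
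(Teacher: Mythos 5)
Your proposal is correct and follows essentially the same route as the paper: define the convolution product $(fg)(c)=f(c_1)g(c_2)$, observe that $\omega^*(f)(gh)$ and $(fg)\psi^*(h)$ evaluated at $c$ are the pairings of $f\otimes g\otimes h$ with $(\omega\otimes\Delta)\Delta(c)$ and $(\Delta\otimes\psi)\Delta(c)$ respectively so that BiHom-associativity is dual to BiHom-coassociativity, and check that the counit $\varepsilon$ serves as the unit via $f\varepsilon=\omega^*(f)$ and $\varepsilon f=\psi^*(f)$. Your write-up is in fact slightly more complete than the paper's, since you also record the (easy) commutation and multiplicativity of $\omega^*$, $\psi^*$ and the identities $\omega^*(\varepsilon)=\psi^*(\varepsilon)=\varepsilon$, which the paper leaves implicit.
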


\begin{proof}
The product $\mu = \Delta^*$ is def\/ined from $C^* \otimes C^*$ to $C^*$ by
\begin{gather*}
(fg)(x) = \Delta^*(f,g)(x) = \langle \Delta(x),f \otimes g \rangle = (f
\otimes g)(\Delta(x)) = f(x_1)g(x_2), \qquad \forall \, x \in C,
\end{gather*}
where $\langle \cdot,\cdot \rangle$ is the natural pairing between the
linear space $C \otimes C$ and its dual linear space. For $f,g,h \in C^*$
and $x \in C$, we have
\begin{gather*}
(fg) \psi^*(h)(x) = \langle (\Delta \otimes \psi) \circ \Delta(x),f \otimes
g \otimes h \rangle,\\
 \omega^*(f)(gh)(x) = \langle (\omega
\otimes \Delta) \circ \Delta(x),f \otimes g \otimes h \rangle.
\end{gather*}
Therefore, the BiHom-associativity condition $\mu \circ (\mu \otimes \psi^* - \omega^*
\otimes \mu) = 0$ follows from the BiHom-coassociativity condition $(\Delta \otimes
\psi - \omega \otimes \Delta) \circ \Delta = 0$.

Moreover, if $C$ has a counit $\varepsilon $ then for $f \in C^*$ and $x \in
C$ we have
\begin{gather*}
 (\varepsilon f)(x) = \varepsilon (x_1) f(x_2) = f(\varepsilon (x_1) x_2) =
f(\psi(x)) = \psi^*(f)(x), \\
 (f \varepsilon )(x) = f(x_1) \varepsilon (x_2) = f(x_1 \varepsilon (x_2))
= f(\omega(x)) = \omega^*(f)(x),
\end{gather*}
which shows that $\varepsilon $ is the unit of~$C^*$.
\end{proof}

The dual of a BiHom-associative algebra $(A,\mu,\alpha,\beta)$ is not always
a BiHom-coassociative coalgebra, because $(A \otimes A)^* \supsetneq A^*
\otimes A^*$. Nevertheless, it is the case if the BiHom-associative algebra
is f\/inite-dimensional, since $(A\otimes A)^* = A^* \otimes A^*$ in this case.

More generally, we can def\/ine the f\/inite dual of $A$ by
\begin{gather*}
A^\circ = \{f \in A^*/ f(I)=0 \   \text{for some cof\/inite ideal $I$ of $A$}\},
\end{gather*}
where a cof\/inite ideal $I$ is an ideal $I \subset A$ such that $A/I$ is
f\/inite-dimensional and where we say that $I$ is an ideal of $A$ if for $x
\in I$ and $y \in A$ we have $x y\in I$, $y x\in I$ and $\alpha (x)\in I$,
$\beta (x)\in I$.

$A^\circ$ is a subspace of $A^*$ since it is closed under multiplication by
scalars and the sum of two elements of~$A^\circ$ is again in $A^\circ$
because the intersection of two cof\/inite ideals is again a~cof\/inite ideal.
If~$A$ is f\/inite-dimensional, of course $A^\circ = A^*$.
As in the classical case, one can show that if~$A$ and~$B$ are two
BiHom-associative algebras and $f \colon  A \to B$ is a~morphism of
BiHom-associative algebras, then the dual map $f^* \colon B^* \to A^*$ satisf\/ies $f^*(B^\circ)\subset A^\circ$.

Therefore, a similar proof to the one of the previous theorem leads to:

\begin{Theorem}
Let $(A,\mu,\alpha,\beta)$ be a BiHom-associative algebra. Then its finite
dual is provided with a structure of BiHom-coassociative coalgebra $(A^\circ,\Delta,\beta^\circ,\alpha^\circ)$, where $\Delta = \mu^\circ =
\mu^*|_{A^\circ}$ and~$\beta^\circ$,~$\alpha^\circ$ are the transpose maps on~$A^\circ$. Moreover, the BiHom-coassociative coalgebra is counital whenever~$A $ is unital, with counit $\varepsilon \colon A^\circ \to \Bbbk $ defined by $\varepsilon (f) = f(1_A)$.
\end{Theorem}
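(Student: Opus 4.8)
The plan is to imitate the proof of the previous theorem; the only genuinely new issue is that one must first verify that $\Delta=\mu^{*}|_{A^{\circ}}$ really takes values in $A^{\circ}\otimes A^{\circ}$ and not merely in the larger space $A^{*}\otimes A^{*}$. Once that well-definedness is established, every coalgebra axiom is obtained by transposing the corresponding BiHom-associative identity, and the counit computation is a short dualization.

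First I would record that, with the notion of ideal adopted here, every cofinite ideal $I$ of $A$ is stable under $\alpha$ and $\beta$, so the canonical projection $\pi\colon A\to A/I$ is a morphism of BiHom-associative algebras onto the finite-dimensional BiHom-associative algebra $A/I$. Given $f\in A^{\circ}$ vanishing on such an $I$, factor it as $f=\bar f\circ\pi$ with $\bar f\in(A/I)^{*}$. Transposing the identity $\pi\circ\mu_{A}=\mu_{A/I}\circ(\pi\otimes\pi)$ yields $\mu_{A}^{*}\circ\pi^{*}=(\pi^{*}\otimes\pi^{*})\circ\mu_{A/I}^{*}$, whence
\[
\Delta(f)=\mu_{A}^{*}(f)=(\pi^{*}\otimes\pi^{*})\big(\mu_{A/I}^{*}(\bar f)\big).
\]
Now $\mu_{A/I}^{*}(\bar f)$ lies in $(A/I)^{*}\otimes(A/I)^{*}$ automatically, because $A/I$ is finite-dimensional and hence $(A/I\otimes A/I)^{*}=(A/I)^{*}\otimes(A/I)^{*}$; moreover $\pi^{*}$ maps $(A/I)^{*}=(A/I)^{\circ}$ into $A^{\circ}$ by the already-quoted fact that dual maps of morphisms preserve finite duals (its image is the annihilator of $I$). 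Therefore $\Delta(f)\in A^{\circ}\otimes A^{\circ}$, so $\Delta$ is well-defined. I expect this to be the main obstacle, and it is exactly the step where finite-dimensionality of the quotients is essential.

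With $\Delta$ well-defined, the coalgebra axioms follow by transposition, just as in the preceding proof. Writing $\psi=\beta^{\circ}=\beta^{*}|_{A^{\circ}}$ and $\omega=\alpha^{\circ}=\alpha^{*}|_{A^{\circ}}$, the relation $\psi\circ\omega=\omega\circ\psi$ is the transpose of $\alpha\circ\beta=\beta\circ\alpha$; the comultiplicativity relations $(\psi\otimes\psi)\circ\Delta=\Delta\circ\psi$ and $(\omega\otimes\omega)\circ\Delta=\Delta\circ\omega$ are the transposes of the multiplicativity identities $\beta\circ\mu=\mu\circ(\beta\otimes\beta)$ and $\alpha\circ\mu=\mu\circ(\alpha\otimes\alpha)$; and the BiHom-coassociativity $(\Delta\otimes\psi)\circ\Delta=(\omega\otimes\Delta)\circ\Delta$ is the transpose of BiHom-associativity in the form $\mu\circ(\mu\otimes\beta)=\mu\circ(\alpha\otimes\mu)$. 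Each identity is checked by pairing both sides against an arbitrary tensor of elements of $A$, exactly as in the proof of the previous theorem.

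Finally, for the counital statement I would assume $A$ unital with unit $1_{A}$ and set $\varepsilon(f)=f(1_{A})$. Since $\alpha(1_{A})=\beta(1_{A})=1_{A}$, one gets $\varepsilon\circ\psi=\varepsilon$ and $\varepsilon\circ\omega=\varepsilon$ immediately. Writing $\Delta(f)=f_{1}\otimes f_{2}$, so that $f(xy)=f_{1}(x)f_{2}(y)$ for all $x,y\in A$, and using $x1_{A}=\alpha(x)$ and $1_{A}x=\beta(x)$, I would compute for $x\in A$ that $\big((\operatorname{id}_{A^{\circ}}\otimes\varepsilon)\circ\Delta\big)(f)(x)=f_{1}(x)f_{2}(1_{A})=f(\alpha(x))=\omega(f)(x)$ and $\big((\varepsilon\otimes\operatorname{id}_{A^{\circ}})\circ\Delta\big)(f)(x)=f_{1}(1_{A})f_{2}(x)=f(\beta(x))=\psi(f)(x)$, which are precisely the two remaining counit conditions. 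This completes the plan.
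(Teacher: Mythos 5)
Your proposal is correct and follows essentially the same route as the paper, which does not write out this proof at all but merely states that ``a similar proof to the one of the previous theorem'' applies, after having prepared exactly the ingredients you use (cofinite ideals stable under $\alpha$ and $\beta$, and the fact that $f^{*}(B^{\circ})\subset A^{\circ}$ for morphisms $f$). Your factorization $f=\bar f\circ\pi$ through the finite-dimensional quotient $A/I$ to show $\Delta(f)\in A^{\circ}\otimes A^{\circ}$, followed by transposition of the BiHom-associativity, multiplicativity and unit identities, is precisely the intended filling-in of that sketch.
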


We can now def\/ine the notion of BiHom-bialgebra.

\begin{Definition}
A \textit{BiHom-bialgebra} is a 7-tuple $(H, \mu , \Delta, \alpha , \beta ,
\psi , \omega )$, with the property that $(H, \mu , \alpha , \beta )$ is a
BiHom-associative algebra, $(H, \Delta , \psi , \omega )$ is a~BiHom-coassociative coalgebra and moreover the following relations are
satisf\/ied, for all $h, h^{\prime }\in H$:
\begin{gather}
 \Delta (hh^{\prime })=h_1h^{\prime }_1\otimes h_2h^{\prime }_2,
\label{ghombia2} \\
 \alpha \circ \psi =\psi \circ \alpha , \qquad \alpha \circ \omega =\omega
\circ \alpha , \qquad \beta \circ \psi =\psi \circ \beta , \qquad \beta \circ
\omega =\omega \circ \beta , \nonumber\\
 (\alpha \otimes \alpha )\circ \Delta =\Delta \circ \alpha , \qquad (\beta
\otimes \beta )\circ \Delta =\Delta \circ \beta , \nonumber\\
 \psi (hh^{\prime })=\psi (h)\psi (h^{\prime }), \qquad \omega (hh^{\prime
})=\omega (h)\omega (h^{\prime }).\nonumber
\end{gather}

We say that $H$ is a unital and counital BiHom-bialgebra if, in addition, it
admits a unit $1_H$ and a counit $\varepsilon _H$ such that
\begin{gather*}
 \Delta(1_H)=1_H\otimes 1_H,\qquad \varepsilon _H(1_H)=1, \qquad \psi
(1_H)=1_H, \qquad \omega (1_H)=1_H, \\
 \varepsilon _H\circ \alpha =\varepsilon _H, \qquad \varepsilon _H\circ
\beta =\varepsilon _H, \qquad \varepsilon _H(hh^{\prime })=\varepsilon
_H(h)\varepsilon _H(h^{\prime }), \qquad \forall \,h, h^{\prime }\in H.
\end{gather*}
\end{Definition}

Let us record the formula expressing the BiHom-coassociativity of $\Delta $:
\begin{gather}
 \Delta (h_1)\otimes \psi (h_2)=\omega (h_1)\otimes \Delta (h_2),
\qquad \forall \,h\in H.  \label{ghombia1}
\end{gather}

\begin{Remark}
Obviously, a BiHom-bialgebra $(H, \mu , \Delta, \alpha , \beta , \psi ,
\omega )$ with $\alpha =\beta =\psi =\omega $ reduces to a Hom-bialgebra, as
used for instance in~\cite{mp1,mp2}, while a~BiHom-bialgebra for
which $\psi =\omega =\alpha ^{-1} =\beta ^{-1}$ reduces to a~monoidal
Hom-bialgebra, in the terminology of~\cite{stef}.
\end{Remark}

We see now that analogues of Yau's twisting principle hold for the
BiHom-structures we def\/ined (proofs are straightforward and left to the
reader):

\begin{Proposition}\label{yautwistdiverse}\quad
\begin{enumerate}\itemsep=0pt
 \item[$(i)$] Let $(A, \mu )$ be an associative algebra and $
\alpha , \beta \colon A\rightarrow A$ two commuting algebra endomorphisms. Define
a new multiplication $\mu _{(\alpha , \beta )}\colon A\otimes A\rightarrow A$, by $
\mu _{(\alpha , \beta )}:= \mu \circ (\alpha \otimes \beta )$. Then $(A, \mu
_{(\alpha , \beta )}, \alpha , \beta )$ is a BiHom-associative algebra,
denoted by $A_{(\alpha , \beta )}$. If $A$ is unital with unit~$1_A$, then $A_{(\alpha , \beta )}$ is also unital with unit~$1_A$.
\item[$(ii)$] Let $(C, \Delta )$ be a coassociative coalgebra and $\psi , \omega
\colon C\rightarrow C$ two commuting coalgebra endomorphisms. Define a new
comultiplication $\Delta _{(\psi , \omega ) }\colon C\rightarrow C\otimes C$, by $\Delta _{(\psi , \omega ) }:=(\omega \otimes \psi )\circ \Delta $. Then $(C,
\Delta _{(\psi , \omega ) }, \psi , \omega )$ is a BiHom-coassociative
coalgebra, denoted by $C_{(\psi , \omega )}$. If $C$ is counital with counit
$\varepsilon _C$, then $C_{(\psi , \omega )}$ is also counital with counit $\varepsilon _C$.
\item[$(iii)$] Let $(H, \mu , \Delta )$ be a bialgebra and $\alpha , \beta , \psi ,
\omega \colon H\rightarrow H$ bialgebra endomorphisms such that any two of them
commute. If we define $\mu _{(\alpha , \beta )}$ and $\Delta _{(\psi ,
\omega ) }$ as in $(i)$ and $(ii)$, then $H_{(\alpha , \beta , \psi , \omega
)}:=(H, \mu _{(\alpha , \beta )}, \Delta _{(\psi , \omega ) }, \alpha ,
\beta , \psi , \omega )$ is a BiHom-bialgebra.
\end{enumerate}
\end{Proposition}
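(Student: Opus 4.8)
The plan is to verify each of the three statements by direct computation, in every case drawing on only three ingredients: the ordinary (co)associativity and (co)algebra-map hypotheses on the untwisted structure, the multiplicativity/comultiplicativity of the twisting maps, and the assumption that the relevant maps pairwise commute. I will treat $(i)$ first, obtain $(ii)$ as its formal dual, and finally assemble $(iii)$ from the two.

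For $(i)$, write $a\ast b=\mu_{(\alpha,\beta)}(a\otimes b)=\alpha(a)\beta(b)$. Commutativity $\alpha\beta=\beta\alpha$ is a hypothesis. Multiplicativity of $\alpha$ for $\ast$ is immediate: $\alpha(a\ast b)=\alpha(\alpha(a)\beta(b))=\alpha^2(a)\,\alpha\beta(b)=\alpha(a)\ast\alpha(b)$, using that $\alpha$ is an algebra map together with $\alpha\beta=\beta\alpha$, and symmetrically for $\beta$. For BiHom-associativity I would expand both sides and check they coincide:
\[
\alpha(a)\ast(b\ast c)=\alpha^2(a)\,\alpha\beta(b)\,\beta^2(c)=(a\ast b)\ast\beta(c),
\]
the common value being reached by applying $\beta$ (resp.\ $\alpha$) as an algebra map to $b\ast c$ (resp.\ $a\ast b$), moving it past the other map via commutativity, and then dropping parentheses by ordinary associativity of $\mu$. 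The unital claim is a one-line check: $a\ast 1_A=\alpha(a)\beta(1_A)=\alpha(a)$ and $1_A\ast a=\alpha(1_A)\beta(a)=\beta(a)$, since $\alpha,\beta$ fix $1_A$.

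Part $(ii)$ is the exact transpose of $(i)$, so I would set $\Delta'=\Delta_{(\psi,\omega)}=(\omega\otimes\psi)\circ\Delta$ and run the dual computation. Comultiplicativity of $\psi$ and $\omega$ for $\Delta'$ follows from $\Delta\psi=(\psi\otimes\psi)\Delta$, $\Delta\omega=(\omega\otimes\omega)\Delta$ and commutativity. For BiHom-coassociativity I would expand both $(\Delta'\otimes\psi)\circ\Delta'$ and $(\omega\otimes\Delta')\circ\Delta'$ and verify they agree; writing $(\Delta\otimes\mathrm{id})\Delta(c)=(\mathrm{id}\otimes\Delta)\Delta(c)=c_1\otimes c_2\otimes c_3$, both sides reduce to the common value
\[
\omega^2(c_1)\otimes\omega\psi(c_2)\otimes\psi^2(c_3),
\]
after using coassociativity and pushing $\psi,\omega$ through one another. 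The counital statement mirrors the unital one in $(i)$.

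For $(iii)$ the BiHom-associative and BiHom-coassociative parts are exactly $(i)$ and $(ii)$, and the auxiliary commutation and (co)multiplicativity relations of the BiHom-bialgebra axioms follow from the pairwise commuting hypothesis; for instance $\psi(h\ast h')=\psi(\alpha(h)\beta(h'))=\psi\alpha(h)\,\psi\beta(h')=\alpha\psi(h)\,\beta\psi(h')=\psi(h)\ast\psi(h')$, and likewise $(\alpha\otimes\alpha)\circ\Delta'=\Delta'\circ\alpha$ follows from $\Delta\alpha=(\alpha\otimes\alpha)\Delta$ and commutativity. The only substantive point, and the step I expect to be the main obstacle, is the twisted bialgebra compatibility: with $\ast$ and $\Delta'$ in place, one must show that $\Delta'(h\ast h')$ equals the twisted product of the twisted coproducts of $h$ and $h'$. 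I would evaluate the left-hand side by expanding $\Delta(\alpha(h)\beta(h'))=\Delta(\alpha(h))\,\Delta(\beta(h'))$ (multiplicativity of $\Delta$), rewriting $\Delta(\alpha(h))=\alpha(h_1)\otimes\alpha(h_2)$ and $\Delta(\beta(h'))=\beta(h'_1)\otimes\beta(h'_2)$ since $\alpha,\beta$ are bialgebra maps (here $h_1\otimes h_2$ and $h'_1\otimes h'_2$ denote the \emph{original} coproducts), multiplying componentwise in $H\otimes H$, and finally applying $\omega\otimes\psi$ and invoking multiplicativity of $\omega,\psi$; this produces
\[
\omega(\alpha(h_1)\beta(h'_1))\otimes\psi(\alpha(h_2)\beta(h'_2)).
\]
Expanding the right-hand side directly gives the same expression once each occurrence of $\alpha$ or $\beta$ is commuted past $\omega$ or $\psi$, which completes the verification.
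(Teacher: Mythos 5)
Your proof is correct, and it is exactly the ``straightforward'' direct verification that the paper intends: the paper itself prints no proof for this proposition, explicitly leaving it to the reader. All the key identities you isolate (the common value $\alpha^2(a)\,\alpha\beta(b)\,\beta^2(c)$ in $(i)$, the common value $\omega^2(c_1)\otimes\omega\psi(c_2)\otimes\psi^2(c_3)$ in $(ii)$, and the comparison of $\omega(\alpha(h_1)\beta(h'_1))\otimes\psi(\alpha(h_2)\beta(h'_2))$ with the twisted product of twisted coproducts in $(iii)$) check out against the paper's definitions, with your appeal to $\alpha(1_A)=\beta(1_A)=1_A$ being the standard unital-endomorphism convention implicit in the statement.
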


More generally, a BiHom-bialgebra $(H, \mu, \Delta , \alpha , \beta , \psi ,
\omega )$ and multiplicative and comultiplicative linear maps $
\alpha^{\prime }$, $\beta^{\prime }$, $\psi^{\prime }$, $\omega^{\prime }$ such
that any two of the maps $\alpha$, $\beta$, $\psi$, $\omega$, $\alpha^{\prime }$,
$\beta^{\prime }$, $\psi^{\prime }$, $\omega^{\prime }$ commute, give rise to a
new BiHom-bialgebra $(H, \mu \circ(\alpha^{\prime }\otimes \beta^{\prime }),
(\omega ^{\prime }\otimes \psi ^{\prime })\circ \Delta ,
\alpha\circ\alpha^{\prime }, \beta\circ\beta^{\prime },
\psi\circ\psi^{\prime }, \omega\circ\omega^{\prime })$. Hence, if the maps $
\alpha$, $\beta$, $\psi$, $\omega$ are invertible, one can untwist the
BiHom-bialgebra and get a~bialgebra by taking $\alpha^{\prime }= \alpha^{-1}$, $\beta^{\prime}=\beta^{-1}$, $\psi^{\prime}=\psi^{-1}$, $\omega^{\prime}=\omega^{-1}$.

\begin{Proposition}
\label{TwistLeftMod} Let $(A, \mu _A)$ be an associative algebra and $\alpha
_A, \beta _A\colon A\rightarrow A$ two commuting algebra endomorphisms. Assume
that $M$ is a left $A$-module, with action $A\otimes M\rightarrow M$, $
a\otimes m \mapsto a\cdot m$. Let $\alpha _M, \beta _M\colon M\rightarrow M$ be
two commuting linear maps such that $\alpha _M(a\cdot m)=\alpha _A(a)\cdot
\alpha _M(m)$ and $\beta _M(a\cdot m)=\beta _A(a)\cdot \beta _M(m)$, for all
$a\in A$, $m\in M$. Then $(M, \alpha _M, \beta _M)$ becomes a left module
over $A_{(\alpha _A, \beta _A)}$, with action $A_{(\alpha _A, \beta
_A)}\otimes M\rightarrow M$, $a\otimes m\mapsto a\triangleright m:= \alpha
_A(a)\cdot \beta _M(m)$.
\end{Proposition}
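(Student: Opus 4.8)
The plan is to verify directly the four axioms in the definition of a left module over the BiHom-associative algebra $A_{(\alpha_A,\beta_A)}$, whose multiplication is $a\ast a':=\alpha_A(a)\beta_A(a')$ and whose structure maps are $\alpha_A$ and $\beta_A$. Concretely, writing $a\triangleright m=\alpha_A(a)\cdot\beta_M(m)$ for the proposed action, I must check that $\alpha_M\circ\beta_M=\beta_M\circ\alpha_M$ together with the analogues of (\ref{ghommod11}), (\ref{ghommod12}) and (\ref{ghommod2}) for $\triangleright$, where the role of the ambient algebra is now played by $A_{(\alpha_A,\beta_A)}$ (so its structure maps are again $\alpha_A$, $\beta_A$).

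The commuting relation $\alpha_M\circ\beta_M=\beta_M\circ\alpha_M$ is part of the hypotheses, so nothing is required there. For the analogue of (\ref{ghommod11}) I would compute $\alpha_M(a\triangleright m)=\alpha_M(\alpha_A(a)\cdot\beta_M(m))=\alpha_A^2(a)\cdot\alpha_M\beta_M(m)$, using the assumption $\alpha_M(b\cdot n)=\alpha_A(b)\cdot\alpha_M(n)$, and compare with $\alpha_A(a)\triangleright\alpha_M(m)=\alpha_A^2(a)\cdot\beta_M\alpha_M(m)$; equality then follows from the commuting of $\alpha_M$ and $\beta_M$. The verification of (\ref{ghommod12}) is entirely parallel: both sides collapse to $\alpha_A\beta_A(a)\cdot\beta_M^2(m)$ once one applies the $\beta_M$-compatibility and the commuting of $\alpha_A$ and $\beta_A$.

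The substance of the argument, and the step I expect to demand the most care, is the module-associativity axiom (\ref{ghommod2}), that is, $\alpha_A(a)\triangleright(a'\triangleright m)=(a\ast a')\triangleright\beta_M(m)$. On the left I would unfold $a'\triangleright m=\alpha_A(a')\cdot\beta_M(m)$, push $\beta_M$ inside via the $\beta_M$-compatibility, and finally invoke the \emph{ordinary} associativity $(bc)\cdot n=b\cdot(c\cdot n)$ of the original $A$-action to reach $(\alpha_A^2(a)\,\beta_A\alpha_A(a'))\cdot\beta_M^2(m)$. On the right I would substitute $a\ast a'=\alpha_A(a)\beta_A(a')$ and use that $\alpha_A$ is an algebra endomorphism to obtain $(\alpha_A^2(a)\,\alpha_A\beta_A(a'))\cdot\beta_M^2(m)$. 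The two expressions agree precisely because $\alpha_A$ and $\beta_A$ commute; this is the one point where the commutativity of the twisting endomorphisms is genuinely used, and the only real obstacle is the bookkeeping of which factor carries which composite of $\alpha_A$ and $\beta_A$. Assembling the four verifications completes the proof.
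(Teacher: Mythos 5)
Your proof is correct and matches what the paper intends: the paper states this proposition without proof (as one of the twisting results whose verification is "straightforward and left to the reader"), and your argument is exactly that straightforward verification. All four module axioms are checked correctly, with the commutativity of $\alpha_A$ and $\beta_A$ entering precisely where you say it does, in reconciling $\beta_A\alpha_A(a')$ with $\alpha_A\beta_A(a')$ in the BiHom-associativity axiom.
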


\begin{Proposition}
Let $(C, \Delta _C)$ be a coassociative coalgebra and $\psi _C, \omega
_C\colon C\rightarrow C$ two commuting coalgebra endomorphisms. Assume that~$M$ is
a right $C$-comodule, with coaction $\rho \colon M\rightarrow M\otimes C$, $\rho
(m)=m_{(0)}\otimes m_{(1)}$, for all $m\in M$. Let $\psi _M, \omega _M\colon
M\rightarrow M$ be two commuting linear maps such that $(\psi _M\otimes \psi
_C)\circ \rho =\rho \circ \psi _M$ and $(\omega _M\otimes \omega _C)\circ
\rho =\rho \circ \omega _M$. Then $(M, \psi _M, \omega _M)$ becomes a right
comodule over the BiHom-coassociative coalgebra $C_{(\psi _C, \omega _C)}$,
with coaction
\begin{gather*}
 M\rightarrow M\otimes C_{(\psi _C, \omega _C)}, \qquad m\mapsto
m_{\langle 0\rangle}\otimes m_{\langle 1\rangle }:= \omega _M(m_{(0)})\otimes \psi _C(m_{(1)}).
\end{gather*}
\end{Proposition}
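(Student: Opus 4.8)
The plan is to check, one by one, the four defining conditions of a right comodule over the BiHom-coassociative coalgebra $C_{(\psi_C,\omega_C)}$ for the triple $(M,\psi_M,\omega_M)$ equipped with the proposed coaction $\tilde\rho(m):=m_{\langle 0\rangle}\otimes m_{\langle 1\rangle}=\omega_M(m_{(0)})\otimes\psi_C(m_{(1)})$. Throughout I keep in mind that the structure maps of $C_{(\psi_C,\omega_C)}$ are again $\psi_C$ and $\omega_C$, whereas its comultiplication is $\Delta_{(\psi_C,\omega_C)}=(\omega_C\otimes\psi_C)\circ\Delta_C$. The very first condition, $\psi_M\circ\omega_M=\omega_M\circ\psi_M$, is one of the hypotheses, so there is nothing to prove there.

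First I would dispatch the two intertwining relations $(\psi_M\otimes\psi_C)\circ\tilde\rho=\tilde\rho\circ\psi_M$ and $(\omega_M\otimes\omega_C)\circ\tilde\rho=\tilde\rho\circ\omega_M$. Each reduces to a one-line substitution: to handle the first, I would expand $\tilde\rho(\psi_M(m))$ and rewrite $\rho(\psi_M(m))=\psi_M(m_{(0)})\otimes\psi_C(m_{(1)})$ using the hypothesis $(\psi_M\otimes\psi_C)\circ\rho=\rho\circ\psi_M$, so that both sides collapse to $\psi_M\omega_M(m_{(0)})\otimes\psi_C^2(m_{(1)})$ once the commutation $\omega_M\psi_M=\psi_M\omega_M$ is invoked. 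The $\omega$-relation is entirely symmetric, the only extra ingredient being $\psi_C\omega_C=\omega_C\psi_C$.

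The heart of the matter, and the step I expect to require the only genuine bookkeeping, is the coassociativity axiom $(\omega_M\otimes\Delta_{(\psi_C,\omega_C)})\circ\tilde\rho=(\tilde\rho\otimes\psi_C)\circ\tilde\rho$. My main tool will be the coassociativity of the original coaction, which in extended Sweedler notation I would record as $m_{(0)(0)}\otimes m_{(0)(1)}\otimes m_{(1)}=m_{(0)}\otimes(m_{(1)})_1\otimes(m_{(1)})_2=:m_{(0)}\otimes m_{(1)}\otimes m_{(2)}$, together with the fact that $\psi_C$ and $\omega_C$ are coalgebra endomorphisms, so that $\Delta_C\circ\psi_C=(\psi_C\otimes\psi_C)\circ\Delta_C$. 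Expanding the left-hand side gives $\omega_M^2(m_{(0)})\otimes\omega_C\psi_C(m_{(1)})\otimes\psi_C^2(m_{(2)})$; expanding the right-hand side, where I rewrite $\rho(\omega_M(m_{(0)}))=\omega_M(m_{(0)(0)})\otimes\omega_C(m_{(0)(1)})$ by $(\omega_M\otimes\omega_C)\circ\rho=\rho\circ\omega_M$ and then apply the coassociativity identity, gives $\omega_M^2(m_{(0)})\otimes\psi_C\omega_C(m_{(1)})\otimes\psi_C^2(m_{(2)})$. The two expressions differ only in their middle factor, and they coincide precisely because $\psi_C$ and $\omega_C$ commute; this commutation is the crux of the whole verification. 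Once the coassociativity axiom is established, all four conditions hold and $(M,\psi_M,\omega_M)$ with coaction $\tilde\rho$ is a right $C_{(\psi_C,\omega_C)}$-comodule, as claimed.
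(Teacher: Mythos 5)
Your verification is correct: all four comodule axioms for $\tilde\rho(m)=\omega_M(m_{(0)})\otimes\psi_C(m_{(1)})$ over $C_{(\psi_C,\omega_C)}$ check out exactly as you compute them, with the two intertwining conditions following from the hypotheses plus commutation, and the BiHom-coassociativity axiom $(\omega_M\otimes\Delta_{(\psi_C,\omega_C)})\circ\tilde\rho=(\tilde\rho\otimes\psi_C)\circ\tilde\rho$ reducing, via coassociativity of $\rho$ and comultiplicativity of $\psi_C$, to the commutation $\psi_C\omega_C=\omega_C\psi_C$. This is precisely the straightforward direct verification that the paper omits (the proof is left to the reader there), so your argument supplies exactly the intended proof.
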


We describe in what follows primitive elements of a BiHom-bialgebra.

Let $(H, \mu , \Delta, \alpha , \beta , \psi , \omega )$ be a unital and
counital BiHom-bialgebra with a unit $1=\eta(1)$ and a counit $\varepsilon$.
We assume that $\alpha $ and $\beta$ are bijective.

An element $x\in H$ is called \textit{primitive} if $\Delta (x)=1\otimes x +
x\otimes 1$.

\begin{Lemma}
Let $x$ be a primitive element in~$H$. Then $\varepsilon(x)1=\omega(x)-x=
\psi(x)-x$, and therefore $\omega(x)=\psi(x)$. Moreover, $\alpha^p\beta^q(x)$
is also a primitive element for any $p,q\in\mathbb{Z}$.
\end{Lemma}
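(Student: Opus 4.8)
The plan is to read everything off from the counit axioms and the $\alpha,\beta$-compatibility of $\Delta$, so the whole argument reduces to two short direct computations.

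For the first two equalities I would apply the counit conditions $(\operatorname{id}_H\otimes\varepsilon)\circ\Delta=\omega$ and $(\varepsilon\otimes\operatorname{id}_H)\circ\Delta=\psi$ to the primitive element $x$. Using $\Delta(x)=1\otimes x+x\otimes 1$ together with $\varepsilon(1)=1$, the first condition gives
\[
\omega(x)=(\operatorname{id}_H\otimes\varepsilon)(1\otimes x+x\otimes 1)=\varepsilon(x)1+x,
\]
and the second gives
\[
\psi(x)=(\varepsilon\otimes\operatorname{id}_H)(1\otimes x+x\otimes 1)=x+\varepsilon(x)1.
\]
Subtracting $x$ from each identity yields $\omega(x)-x=\varepsilon(x)1=\psi(x)-x$, and comparing the two right-hand sides immediately gives $\omega(x)=\psi(x)$.

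For the last assertion I would use the comultiplicativity of the structure maps, namely $(\alpha\otimes\alpha)\circ\Delta=\Delta\circ\alpha$ and $(\beta\otimes\beta)\circ\Delta=\Delta\circ\beta$. Since $\alpha$ and $\beta$ are assumed bijective, composing with their inverses shows that these relations hold as well with $\alpha,\beta$ replaced by $\alpha^{-1},\beta^{-1}$, hence with $\alpha^p,\beta^q$ for all $p,q\in\mathbb{Z}$; as $\alpha$ and $\beta$ commute this gives $(\alpha^p\beta^q\otimes\alpha^p\beta^q)\circ\Delta=\Delta\circ\alpha^p\beta^q$. Applying this to $x$ produces
\[
\Delta(\alpha^p\beta^q(x))=\alpha^p\beta^q(1)\otimes\alpha^p\beta^q(x)+\alpha^p\beta^q(x)\otimes\alpha^p\beta^q(1).
\]
Because $\alpha(1)=\beta(1)=1$ and the maps are bijective we have $\alpha^p\beta^q(1)=1$, so the right-hand side equals $1\otimes\alpha^p\beta^q(x)+\alpha^p\beta^q(x)\otimes 1$, which is exactly the statement that $\alpha^p\beta^q(x)$ is primitive.

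There is no genuine obstacle here; the computation is entirely routine. The only points needing a little care are the use of $\varepsilon(1)=1$ in the first step, and, in the last part, the use of bijectivity of $\alpha$ and $\beta$ both to extend the compatibility relations to negative exponents and to conclude that $\alpha^p\beta^q$ fixes the unit for all integers $p,q$.
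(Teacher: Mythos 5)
Your proof is correct and follows essentially the same route as the paper: the first part is the identical counit computation, and the second part is the paper's observation that $\alpha$, $\beta$ are comultiplicative and fix the unit, which you merely spell out in more detail (extending to negative exponents via bijectivity). No issues.
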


\begin{proof}
By the counit property, we have $\omega( x)=(\operatorname{id} _H\otimes
\varepsilon)(1\otimes x+x\otimes 1)=\varepsilon
(x)1+\varepsilon(1)x=\varepsilon (x)1+x$, and similarly $\psi(
x)=\varepsilon (x)1+x$.

Since $\alpha$ and $\beta$ are comultiplicative maps and $\alpha^p\beta^q(1)=1$, it follows that $\alpha^p\beta^q(x)$ is a~primitive
element whenever~$x$ is a~primitive element.
\end{proof}

\begin{Proposition}
Let $(H, \mu , \Delta, \alpha , \beta , \psi , \omega )$ be a unital and
counital BiHom-bialgebra, with unit $1=\eta(1)$ and counit~$\varepsilon $.
Assume that $\alpha $ and $\beta $ are bijective. If $x$ and $y$ are two
primitive elements in~$H$, then the commutator $[x,y]=x y -\alpha^{-1}\beta
(y) \alpha\beta^{-1}(x)$ is also a primitive element.

Consequently, the set of all primitive elements of $H$, denoted by $\operatorname{Prim}(H)$, has a structure of BiHom-Lie algebra.
\end{Proposition}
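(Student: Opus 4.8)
The plan is to compute $\Delta([x,y])$ directly and verify that it equals $1\otimes[x,y]+[x,y]\otimes 1$. Two preliminary observations make the computation manageable. First, by the previous lemma $\alpha^p\beta^q$ sends primitive elements to primitive elements for all $p,q\in\mathbb{Z}$; in particular, since $\alpha$ and $\beta$ are bijective, the elements $u:=\alpha^{-1}\beta(y)$ and $v:=\alpha\beta^{-1}(x)$ occurring in the commutator are again primitive, so $\Delta(u)=1\otimes u+u\otimes 1$ and $\Delta(v)=1\otimes v+v\otimes 1$. Second, I would use the unital axioms $1h=\beta(h)$ and $h1=\alpha(h)$ together with $\alpha(1)=\beta(1)=1$ to simplify the products involving the unit.

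Next I would expand $\Delta(xy)$ using the multiplicativity axiom~(\ref{ghombia2}), namely $\Delta(xy)=x_1y_1\otimes x_2y_2$, substituting the primitive expansions of $\Delta(x)$ and $\Delta(y)$. The four resulting terms simplify, via the unit axioms above, to
\begin{gather*}
\Delta(xy)=1\otimes xy+\beta(y)\otimes\alpha(x)+\alpha(x)\otimes\beta(y)+xy\otimes 1.
\end{gather*}
The same computation applied to $uv$, using $\alpha(u)=\beta(y)$ and $\beta(v)=\alpha(x)$, yields
\begin{gather*}
\Delta(uv)=1\otimes uv+\alpha(x)\otimes\beta(y)+\beta(y)\otimes\alpha(x)+uv\otimes 1.
\end{gather*}

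The decisive point is that the two mixed (``cross'') terms in these two expansions coincide, so that upon forming $\Delta([x,y])=\Delta(xy)-\Delta(uv)$ they cancel, leaving $1\otimes(xy-uv)+(xy-uv)\otimes 1=1\otimes[x,y]+[x,y]\otimes 1$. Hence $[x,y]$ is primitive. I expect this cancellation to be the crux: the particular twisting by $\alpha^{-1}\beta$ and $\alpha\beta^{-1}$ in the definition of the commutator is exactly what forces the cross terms of $\Delta(xy)$ and $\Delta(uv)$ to match, and the main care needed is in tracking how the factors $\alpha$ and $\beta$ are produced by the unit axioms.

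Finally, to obtain the BiHom-Lie algebra structure on $\operatorname{Prim}(H)$, I would observe that $\operatorname{Prim}(H)$ is a linear subspace of $H$ (being the kernel of the linear map $h\mapsto\Delta(h)-1\otimes h-h\otimes 1$) which is stable under $\alpha$ and $\beta$ (again by the previous lemma) and, by the computation above, stable under the bracket $[-]$ of the BiHom-Lie algebra $L(H)$ associated to the BiHom-associative algebra $(H,\mu,\alpha,\beta)$ as in Proposition~\ref{croset}. Since skew-symmetry and the BiHom-Jacobi identity already hold in $L(H)$, they hold \emph{a fortiori} on the subspace $\operatorname{Prim}(H)$, so $(\operatorname{Prim}(H),[-],\alpha,\beta)$ is a BiHom-Lie subalgebra of $L(H)$.
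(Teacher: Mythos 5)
Your proposal is correct and follows essentially the same route as the paper: expand $\Delta(xy)$ and $\Delta(\alpha^{-1}\beta(y)\,\alpha\beta^{-1}(x))$ using multiplicativity of $\Delta$, the primitivity of $\alpha^{-1}\beta(y)$ and $\alpha\beta^{-1}(x)$ (from the preceding lemma), and the unit axioms, then observe that the cross terms cancel, and finally invoke Proposition~\ref{croset} to get the BiHom-Lie structure on $\operatorname{Prim}(H)$. Your closing paragraph is in fact slightly more explicit than the paper's one-line conclusion, since you also note the stability of $\operatorname{Prim}(H)$ under $\alpha$ and $\beta$ needed for the restriction to be a BiHom-Lie subalgebra of $L(H)$.
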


\begin{proof}
We compute
\begin{gather*}
\Delta (xy) = \Delta (x) \Delta (y)
=(1\otimes x+x\otimes 1)(1\otimes y+y\otimes 1) \\
\hphantom{\Delta (xy)}{}
= 1\otimes x y + \beta(y)\otimes \alpha(x)+\alpha(x)\otimes \beta(y) + x
y\otimes 1,
\\
\Delta \big(\alpha^{-1}\beta (y) \alpha\beta^{-1}(x)\big)   =  \Delta
\big(\alpha^{-1}\beta (y)\big) \Delta \big( \alpha\beta^{-1}(x)\big) \\
\hphantom{\Delta \big(\alpha^{-1}\beta (y) \alpha\beta^{-1}(x)\big)}{}
 = \big(1\otimes \alpha^{-1}\beta (y)+\alpha^{-1}\beta (y)\otimes 1\big)\big(1\otimes
\alpha\beta^{-1}(x)+ \alpha\beta^{-1}(x)\otimes 1\big) \\
\hphantom{\Delta \big(\alpha^{-1}\beta (y) \alpha\beta^{-1}(x)\big)}{}
 = 1\otimes \alpha^{-1}\beta (y) \alpha\beta^{-1}(x) +
\beta\big(\alpha\beta^{-1}(x)\big)\otimes \alpha\big(\alpha^{-1}\beta (y)\big) \\
\hphantom{\Delta \big(\alpha^{-1}\beta (y) \alpha\beta^{-1}(x)\big)=}{}
 +\alpha\big(\alpha^{-1}\beta (y)\big)\otimes \beta\big( \alpha\beta^{-1}(x)\big) +
\alpha^{-1}\beta (y) \alpha\beta^{-1}(x)\otimes 1 \\
\hphantom{\Delta \big(\alpha^{-1}\beta (y) \alpha\beta^{-1}(x)\big)}{}
 = 1\otimes \alpha^{-1}\beta (y) \alpha\beta^{-1}(x) + \alpha(x)\otimes
\beta (y) \\
\hphantom{\Delta \big(\alpha^{-1}\beta (y) \alpha\beta^{-1}(x)\big)=}{}
 +\beta (y)\otimes \alpha(x) + \alpha^{-1}\beta (y)
\alpha\beta^{-1}(x)\otimes 1.
\end{gather*}
Therefore, we have
\begin{gather*}
\Delta([x,y])=\Delta (xy)-\Delta \big(\alpha^{-1}\beta (y) \alpha\beta^{-1}(x)\big)=1
\otimes[x,y]+[x,y]\otimes 1,
\end{gather*}
which means that $Prim(H)$ is closed under the bracket multiplication $[\cdot,\cdot]$. Hence, $\operatorname{Prim}(H)$ is a BiHom-Lie algebra by Proposition~\ref{croset}.
\end{proof}

Now, we introduce the notion of $H$-module BiHom-algebra, where $H$ is a
BiHom-bialgebra.

\begin{Definition}
Let $(H, \mu _H, \Delta _H, \alpha _H, \beta _H, \psi _H, \omega _H)$ be a
BiHom-bialgebra for which the maps $\alpha _H$, $\beta _H$, $\psi _H$, $\omega _H$
are bijective. A BiHom-associative algebra $(A, \mu _A, \alpha _A, \beta _A)$
is called a \textit{left $H$-module BiHom-algebra} if $(A, \alpha _A,
\beta _A)$ is a left $H$-module, with action denoted by $H\otimes
A\rightarrow A$, $h\otimes a\mapsto h\cdot a$, such that the following
condition is satisf\/ied
\begin{gather}
 h\cdot (aa^{\prime })=\big[\alpha _H^{-1}\big(\omega _H^{-1}(h_1)\big)\cdot a\big] [\beta
_H^{-1}\big(\psi _H^{-1}(h_2)\big)\cdot a^{\prime }], \qquad \forall \, h\in H, \quad a,
a^{\prime }\in A.  \label{gmodalgcompat}
\end{gather}
\end{Definition}

\begin{Remark}
This concept contains as particular cases the concepts of module algebras
over a Hom-bialgebra, respectively monoidal Hom-bialgebra, introduced in~\cite{yau1}, respectively~\cite{chenwangzhang}.
\end{Remark}

The choice of (\ref{gmodalgcompat}) is motivated by the following result,
whose proof is also left to the reader:

\begin{Proposition}
\label{gyaumodalg} Let $(H, \mu _H, \Delta _H)$ be a bialgebra and $(A, \mu
_A)$ a left $H$-module algebra in the usual sense, with action denoted by $H\otimes A\rightarrow A$, $h\otimes a\mapsto h\cdot a$. Let $\alpha _H,
\beta _H, \psi _H, \omega _H\colon H\rightarrow H$ be bialgebra endomorphisms of $H $ such that any two of them commute; let $\alpha _A, \beta _A\colon A\rightarrow
A$ be two commuting algebra endomorphisms such that, for all $h\in H$ and $a\in A$, we have
\begin{gather*}
 \alpha _A(h\cdot a)=\alpha _H(h)\cdot \alpha _A(a)\qquad \text{and} \qquad \beta
_A(h\cdot a)=\beta _H(h)\cdot \beta _A(a).
\end{gather*}
If we consider the BiHom-bialgebra $H_{(\alpha _H, \beta _H, \psi _H, \omega
_H)}$ and the BiHom-associative algebra $A_{(\alpha _A, \beta _A)}$ as
defined before, then $A_{(\alpha _A, \beta _A)}$ is a left $H_{(\alpha _H,
\beta _H, \psi _H, \omega _H)}$-module BiHom-algebra in the above sense,
with action
\begin{gather*}
 H_{(\alpha _H, \beta _H, \psi _H, \omega _H)}\otimes A_{(\alpha _A, \beta
_A)}\rightarrow A_{(\alpha _A, \beta _A)}, \qquad  h\otimes a\mapsto
h\triangleright a:=\alpha _H(h)\cdot \beta _A(a).
\end{gather*}
\end{Proposition}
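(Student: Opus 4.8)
The plan is to reduce the statement to the two defining requirements of a left $H'$-module BiHom-algebra, where I abbreviate $H'=H_{(\alpha_H,\beta_H,\psi_H,\omega_H)}$ and $A'=A_{(\alpha_A,\beta_A)}$; here I assume, as the definition requires, that the four endomorphisms $\alpha_H,\beta_H,\psi_H,\omega_H$ are bijective. That $H'$ is a BiHom-bialgebra (with product $hh'\mapsto\alpha_H(h)\beta_H(h')$, comultiplication $\Delta_{(\psi_H,\omega_H)}(h)=\omega_H(h_1)\otimes\psi_H(h_2)$ and structure maps $\alpha_H,\beta_H,\psi_H,\omega_H$) and that $A'$ is a BiHom-associative algebra (with product $a\ast a'=\alpha_A(a)\beta_A(a')$ and structure maps $\alpha_A,\beta_A$) are already guaranteed by Proposition~\ref{yautwistdiverse}(iii) and~(i). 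Thus it remains to check (a) that $(A,\alpha_A,\beta_A)$ is a left module over the BiHom-associative algebra underlying $H'$ with action $h\triangleright a=\alpha_H(h)\cdot\beta_A(a)$, i.e.\ the module axioms \eqref{ghommod11}--\eqref{ghommod2}, and (b) the module-algebra compatibility \eqref{gmodalgcompat} for this data.

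First I would dispatch the module axioms. Using that $\alpha_H,\beta_H,\alpha_A,\beta_A$ are multiplicative and pairwise commute, together with the hypotheses $\alpha_A(h\cdot a)=\alpha_H(h)\cdot\alpha_A(a)$ and $\beta_A(h\cdot a)=\beta_H(h)\cdot\beta_A(a)$, conditions \eqref{ghommod11} and \eqref{ghommod12} follow by a one-line expansion of $\alpha_A(h\triangleright a)$ and $\beta_A(h\triangleright a)$. For the module-associativity \eqref{ghommod2} I would expand both sides of $\alpha_H(h)\triangleright(h'\triangleright a)=(\alpha_H(h)\beta_H(h'))\triangleright\beta_A(a)$: after one application of \eqref{ghommod12} the left side becomes $\alpha_H^2(h)\cdot\big(\alpha_H\beta_H(h')\cdot\beta_A^2(a)\big)$, and the ordinary module-associativity $g\cdot(g'\cdot x)=(gg')\cdot x$ in $A$ together with multiplicativity of $\alpha_H$ collapses this to $\alpha_H\big(\alpha_H(h)\beta_H(h')\big)\cdot\beta_A^2(a)$, which is exactly the right side.

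The core of the argument is \eqref{gmodalgcompat}. The key observation is that since $\Delta_{(\psi_H,\omega_H)}(h)=\omega_H(h_1)\otimes\psi_H(h_2)$, the maps $\alpha_H^{-1}\omega_H^{-1}$ and $\beta_H^{-1}\psi_H^{-1}$ prescribed by \eqref{gmodalgcompat} hit the two legs and the factors $\omega_H,\psi_H$ cancel, leaving $\alpha_H^{-1}(h_1)$ and $\beta_H^{-1}(h_2)$. Unwinding the action $\triangleright$ and the product $\ast$ of $A'$, and using the two compatibility hypotheses, the right-hand side of \eqref{gmodalgcompat} reduces to $\big(\alpha_H(h_1)\cdot\alpha_A\beta_A(a)\big)\big(\alpha_H(h_2)\cdot\beta_A^2(a')\big)$. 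Because $\alpha_H$ is comultiplicative, $\alpha_H(h_1)\otimes\alpha_H(h_2)=\Delta(\alpha_H(h))$, so the ordinary module-algebra identity $g\cdot(xy)=(g_1\cdot x)(g_2\cdot y)$ with $g=\alpha_H(h)$ turns this into $\alpha_H(h)\cdot\big(\alpha_A\beta_A(a)\,\beta_A^2(a')\big)$. The left-hand side is $h\triangleright(a\ast a')=\alpha_H(h)\cdot\beta_A\big(\alpha_A(a)\beta_A(a')\big)$, and since $\beta_A$ is an algebra endomorphism of $A$ this is also $\alpha_H(h)\cdot\big(\alpha_A\beta_A(a)\,\beta_A^2(a')\big)$; the two sides coincide.

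The only genuine difficulty is the bookkeeping: one must keep the two independent layers of twisting --- of $H$ through $\alpha_H,\beta_H,\psi_H,\omega_H$, and of $A$ through $\alpha_A,\beta_A$ --- separate while simultaneously unwinding the twisted action and the twisted products, and verify that the inverse maps $\alpha_H^{-1}\omega_H^{-1}$ and $\beta_H^{-1}\psi_H^{-1}$ in \eqref{gmodalgcompat} cancel exactly against the $\omega_H,\psi_H$ carried by $\Delta_{(\psi_H,\omega_H)}$. This cancellation is precisely where the specific shape of \eqref{gmodalgcompat} proves to be the correct one; once it is observed, the remaining steps are the routine applications of multiplicativity, commutativity and the classical module-algebra and module-associativity identities noted above.
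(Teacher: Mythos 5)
Your proof is correct: the paper leaves this proof to the reader (``whose proof is also left to the reader''), and your direct verification is exactly the intended routine check --- conditions \eqref{ghommod11}, \eqref{ghommod12}, \eqref{ghommod2} via multiplicativity, commutativity and the classical module axiom, and then \eqref{gmodalgcompat} via the cancellation of $\omega _H$, $\psi _H$ against the inverses, comultiplicativity of $\alpha _H$, and the classical module-algebra identity. All of your intermediate computations check out, including the reduction of both sides of \eqref{gmodalgcompat} to $\alpha _H(h)\cdot \big(\alpha _A\beta _A(a)\,\beta _A^2(a^{\prime })\big)$.
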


\section{Monoidal BiHom-Hopf algebras and BiHom-Hopf algebras}

In this section, we introduce the concept of monoidal BiHom-Hopf
algebra and discuss a possible generalization of Hom-Hopf algebras to
BiHom-Hopf algebras.

We begin with a lemma whose proof is obvious.

\begin{Lemma}
\label{subalg} Let $(A,\mu ,\alpha ,\beta )$ be a BiHom-associative algebra.
Define $\underline{A}:=\{a\in A/\alpha (a)=\beta (a)=a\}$. Then $(\underline{A},\mu )$ is an associative algebra. If~$A$ is unital with unit $1_{A}$,
then $1_{A}$ is also the unit of $\underline{A}$ $($in particular, it follows
that the unit of a BiHom-associative algebra, if it exists, is unique$)$.
\end{Lemma}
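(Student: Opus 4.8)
The plan is to verify the three claims in turn --- that $\underline{A}$ is closed under $\mu$, that $\mu$ is ordinarily associative there, and that a unit of $A$ restricts to a unit of $\underline{A}$ --- each being a one-line consequence of a single defining axiom. The guiding observation is that on $\underline{A}$ both structure maps act as the identity, so every $\alpha$- or $\beta$-twist in the BiHom axioms collapses to the corresponding classical identity.

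First I would check closure. Given $a,b\in\underline{A}$, so that $\alpha(a)=\beta(a)=a$ and $\alpha(b)=\beta(b)=b$, multiplicativity of $\alpha$ and $\beta$ yields $\alpha(ab)=\alpha(a)\alpha(b)=ab$ and $\beta(ab)=\beta(a)\beta(b)=ab$; hence $ab\in\underline{A}$ and $(\underline{A},\mu)$ is indeed an algebra. Next I would invoke BiHom-associativity: for $a,b,c\in\underline{A}$ the identity $\alpha(a)(bc)=(ab)\beta(c)$ becomes $a(bc)=(ab)c$ since $\alpha(a)=a$ and $\beta(c)=c$, which is exactly ordinary associativity on $\underline{A}$.

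For the unital statement, suppose $A$ has a unit $1_A$. By definition $\alpha(1_A)=\beta(1_A)=1_A$, so $1_A\in\underline{A}$, and for $a\in\underline{A}$ the unit axioms $a1_A=\alpha(a)$ and $1_Aa=\beta(a)$ reduce to $a1_A=a=1_Aa$, making $1_A$ a two-sided unit of $(\underline{A},\mu)$. Uniqueness then follows either from uniqueness of units in the associative algebra $\underline{A}$ or directly: if $1_A$ and $1_A'$ are both units of $A$, then $1_A1_A'=\alpha(1_A)=1_A$ (using $1_A'$ as unit) and $1_A1_A'=\beta(1_A')=1_A'$ (using $1_A$ as unit), so $1_A=1_A'$.

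There is no genuine obstacle here, since each assertion reduces to a single axiom once one notes $\alpha=\beta=\operatorname{id}$ on $\underline{A}$. The only point deserving attention is the tacit fact that any unit is fixed by both $\alpha$ and $\beta$, and hence automatically lies in $\underline{A}$; this is precisely what lets the uniqueness argument be carried out inside the associative algebra $\underline{A}$.
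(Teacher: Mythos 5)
Your proof is correct and is exactly the routine verification the paper has in mind (the paper states the lemma with no written proof, declaring it obvious): closure and associativity follow from multiplicativity and BiHom-associativity once $\alpha=\beta=\operatorname{id}$ on $\underline{A}$, and the unit claim follows since any unit is fixed by $\alpha$ and $\beta$, hence lies in $\underline{A}$, where uniqueness is classical. Your additional direct uniqueness argument ($1_A 1_A' = \alpha(1_A)=1_A$ and $1_A 1_A'=\beta(1_A')=1_A'$) is a fine equivalent alternative.
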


\begin{Proposition}
Let $ ( A,\mu ,\alpha,\beta  ) $ be a BiHom-associative algebra and
$(C, \Delta , \psi , \omega )$ a BiHom-coassociative coalgebra. Set, for $f,g\in \operatorname{Hom} ( C,A )$, $f \star g=\mu \circ  ( f\otimes g
 ) \circ\Delta $. Define the linear maps $\phi , \gamma \colon \operatorname{Hom}(C,
A)\rightarrow \operatorname{Hom}(C, A)$ by $\phi(f)=\alpha\circ f\circ \omega$ and $\gamma(f)=\beta\circ f\circ \psi$, for all $f\in \operatorname{Hom}(C, A)$. Then $(\operatorname{Hom}( C,A ),\star,\phi,\gamma)$ is a BiHom-associative algebra.

Moreover, if $A$ is unital with unit $1_A$ and $C$ is counital with counit $\varepsilon$, then $\operatorname{Hom}( C,A)$ is a~unital BiHom-asssociative
algebra with unit $\eta \circ \varepsilon$, where we denote by $\eta $ the
linear map $\eta \colon \Bbbk \rightarrow A$, $\eta (1)=1_A$.

In particular, if we denote by $\underline{\operatorname{Hom}}(C, A)$ the linear subspace
of $\operatorname{Hom}(C, A)$ consisting of the linear maps $f\colon C\rightarrow A$ such that $\alpha \circ f\circ \omega =f$ and $\beta \circ f\circ \psi =f$, then $(\underline{\operatorname{Hom}}(C, A), \star , \eta \circ \varepsilon )$ is an associative
unital algebra.
\end{Proposition}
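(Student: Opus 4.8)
The plan is to verify directly the four defining axioms of a BiHom-associative algebra for the quadruple $(\operatorname{Hom}(C,A),\star,\phi,\gamma)$ — namely that $\phi$ and $\gamma$ commute, that each is multiplicative with respect to $\star$, and that the BiHom-associativity identity $\phi(f)\star(g\star h)=(f\star g)\star\gamma(h)$ holds — then to check the two unit conditions, and finally to obtain the statement about $\underline{\operatorname{Hom}}(C,A)$ as an immediate consequence of Lemma~\ref{subalg}.

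First I would dispatch the easy axioms. Since $\phi(\gamma(f))=\alpha\beta\circ f\circ\psi\omega$ and $\gamma(\phi(f))=\beta\alpha\circ f\circ\omega\psi$, the equality $\phi\circ\gamma=\gamma\circ\phi$ is immediate from $\alpha\beta=\beta\alpha$ and $\psi\omega=\omega\psi$. For multiplicativity I would compute
\[
\phi(f\star g)=\alpha\circ\mu\circ(f\otimes g)\circ\Delta\circ\omega=\mu\circ(\alpha\otimes\alpha)\circ(f\otimes g)\circ(\omega\otimes\omega)\circ\Delta=\phi(f)\star\phi(g),
\]
using the multiplicativity $\alpha\circ\mu=\mu\circ(\alpha\otimes\alpha)$ and the comultiplicativity $\Delta\circ\omega=(\omega\otimes\omega)\circ\Delta$; the identity $\gamma(f\star g)=\gamma(f)\star\gamma(g)$ follows symmetrically from the corresponding properties of $\beta$ and $\psi$.

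The hard part — really the only computation with any content — is the BiHom-associativity identity. Writing $\Delta(c)=c_1\otimes c_2$ in Sweedler notation, for $c\in C$ I would expand
\[
\big(\phi(f)\star(g\star h)\big)(c)=\alpha\big(f(\omega(c_1))\big)\cdot\big[g(c_{21})h(c_{22})\big],
\]
and apply the BiHom-associativity of $A$ (with $a=f(\omega(c_1))$, $a'=g(c_{21})$, $a''=h(c_{22})$) to rewrite the right-hand side as $\big[f(\omega(c_1))g(c_{21})\big]\,\beta\big(h(c_{22})\big)$. On the other side,
\[
\big((f\star g)\star\gamma(h)\big)(c)=\big[f(c_{11})g(c_{12})\big]\,\beta\big(h(\psi(c_2))\big).
\]
The two expressions now coincide upon applying the linear map $x\otimes y\otimes z\mapsto[f(x)g(y)]\beta(h(z))$ to the two equal sides of the BiHom-coassociativity relation $(\omega\otimes\Delta)\circ\Delta=(\Delta\otimes\psi)\circ\Delta$, that is, to $\omega(c_1)\otimes c_{21}\otimes c_{22}=c_{11}\otimes c_{12}\otimes\psi(c_2)$. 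This is the step where the precise placement of $\psi$ and $\omega$ is used essentially: the inner twist of $\phi$ must be $\omega$ (to match the $(\omega\otimes\Delta)\circ\Delta$ side) and that of $\gamma$ must be $\psi$ (to match the $(\Delta\otimes\psi)\circ\Delta$ side), while the outer twists $\alpha$ and $\beta$ are forced by the shape $\alpha(a)(a'a'')=(aa')\beta(a'')$; thus the cross-pairing $\phi=\alpha(-)\omega$, $\gamma=\beta(-)\psi$ is exactly what makes the identity close.

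For the unital statement, I would first note $\alpha\circ\eta=\eta=\beta\circ\eta$ (since $\alpha(1_A)=\beta(1_A)=1_A$) and $\varepsilon\circ\omega=\varepsilon=\varepsilon\circ\psi$, whence $\phi(\eta\circ\varepsilon)=\eta\circ\varepsilon=\gamma(\eta\circ\varepsilon)$. Then, using the counit axioms $(\operatorname{id}_C\otimes\varepsilon)\circ\Delta=\omega$ and $(\varepsilon\otimes\operatorname{id}_C)\circ\Delta=\psi$ together with $a\,1_A=\alpha(a)$ and $1_A\,a=\beta(a)$, I would compute $\big(f\star(\eta\circ\varepsilon)\big)(c)=\alpha(f(\omega(c)))=\phi(f)(c)$ and $\big((\eta\circ\varepsilon)\star f\big)(c)=\beta(f(\psi(c)))=\gamma(f)(c)$, which are precisely the unit conditions for $\eta\circ\varepsilon$ relative to the structure maps $\phi,\gamma$. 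Finally, the last assertion follows from Lemma~\ref{subalg} applied to the BiHom-associative algebra $(\operatorname{Hom}(C,A),\star,\phi,\gamma)$: the subspace $\underline{\operatorname{Hom}}(C,A)=\{f:\phi(f)=f,\ \gamma(f)=f\}$ is exactly the associative unital algebra of fixed points described there, with unit $\eta\circ\varepsilon$.
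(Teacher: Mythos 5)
Your proof is correct and follows essentially the same route as the paper's: both expand $\phi(f)\star(g\star h)$ and $(f\star g)\star\gamma(h)$ and match them via the BiHom-associativity of $\mu$ together with the BiHom-coassociativity of $\Delta$, verify the unit via the counit axioms and the identities $a1_A=\alpha(a)$, $1_Aa=\beta(a)$, and deduce the last claim from Lemma~\ref{subalg}. The only differences are cosmetic: you work element-wise in Sweedler notation where the paper composes maps, and you explicitly check the commutation of $\phi,\gamma$, their multiplicativity with respect to $\star$, and the fixed-point conditions $\phi(\eta\circ\varepsilon)=\gamma(\eta\circ\varepsilon)=\eta\circ\varepsilon$, which the paper leaves implicit.
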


\begin{proof}
Let $f,g,h\in \operatorname{Hom}( C,A) $. We have
\begin{gather*}
\phi (f)\star (g\star h)  = \mu \circ ( \phi (f)\otimes (g\star
h)) \Delta =\mu \circ  ( \phi (f)\otimes (\mu \circ  (
g\otimes h ) \circ \Delta ) ) \Delta \\
\hphantom{\phi (f)\star (g\star h)}{}
 = \mu \circ  (  ( \alpha \otimes \mu  ) \circ  ( f\otimes
g\otimes h ) \circ  ( \omega \otimes \Delta  )  ) \Delta .
\end{gather*}
Similarly,
\begin{gather*}
(f\star g)\star \gamma (h)=\mu \circ  (  ( \mu \otimes \beta  )
\circ  ( f\otimes g\otimes h ) \circ  ( \Delta \otimes \psi
 )  ) \Delta .
\end{gather*}
The BiHom-associativity of $\mu $ and the BiHom-coassociativity of~$\Delta $
lead to the BiHom-associa\-ti\-vi\-ty of the convolution product~$\star $.

The map $\eta \circ \varepsilon $ is the unit for the convolution product.
Indeed, for $f\in \operatorname{Hom}( C,A ) $ and $x\in C$, we have
\begin{gather*}
\begin{split}
&  (f\star (\eta \circ \varepsilon ))(x)=\mu \circ  ( f\otimes \eta \circ
\varepsilon  ) \circ \Delta (x)= \mu  ( f(x_{1})\otimes \eta \circ
\varepsilon (x_{2}) ) = \varepsilon (x_{2})\mu  ( f(x_{1})\otimes \eta (1) )  \\
& \hphantom{(f\star (\eta \circ \varepsilon ))(x)}{}
  =
\varepsilon (x_{2})  (  \alpha \circ f )  (x_{1}) = (  \alpha
\circ f )  (x_{1}\varepsilon (x_{2})) =\alpha \circ f\circ \omega (x).
\end{split}
\end{gather*}
A similar calculation shows that $(\eta \circ \varepsilon )\star f=\beta
\circ f\circ \psi $.

The last statement follows from Lemma~\ref{subalg}.
\end{proof}

\begin{Definition}
Let $(H, \mu , \Delta, \alpha , \beta , \psi , \omega )$ be a unital and
counital BiHom-bialgebra. We say that~$H$ is a \textit{monoidal
BiHom-bialgebra} if $\alpha$, $\beta$, $\psi$, $\omega $ are bijective and $\omega =\alpha ^{-1}$ and $\psi =\beta ^{-1}$. We will refer to a monoidal BiHom-bialgebra as the
$5$-tuple $(H, \mu , \Delta, \alpha , \beta )$.
\end{Definition}

If $(H, \mu , \Delta, \alpha , \beta )$ is a monoidal
BiHom-bialgebra, we can consider the associative unital algebra $\underline{\operatorname{Hom}}(H, H)$, and since $\omega =\alpha ^{-1}$ and $\psi =\beta ^{-1}$, it
follows that $\operatorname{id}_H\in \underline{\operatorname{Hom}}(H, H)$.

\begin{Definition}
Let $(H, \mu , \Delta, \alpha , \beta )$ be a monoidal
BiHom-bialgebra with a unit~$1_H$ and a~co\-unit~$\varepsilon _H$. A linear
map $S\colon H\rightarrow H$ is called an \textit{antipode} if $\alpha \circ
S=S\circ \alpha $ and $\beta \circ S=S\circ \beta $ (i.e., $S\in \underline{\operatorname{Hom}}(H, H)$) and $S$ is the convolution inverse of $\operatorname{id}_H$ in $\underline{\operatorname{Hom}}(H, H)$, that is
\begin{gather*}
 S(h_1)h_2=\varepsilon _H(h)1_H=h_1S(h_2), \qquad \forall \, h\in H.
\end{gather*}
A \textit{monoidal BiHom-Hopf algebra} is a monoidal BiHom-bialgebra endowed
with an antipode.

Obviously, if the antipode exists, it is unique; we will refer to the monoidal BiHom-Hopf algebra
as the $8$-tuple $(H, \mu , \Delta, \alpha , \beta , 1_H,
\varepsilon _H, S)$.
\end{Definition}

\begin{Proposition}
Let $(H, \mu , \Delta , 1_H, \varepsilon _H)$ be a Hopf algebra $($in the
usual sense$)$ with antipo\-de~$S$. Let $\alpha , \beta \colon H\rightarrow H$ be two
unital and counital commuting bialgebra automorphisms. Then $(H, \mu \circ
(\alpha \otimes \beta ), (\alpha ^{-1}\otimes \beta ^{-1})\circ \Delta ,
\alpha , \beta , 1_H, \varepsilon _H, S)$ is a
monoidal BiHom-bialgebra.
\end{Proposition}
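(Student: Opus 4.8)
The plan is to reduce the statement to the Yau-twisting result of Proposition~\ref{yautwistdiverse} together with one standard fact about Hopf algebras, namely that a unital counital bialgebra automorphism commutes with the antipode. First I would apply Proposition~\ref{yautwistdiverse}$(iii)$ to the four maps $\alpha$, $\beta$, $\psi:=\beta^{-1}$ and $\omega:=\alpha^{-1}$. Since $\alpha$ and $\beta$ are commuting bialgebra automorphisms, their inverses are again bialgebra automorphisms and all four maps commute pairwise, so the hypotheses of that proposition are met. It then yields the BiHom-bialgebra
\begin{gather*}
 \big(H, \mu\circ(\alpha\otimes\beta), (\alpha^{-1}\otimes\beta^{-1})\circ\Delta, \alpha, \beta, \beta^{-1}, \alpha^{-1}\big),
\end{gather*}
which is unital and counital with the inherited unit $1_H$ and counit $\varepsilon_H$. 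Because its coalgebra structure maps are exactly $\psi=\beta^{-1}$ and $\omega=\alpha^{-1}$ with $\alpha,\beta$ bijective, it is a \emph{monoidal} BiHom-bialgebra in the sense of the definition of a monoidal BiHom-bialgebra given above.

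It then remains to check that $S$ serves as an antipode for this monoidal BiHom-bialgebra, so that the $8$-tuple is in fact a monoidal BiHom-Hopf algebra with antipode $S$. Writing $a*b:=\alpha(a)\beta(b)$ for the twisted product and $\alpha^{-1}(h_1)\otimes\beta^{-1}(h_2)$ for the twisted coproduct, the membership $S\in\underline{\operatorname{Hom}}(H,H)$ amounts to $\alpha\circ S=S\circ\alpha$ and $\beta\circ S=S\circ\beta$, which is the only point of genuine content and which I treat below. Granting it, the two antipode identities reduce to a short computation; for instance
\begin{gather*}
 S\big(\alpha^{-1}(h_1)\big)*\beta^{-1}(h_2)=\alpha\big(S\big(\alpha^{-1}(h_1)\big)\big)\,\beta\big(\beta^{-1}(h_2)\big)=S(h_1)\,h_2=\varepsilon_H(h)1_H,
\end{gather*}
where the middle equality uses $\alpha\circ S\circ\alpha^{-1}=S$ and the last one the original antipode axiom; the symmetric identity $\alpha^{-1}(h_1)*S\big(\beta^{-1}(h_2)\big)=\varepsilon_H(h)1_H$ comes out the same way from $\beta\circ S\circ\beta^{-1}=S$. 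Thus $S$ is the two-sided convolution inverse of $\operatorname{id}_H$ in $\underline{\operatorname{Hom}}(H,H)$, the unit there being $\eta\circ\varepsilon_H$, so that $S$ is an antipode.

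The step I expect to be the main obstacle is therefore the commutation $\alpha\circ S=S\circ\alpha$ (and likewise for $\beta$); everything else is bookkeeping once Proposition~\ref{yautwistdiverse}$(iii)$ is available. To settle it I would work in the \emph{ordinary} convolution algebra $\operatorname{Hom}(H,H)$ of the Hopf algebra $H$. Since $\alpha$ is a counital coalgebra map one has $\Delta\circ\alpha=(\alpha\otimes\alpha)\circ\Delta$ and $\varepsilon_H\circ\alpha=\varepsilon_H$, and since it is a unital algebra map it is compatible with $\mu$ and $1_H$; a direct check then shows that $\alpha\circ S\circ\alpha^{-1}$ is also a two-sided convolution inverse of $\operatorname{id}_H$, and uniqueness of such an inverse forces $\alpha\circ S\circ\alpha^{-1}=S$, i.e.\ $\alpha\circ S=S\circ\alpha$. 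The same argument applied to $\beta$ finishes the verification, and the $8$-tuple is a monoidal BiHom-Hopf algebra with antipode $S$.
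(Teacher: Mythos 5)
Your proof is correct and follows essentially the same route as the paper's: the paper's proof is just the one-line remark ``a straightforward computation; note that $\alpha$, $\beta$ being bialgebra maps, they automatically commute with $S$,'' and your argument is exactly that computation spelled out --- Proposition~\ref{yautwistdiverse}(iii) applied with $\psi=\beta^{-1}$, $\omega=\alpha^{-1}$ for the monoidal BiHom-bialgebra structure, plus the standard uniqueness-of-convolution-inverse argument establishing $\alpha\circ S=S\circ\alpha$ and $\beta\circ S=S\circ\beta$, which is precisely the fact the paper invokes as automatic.
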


\begin{proof}
A straightforward computation. Let us only note that~$\alpha$,~$\beta $ being
bialgebra maps, they automatically commute with~$S$.
\end{proof}

We state now the basic properties of the antipode.

\begin{Proposition}
Let $(H, \mu , \Delta, \alpha , \beta , 1_H,
\varepsilon _H, S)$ be a monoidal BiHom-Hopf algebra. Then
\begin{enumerate}\itemsep=0pt
\item[$(i)$] $S(1_H)=1_H$ and $\varepsilon _H\circ S=\varepsilon _H$;

\item[$(ii)$] $S(\beta (a)\alpha (b))=S(\beta (b))S(\alpha (a))$, for all $a, b\in H$;

\item[$(iii)$] $\alpha (S(h)_1)\otimes \beta (S(h)_2)=\beta (S(h_2))\otimes \alpha
(S(h_1))$, for all $h\in H$.
\end{enumerate}
\end{Proposition}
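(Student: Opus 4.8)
The plan is to get (i) directly from the antipode and (co)unit axioms, and to obtain the anti-(co)multiplicativity statements (ii) and (iii) by the convolution-algebra technique, exploiting that the subalgebras $\underline{\operatorname{Hom}}(H\otimes H, H)$ and $\underline{\operatorname{Hom}}(H, H\otimes H)$ are \emph{associative} unital algebras by the convolution Proposition, so that one-sided inverses are automatically two-sided and unique. For (i), take $h=1_H$ in $S(h_1)h_2=\varepsilon_H(h)1_H$; since $\Delta(1_H)=1_H\otimes 1_H$ and $a1_H=\alpha(a)$, this reads $\alpha(S(1_H))=\varepsilon_H(1_H)1_H=1_H$, and as $\alpha(1_H)=1_H$ we get $S(1_H)=\alpha^{-1}(1_H)=1_H$. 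Applying $\varepsilon_H$ to the same axiom and using that $\varepsilon_H$ is multiplicative together with $(\operatorname{id}_H\otimes\varepsilon_H)\circ\Delta=\alpha^{-1}$ gives $\varepsilon_H(S(\alpha^{-1}(h)))=\varepsilon_H(h)$; then $\varepsilon_H\circ\alpha^{-1}=\varepsilon_H$ and $S\alpha^{-1}=\alpha^{-1}S$ yield $\varepsilon_H\circ S=\varepsilon_H$.

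For (ii) I would work in the associative unital algebra $\underline{\operatorname{Hom}}(H\otimes H, H)$ with convolution $F\star G=\mu\circ(F\otimes G)\circ\Delta_{H\otimes H}$ and unit $E\colon a\otimes b\mapsto \varepsilon_H(a)\varepsilon_H(b)1_H$. Using multiplicativity of $\alpha$, $\beta$, $S$ one checks that the three maps $T:=\mu\circ(\beta\otimes\alpha)$, $F_1:=S\circ T$ and $F_2\colon a\otimes b\mapsto S(\beta(b))S(\alpha(a))$ all lie in $\underline{\operatorname{Hom}}(H\otimes H, H)$. Writing $w=\beta(a)\alpha(b)$ and using $\Delta(hh')=h_1h'_1\otimes h_2h'_2$ with multiplicativity of $\alpha$, $\beta$ one gets $w_1\otimes w_2=\beta(a_1)\alpha(b_1)\otimes\beta(a_2)\alpha(b_2)$, whence $(T\star F_1)(a\otimes b)=w_1S(w_2)=\varepsilon_H(w)1_H=\varepsilon_H(a)\varepsilon_H(b)1_H$ by the antipode axiom $h_1S(h_2)=\varepsilon_H(h)1_H$ together with $\varepsilon_H\circ\alpha=\varepsilon_H\circ\beta=\varepsilon_H$; thus $T\star F_1=E$, so $F_1$ is a right convolution inverse of $T$.

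The main obstacle is the matching identity $F_2\star T=E$, i.e.\ that the twisted opposite product $F_2$ is a \emph{left} inverse of $T$. Here $(F_2\star T)(a\otimes b)=[S(\beta(b_1))S(\alpha(a_1))][\beta(a_2)\alpha(b_2)]$, and the collapse of the inner $a$-factors followed by the outer $b$-factors must be carried out using the BiHom-associativity $\alpha(x)(yz)=(xy)\beta(z)$ to regroup the four-fold product; the delicate point is to track the maps $\alpha$, $\beta$ produced by each regrouping and verify that they land precisely where $S(h_1)h_2=\varepsilon_H(h)1_H$, the commutations $S\alpha=\alpha S$, $S\beta=\beta S$, and $\varepsilon_H\circ\alpha=\varepsilon_H\circ\beta=\varepsilon_H$ allow telescoping to $\varepsilon_H(a)\varepsilon_H(b)1_H$. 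Once both $T\star F_1=E$ and $F_2\star T=E$ hold, associativity of $\underline{\operatorname{Hom}}(H\otimes H, H)$ gives $F_2=F_2\star E=F_2\star(T\star F_1)=(F_2\star T)\star F_1=E\star F_1=F_1$, which evaluated on $a\otimes b$ is exactly (ii).

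For (iii) I would dualize, in $\underline{\operatorname{Hom}}(H, H\otimes H)$ with convolution $F\star G=\mu_{H\otimes H}\circ(F\otimes G)\circ\Delta$ and unit $E'\colon h\mapsto\varepsilon_H(h)(1_H\otimes 1_H)$. Since $\alpha$, $\beta$ are multiplicative, the twisted comultiplication $T':=(\alpha\otimes\beta)\circ\Delta$ is an \emph{algebra} map $H\to H\otimes H$; with $T'(1_H)=1_H\otimes 1_H$ and $h_1S(h_2)=\varepsilon_H(h)1_H$ this gives at once $(T'\star(T'\circ S))(h)=T'(h_1S(h_2))=\varepsilon_H(h)(1_H\otimes 1_H)$, so $T'\circ S=(\alpha\otimes\beta)\circ\Delta\circ S$ (the left-hand side of (iii)) is a right inverse of $T'$. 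The opposite inverse $G_2:=\tau\circ(\alpha\otimes\beta)\circ(S\otimes S)\circ\Delta$ (the right-hand side of (iii)) must then be shown to satisfy $G_2\star T'=E'$; this is the dual obstacle, where the BiHom-coassociativity $\Delta(h_1)\otimes\beta^{-1}(h_2)=\alpha^{-1}(h_1)\otimes\Delta(h_2)$ is used to rearrange the iterated coproduct so that $S(h_1)h_2=\varepsilon_H(h)1_H$ collapses each tensor leg. After checking $T'$, $T'\circ S$, $G_2\in\underline{\operatorname{Hom}}(H, H\otimes H)$, uniqueness of inverses yields $T'\circ S=G_2$, which is precisely (iii).
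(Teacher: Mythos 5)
Your overall strategy coincides with the paper's: part (i) is read off directly from the (co)unit and antipode axioms, and parts (ii) and (iii) are obtained from uniqueness of one-sided inverses in the associative unital convolution algebras $\underline{\operatorname{Hom}}(H\otimes H, H)$ and $\underline{\operatorname{Hom}}(H, H\otimes H)$. Your maps $T$, $F_1$, $F_2$ are exactly the paper's $m$, $L$, $R$, and your $T'$, $T'\circ S$, $G_2$ are the paper's $\delta$, $\mathcal{L}$, $\mathcal{R}$; the only structural difference is that you verify the mirror-image pair of one-sided identities ($m\star L=E$ and $R\star m=E$ instead of the paper's $L\star m=E$ and $m\star R=E$, and dually for (iii)), which is equally legitimate. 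Your observation that $T'=(\alpha\otimes\beta)\circ\Delta$ is multiplicative as a map into the tensor product algebra, so that $T'\star(T'\circ S)=E'$ follows in one line, is genuinely slicker than anything in the paper (which only says (iii) is ``similar to the proof of (ii)'').

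The gap is that the two computations carrying the real content of the proposition, $F_2\star T=E$ and $G_2\star T'=E'$, are never performed: you label them ``the main obstacle'' and ``the dual obstacle'' and describe the technique, but a description of what ``must be carried out'' is not a proof — in the paper, the computation of $m\star R$ is the bulk of the proof of (ii). The good news is that both identities do close, as mirror images of the paper's computation. For (ii): $(F_2\star T)(a\otimes b)=[S(\beta(b_1))S(\alpha(a_1))][\beta(a_2)\alpha(b_2)]$; writing the right factor as $\beta\big(a_2\,\alpha\beta^{-1}(b_2)\big)$ and applying $\alpha(x)(yz)=(xy)\beta(z)$ backwards gives $\alpha\beta(S(b_1))\big[S(\alpha(a_1))\big(a_2\,\alpha\beta^{-1}(b_2)\big)\big]$; a second (forward) application collapses the bracket to $(S(a_1)a_2)\,\alpha(b_2)=\varepsilon_H(a)1_H\alpha(b_2)=\varepsilon_H(a)\,\alpha\beta(b_2)$, and then $\alpha\beta(S(b_1))\,\alpha\beta(b_2)=\alpha\beta(S(b_1)b_2)=\varepsilon_H(b)1_H$, using $S(h_1)h_2=\varepsilon_H(h)1_H$ twice, $1_Hx=\beta(x)$, and the commutations of $S$, $\alpha$, $\beta$. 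For (iii), the analogous computation closes using the iterated BiHom-coassociativity $(h_1)_1\otimes(h_1)_2\otimes h_2=\alpha^{-1}(h_1)\otimes(h_2)_1\otimes\beta((h_2)_2)$, the counit identity $\varepsilon_H(k_1)k_2=\beta^{-1}(k)$, and both antipode axioms. One terminological slip: you invoke ``multiplicativity of $S$'' to place $F_1,F_2$ in $\underline{\operatorname{Hom}}(H\otimes H,H)$; what is actually used (and is available, being part of the definition of the antipode) is that $S$ commutes with $\alpha$ and $\beta$, together with multiplicativity of $\alpha$ and $\beta$.
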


\begin{proof}
(i) By $\Delta (1_H)=1_H\otimes 1_H$ we obtain $S(1_H)1_H=\varepsilon
_H(1_H)1_H$, so $\alpha (S(1_H))=1_H$, and since $\alpha \circ S=S\circ
\alpha $ and $\alpha (1_H)=1_H$ we obtain $S(1_H)=1_H$. Then, if $h\in H$,
we apply $\varepsilon _H$ to the equality $h_1S(h_2)=\varepsilon _H(h)1_H$,
and we obtain $\varepsilon _H(h_1)\varepsilon _H(S(h_2)) =\varepsilon _H(h)$, so $\varepsilon _H(S(\varepsilon _H(h_1)h_2))=\varepsilon _H(h)$, hence $\varepsilon _H(S(\beta ^{-1}(h)))=\varepsilon _H(h)$, and since $S\circ
\beta =\beta \circ S$ and $\varepsilon _H\circ \beta =\varepsilon _H$ we
obtain $\varepsilon _H\circ S=\varepsilon _H$.

(ii) We def\/ine the linear maps $R, L, m\colon H\otimes H\rightarrow H$ by the
formulae (for all $a, b\in H$):
\begin{gather*}
 R(a\otimes b)=S(\beta (b))S(\alpha (a)), \qquad
 L(a\otimes b)=S(\beta (a)\alpha (b)), \qquad
 m(a\otimes b)=\beta (a)\alpha (b).
\end{gather*}
One can easily check that $R, L, m\in \underline{\operatorname{Hom}}(H\otimes H, H)$ (where
$H\otimes H$ is the tensor product BiHom-coassociative coalgebra). Thus, to
prove that $R=L$, it is enough to prove that $L$ (respective\-ly~$R$) is a left
(respectively right) convolution inverse of $m$ in $\underline{\operatorname{Hom}}(H\otimes
H, H)$. We compute
\begin{gather*}
(L\star m)(a\otimes b) = L(a_1\otimes b_1)m(a_2\otimes b_2)   = S(\beta (a_1)\alpha (b_1))(\beta (a_2)\alpha (b_2)) \\
\hphantom{(L\star m)(a\otimes b)}{}
= S(\beta (a)_1\alpha (b)_1)(\beta (a)_2\alpha (b)_2)
 = S((\beta (a)\alpha (b))_1)(\beta (a)\alpha (b))_2 \\
 \hphantom{(L\star m)(a\otimes b)}{}
 = \varepsilon _H(\beta (a)\alpha (b))1_H
 = \varepsilon _H(a)\varepsilon _H(b)1_H,
\\
(m\star R)(a\otimes b) = m(a_1\otimes b_1)R(a_2\otimes b_2)
 = (\beta (a_1)\alpha (b_1))(S(\beta (b_2))S(\alpha (a_2))) \\
 \hphantom{(m\star R)(a\otimes b)}{}
 = \alpha \big(\alpha ^{-1}\beta (a_1)b_1\big)(\beta (S(b_2))\alpha (S(a_2)))
 = \big(\big(\alpha ^{-1}\beta (a_1)b_1\big)\beta (S(b_2)))\alpha \beta (S(a_2)\big) \\
\hphantom{(m\star R)(a\otimes b)}{}
 = (\beta (a_1)(b_1S(b_2)))\alpha \beta (S(a_2))
 = (\beta (a_1)\varepsilon _H(b)1_H)\alpha \beta (S(a_2)) \\
 \hphantom{(m\star R)(a\otimes b)}{}
 = \varepsilon _H(b)\alpha \beta (a_1)\alpha \beta (S(a_2))
 = \varepsilon _H(b)\alpha \beta (a_1S(a_2)) \\
\hphantom{(m\star R)(a\otimes b)}{}
 = \varepsilon _H(b)\alpha \beta (\varepsilon _H(a)1_H)
 = \varepsilon _H(a)\varepsilon _H(b)1_H,
\end{gather*}
f\/inishing the proof.

(iii) similar to the proof of (ii), by def\/ining the linear maps $\mathcal{L}, \mathcal{R}, \delta \colon H\rightarrow H\otimes H$,
\begin{gather*}
 \mathcal{L}(h)=\alpha (S(h)_1)\otimes \beta (S(h)_2), \qquad\!
 \mathcal{R}(h)=\beta (S(h_2))\otimes \alpha (S(h_1)), \qquad\!
 \delta (h)=\alpha (h_1)\otimes \beta (h_2),
\end{gather*}
for all $h\in H$, and proving that $\mathcal{L}$ (respectively $\mathcal{R}$) is a left (respectively right) convolution inverse of $\delta $ in $\underline{\operatorname{Hom}}(H, H\otimes H)$.
\end{proof}

\begin{Remark}
We had to restrict the def\/inition of the antipode to the class of monoidal
BiHom-bialgebras because, if~$H$ is a Hopf algebra with antipode~$S$ and we
make an arbitrary Yau twist of~$H$, then in general~$S$ will not satisfy the
def\/ining property of an antipode for the Yau twist, as the next example
shows.
\end{Remark}

\begin{Example}
Let $\Bbbk $ be a f\/ield and let $H=\Bbbk \left[ X\right] $, regarded as a
Hopf algebra in the usual way. Let $\alpha \colon H\rightarrow H$ be the algebra
map def\/ined by setting $\alpha (X) =X^{2}$ and let $\beta =
\omega =\psi =\operatorname{Id}_{H}$. Then we can consider the BiHom-bialgebra $H_{(\alpha ,\beta ,\psi ,\omega )}:=(H,\mu _{(\alpha ,\beta )},\Delta
_{(\psi ,\omega )},\alpha ,\beta ,\psi ,\omega )$, where $\mu \colon H\otimes
H\rightarrow H$ is the usual multiplication and $\Delta \colon H\rightarrow
H\otimes H$ is the usual comultiplication. Moreover $H_{(\alpha ,\beta ,\psi
,\omega )}$ has unit $1_{H}=\eta _H\left( 1_{\Bbbk }\right) $ and counit $\varepsilon _{H}$ that coincide with the ones of~$H$.

Assume that there exists a linear map $S\colon H\rightarrow H$ such that $S\star
\operatorname{Id}=\operatorname{Id}\star S=\eta _H\circ \varepsilon _H$, i.e.,
\begin{gather}
\mu _{(\alpha ,\beta )}\circ  ( S\otimes \operatorname{Id} ) \circ \Delta
_{(\psi ,\omega )}=\mu _{(\alpha ,\beta )}\circ  ( \operatorname{Id}\otimes
S ) \circ \Delta _{(\psi ,\omega )}=\eta _{H}\circ \varepsilon _{H}.
\label{form:1}
\end{gather}
Then we compute
\begin{gather*}
\big( \mu _{(\alpha ,\beta )}\circ  ( \operatorname{Id}\otimes S ) \circ
\Delta _{(\psi ,\omega )}\big) (X) =\alpha (X)
S ( 1 ) +\alpha  ( 1 ) S(X) =X^{2}S (
1 ) +S(X),
\\
 ( \mu _{(\alpha ,\beta )}\circ  ( S\otimes \operatorname{Id} ) \circ
\Delta _{(\psi ,\omega )} ) (X) =\alpha  ( S (
X )  ) 1+\alpha  ( S ( 1 )  ) X,
\end{gather*}
and
\begin{gather*}
 ( \eta _{H}\circ \varepsilon _{H} ) (X) =01_{H},
\end{gather*}
so that from (\ref{form:1}) we get
\begin{gather}
S(X) =-X^{2}S(1)   \label{form:2}
\end{gather}
and
\begin{gather}
\alpha  ( S(X)  ) =-\alpha  ( S(1) ) X,  \label{form:3}
\end{gather}
and hence
\begin{gather*}
-\alpha ( S(1)  ) X\overset{\eqref{form:3}}{=}\alpha ( S(X) ) \overset{\eqref{form:2}}{=}\alpha \big( {-}X^{2}S(1)   ) \overset{\text{def.}\, \alpha }{=}-\alpha \big( X^{2}\big) \alpha  ( S(1)
 ) \overset{\text{def.}\,\alpha }{=}-X^{4}\alpha  ( S (1) ),
\end{gather*}
so that we get $\alpha ( S(1)  ) X=X^{4}\alpha (
S(1)  )$, which implies that $\alpha ( S(
1) ) =0$.

On the other hand, we have
\begin{gather*}
1= ( \eta _{H}\circ \varepsilon _{H} ) (1)  \overset{\eqref{form:1}}{=}\big( \mu _{(\alpha ,\beta )}\circ  (
S\otimes \operatorname{Id} ) \circ \Delta _{(\psi ,\omega )}\big)  (
1 ) =\alpha  ( S(1)   ) 1=0,
\end{gather*}
and this is a contradiction.
\end{Example}

In view of all the above, we propose the following def\/inition for what might
be a BiHom-Hopf algebra, that is moreover invariant under Yau twisting:

\begin{Definition}\sloppy
Let $(H, \mu , \Delta, \alpha, \beta , \psi , \omega )$ be a unital and
counital BiHom-bialgebra with a~unit~$1_H$ and a~counit $\varepsilon _H$. A~linear map $S\colon H\rightarrow H$ is called an \textit{antipode} if it commutes
with all the maps $\alpha$, $\beta$, $\psi$, $\omega $ and it satisf\/ies the
following relation:
\begin{gather*}
\beta \psi (S(h_1))\alpha \omega (h_2)=\varepsilon _H(h)1_H= \beta \psi
(h_1)\alpha \omega (S(h_2)), \qquad \forall \, h\in H.
\end{gather*}
A \textit{BiHom-Hopf algebra} is a unital and counital BiHom-bialgebra with
an antipode.
\end{Definition}

We hope to make a more detailed analysis of these structures in a
forthcoming paper.

\section{BiHom-pseudotwistors and BiHom-twisted tensor products}

Inspired by Proposition~\ref{yaugeneral}, by the concept of
pseudotwistor for associative algebras introduced in~\cite{lpvo} and its
generalization for Hom-associative algebras introduced in~\cite{mp2}, we
arrive at the following concept and result:

\begin{Theorem}
\label{generalpseudotwistor} Let $(D, \mu , \tilde{\alpha }, \tilde{\beta })$
be a BiHom-associative algebra and $\alpha , \beta \colon D\rightarrow D$ two
multiplicative linear maps such that any two of the maps $\tilde{\alpha }$,
$\tilde{\beta }$, $\alpha$, $\beta $ commute. Let $T\colon D\otimes D\rightarrow
D\otimes D$ a~linear map and assume that there exist two linear maps $\tilde{T}_1, \tilde{T}_2\colon D\otimes D\otimes D \rightarrow D\otimes D\otimes D$ such
that the following relations hold:
\begin{gather}
 (\alpha \otimes \alpha )\circ T=T\circ (\alpha \otimes \alpha ),
\label{ghommultT1} \\
 (\beta \otimes \beta )\circ T=T\circ (\beta \otimes \beta ),
\label{ghommultT2} \\
 (\tilde{\alpha }\otimes \tilde{\alpha })\circ T=T\circ (\tilde{\alpha }
\otimes \tilde{\alpha }),  \label{ghommultT3} \\
 (\tilde{\beta }\otimes \tilde{\beta })\circ T=T\circ (\tilde{\beta }
\otimes \tilde{\beta }),  \label{ghommultT4} \\
 T\circ (\tilde{\alpha }\otimes \mu )= (\tilde{\alpha }\otimes \mu )\circ
\tilde{T}_1\circ (T\otimes \operatorname{id}_D),  \label{ghompstw1} \\
 T\circ (\mu \otimes \tilde{\beta })= (\mu \otimes \tilde{\beta })\circ
\tilde{T}_2\circ (\operatorname{id}_D\otimes T),  \label{ghompstw2} \\
 \tilde{T}_1\circ (T\otimes \operatorname{id}_D)\circ (\alpha \otimes T)= \tilde{T}_2\circ
(\operatorname{id}_D\otimes T)\circ (T\otimes \beta ).  \label{ghompstw3}
\end{gather}
Then $D^T_{\alpha , \beta }:=(D, \mu \circ T, \tilde{\alpha }\circ \alpha ,
\tilde{\beta }\circ \beta )$ is also a BiHom-associative algebra. The map $T$
is called an $(\alpha , \beta )$-BiHom-pseudotwistor and the two
maps $\tilde{T}_1$, $\tilde{T}_2$ are called the companions of~$T$.
In the particular case $\alpha =\beta =\operatorname{id}_D$, we simply call $T$ a \textit{BiHom-pseudotwistor} and we denote $D^T_{\alpha , \beta }$ by~$D^T$.
\end{Theorem}

\begin{proof}
The fact that $\tilde{\alpha }\circ \alpha $ and $\tilde{\beta }\circ \beta$
are multiplicative with respect to $\mu \circ T$ follows immediately from (\ref{ghommultT1})--(\ref{ghommultT4}) and the fact that $\alpha $, $\beta $, $\tilde{\alpha }$, $\tilde{\beta }$ are multiplicative with respect to~$\mu $. Now we prove the BiHom-associativity of $\mu \circ T$:
\begin{gather*}
(\mu \circ T)\circ ((\mu \circ T)\otimes (\tilde{\beta }\circ \beta ))
 = \mu \circ T\circ (\mu \otimes \tilde{\beta })\circ (T\otimes \beta ) \\
\qquad {} \overset{\eqref{ghompstw2}}{=}\mu \circ (\mu \otimes \tilde{\beta })\circ
\tilde{T}_2\circ (\operatorname{id}_D\otimes T)\circ (T\otimes \beta ) \\
\qquad {}
 \overset{\eqref{ghompstw3}}{=} \mu \circ (\mu \otimes \tilde{\beta })\circ
\tilde{T}_1\circ (T\otimes \operatorname{id}_D)\circ (\alpha \otimes T)
 = \mu \circ (\tilde{\alpha }\otimes \mu )\circ \tilde{T}_1\circ (T\otimes
\operatorname{id}_D)\circ (\alpha \otimes T) \\
\qquad {}
\overset{\eqref{ghompstw1}}{=}\mu \circ T\circ (\tilde{\alpha }\otimes \mu
) \circ (\alpha \otimes T)
=(\mu \circ T)\circ ((\tilde{\alpha }\circ \alpha )\otimes (\mu \circ T)),
\end{gather*}
f\/inishing the proof.
\end{proof}

Obviously, if $(D, \mu )$ is an associative algebra and $\tilde{\alpha }=\tilde{\beta }=\alpha =\beta =\operatorname{id}_D$, an $(\alpha , \beta )$-BiHom-pseudotwistor reduces to a pseudotwistor (as def\/ined in~\cite{lpvo})
and the BiHom-associative algebra~$D^T_{\alpha , \beta }$ is actually
associative.

We show now that Proposition~\ref{yaugeneral} is a particular case of
Theorem~\ref{generalpseudotwistor}.

\begin{Proposition}
Let $(D, \mu , \tilde{\alpha }, \tilde{\beta })$ be a BiHom-associative
algebra and $\alpha , \beta \colon D\rightarrow D$ two multiplicative linear maps
such that any two of the maps $\tilde{\alpha }$, $\tilde{\beta }$, $\alpha$,
$\beta $ commute. Define the maps
\begin{gather*}
 T\colon \ D\otimes D\rightarrow D\otimes D, \qquad T=\alpha \otimes \beta, \\
 \tilde{T}_1\colon \ D\otimes D\otimes D \rightarrow D\otimes D\otimes D, \qquad
\tilde{T}_1= \operatorname{id}_D\otimes \operatorname{id}_D\otimes \beta , \\
 \tilde{T}_2\colon \ D\otimes D\otimes D \rightarrow D\otimes D\otimes D, \qquad
\tilde{T}_2=\alpha \otimes \operatorname{id}_D\otimes \operatorname{id}_D.
\end{gather*}
Then $T$ is an $(\alpha , \beta )$-BiHom-pseudotwistor with companions $\tilde{T}_1$, $\tilde{T}_2$ and the BiHom-associative algebras $D^T_{\alpha
, \beta }$ and $D_{(\alpha , \beta )}$ coincide.
\end{Proposition}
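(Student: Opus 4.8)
The plan is to verify directly that the data $T=\alpha\otimes\beta$, $\tilde T_1=\operatorname{id}_D\otimes\operatorname{id}_D\otimes\beta$, $\tilde T_2=\alpha\otimes\operatorname{id}_D\otimes\operatorname{id}_D$ satisfy all seven hypotheses \eqref{ghommultT1}--\eqref{ghompstw3} of Theorem~\ref{generalpseudotwistor}, and then to observe that the resulting twisted algebra is literally the Yau twist $D_{(\alpha,\beta)}$ of Proposition~\ref{yaugeneral}. Every condition reduces to comparing tensor products of the four maps $\alpha,\beta,\tilde\alpha,\tilde\beta$ placed in fixed tensor slots, so the only facts I need are the pairwise commutativity of these maps (assumed) and their multiplicativity with respect to $\mu$.

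First I would dispatch the four multiplicativity-type constraints \eqref{ghommultT1}--\eqref{ghommultT4}. Each asserts that $T=\alpha\otimes\beta$ commutes with $\theta\otimes\theta$ for $\theta\in\{\alpha,\beta,\tilde\alpha,\tilde\beta\}$; for instance $(\alpha\otimes\alpha)\circ(\alpha\otimes\beta)=\alpha^2\otimes\alpha\beta$ equals $(\alpha\otimes\beta)\circ(\alpha\otimes\alpha)=\alpha^2\otimes\beta\alpha$ precisely because $\alpha\beta=\beta\alpha$, and the other three are identical in form.

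Next I would check the two mixed conditions \eqref{ghompstw1} and \eqref{ghompstw2}. For \eqref{ghompstw1}, the left side is $(\alpha\otimes\beta)\circ(\tilde\alpha\otimes\mu)=\alpha\tilde\alpha\otimes\mu\circ(\beta\otimes\beta)$ (using $\beta\circ\mu=\mu\circ(\beta\otimes\beta)$), while the right side simplifies, via $\tilde T_1\circ(T\otimes\operatorname{id}_D)=\alpha\otimes\beta\otimes\beta$, to $\tilde\alpha\alpha\otimes\mu\circ(\beta\otimes\beta)$; the two agree because $\alpha$ and $\tilde\alpha$ commute. Condition \eqref{ghompstw2} is the mirror image, using $\alpha\circ\mu=\mu\circ(\alpha\otimes\alpha)$ and $\beta\tilde\beta=\tilde\beta\beta$.

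The hard part --- to the modest extent there is one --- will be the three-slot compatibility \eqref{ghompstw3}, where one must track carefully which map lands in each tensor factor. Composing out the left side gives $\tilde T_1\circ(T\otimes\operatorname{id}_D)\circ(\alpha\otimes T)=\alpha^2\otimes\beta\alpha\otimes\beta^2$, while the right side gives $\tilde T_2\circ(\operatorname{id}_D\otimes T)\circ(T\otimes\beta)=\alpha^2\otimes\alpha\beta\otimes\beta^2$; these coincide exactly because $\alpha\beta=\beta\alpha$ in the middle factor, which is the single place commutativity is genuinely used. With all seven hypotheses verified, Theorem~\ref{generalpseudotwistor} yields that $D^T_{\alpha,\beta}=(D,\mu\circ(\alpha\otimes\beta),\tilde\alpha\circ\alpha,\tilde\beta\circ\beta)$ is BiHom-associative, and comparing the multiplication and the two structure maps term by term with Proposition~\ref{yaugeneral} shows this is exactly $D_{(\alpha,\beta)}$, completing the identification.
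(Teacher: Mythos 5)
Your proof is correct and follows essentially the same route as the paper: a direct verification of the seven hypotheses \eqref{ghommultT1}--\eqref{ghompstw3} of Theorem~\ref{generalpseudotwistor}, with the multiplicativity conditions and \eqref{ghompstw1}, \eqref{ghompstw2} reducing to pairwise commutativity and multiplicativity of $\alpha$, $\beta$, and \eqref{ghompstw3} coming down to $\alpha\beta=\beta\alpha$ in the middle tensor factor. The only difference is notational --- you compose the maps slot by slot while the paper evaluates on elements $a\otimes b\otimes c$ --- and your slot-wise computations (e.g., both sides of \eqref{ghompstw3} equal $\alpha^2\otimes\beta\alpha\otimes\beta^2$ up to commuting $\alpha$ and $\beta$) match the paper's exactly.
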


\begin{proof}
The conditions (\ref{ghommultT1})--(\ref{ghommultT4}) are obviously
satisf\/ied. We check (\ref{ghompstw1}), for $a, b, c\in D$:
\begin{gather*}
\big((\tilde{\alpha }\otimes \mu )\circ \tilde{T}_1\circ (T\otimes
\operatorname{id}_D)\big)(a\otimes b\otimes c) =  \big((\tilde{\alpha }\otimes \mu )\circ \tilde{T}
_1\big)(\alpha (a)\otimes \beta (b)\otimes c) \\
\qquad {} = (\tilde{\alpha }\otimes \mu )(\alpha (a)\otimes \beta (b)\otimes \beta
(c))
 = (\tilde{\alpha }\circ \alpha )(a)\otimes \beta (bc)  \\
 \qquad{}
 = T(\tilde{\alpha }(a)\otimes bc)
 = (T\circ (\tilde{\alpha }\otimes \mu ))(a\otimes b\otimes c).
\end{gather*}
The condition (\ref{ghompstw2}) is similar, so we check (\ref{ghompstw3}):
\begin{gather*}
\big(\tilde{T}_1\circ (T\otimes \operatorname{id}_D)\circ (\alpha \otimes T)\big)(a\otimes b\otimes
c) =  \big(\tilde{T}_1\circ (T\otimes \operatorname{id}_D)\big)(\alpha (a)\otimes \alpha (b)\otimes
\beta (c)) \\
\qquad {} = \tilde{T}_1\big(\alpha ^2(a)\otimes \beta \alpha (b)\otimes \beta (c)\big)
 = \alpha ^2(a)\otimes \beta \alpha (b)\otimes \beta ^2(c)
 = \tilde{T}_2\big(\alpha (a)\otimes \alpha \beta (b)\otimes \beta ^2(c)\big) \\
\qquad{} = \big(\tilde{T}_2\circ (\operatorname{id}_D\otimes T)\big)(\alpha (a)\otimes \beta (b)\otimes
\beta (c))
  = \big(\tilde{T}_2\circ (\operatorname{id}_D\otimes T\big)\circ (T\otimes \beta ))(a\otimes
b\otimes c).
\end{gather*}
It is obvious that $D^T_{\alpha , \beta }$ and $D_{(\alpha , \beta )}$
coincide.
\end{proof}

\begin{Example}
We consider the 2-dimensional BiHom-associative algebra $(D,\mu,\tilde{\alpha},\tilde{\beta})$ def\/ined with respect to a basis $\mathcal{B}=\{e_1,e_2\}$
by
\begin{gather*}
 \mu(e_1,e_1)= \mu(e_1,e_2)=e_1,\qquad \mu(e_2,e_1)=\mu(e_2,e_2)=e_2, \\
  \tilde{\alpha} (e_1 )=e_1, \qquad \tilde{\alpha}(e_2)= e_2, \qquad \tilde{\beta}
(e_1 )=e_1, \qquad \tilde{\beta}(e_2)= e_1.
\end{gather*}
We have the following multiplicative linear maps $\alpha, \beta$ def\/ined
with respect to the basis $\mathcal{B}$ by
\begin{gather*}
\alpha (e_1 )=e_1, \qquad \alpha(e_2)=a e_1+(1-a) e_2, \qquad
\beta (e_1 )=e_1, \qquad \beta(e_2)= b e_1+(1-b) e_2,
\end{gather*}
where $a$, $b$ are parameters in $\Bbbk $. One can easily see that any two of
the maps $\tilde{\alpha }$, $\tilde{\beta }$, $\alpha$, $\beta $ commute. By the
previous proposition, we can construct the BiHom-associative algebra $
D_{(\alpha,\beta)}=(D, \mu _T=\mu \circ (\alpha \otimes \beta ), \alpha_T=
\tilde{\alpha }\circ \alpha , \beta_T=\tilde{\beta }\circ \beta )$ def\/ined
on the basis $\mathcal{B}$ by
\begin{gather*}
  \mu_T(e_1,e_1)=e_1,\qquad \mu_T(e_1,e_2)= e_1,\qquad \mu_T(e_2,e_1)=a e_1+
(1-a)e_2,\\ \mu_T(e_2,e_2)=a e_1+(1-a)e_2, \qquad
  \alpha_T(e_1)=e_1, \qquad \alpha_T(e_2)=a e_1+(1-a) e_2,\\ \beta_T(e_1)=e_1, \qquad
\beta_T(e_2)= e_1.
\end{gather*}
\end{Example}

\begin{Definition}[\cite{Cap,VanDaele}] Let $(A, \mu _A)$, $(B, \mu _B)$ be two
associative algebras. A \textit{twisting map} between $A$ and $B$ is a~linear map $R\colon B\otimes A \rightarrow A\otimes B$ satisfying the conditions
\begin{gather}
 R\circ (\operatorname{id}_B\otimes \mu _A)=(\mu _A\otimes \operatorname{id}_B)\circ (\operatorname{id}_A\otimes R)\circ
(R\otimes \operatorname{id}_A),  \label{twmap1} \\
 R\circ (\mu _B\otimes \operatorname{id}_A)=(\operatorname{id}_A\otimes \mu _B)\circ (R\otimes \operatorname{id}_B)\circ
(\operatorname{id}_B\otimes R).  \nonumber 
\end{gather}
If this is the case, the map $\mu _R=(\mu _A\otimes \mu _B)\circ
(\operatorname{id}_A\otimes R\otimes \operatorname{id}_B)$ is an associative product on $A\otimes B$; the
associative algebra $(A\otimes B, \mu _R)$ is denoted by $A\otimes _RB$ and
called the \textit{twisted tensor product} of~$A$ and $B$ af\/forded by~$R$.
\end{Definition}

We introduce now twisted tensor products of BiHom-associative algebras.

\begin{Definition}
Let $(A, \mu _A, \alpha _A, \beta _A)$ and $(B, \mu _B, \alpha _B, \beta _B)$
be two BiHom-associative algebras such that the maps $\alpha _A$, $\beta _A$,
$\alpha _B$, $\beta _B$ are bijective. A linear map $R\colon B\otimes A \rightarrow
A\otimes B$ is called a~\textit{BiHom-twisting map} between~$A$ and $B$ if
the following conditions are satisf\/ied
\begin{gather}
 (\alpha _A\otimes \alpha _B)\circ R=R\circ (\alpha _B\otimes \alpha _A),
\label{ghomtwmap01} \\
 (\beta _A\otimes \beta _B)\circ R=R\circ (\beta _B\otimes \beta _A),
\label{ghomtwmap02} \\
 R\circ (\alpha _B\otimes \mu _A)=(\mu _A\otimes \beta _B)\circ
(\operatorname{id}_A\otimes R)\circ \big(\operatorname{id}_A\otimes \alpha _B\beta _B^{-1} \otimes \operatorname{id}_A\big)\circ
(R\otimes \operatorname{id}_A),  \label{ghomtwmap1} \\
 R\circ (\mu _B\otimes \beta _A)=(\alpha _A\otimes \mu _B)\circ (R\otimes
\operatorname{id}_B)\circ \big(\operatorname{id}_B\otimes \alpha _A^{-1}\beta _A \otimes \operatorname{id}_B\big) \circ
(\operatorname{id}_B\otimes R).  \label{ghomtwmap2}
\end{gather}
\end{Definition}

If we use the standard Sweedler-type notation $R(b\otimes a)=a_R\otimes
b_R=a_r\otimes b_r$, for $a\in A$, $b\in B$, then the above conditions may
be rewritten (for all $a, a^{\prime }\in A$ and $b, b^{\prime }\in B$) as
follows
\begin{gather}
 \alpha _A(a_R)\otimes \alpha _B(b_R)=\alpha _A(a)_R\otimes \alpha _B(b)_R,
\label{ghomsweed01} \\
 \beta _A(a_R)\otimes \beta _B(b_R)=\beta _A(a)_R\otimes \beta _B(b)_R,
\label{ghomsweed02} \\
 (aa^{\prime })_R\otimes \alpha _B(b)_R=a_Ra^{\prime }_r\otimes \beta
_B\big(\big[\alpha _B\beta _B^{-1}(b_R)\big]_r\big),  \label{ghomsweed1} \\
 \beta _A(a)_R\otimes (bb^{\prime })_R=\alpha _A([\alpha _A^{-1}\beta
_A(a_R)]_r)\otimes b_rb^{\prime }_R.  \label{ghomsweed2}
\end{gather}

\begin{Proposition}\label{ghomttp} Let $(A, \mu _A, \alpha _A, \beta _A)$ and $(B, \mu _B,
\alpha _B, \beta _B)$ be two BiHom-associative algebras with bijective
structure maps, $R\colon B\otimes A \rightarrow A\otimes B$ a BiHom-twisting map.
Define the linear map
\begin{gather*}
 T\colon \ (A\otimes B)\otimes (A\otimes B)\rightarrow (A\otimes B)\otimes
(A\otimes B), \\
 T((a\otimes b)\otimes (a^{\prime }\otimes b^{\prime }))= (a\otimes
b_R)\otimes (a^{\prime }_R\otimes b^{\prime }).
\end{gather*}
Then $T$ is a BiHom-pseudotwistor for the tensor product $(A\otimes B, \mu
_{A\otimes B}, \alpha _A\otimes \alpha _B, \beta _A\otimes \beta _B)$ of~$A$
and~$B$, with companions
\begin{gather*}
 \tilde{T}_1=\big(\operatorname{id}_A\otimes \alpha _B^{-1}\beta _B\otimes \operatorname{id}_A\otimes
\operatorname{id}_B\otimes \operatorname{id}_A\otimes \operatorname{id}_B\big)\circ T_{13} \\
\hphantom{\tilde{T}_1=}{} \circ (\operatorname{id}_A\otimes \alpha
_B\beta _B^{-1}\otimes \operatorname{id}_A\otimes \operatorname{id}_B\otimes \operatorname{id}_A\otimes \operatorname{id}_B), \\
 \tilde{T}_2=\big(\operatorname{id}_A\otimes \operatorname{id}_B\otimes \operatorname{id}_A\otimes \operatorname{id}_B\otimes \alpha
_A\beta _A^{-1}\otimes \operatorname{id}_B\big)\circ T_{13} \\
\hphantom{\tilde{T}_2=}{}
\circ (\operatorname{id}_A\otimes \operatorname{id}_B\otimes
\operatorname{id}_A\otimes \operatorname{id}_B\otimes \alpha _A^{-1}\beta _A\otimes \operatorname{id}_B),
\end{gather*}
where we use the standard notation for~$T_{13}$. The BiHom-associative
algebra $(A\otimes B)^T$ is denoted by $A\otimes _RB$ and is called the
\textit{BiHom-twisted tensor product} of~$A$ and~$B$; its multiplication is
defined by $(a\otimes b)(a^{\prime }\otimes b^{\prime })=aa^{\prime
}_R\otimes b_Rb^{\prime }$, and the structure maps are $\alpha _A\otimes
\alpha _B$ and $\beta _A\otimes \beta _B$.
\end{Proposition}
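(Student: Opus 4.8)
The plan is to invoke Theorem~\ref{generalpseudotwistor} with $D=A\otimes B$, $\tilde\alpha=\alpha_A\otimes\alpha_B$, $\tilde\beta=\beta_A\otimes\beta_B$, $\mu=\mu_{A\otimes B}$ and $\alpha=\beta=\operatorname{id}_D$; this is exactly the BiHom-pseudotwistor case, so \eqref{ghommultT1} and \eqref{ghommultT2} hold trivially and it remains to verify \eqref{ghommultT3}, \eqref{ghommultT4}, \eqref{ghompstw1}, \eqref{ghompstw2} and \eqref{ghompstw3} for the given $T$, $\tilde T_1$, $\tilde T_2$. First I would record that $\mu_{A\otimes B}\circ T$ really gives the announced product: evaluating on $(a\otimes b)\otimes(a'\otimes b')$ yields $\mu_{A\otimes B}((a\otimes b_R)\otimes(a'_R\otimes b'))=aa'_R\otimes b_Rb'$, and $\tilde\alpha\circ\alpha=\alpha_A\otimes\alpha_B$, $\tilde\beta\circ\beta=\beta_A\otimes\beta_B$ are the stated structure maps, so the conclusion of the theorem will indeed describe $A\otimes_RB$.

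For multiplicativity \eqref{ghommultT3}, \eqref{ghommultT4} I would evaluate both sides on a generic element and note that the only nontrivial legs are the two touched by $R$, whence equality reduces precisely to the Sweedler identities \eqref{ghomsweed01} and \eqref{ghomsweed02}. For \eqref{ghompstw1} I would compute the two composites on $(a\otimes b)\otimes(a'\otimes b')\otimes(a''\otimes b'')$: the left-hand side applies $R$ to $\alpha_B(b)\otimes a'a''$ and gives $(\alpha_A(a)\otimes\alpha_B(b)_R)\otimes((a'a'')_R\otimes b'b'')$, while the right-hand side, after $T\otimes\operatorname{id}_D$, the inner conjugation $\alpha_B\beta_B^{-1}$ of $\tilde T_1$, the middle $T_{13}$, the outer $\alpha_B^{-1}\beta_B$, and finally $\tilde\alpha\otimes\mu$, gives $(\alpha_A(a)\otimes\beta_B([\alpha_B\beta_B^{-1}(b_R)]_r))\otimes(a'_Ra''_r\otimes b'b'')$; matching the two entangled $B_1/A_2A_3$ legs is exactly identity \eqref{ghomsweed1}. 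Condition \eqref{ghompstw2} is symmetric and reduces to \eqref{ghomsweed2}, the conjugations $\alpha_A^{\pm1}\beta_A$ inside $\tilde T_2$ now playing the role that $\alpha_B^{\pm1}\beta_B$ played in $\tilde T_1$.

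The heart of the argument is \eqref{ghompstw3}, which here reads $\tilde T_1\circ(T\otimes\operatorname{id}_D)\circ(\operatorname{id}_D\otimes T)=\tilde T_2\circ(\operatorname{id}_D\otimes T)\circ(T\otimes\operatorname{id}_D)$. I would evaluate both sides on $d_1\otimes d_2\otimes d_3$ with $d_i=a_i\otimes b_i$. Each composite consists of three applications of $R$ at disjoint slots — to $(b_1,a_2)$, to $(b_2,a_3)$, and, via $T_{13}$, to the two surviving middle legs — so the outer legs $a_1$, $b_3$ and the pass-through outputs of the first two applications match at once. The delicate point is the entangled $B_1$ and $A_3$ legs fed into the third application: on the two sides they carry the conjugations $\alpha_B\beta_B^{-1}$ (resp.\ $\alpha_A^{-1}\beta_A$) inside $T_{13}$ and $\alpha_B^{-1}\beta_B$ (resp.\ $\alpha_A\beta_A^{-1}$) outside. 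The reconciliation comes from the fact that \eqref{ghomtwmap01} and \eqref{ghomtwmap02} force $R\circ(\alpha_B^{-1}\beta_B\otimes\alpha_A^{-1}\beta_A)=(\alpha_A^{-1}\beta_A\otimes\alpha_B^{-1}\beta_B)\circ R$ (and the inverse relation): applying this to the third invocation of $R$ transports the grading automorphism across $R$ and converts the left-hand expression into the right-hand one, the outer conjugations cancelling exactly against the transported inner ones. I expect this to be the main obstacle, precisely because the three copies of $R$ share legs in a way that rules out a naive leg-by-leg comparison and forces one to commute the automorphisms $\alpha\beta^{-1}$ through $R$. Once \eqref{ghommultT3}--\eqref{ghompstw3} are checked, Theorem~\ref{generalpseudotwistor} yields that $(A\otimes B)^T=(A\otimes B,\mu_{A\otimes B}\circ T,\alpha_A\otimes\alpha_B,\beta_A\otimes\beta_B)$ is BiHom-associative, which is the asserted BiHom-twisted tensor product $A\otimes_RB$.
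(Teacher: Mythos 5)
Your proposal is correct and follows essentially the same route as the paper: invoke Theorem~\ref{generalpseudotwistor} with $\alpha=\beta=\operatorname{id}$, observe that the multiplicativity conditions and \eqref{ghompstw1}, \eqref{ghompstw2} reduce to the BiHom-twisting-map axioms \eqref{ghomsweed01}--\eqref{ghomsweed2}, and settle the key condition \eqref{ghompstw3} by transporting the automorphism $\alpha^{-1}\beta$ across the third application of $R$. Your transport identity $R\circ\big(\alpha _B^{-1}\beta _B\otimes \alpha _A^{-1}\beta _A\big)=\big(\alpha _A^{-1}\beta _A\otimes \alpha _B^{-1}\beta _B\big)\circ R$ is exactly the paper's auxiliary relation \eqref{helpful}, which the paper likewise derives from \eqref{ghomsweed01} and \eqref{ghomsweed02} before using it to match the two sides of \eqref{ghompstw3}.
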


\begin{proof}
We begin by proving the following relation, for all $a\in A$, $b\in B$:
\begin{gather}
 \alpha _B^{-1}\beta _B\big(\big[\alpha _B\beta _B^{-1}(b)\big]_R\big)\otimes a_R=
b_R\otimes \alpha _A\beta _A^{-1}\big(\big[\alpha _A^{-1}\beta _A(a)\big]_R\big).\label{helpful}
\end{gather}
This relation is equivalent to
\begin{gather*}
 \beta _B\big(\big[\alpha _B\beta _B^{-1}(b)\big]_R\big)\otimes \beta _A(a_R)= \alpha
_B(b_R)\otimes \alpha _A\big(\big[\alpha _A^{-1}\beta _A(a)\big]_R\big),
\end{gather*}
which, by using (\ref{ghomsweed01}) and (\ref{ghomsweed02}), is equivalent
to
\begin{gather*}
\alpha _B(b)_R\otimes \beta _A(a)_R=\alpha _B(b)_R\otimes \beta _A(a)_R,
\end{gather*}
which is obviously true.

We need to prove the relations (\ref{ghommultT1})--(\ref{ghompstw3}) (with $\tilde{\alpha }=\alpha _A\otimes \alpha _B$, $\tilde{\beta }=\beta _A\otimes
\beta _B$, $\alpha =\beta =\operatorname{id}_A\otimes \operatorname{id}_B$). We will prove only~(\ref{ghompstw3}), while (\ref{ghommultT1})--(\ref{ghompstw2}) are very easy and
left to the reader. We compute ($r$~and~$\mathcal{R}$ are two more copies of
$R$)
\begin{gather*}
\tilde{T}_1\circ (T\otimes \operatorname{id})\circ (\operatorname{id}\otimes T)(a\otimes
b\otimes a^{\prime }\otimes b^{\prime }\otimes a^{\prime \prime }\otimes
b^{\prime \prime }) =\tilde{T}_1(a\otimes b_r\otimes a^{\prime }_r\otimes b^{\prime }_R\otimes
a^{\prime \prime }_R\otimes b^{\prime \prime }) \\
\qquad {} = a\otimes \alpha _B^{-1}\beta _B\big(\big[\alpha _B\beta _B^{-1}(b_r)\big]_{\mathcal{R}}\big) \otimes a^{\prime }_r\otimes b^{\prime }_R\otimes (a^{\prime \prime }_R)_{\mathcal{R}}\otimes b^{\prime \prime },
\\
\tilde{T}_2\circ (\operatorname{id}\otimes T)\circ (T\otimes \operatorname{id})(a\otimes
b\otimes a^{\prime }\otimes b^{\prime }\otimes a^{\prime \prime }\otimes
b^{\prime \prime })
 = \tilde{T}_2(a\otimes b_r\otimes a^{\prime }_r\otimes b^{\prime }_R\otimes
a^{\prime \prime }_R\otimes b^{\prime \prime }) \\
\qquad{} = a\otimes (b_r)_{\mathcal{R}}\otimes a^{\prime }_r\otimes b^{\prime
}_R\otimes \alpha _A \beta _A^{-1}\big(\big[\alpha _A^{-1}\beta _A(a^{\prime \prime
}_R)\big]_{\mathcal{R}}\big)\otimes b^{\prime \prime },
\end{gather*}
and the two terms are equal because of the relation (\ref{helpful}).
\end{proof}

\begin{Remark}
Let $(A, \mu _A, \alpha _A, \beta _A)$ and $(B, \mu _B, \alpha _B, \beta _B)$
be two BiHom-associative algebras with bijective structure maps. Then
obviously the linear map $R\colon B\otimes A \rightarrow A\otimes B$, $R(b\otimes
a)=a\otimes b$, is a BiHom-twisting map and the BiHom-twisted tensor product
$A\otimes _RB$ coincides with the ordinary tensor product $A\otimes B$.
\end{Remark}

\begin{Proposition}
\label{gendefttp} Let $(A, \mu _A)$ and $(B, \mu _B)$ be two associative
algebras, $\alpha _A, \beta _A\colon A\rightarrow A$ two commuting algebra
isomorphisms of $A$ and $\alpha _B, \beta _B\colon B\rightarrow B$ two commuting
algebra isomorphisms of $B$. Let $P\colon B\otimes A\rightarrow A\otimes B$ be a
twisting map satisfying the conditions
\begin{gather}
 (\alpha _A\otimes \alpha _B)\circ P=P\circ (\alpha _B\otimes \alpha _A),
\label{cond1P} \\
 (\beta _A\otimes \beta _B)\circ P=P\circ (\beta _B\otimes \beta _A).
\label{cond2P}
\end{gather}
Define the linear map
\begin{gather*}
 U\colon \ B\otimes A\rightarrow A\otimes B, \qquad U(b\otimes a)=\beta
_A^{-1}(\beta _A(a)_P)\otimes \alpha _B^{-1}(\alpha _B(b)_P).
\end{gather*}
Then $U$ is a BiHom-twisting map between the BiHom-associative algebras $A_{(\alpha _A, \beta _A)}$ and $B_{(\alpha _B, \beta _B)}$ and the
BiHom-associative algebras $A_{(\alpha _A, \beta _A)}\otimes _UB_{(\alpha
_B, \beta _B)}$ and $(A\otimes _PB)_{(\alpha _A\otimes \alpha _B, \beta
_A\otimes \beta _B)}$ coincide.
\end{Proposition}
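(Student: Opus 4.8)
The plan is to first put $U$ into closed form. Reading off its definition, $U=(\beta_A^{-1}\otimes\alpha_B^{-1})\circ P\circ(\alpha_B\otimes\beta_A)$, so that $U(b\otimes a)=a_U\otimes b_U$ with $a_U=\beta_A^{-1}(\beta_A(a)_P)$ and $b_U=\alpha_B^{-1}(\alpha_B(b)_P)$. I then have to verify the four axioms of a BiHom-twisting map between the Yau twists $A_{(\alpha_A,\beta_A)}$ and $B_{(\alpha_B,\beta_B)}$, whose structure maps are $(\alpha_A,\beta_A)$ and $(\alpha_B,\beta_B)$ and whose multiplications are $a\bullet a'=\alpha_A(a)\beta_A(a')$ on $A_{(\alpha_A,\beta_A)}$ and $b\bullet b'=\alpha_B(b)\beta_B(b')$ on $B_{(\alpha_B,\beta_B)}$.

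For \eqref{ghomtwmap01} and \eqref{ghomtwmap02} I would substitute the closed form of $U$ and use \eqref{cond1P}, respectively \eqref{cond2P}, to commute the extra structure map across $P$, then cancel via the pairwise commutativity of $\alpha_A,\beta_A,\alpha_B,\beta_B$. For example, both sides of \eqref{ghomtwmap01} collapse to $(\alpha_A\beta_A^{-1}\otimes\operatorname{id}_B)\circ P\circ(\alpha_B\otimes\beta_A)$, using $(\alpha_B^2\otimes\alpha_A\beta_A)=(\alpha_B\otimes\alpha_A)\circ(\alpha_B\otimes\beta_A)$ together with \eqref{cond1P}; the check of \eqref{ghomtwmap02} is symmetric.

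The main obstacle is \eqref{ghomtwmap1} and \eqref{ghomtwmap2} (equivalently the Sweedler forms \eqref{ghomsweed1}, \eqref{ghomsweed2}), since these involve the twisted products. The plan is to plug $U=(\beta_A^{-1}\otimes\alpha_B^{-1})\circ P\circ(\alpha_B\otimes\beta_A)$ and the formulas for $\bullet$ into both sides, then exploit that $\alpha_A,\beta_A,\alpha_B,\beta_B$ are multiplicative and pairwise commuting, together with \eqref{cond1P}--\eqref{cond2P}, to push every structure map past $P$ to the outside. After this rearrangement each identity becomes one of the two ordinary twisting-map axioms for $P$ --- the one labelled \eqref{twmap1} for \eqref{ghomtwmap1} and its unlabelled companion for \eqref{ghomtwmap2} --- which hold by hypothesis. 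The only delicate point is the bookkeeping: two copies of $P$ appear on each side and the four commuting maps must be tracked carefully, but no genuine choice is involved. Once the four axioms hold, $U$ is a BiHom-twisting map and Proposition~\ref{ghomttp} yields the BiHom-associative algebra $A_{(\alpha_A,\beta_A)}\otimes_U B_{(\alpha_B,\beta_B)}$, with structure maps $\alpha_A\otimes\alpha_B$ and $\beta_A\otimes\beta_B$.

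It then remains to identify the two algebras. They share the underlying space $A\otimes B$ and the structure maps $\alpha_A\otimes\alpha_B$, $\beta_A\otimes\beta_B$ (the latter are algebra endomorphisms of $A\otimes_P B$ precisely by \eqref{cond1P}, \eqref{cond2P}, so the Yau twist on the right is legitimate), so I only compare multiplications. On the right, the Yau twist gives $(a\otimes b)\star(a'\otimes b')=\alpha_A(a)\beta_A(a')_P\otimes\alpha_B(b)_P\beta_B(b')$. On the left, Proposition~\ref{ghomttp} gives $(a\otimes b)(a'\otimes b')=(a\bullet a'_U)\otimes(b_U\bullet b')$; substituting $a'_U=\beta_A^{-1}(\beta_A(a')_P)$ and $b_U=\alpha_B^{-1}(\alpha_B(b)_P)$ and expanding $\bullet$, the inner $\beta_A$ and $\alpha_B$ cancel the $\beta_A^{-1}$ and $\alpha_B^{-1}$ hidden in $U$, producing exactly $\alpha_A(a)\beta_A(a')_P\otimes\alpha_B(b)_P\beta_B(b')$. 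Hence the two BiHom-associative algebras coincide.
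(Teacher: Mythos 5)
Your proposal is correct and follows essentially the same route as the paper: both verify the BiHom-twisting axioms for $U$ by pushing the structure maps through $P$ via \eqref{cond1P}--\eqref{cond2P} and multiplicativity, so that everything reduces to the ordinary twisting-map axioms for $P$ (the paper carries this out in Sweedler notation for \eqref{ghomsweed1} only, leaving the remaining axioms to the reader). Your explicit check that the two multiplications coincide, via the cancellation of $\beta_A^{-1}$ and $\alpha_B^{-1}$ inside $U$, fills in a part the paper's proof omits, and it is accurate.
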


\begin{proof}
We only prove (\ref{ghomsweed1}) for $U$ and leave the rest to the reader.
We compute (by denoting by $p$ another copy of $P$ and by $u$ another copy
of $U$)
\begin{gather*}
(aa^{\prime })_U\otimes \alpha _B(b)_U = [\alpha _A(a)\beta _A(a^{\prime
})]_U\otimes \alpha _B(b)_U
 = \beta _A^{-1}\big(\big[\beta _A\alpha _A(a)\beta _A^2(a^{\prime })\big]_P\big)\otimes
\alpha _B^{-1}(\alpha _B^2(b)_P) \\
\hphantom{(aa^{\prime })_U\otimes \alpha _B(b)_U}{}
 \overset{\eqref{twmap1}}{=} \beta _A^{-1}(\beta _A\alpha _A(a)_P)\beta
_A^{-1}\big(\beta _A^2(a^{\prime })_p\big) \otimes \alpha _B^{-1}((\alpha
_B^2(b)_P)_p) \\
\hphantom{(aa^{\prime })_U\otimes \alpha _B(b)_U}{} = \beta _A^{-1}(\alpha _A(\beta _A(a))_P)\beta _A^{-1}\big(\beta _A^2(a^{\prime
})_p\big) \otimes \alpha _B^{-1}([\alpha _B(\alpha _B(b))_P]_p) \\
\hphantom{(aa^{\prime })_U\otimes \alpha _B(b)_U}{} \overset{\eqref{cond1P}}{=} \beta _A^{-1}\alpha _A(\beta _A(a)_P)\beta
_A^{-1}\big(\beta _A^2(a^{\prime })_p\big) \otimes \alpha _B^{-1}(\alpha _B(\alpha
_B(b)_P)_p),
\\
a_Ua^{\prime }_u\otimes \beta _B\big(\big[\alpha _B\beta _B^{-1}(b_U)\big]_u\big) =  \alpha
_A(a_U)\beta _A(a^{\prime }_u)\otimes \beta _B\big(\big[\alpha _B\beta
_B^{-1}(b_U)\big]_u\big) \\
\hphantom{a_Ua^{\prime }_u\otimes \beta _B\big(\big[\alpha _B\beta _B^{-1}(b_U)\big]_u\big)}{}
 = \alpha _A\beta _A^{-1}(\beta _A(a)_P)\beta _A(a^{\prime }_u)\otimes \beta
_B\big(\big[\beta _B^{-1}(\alpha _B(b)_P)\big]_u\big) \\
\hphantom{a_Ua^{\prime }_u\otimes \beta _B\big(\big[\alpha _B\beta _B^{-1}(b_U)\big]_u\big)}{}
 = \alpha _A\beta _A^{-1}(\beta _A(a)_P)\beta _A(a^{\prime })_p\otimes
\alpha _B^{-1}\beta _B\big(\big[\alpha _B\beta _B^{-1}(\alpha _B(b)_P)\big]_p\big) \\
\hphantom{a_Ua^{\prime }_u\otimes \beta _B\big(\big[\alpha _B\beta _B^{-1}(b_U)\big]_u\big)}{}
 = \alpha _A\beta _A^{-1}(\beta _A(a)_P)\beta _A(a^{\prime })_p\otimes
\alpha _B^{-1}\beta _B\big(\beta _B^{-1}(\alpha _B(\alpha _B(b)_P))_p\big) \\
\hphantom{a_Ua^{\prime }_u\otimes \beta _B\big(\big[\alpha _B\beta _B^{-1}(b_U)\big]_u\big)}{}
 = \alpha _A\beta _A^{-1}(\beta _A(a)_P)\beta _A^{-1}\big(\beta _A^2(a^{\prime
})\big)_p\otimes \alpha _B^{-1}\beta _B\big(\beta _B^{-1}(\alpha _B(\alpha
_B(b)_P))_p\big) \\
\hphantom{a_Ua^{\prime }_u\otimes \beta _B\big(\big[\alpha _B\beta _B^{-1}(b_U)\big]_u\big)}{}
 \overset{\eqref{cond2P}}{=} \alpha _A\beta _A^{-1}(\beta _A(a)_P)\beta
_A^{-1}\big(\beta _A^2(a^{\prime })_p\big) \otimes \alpha _B^{-1}(\alpha _B(\alpha
_B(b)_P)_p),
\end{gather*}
f\/inishing the proof.
\end{proof}

\section{BiHom-smash products}

We construct f\/irst a large family of BiHom-twisting maps.

\begin{Theorem}
\label{BihomTwitMapR} Let $(H, \mu _H, \Delta _H, \alpha _H, \beta _H, \psi
_H, \omega _H)$ be a BiHom-bialgebra, $(A, \mu _A, \alpha _A, \beta _A)$ a
left $H$-module BiHom-algebra, with action denoted by $H\otimes A\rightarrow
A$, $h\otimes a\mapsto h\cdot a$, and assume that all structure maps $\alpha
_H, \beta _H, \psi _H, \omega _H, \alpha _A, \beta _A$ are bijective. Let $m, n, p\in \mathbb{Z}$. Define the linear map
\begin{gather*}  
 R_{m, n, p}\colon \ H\otimes A\rightarrow A\otimes H, \qquad R_{m, n, p}(h\otimes
a)= \alpha _H^m\beta _H^n\omega _H^p(h_1) \cdot \beta _A^{-1}(a)\otimes \psi
_H^{-1}(h_2).
\end{gather*}
Then $R_{m, n, p}$ is a BiHom-twisting map between $A$ and $H$.
\end{Theorem}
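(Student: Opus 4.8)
The plan is to verify directly that $R_{m,n,p}$ satisfies the four defining conditions of a BiHom-twisting map, namely equations \eqref{ghomtwmap01}--\eqref{ghomtwmap2}, by unwinding the explicit formula for $R_{m,n,p}$ and using the module-BiHom-algebra compatibility \eqref{gmodalgcompat} together with the BiHom-bialgebra axioms. First I would check the two multiplicativity conditions \eqref{ghomtwmap01} and \eqref{ghomtwmap02}. For \eqref{ghomtwmap01}, applying $R_{m,n,p}$ to $\alpha_H(h)\otimes\alpha_A(a)$ and using that $\alpha_H$ is comultiplicative, that the maps $\alpha_H,\beta_H,\psi_H,\omega_H$ all commute, that $\alpha_H\circ\psi_H=\psi_H\circ\alpha_H$, and that the action satisfies $\alpha_A(h\cdot a)=\alpha_H(h)\cdot\alpha_A(a)$, one pushes $\alpha_A$ and $\alpha_H$ past the formula; the computation is bookkeeping of commuting maps. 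The condition \eqref{ghomtwmap02} is entirely analogous with $\beta$ in place of $\alpha$.

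The substance of the proof lies in the two associativity-type conditions \eqref{ghomtwmap1} and \eqref{ghomtwmap2}, equivalently the Sweedler forms \eqref{ghomsweed1} and \eqref{ghomsweed2}. For \eqref{ghomsweed1} I would expand both sides: the left-hand side $R_{m,n,p}(\alpha_H(h)\otimes aa')$ produces, after applying $\Delta_H(\alpha_H(h))=(\alpha_H\otimes\alpha_H)\Delta_H(h)$ and the compatibility \eqref{gmodalgcompat} that splits the action of a single element over a product $aa'$, a term governed by the BiHom-coassociativity \eqref{ghombia1} of $\Delta_H$. The right-hand side is the iterated expression $(\mu_A\otimes\beta_B)\circ(\mathrm{id}_A\otimes R)\circ(\mathrm{id}_A\otimes\alpha_B\beta_B^{-1}\otimes\mathrm{id}_A)\circ(R\otimes\mathrm{id}_A)$ applied to $h\otimes a\otimes a'$, which generates two nested coproducts on $h$. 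The key step is to match these two coproduct patterns using the BiHom-coassociativity relation $\Delta(h_1)\otimes\psi(h_2)=\omega(h_1)\otimes\Delta(h_2)$, while carefully tracking the powers $\alpha_H^m\beta_H^n\omega_H^p$ and the correcting factors $\beta_A^{-1}$, $\psi_H^{-1}$, $\alpha_B\beta_B^{-1}$ that appear in the formula. The verification of \eqref{ghomsweed2} proceeds dually, expanding $R_{m,n,p}(hh'\otimes\beta_A(a))$ using multiplicativity of $\Delta_H$ (that is, $\Delta(hh')=h_1h'_1\otimes h_2h'_2$) and matching against the right-hand side.

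The main obstacle I anticipate is the correct placement and cancellation of the auxiliary structure maps. The formula for $R_{m,n,p}$ carries the twist $\alpha_H^m\beta_H^n\omega_H^p$ on the first tensorand and $\psi_H^{-1}$ on the second, and the BiHom-twisting axioms themselves insert $\alpha_B\beta_B^{-1}$ (respectively $\alpha_A^{-1}\beta_A$) between the two copies of $R$; getting the exponents to balance requires systematic use of the commutation relations among $\alpha_H,\beta_H,\psi_H,\omega_H$, the multiplicativity of each of these maps with respect to $\mu_H$, and the action-compatibilities $\alpha_A(h\cdot a)=\alpha_H(h)\cdot\alpha_A(a)$ and $\beta_A(h\cdot a)=\beta_H(h)\cdot\beta_A(a)$. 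Once the coproduct identity \eqref{ghombia1} is applied at the right moment to shift a $\psi_H$ or $\omega_H$ across the tensor factors, the remaining equality should reduce to an identity of the form already seen in the proof of Proposition~\ref{ghomttp}. Since all these ingredients are routine manipulations of commuting multiplicative maps, I expect the whole verification to be straightforward but computation-heavy, so I would present \eqref{ghomsweed1} in full and remark that \eqref{ghomsweed2} is proved dually, leaving the symmetric details to the reader.
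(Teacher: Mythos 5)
Your plan is correct and coincides with the paper's own proof: the paper likewise dismisses \eqref{ghomtwmap01}--\eqref{ghomtwmap02} as routine, establishes \eqref{ghomtwmap1} by expanding the composite of the two copies of $R_{m,n,p}$ and matching the coproduct patterns via the BiHom-coassociativity \eqref{ghombia1} before collapsing the two actions with \eqref{gmodalgcompat}, and establishes \eqref{ghomtwmap2} using the multiplicativity of $\Delta_H$ (i.e.,\ \eqref{ghombia2}). The only ingredient you leave unnamed is the left-module axiom \eqref{ghommod2}, which is what actually collapses the nested actions $\alpha_H(h)\cdot(h'\cdot a)=(hh')\cdot\beta_A(a)$ in the verification of \eqref{ghomsweed2}; it is implicit in your appeal to the module structure, so this is a matter of bookkeeping rather than a gap.
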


\begin{proof}
The relations (\ref{ghomtwmap01}) and (\ref{ghomtwmap02}) are very easy to
prove and left to the reader.

Proof of (\ref{ghomtwmap1}):
\begin{gather*}
(\mu _A\otimes \beta _H)\circ (\operatorname{id}_A\otimes R_{m, n, p})\circ
\big(\operatorname{id}_A\otimes \alpha _H\beta _H^{-1}\otimes \operatorname{id}_A\big) \circ (R_{m, n, p}\otimes
\operatorname{id}_A)(h\otimes a\otimes a^{\prime })\\
\qquad{} =(\mu _A\otimes \beta _H)\circ (\operatorname{id}_A\otimes R_{m, n, p})\big(\alpha _H^m\beta
_H^n\omega _H^p(h_1)\cdot \beta _A^{-1}(a)\otimes \alpha _H\beta _H^{-1}\psi
_H^{-1}(h_2)\otimes a^{\prime }\big) \\
\qquad{} = (\mu _A\otimes \beta _H)\big(\alpha _H^m\beta _H^n\omega _H^p(h_1)\cdot \beta
_A^{-1}(a)\otimes \alpha _H^m\beta _H^n\omega _H^p\big(\big[\alpha _H\beta
_H^{-1}\psi _H^{-1}(h_2)\big]_1\big)\cdot \beta _A^{-1}(a^{\prime }) \\
\qquad\quad {}  \otimes \psi _H^{-1}\big(\big[\alpha _H\beta _H^{-1}\psi _H^{-1}(h_2)\big]_2\big)\big) \\
\qquad{} = (\mu _A\otimes \beta _H)\big(\alpha _H^m\beta _H^n\omega _H^p(h_1)\cdot \beta
_A^{-1}(a)\otimes \alpha _H^{m+1}\beta _H^{n-1}\omega _H^p\psi
_H^{-1}((h_2)_1)\cdot \beta _A^{-1}(a^{\prime }) \\
\qquad\quad{} \otimes \alpha _H\beta _H^{-1}\psi _H^{-2}((h_2)_2)\big) \\
\qquad{} = \big[\alpha _H^m\beta _H^n\omega _H^p(h_1)\cdot \beta _A^{-1}(a)\big] \big[\alpha
_H^{m+1}\beta _H^{n-1}\psi _H^{-1}\omega _H^p((h_2)_1)\cdot \beta
_A^{-1}(a^{\prime })\big]\otimes \alpha _H\psi _H^{-2}((h_2)_2) \\
\qquad{} \overset{(\ref{ghombia1})}{=} \big[\alpha _H^m\beta _H^n\omega
_H^{p-1}((h_1)_1) \cdot \beta _A^{-1}(a)\big] \big[\alpha _H^{m+1}\beta _H^{n-1}\psi
_H^{-1}\omega _H^p((h_1)_2)\cdot \beta _A^{-1}(a^{\prime })\big]\otimes \alpha
_H\psi _H^{-1}(h_2) \\
\qquad{} = \big[\alpha_H^{-1}\omega _H^{-1}\big(\alpha _H^{m+1}\beta _H^n\omega
_H^{p}((h_1)_1)\big) \cdot \beta _A^{-1}(a)\big] \big[\beta _H^{-1}\psi _H^{-1}\big(\alpha
_H^{m+1}\beta _H^{n}\omega _H^p((h_1)_2)\big)\cdot \beta _A^{-1}(a^{\prime })\big] \\
\qquad\quad{} \otimes \alpha _H\psi _H^{-1}(h_2) \\
\qquad{} = \big\{\alpha_H^{-1}\omega _H^{-1}\big(\big[\alpha _H^{m+1}\beta _H^n\omega
_H^{p}(h_1)\big]_1\big) \cdot \beta _A^{-1}(a)\big\} \big\{\beta _H^{-1}\psi _H^{-1}\big(\big[\alpha
_H^{m+1}\beta _H^{n}\omega _H^p(h_1)\big]_2\big)\cdot \beta _A^{-1}(a^{\prime })\big\} \\
\qquad\quad{}\otimes \alpha _H\psi _H^{-1}(h_2) \\
\qquad{} \overset{(\ref{gmodalgcompat})}{=} \alpha _H^{m+1}\beta _H^n\omega
_H^{p}(h_1)\cdot \beta _A^{-1}(aa^{\prime })\otimes \alpha _H\psi
_H^{-1}(h_2)  = (R_{m, n, p}\circ (\alpha _H\otimes \mu _A))(h\otimes a\otimes a^{\prime}).
\end{gather*}

 Proof of (\ref{ghomtwmap2}):
\begin{gather*}
(\alpha _A\otimes \mu _H)\circ (R_{m, n, p}\otimes \operatorname{id}_H)\circ
\big(\operatorname{id}_H\otimes \alpha _A^{-1}\beta _A \otimes \operatorname{id}_H\big)\circ (\operatorname{id}_H\otimes R_{m, n,
p})(h\otimes h^{\prime }\otimes a)\\
\qquad{}
 = (\alpha _A\otimes \mu _H)\circ (R_{m, n, p}\otimes \operatorname{id}_H)(h\otimes \alpha
_A^{-1}\beta _A \big(\alpha _H^m\beta _H^n\omega _H^p(h^{\prime }_1)\cdot \beta
_A^{-1}(a)\big)\otimes \psi _H^{-1}(h^{\prime }_2)) \\
\qquad {} = (\alpha _A\otimes \mu _H)\circ (R_{m, n, p}\otimes \operatorname{id}_H)\big(h\otimes \alpha
_H^{m-1}\beta _H^{n+1}\omega _H^p(h^{\prime }_1)\cdot \alpha
_A^{-1}(a)\otimes \psi _H^{-1}(h^{\prime }_2)\big) \\
\qquad {}
 = (\alpha _A\otimes \mu _H)\big(\alpha _H^m\beta _H^n\omega _H^p(h_1)\cdot
\big(\alpha _H^{m-1}\beta _H^{n} \omega _H^p(h^{\prime }_1)\cdot \alpha
_A^{-1}\beta _A^{-1}(a)\big)\otimes \psi _H^{-1}(h_2)\otimes \psi
_H^{-1}(h^{\prime }_2)\big) \\
\qquad {}
 = \alpha _H^{m+1}\beta _H^n\omega _H^p(h_1)\cdot \big(\alpha _H^{m}\beta _H^{n}
\omega _H^p(h^{\prime }_1)\cdot \beta _A^{-1}(a)\big)\otimes \psi
_H^{-1}(h_2h^{\prime }_2) \\
\qquad{} \overset{(\ref{ghommod2})}{=} \big\{\big[\alpha _H^{m}\beta _H^n\omega
_H^p(h_1)\big]\big[\alpha _H^{m}\beta _H^{n} \omega _H^p(h^{\prime }_1)\big]\big\}\cdot
a\otimes \psi _H^{-1}(h_2h^{\prime }_2) \\
\qquad{} = \alpha _H^{m}\beta _H^n\omega _H^p(h_1h^{\prime }_1)\cdot a\otimes \psi
_H^{-1}(h_2h^{\prime }_2) \\
\qquad{} \overset{(\ref{ghombia2})}{=} \alpha _H^{m}\beta _H^n\omega
_H^p((hh^{\prime })_1)\cdot a\otimes \psi _H^{-1}((hh^{\prime })_2)
 = (R_{m, n, p}\circ (\mu _H\otimes \beta _A))(h\otimes h^{\prime }\otimes
a),
\end{gather*}
f\/inishing the proof.
\end{proof}

\begin{Definition}
Let $(H, \mu _H, \Delta _H, \alpha _H, \beta _H, \psi _H, \omega _H)$ be a
BiHom-bialgebra and $(A, \mu _A, \alpha _A, \beta _A)$ a~left $H$-module
BiHom-algebra, with left $H$-module structure $H\otimes A\rightarrow A$, $h\otimes a\mapsto h\cdot a$, such that all structure maps $\alpha _H$, $\beta
_H$, $\psi _H$, $\omega _H$, $\alpha _A$, $\beta _A$ are bijective. Consider the
BiHom-twisting map
\begin{gather}
 R=R_{0, -1, -1}\colon \ H\otimes A\rightarrow A\otimes H, \qquad R(h\otimes
a)=\beta _H^{-1}\omega _H^{-1}(h_1) \cdot \beta _A^{-1}(a)\otimes \psi
_H^{-1}(h_2).  \label{Rsmash}
\end{gather}
We denote the BiHom-associative algebra $A\otimes _RH$ by $A\# H$ (we denote
$a\otimes h:=a\# h$, for $a\in A$, $h\in H$) and call it the \textit{BiHom-smash product} of~$A$ and~$H$. Its structure maps are $\alpha
_A\otimes \alpha _H$ and $\beta _A\otimes \beta _H$, and its multiplication
is
\begin{gather*}
 (a\# h)(a^{\prime }\# h^{\prime })=a\big(\beta _H^{-1}\omega _H^{-1}(h_1)
\cdot \beta _A^{-1}(a^{\prime })\big)\# \psi _H^{-1}(h_2)h^{\prime }.
\end{gather*}
\end{Definition}

\begin{Remark}
If $H$ is a Hom-bialgebra, i.e., $\alpha _H=\beta _H=\psi _H=\omega _H$, and $A$ is a Hom-associative algebra, the multiplication of $A\# H$ becomes
\begin{gather*}
 (a\# h)(a^{\prime }\# h^{\prime })=a\big(\alpha _H^{-2}(h_1) \cdot \alpha
_A^{-1}(a^{\prime })\big)\# \alpha_H^{-1}(h_2)h^{\prime },
\end{gather*}
which is the formula introduced in \cite{mp2}. If $H$ is a monoidal
Hom-bialgebra, i.e., $\psi _H=\omega _H=\alpha _H^{-1}=\beta _H^{-1}$, and~$A$
is a Hom-associative algebra, the multiplication of $A\# H$ becomes
\begin{gather*}
 (a\# h)(a^{\prime }\# h^{\prime })=a\big(h_1 \cdot \alpha _A^{-1}(a^{\prime
})\big)\# \alpha_H(h_2)h^{\prime },
\end{gather*}
which is the formula introduced in \cite{chenwangzhang}, used also in~\cite{LB} for def\/ining the Radford biproduct for monoidal Hom-bialgebras.
\end{Remark}

\begin{Proposition}
In the same setting as in Proposition~{\rm \ref{gyaumodalg}}, and assuming
moreover that the maps $\alpha _A$ and $\beta _A$ are bijective, if we
denote by $A\# H$ the usual smash product between $A$ and $H$, then $\alpha
_A\otimes \alpha _H$ and $\beta _A\otimes \beta _H$ are commuting algebra
endomorphisms of $A\# H$ and the BiHom-associative algebras $(A\#
H)_{(\alpha _A\otimes \alpha _H, \beta _A\otimes \beta _H)}$ and $A_{(\alpha
_A, \beta _A)}\# H_{(\alpha _H, \beta _H, \psi _H, \omega _H)}$ coincide.
\end{Proposition}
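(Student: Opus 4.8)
The plan is to reduce the statement to two explicit computations of a multiplication and then to observe that the outputs coincide; the whole argument is a direct (if bookkeeping-heavy) verification, the only real care needed being the correct interpretation of the several twisting maps that enter the abstract BiHom-smash-product formula. Throughout I write the usual smash product as $(a\# h)(a'\# h')=a(h_1\cdot a')\# h_2h'$, and I freely use that $\alpha_H,\beta_H$ are bialgebra endomorphisms (in particular coalgebra maps), that $\alpha_A,\beta_A$ are algebra endomorphisms, and the compatibilities $\alpha_A(h\cdot a)=\alpha_H(h)\cdot\alpha_A(a)$ and $\beta_A(h\cdot a)=\beta_H(h)\cdot\beta_A(a)$ from Proposition~\ref{gyaumodalg}.

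First I would dispose of the endomorphism claim. Applying $\alpha_A\otimes\alpha_H$ to a product and using multiplicativity of $\alpha_A$, $\alpha_H$ together with the $\alpha$-compatibility gives $\alpha_A(a)(\alpha_H(h_1)\cdot\alpha_A(a'))\# \alpha_H(h_2)\alpha_H(h')$. Expanding instead the product $(\alpha_A(a)\# \alpha_H(h))(\alpha_A(a')\# \alpha_H(h'))$ and using that $\alpha_H$ is a coalgebra map, so that $\alpha_H(h)_1\otimes\alpha_H(h)_2=\alpha_H(h_1)\otimes\alpha_H(h_2)$, yields the same expression; hence $\alpha_A\otimes\alpha_H$ is multiplicative. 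The identical argument with the $\beta$-compatibility shows $\beta_A\otimes\beta_H$ is multiplicative, and the two maps commute because $\alpha_A,\beta_A$ commute and $\alpha_H,\beta_H$ commute by hypothesis.

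Next I would compute the multiplication of the Yau twist $(A\# H)_{(\alpha_A\otimes\alpha_H,\beta_A\otimes\beta_H)}$. By definition this multiplication is $\mu_{A\# H}\circ\big((\alpha_A\otimes\alpha_H)\otimes(\beta_A\otimes\beta_H)\big)$, so $(a\# h)$ times $(a'\# h')$ equals $(\alpha_A(a)\# \alpha_H(h))(\beta_A(a')\# \beta_H(h'))$, which, again using $\Delta\circ\alpha_H=(\alpha_H\otimes\alpha_H)\circ\Delta$, comes out as $\alpha_A(a)(\alpha_H(h_1)\cdot\beta_A(a'))\# \alpha_H(h_2)\beta_H(h')$. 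This is the target expression to match.

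The crux is to show that $A_{(\alpha_A,\beta_A)}\# H_{(\alpha_H,\beta_H,\psi_H,\omega_H)}$ has exactly this multiplication, and this is where the only genuine care is required. I feed the twisted data into the general formula $(a\# h)(a'\# h')=a\big(\beta_H^{-1}\omega_H^{-1}(h_1)\cdot\beta_A^{-1}(a')\big)\# \psi_H^{-1}(h_2)h'$, recalling that in the twisted bialgebra the coproduct is $\bar\Delta(h)=\omega_H(h_1)\otimes\psi_H(h_2)$, that the two products are the twisted ones $a\ast a'=\alpha_A(a)\beta_A(a')$ and $h\ast h'=\alpha_H(h)\beta_H(h')$, and that the action is $h\triangleright a=\alpha_H(h)\cdot\beta_A(a)$. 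The main point to get right is the cancellation in the coproduct legs: $\beta_H^{-1}\omega_H^{-1}$ applied to $\omega_H(h_1)$ gives $\beta_H^{-1}(h_1)$, while $\psi_H^{-1}$ applied to $\psi_H(h_2)$ gives $h_2$. Then the action term is $\beta_H^{-1}(h_1)\triangleright\beta_A^{-1}(a')=\alpha_H\beta_H^{-1}(h_1)\cdot a'$, and applying $a\ast(-)=\alpha_A(a)\beta_A(-)$ together with the $\beta$-compatibility (using $\beta_H\alpha_H\beta_H^{-1}=\alpha_H$) turns the first tensor factor into $\alpha_A(a)(\alpha_H(h_1)\cdot\beta_A(a'))$; the second factor is $h_2\ast h'=\alpha_H(h_2)\beta_H(h')$. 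These agree termwise with the Yau-twist product above. Finally, the structure maps of $A_{(\alpha_A,\beta_A)}\# H_{(\alpha_H,\beta_H,\psi_H,\omega_H)}$ are $\alpha_A\otimes\alpha_H$ and $\beta_A\otimes\beta_H$ by definition of the BiHom-smash product, and these are precisely the structure maps of the Yau twist, so the two BiHom-associative algebras coincide. I expect the only obstacle to be this interpretation-and-cancellation bookkeeping rather than any substantive difficulty.
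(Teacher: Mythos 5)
Your proof is correct, and it ends up at the same key identities as the paper, but it is organized along a genuinely different route. The paper never compares the two multiplications directly: it observes that the usual smash product is the twisted tensor product $A\# H=A\otimes _PH$ for the ordinary twisting map $P(h\otimes a)=h_1\cdot a\otimes h_2$, notes that $P$ satisfies~(\ref{cond1P}) and~(\ref{cond2P}), and then invokes Proposition~\ref{gendefttp}. That proposition (whose statement implicitly contains your first step, namely that $\alpha _A\otimes \alpha _H$ and $\beta _A\otimes \beta _H$ are commuting algebra endomorphisms of $A\otimes _PH$) yields at once $(A\# H)_{(\alpha _A\otimes \alpha _H, \beta _A\otimes \beta _H)}=A_{(\alpha _A, \beta _A)}\otimes _U H_{(\alpha _H, \beta _H)}$, where $U(h\otimes a)=\beta _A^{-1}(\beta _A(a)_P)\otimes \alpha _H^{-1}(\alpha _H(h)_P)$ is the induced BiHom-twisting map; the whole proof then reduces to the short check that $U$ coincides with the map $R=R_{0,-1,-1}$ of~(\ref{Rsmash}) affording $A_{(\alpha _A, \beta _A)}\# H_{(\alpha _H, \beta _H, \psi _H, \omega _H)}$, both coming out as $h\otimes a\mapsto \alpha _H\beta _H^{-1}(h_1)\cdot a\otimes h_2$. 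You instead verify the endomorphism claim by hand and compare the two multiplications on $A\otimes H$ termwise; the cancellations you isolate ($\beta _H^{-1}\omega _H^{-1}\omega _H(h_1)=\beta _H^{-1}(h_1)$, $\psi _H^{-1}\psi _H(h_2)=h_2$, and $\beta _H\alpha _H\beta _H^{-1}=\alpha _H$) are exactly the ones occurring in the paper's evaluation of $R$ and $U$, only embedded in a longer formula. What your approach buys is self-containedness: nothing about $P$, $U$, or the twisted-tensor-product machinery needs to be recalled, at the price of redoing by hand what Proposition~\ref{gendefttp} guarantees for free (in particular the well-definedness of the Yau twist of $A\# H$). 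What the paper's approach buys is brevity and a conceptual point: the statement is an instance of the general compatibility between Yau twisting and twisted tensor products, with the BiHom-smash product appearing as the special twisting map $R_{0,-1,-1}$.
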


\begin{proof}
We will apply Proposition~\ref{gendefttp}. In our situation, we have the
twisting map $P\colon H\otimes A \rightarrow A\otimes H$, $P(h\otimes a)=h_1\cdot
a\otimes h_2$, for which $A\# H=A\otimes _PH$. Obviously $P$ satisf\/ies the
condi\-tions~(\ref{cond1P}) and~(\ref{cond2P}), so, by Proposition~\ref{gendefttp}, we obtain the map
\begin{gather*}
 U\colon \ H\otimes A\rightarrow A\otimes H, \qquad U(h\otimes a)=\beta
_A^{-1}(\beta _A(a)_P)\otimes \alpha _H^{-1}(\alpha _H(h)_P),
\end{gather*}
which is a BiHom-twisting map between $A_{(\alpha _A, \beta _A)}$ and $H_{(\alpha _H, \beta _H)}$ and we have
\begin{gather*}
 (A\# H)_{(\alpha _A\otimes \alpha _H, \beta _A\otimes \beta _H)}=
A_{(\alpha _A, \beta _A)}\otimes _UH_{(\alpha _H, \beta _H)}.
\end{gather*}
Thus, the proof will be f\/inished if we prove that the map $U$ coincides with
the map $R$ af\/fording the BiHom-smash product $A_{(\alpha _A, \beta _A)}\#
H_{(\alpha _H, \beta _H, \psi _H, \omega _H)}$. We compute
\begin{gather*}
U(h\otimes a) = \beta _A^{-1}(\alpha _H(h)_1\cdot \beta _A(a))\otimes \alpha
_H^{-1}(\alpha _H(h)_2) \\
\hphantom{U(h\otimes a)}{}
=\beta _A^{-1}(\alpha _H(h_1)\cdot \beta _A(a))\otimes \alpha
_H^{-1}(\alpha _H(h_2))
=\alpha _H\beta _H^{-1}(h_1)\cdot a\otimes h_2,
\\
R(h\otimes a) = \beta _H^{-1}\omega _H^{-1}(\omega _H(h_1))\triangleright
\beta _A^{-1}(a) \otimes \psi _H^{-1}(\psi _H(h_2)) \\
\hphantom{R(h\otimes a)}{}
=\beta _H^{-1}(h_1)\triangleright \beta _A^{-1}(a)\otimes h_2
=\alpha _H\beta _H^{-1}(h_1)\cdot a\otimes h_2,
\end{gather*}
f\/inishing the proof.
\end{proof}

\begin{Example}
We construct a class of examples of $U_{q}(\mathfrak{sl}_{2})_{(\alpha
,\beta ,\psi ,\omega )}$-module BiHom-algebra structures on $\mathbb{A}_{q,\alpha ,\beta }^{2|0}$, generalizing examples of $U_{q}(\mathfrak{sl}_{2})_{\alpha }$-module Hom-algebra structures on $\mathbb{A}_{q,\gamma
}^{2|0}$ given in \cite[Example~5.7]{homquantum3} (here we take the base
f\/ield $\Bbbk =\mathbb{C}$). The quantum group $U_{q}(\mathfrak{sl}_{2})$ is
generated as a unital associative algebra by 4 generators $\{E,F,K,K^{-1}\}$
with relations
\begin{gather*}
KK^{-1}=1=K^{-1}K, \qquad
KE=q^{2}EK,\qquad KF=q^{-2}FK, \qquad
EF-FE=\frac{K-K^{-1}}{q-q^{-1}},
\end{gather*}
where $q\in \mathbb{C}$ with $q\neq 0$, $q\neq \pm 1$. The comultiplication
is def\/ined by
\begin{gather*}
 \Delta (E)=1\otimes E+E\otimes K, \qquad
 \Delta (F)=K^{-1}\otimes F+F\otimes 1, \\
 \Delta (K)=K\otimes K,\qquad \Delta \big(K^{-1}\big)=K^{-1}\otimes K^{-1}.
\end{gather*}

We f\/ix $\lambda_1,\lambda_2,\lambda_3,\lambda_4 \in \mathbb{C}$ some nonzero
elements. The BiHom-bialgebra $U_q(\mathfrak{sl}_2)_{(\alpha,\beta,\psi,\omega )}= (U_q(\mathfrak{sl}_2),\mu_{(\alpha,\beta )},\Delta_{(\psi,\omega
)},\alpha,\beta,\psi,\omega)$ is def\/ined (as in Proposition~\ref{yautwistdiverse}(iii)) by $\mu_{(\alpha,\beta )}=\mu\circ (\alpha\otimes
\beta)$ and $\Delta_{(\psi,\omega )} =(\omega\otimes \psi)\circ\Delta$,
where $\mu$ and $\Delta$ are respectively the multiplication and
comultiplication of $U_q(\mathfrak{sl}_2)$ and $\alpha,\beta,\psi,\omega\colon U_q(\mathfrak{sl}_2)\rightarrow U_q(\mathfrak{sl}_2)$ are bialgebra morphisms
such that
\begin{gather*}
\alpha (E)=\lambda_1 E, \qquad \alpha (F)=\lambda_1^{-1} F, \qquad \alpha (K)=K, \qquad
\alpha \big(K^{-1}\big)=K^{-1}, \\
\beta (E)=\lambda_2 E, \qquad \beta (F)=\lambda_2^{-1} F, \qquad \beta (K)=K, \qquad
\beta \big(K^{-1}\big)=K^{-1}, \\
\psi (E)=\lambda_3 E, \qquad \psi (F)=\lambda_3^{-1} F, \qquad \psi (K)=K, \qquad \psi
\big(K^{-1}\big)=K^{-1}, \\
\omega (E)=\lambda_4 E, \qquad \omega (F)=\lambda_4^{-1} F, \qquad \omega (K)=K, \qquad
\omega \big(K^{-1}\big)=K^{-1}.
\end{gather*}
Note that any two of the maps $\alpha$, $\beta$, $\psi$, $\omega$ commute.

Let $\mathbb{A}_{q}^{2|0}=k\langle x,y\rangle /(yx-q xy)$ be the quantum
plane. We f\/ix also some $\xi \in \mathbb{C}$, $\xi \neq 0$. The
BiHom-quantum plane $\mathbb{A}_{q,\alpha,\beta}^{2|0}=(\mathbb{A}_{q}^{2|0},\mu_{\mathbb{A},\alpha_\mathbb{A},\beta_\mathbb{A}},\alpha_\mathbb{A},\beta_\mathbb{A})$ is the BiHom-associative algebra def\/ined (as
in Proposition~\ref{yautwistdiverse}(i)) by $\mu_{\mathbb{A},\alpha_\mathbb{A},\beta_\mathbb{A}}=\mu_{\mathbb{A}}\circ(\alpha_\mathbb{A}\otimes\beta_\mathbb{A})$, where $\mu_\mathbb{A}$ is the multiplication of $\mathbb{A}_{q}^{2|0}$ and $\alpha_\mathbb{A},\beta_\mathbb{A}\colon  \mathbb{A}_{q}^{2|0}\rightarrow \mathbb{A}_{q}^{2|0} $ are the (commuting) algebra
morphisms such that
\begin{gather*}
\alpha_\mathbb{A}(x)=\xi x,\qquad \alpha_\mathbb{A}(y)= \xi \lambda_1^{-1} y
\qquad \text{and} \qquad \beta_\mathbb{A}(x)=\xi x,\qquad \beta_\mathbb{A} (y)= \xi
\lambda_2^{-1} y.
\end{gather*}

We consider $\mathbb{A}_{q}^{2|0}$ as a left $U_q(\mathfrak{sl}_2)$-module
algebra as in \cite[Example~5.7]{homquantum3} (we denote by $h\otimes
a\mapsto h\cdot a$ the $U_q(\mathfrak{sl}_2)$-action on $\mathbb{A}_{q}^{2|0} $). By the computations performed in \cite[Example~5.7]{homquantum3} we know that $\alpha_\mathbb{A}(h\cdot a)=\alpha(h)\cdot
\alpha_\mathbb{A}( a)$ and $\beta_\mathbb{A}(h\cdot a)=\beta(h)\cdot \beta_\mathbb{A}( a)$, for all $h\in U_q(\mathfrak{sl}_2)$ and $a\in \mathbb{A}_{q}^{2|0}$. Then, according to Proposition~\ref{gyaumodalg}, there exists a
$U_q(\mathfrak{sl}_2)_{(\alpha,\beta,\psi,\omega )}$-module BiHom-algebra
structure on $\mathbb{A}_{q,\alpha , \beta}^{2|0}$ def\/ined by
\begin{gather*}
 \rho \colon \  U_q(\mathfrak{sl}_2)_{(\alpha,\beta,\psi,\omega )}\otimes \mathbb{A}_{q,\alpha , \beta}^{2|0} \rightarrow \mathbb{A}_{q,\alpha , \beta}^{2|0},
\qquad  \rho (h\otimes a)=h\triangleright a=\alpha (h)\cdot \beta_\mathbb{A}(a).
\end{gather*}
By using also the computations performed in \cite[Example 5.7]{homquantum3}
one can see that the map $\rho $ is given on generators by
\begin{gather*}
  \rho \big(E\otimes x^my^n \big)=[n]_q\xi^{m+n}\lambda_1\lambda_2^{-n}
x^{m+1}y^{n-1}, \\
  \rho \big(F \otimes x^my^n \big)=[m]_q\xi^{m+n}\lambda_1^{-1}\lambda_2^{-n}
x^{m-1}y^{n+1}, \\
  \rho\big(K^{\pm 1}\otimes P \big)=P\big(q^{\pm 1}\xi x,q^{\mp 1} \xi\lambda_2^{-1}y\big),
\end{gather*}
for any monomial $x^my^n\in \mathbb{A}_{q}^{2|0}$, where $P=P(x,y)\in
\mathbb{A}_{q}^{2|0}$ and $[n]_q=\frac{q^n-q^{-n}}{q-q^{-1}}$.

Since $\xi \neq 0$ and $\lambda _i\neq 0$ for all $i=1, 2, 3, 4$, all the
maps $\alpha$, $\beta$, $\psi$, $\omega$, $\alpha_\mathbb{A}$, $\beta_\mathbb{A}$
are bijective. According to Theorem~\ref{BihomTwitMapR}, the map $R\colon U_q(\mathfrak{sl}_2)_{(\alpha,\beta,\psi,\omega )}\otimes \mathbb{A}_{q,\alpha,\beta}^{2|0} \rightarrow \mathbb{A}_{q,\alpha,\beta}^{2|0}\otimes
U_q(\mathfrak{sl}_2)_{(\alpha,\beta,\psi,\omega )} $ def\/ined by~\eqref{Rsmash} leads to the smash product $\mathbb{A}_{q,\alpha,\beta}^{2|0}\# U_q(\mathfrak{sl}_2)_{(\alpha,\beta,\psi,\omega )} $ whose multiplication
is def\/ined by
\begin{gather*}
 (a\# h)(a'\# h')=a*\big(\beta ^{-1}\omega ^{-1}(h_{(1)})
\triangleright \beta _\mathbb{A}^{-1}(a')\big)\#
\psi ^{-1}(h_{(2)})\bullet h',
\end{gather*}
where $h_{(1)}\otimes h_{(2)}=\Delta_{(\psi,\omega )}(h)$ and $*$
(respectively $\bullet $) is the multiplication of $\mathbb{A}_{q,\alpha,\beta}^{2|0}$ (respectively $U_q(\mathfrak{sl}_2)_{(\alpha,\beta,\psi,\omega )}$).

In particular, for any $G\in U_q(\mathfrak{sl}_2)$ and $m,n,r,s\in \mathbb{N}
$ we have
\begin{gather*}
 (x^my^n\# K^{\pm 1})(x^ry^s\# G)=q^{\pm r\mp s+n
r}\xi^{m+n+r+s}\lambda_1^{-n}\lambda_2^{-s}x^{m+r}y^{n+s} \# K ^{\pm
1}\beta(G), \\
 (x^my^n\# E)(x^ry^s\# G)=q^{n
r}\xi^{m+n+r+s}\lambda_1^{-n+1}\lambda_2^{-s}x^{m+r}y^{n+s}\# E \beta(G) \\
\hphantom{(x^my^n\# E)(x^ry^s\# G)=}{}
+[s]_q q^{n (r+1)} \xi ^{m+n+r+s}\lambda_1^{1-n}\lambda_2^{-s}
x^{m+r+1}y^{n+s-1}\# K \beta(G), \\
 (x^my^n\# F)(x^ry^s\# G)=q^{s-r+n
r}\xi^{m+n+r+s}\lambda_1^{-n-1}\lambda_2^{-s}x^{m+r}y^{n+s}\# F \beta(G) \\
\hphantom{(x^my^n\# F)(x^ry^s\# G)=}{} +[r]_q q^{n (r-1)} \xi^{m+n+r+s}
\lambda_1^{-n-1}\lambda_2^{-s}x^{m+r-1}y^{n+s+1}\# \beta (G),
\end{gather*}
where $K ^{\pm 1}\beta(G)$, $E \beta(G)$ and $F\beta(G)$ are multiplications
in $U_q(\mathfrak{sl}_2)$.
\end{Example}

We introduce now the BiHom analogue of comodule Hom-algebras def\/ined in \cite{yau2}.

\begin{Definition}
Let $(H, \mu _H, \Delta _H, \alpha _H, \beta _H, \psi _H, \omega _H)$ be a
BiHom-bialgebra. A \textit{right $H$-comodule BiHom-algebra} is a 7-tuple $(D, \mu _D, \alpha _D, \beta _D, \psi _D, \omega _D, \rho _D)$, where $(D,
\mu _D, \alpha _D, \beta _D)$ is a BiHom-associative algebra, $(D, \psi _D,
\omega _D)$ is a right $H$-comodule via the coaction $\rho _D\colon D\rightarrow
D\otimes H$ and moreover $\rho _D$ is a~morphism of BiHom-associative
algebras.
\end{Definition}

\begin{Example}
If $(H, \mu _H, \Delta _H, \alpha _H, \beta _H, \psi _H, \omega _H)$ is a
BiHom-bialgebra, then we have the right $H$-comodule BiHom-algebra $(H, \mu
_H, \alpha _H, \beta _H, \psi _H, \omega _H, \Delta _H)$.
\end{Example}

The next result generalizes Proposition~3.6 in \cite{mp2}.

\begin{Proposition}
\label{smashcomalg} Let $(H, \mu _H, \Delta _H, \alpha _H, \beta _H, \psi
_H, \omega _H)$ be a BiHom-bialgebra and $(A, \mu _A, \alpha _A, \beta _A)$
a left $H$-module BiHom-algebra, with notation $H\otimes A\rightarrow A$, $h\otimes a\mapsto h\cdot a$, such that all structure maps $\alpha _H$, $\beta
_H$, $\psi _H$, $\omega _H$, $\alpha _A$, $\beta _A$ are bijective. Assume that
there exist two more linear maps $\psi _A, \omega _A\colon A\rightarrow A$ such
that any two of the maps $\alpha _A$, $\beta _A$, $\psi _A$, $\omega _A$ commute
and moreover
\begin{gather}
 \omega _A(aa^{\prime })=\omega _A(a)\omega _A(a^{\prime }), \qquad \forall
\, a, a^{\prime }\in A, \nonumber\\
 \omega _A(h\cdot a)=\omega _H(h)\cdot \omega _A(a), \qquad \forall \, a\in
A,\quad h\in H.  \label{inplus}
\end{gather}
Define the linear map
\begin{gather*}
\rho _{A\# H}\colon \ A\# H\rightarrow (A\# H)\otimes H, \qquad \rho _{A\# H}(a\#
h)=(\omega _A(a)\# h_1)\otimes h_2.
\end{gather*}
Then $(A\# H, \mu _{A\# H}, \alpha _A\otimes \alpha _H, \beta _A\otimes
\beta _H, \psi _A\otimes \psi _H, \omega _A\otimes \omega _H, \rho _{A\# H})$
is a right $H$-comodule BiHom-algebra.
\end{Proposition}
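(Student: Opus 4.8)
The plan is to verify the three defining conditions of a right $H$-comodule BiHom-algebra for the $7$-tuple $(A\# H, \mu _{A\# H}, \alpha _A\otimes \alpha _H, \beta _A\otimes \beta _H, \psi _A\otimes \psi _H, \omega _A\otimes \omega _H, \rho _{A\# H})$, namely (a) that $(A\# H, \mu _{A\# H}, \alpha _A\otimes \alpha _H, \beta _A\otimes \beta _H)$ is a BiHom-associative algebra, (b) that $(A\# H, \psi _A\otimes \psi _H, \omega _A\otimes \omega _H)$ is a right $H$-comodule via $\rho _{A\# H}$, and (c) that $\rho _{A\# H}$ is a morphism of BiHom-associative algebras. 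Part (a) is already established, since $A\# H$ is the BiHom-twisted tensor product from Proposition~\ref{ghomttp} applied to the BiHom-twisting map $R$ of~\eqref{Rsmash}, so its structure maps are indeed $\alpha _A\otimes \alpha _H$ and $\beta _A\otimes \beta _H$; nothing new is needed there.

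For part (b) I would first check the commutation relations among $\psi _A\otimes \psi _H$ and $\omega _A\otimes \omega _H$, which follow termwise from the hypothesis that any two of $\alpha _A, \beta _A, \psi _A, \omega _A$ commute, together with the corresponding commutativity of the $H$-maps built into the BiHom-bialgebra axioms. Then I would verify the four comodule axioms. The relations $(\psi _A\otimes \psi _H\otimes \psi _H)\circ \rho _{A\# H}=\rho _{A\# H}\circ (\psi _A\otimes \psi _H)$ and its $\omega$-analogue reduce, on an element $a\# h$, to checking that $\psi _A$ and $\psi _H$ are multiplicative/comultiplicative in the appropriate slots; here one uses $(\psi _H\otimes \psi _H)\circ \Delta _H=\Delta _H\circ \psi _H$ from the BiHom-coassociativity data and the fact that $\omega _A$ commutes with $\psi _A$. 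The crucial coassociativity axiom $(\omega _A\otimes \omega _H)\otimes \Delta _H)\circ \rho _{A\# H}=(\rho _{A\# H}\otimes \psi _H)\circ \rho _{A\# H}$ unwinds, using $\Delta _H(h)=h_1\otimes h_2$ and the BiHom-coassociativity \eqref{ghombia1} $\Delta (h_1)\otimes \psi (h_2)=\omega (h_1)\otimes \Delta (h_2)$, to the identity $\omega _A^2(a)\# (h_1)_1\otimes (h_1)_2\otimes \psi _H(h_2)=\omega _A^2(a)\# \omega _H(h_1)\otimes (h_2)_1\otimes (h_2)_2$, which is precisely \eqref{ghombia1} applied inside the $H$-factor (the $A$-factor carrying $\omega _A^2(a)$ matches on both sides).

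For part (c), the main substance of the proof, I would show $\rho _{A\# H}$ respects multiplication, i.e.\ $\rho _{A\# H}\circ \mu _{A\# H}=\mu _{(A\# H)\otimes H}\circ (\rho _{A\# H}\otimes \rho _{A\# H})$, where the target is the tensor product comodule-algebra multiplication $(d\otimes h)(d'\otimes h')=dd'\otimes hh'$ with $d,d'\in A\# H$. Starting from $(a\# h)(a'\# h')=a(\beta _H^{-1}\omega _H^{-1}(h_1)\cdot \beta _A^{-1}(a'))\# \psi _H^{-1}(h_2)h'$, applying $\rho _{A\# H}$ produces $\omega _A$ of the $A$-part tensored with the coproduct of $\psi _H^{-1}(h_2)h'$; the computation of the other side interleaves the coactions on each factor. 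The hard part will be matching these two expressions: this is exactly where the hypothesis \eqref{inplus}, that $\omega _A$ is multiplicative and that $\omega _A(h\cdot a)=\omega _H(h)\cdot \omega _A(a)$, becomes indispensable, since it lets me push $\omega _A$ through both the internal product and the $H$-action, while the compatibilities $(\omega _H\otimes \omega _H)\circ \Delta _H=\Delta _H\circ \omega _H$, $\Delta _H(hh')=h_1h'_1\otimes h_2h'_2$, and \eqref{ghombia1} reorganize the coproduct terms. Once the bookkeeping of the $\psi _H^{-1},\omega _H^{-1}$ twists is carried out carefully using these identities, both sides collapse to the same element, which completes the verification; the remaining requirements that $\rho _{A\# H}$ commutes with $\psi _A\otimes \psi _H$ and $\omega _A\otimes \omega _H$ are the comodule relations already handled in part (b).
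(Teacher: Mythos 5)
Your proposal is correct and follows essentially the same route as the paper: the paper's proof likewise treats the multiplicativity of $\rho _{A\# H}$ as the only point needing work (leaving parts (a), (b) and the structure-map compatibilities to the reader) and establishes it by exactly the chain you describe --- multiplicativity of $\omega _A$ and comultiplicativity of $\Delta _H$ first, then \eqref{inplus} to push $\omega _A$ through the $H$-action, then \eqref{ghombia1} to reorganize the coproduct legs, then commuting $\omega _A$ past $\beta _A^{-1}$. One bookkeeping slip at the end: for $\rho _{A\# H}$ to be a morphism of BiHom-associative algebras you must check that it commutes with the \emph{algebra} structure maps $\alpha _A\otimes \alpha _H$ and $\beta _A\otimes \beta _H$ (not with $\psi _A\otimes \psi _H$ and $\omega _A\otimes \omega _H$, which belong to the comodule axioms of your part (b)); that check is routine and of the same type as the ones you already describe, so it does not affect the validity of the argument.
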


\begin{proof}
We only prove that $\rho _{A\# H}$ is multiplicative and leave the other
details to the reader:
\begin{gather*}
\rho _{A\# H}((a\# h)(a^{\prime }\# h^{\prime })) = \omega _A\big(a\big(\beta
_H^{-1}\omega _H^{-1}(h_1) \cdot \beta _A^{-1}(a^{\prime })\big)\big)\# \big(\psi
_H^{-1}(h_2)h^{\prime }\big)_1\otimes \big(\psi _H^{-1}(h_2)h^{\prime }\big)_2 \\
\hphantom{\rho _{A\# H}((a\# h)(a^{\prime }\# h^{\prime }))}{}
 = \omega _A(a) \omega _A\big(\beta _H^{-1}\omega _H^{-1}(h_1) \cdot \beta
_A^{-1}(a^{\prime })\big)\# \psi _H^{-1}((h_2)_1)h^{\prime }_1\otimes \psi
_H^{-1}((h_2)_2)h^{\prime }_2 \\
\hphantom{\rho _{A\# H}((a\# h)(a^{\prime }\# h^{\prime }))}{}
\overset{\eqref{inplus}}{=} \omega _A(a)\big(\beta _H^{-1}(h_1)\cdot \omega _A
\beta _A^{-1}(a^{\prime })\big)\# \psi _H^{-1}((h_2)_1)h^{\prime }_1\otimes \psi
_H^{-1}((h_2)_2)h^{\prime }_2 \\
\hphantom{\rho _{A\# H}((a\# h)(a^{\prime }\# h^{\prime }))}{}
\overset{\eqref{ghombia1}}{=} \omega _A(a)\big(\beta _H^{-1}\omega
_H^{-1}((h_1)_1)\cdot \omega _A \beta _A^{-1}(a^{\prime })\big)\# \psi
_H^{-1}((h_1)_2)h^{\prime }_1\otimes h_2h^{\prime }_2 \\
\hphantom{\rho _{A\# H}((a\# h)(a^{\prime }\# h^{\prime }))}{}
=\omega _A(a)\big(\beta _H^{-1}\omega _H^{-1}((h_1)_1)\cdot \beta
_A^{-1}\omega _A(a^{\prime })\big)\# \psi _H^{-1}((h_1)_2)h^{\prime }_1\otimes
h_2h^{\prime }_2 \\
\hphantom{\rho _{A\# H}((a\# h)(a^{\prime }\# h^{\prime }))}{}
=(\omega _A(a)\# h_1)(\omega _A(a^{\prime })\# h^{\prime }_1)\otimes
h_2h^{\prime }_2
=\rho _{A\# H}(a\# h)\rho _{A\# H}(a^{\prime }\# h^{\prime }),
\end{gather*}
f\/inishing the proof.
\end{proof}

\begin{Example}
Let $(H, \mu _H, \Delta _H, \alpha _H, \beta _H, \psi _H, \omega _H)$ be a
BiHom-bialgebra such that all structure maps are bijective. Denote by $A$
the linear space $H^*$. Then $A$ becomes a BiHom-associative algebra with
multiplication and structure maps def\/ined by
\begin{gather*}
\begin{split}
 & (f\bullet g)(h)=f\big(\alpha _H^{-1}\omega _H^{-1}(h_1)\big)g\big(\beta _H^{-1}\psi
_H^{-1}(h_2)\big), \\
& \alpha _A\colon \ H^*\rightarrow H^*, \qquad \alpha _A(f)(h)=f\big(\alpha _H^{-1}(h)\big),
\\
& \beta _A\colon \ H^*\rightarrow H^*, \qquad  \beta _A(f)(h)=f\big(\beta _H^{-1}(h)\big),
\end{split}
\end{gather*}
for all $f, g\in H^*$ and $h\in H$. Moreover, A becomes a left $H$-module
BiHom-algebra, with action
\begin{gather*}
 \rightharpoonup \colon \ H\otimes H^*\rightarrow H^*,\qquad (h\rightharpoonup
f)(h^{\prime })=f(\alpha _H^{-1} \beta _H^{-1}(h^{\prime })h),
\end{gather*}
for all $h, h^{\prime }\in H$ and $f\in H^*$. Obviously, $\alpha _A$ and $\beta _A$ are bijective maps. Def\/ine the linear map
\begin{gather*}
 \omega _A\colon \ H^*\rightarrow H^*, \qquad \omega _A(f)(h)=f\big(\omega _H^{-1}(h)\big),
\qquad \forall \, f\in H^*, \quad h\in H,
\end{gather*}
and choose a linear map $\psi _A\colon H^*\rightarrow H^*$ that commutes with $\alpha _A$, $\beta _A$, $\omega _A$, for instance one can choose the map $\psi
_A $ def\/ined by $\psi _A(f)(h)=f(\psi _H^{-1}(h))$, for all $f\in H^*$ and $h\in H$. Then one can check that the hypotheses of Proposition~\ref{smashcomalg} are satisf\/ied, and consequently~$H^*\# H$ becomes a~right $H$-comodule BiHom-algebra.

Note also that, if $H$ is counital with counit $\varepsilon _H$ such that $\varepsilon _H\circ \alpha _H=\varepsilon _H$ and $\varepsilon _H\circ \beta
_H=\varepsilon _H$, then the BiHom-associative algebra $A=H^*$ is unital
with unit $\varepsilon _H$.
\end{Example}

\subsection*{Acknowledgements}

This paper was written while Claudia Menini was a member of GNSAGA. Florin
Panaite was supported by a grant of the Romanian National Authority for
Scientif\/ic Research, CNCS-UEFISCDI, project number PN-II-ID-PCE-2011-3-0635,
contract nr.~253/5.10.2011. Parts of this paper have been written while
Florin Panaite was a~visiting professor at University of Ferrara in
September~2014, supported by INdAM, and a visiting scholar at the Erwin
Schrodinger Institute in Vienna in July~2014 in the framework of the
``Combinatorics, Geometry and Physics'' programme; he would like to thank
both these institutions for their warm hospitality.

The authors are grateful  to the referees for their remarks and questions.

\pdfbookmark[1]{References}{ref}
\LastPageEnding


\begin{thebibliography}{99}
\footnotesize\itemsep=0pt

\bibitem{AizawaSato}
Aizawa N., Sato H., {$q$}-deformation of the {V}irasoro algebra with central
  extension, \href{http://dx.doi.org/10.1016/0370-2693(91)90671-C}{\textit{Phys. Lett.~B}} \textbf{256} (1991), 185--190.

\bibitem{AC1}
Arag\'{o}n Peri\~{n}\'{a}n M.J., Calder\'{o}n~Mart\'{i}n A.J., On graded matrix
  Hom-algebras, \href{http://dx.doi.org/10.13001/1081-3810.1579}{\textit{Electron.~J. Linear Algebra}} \textbf{24} (2012),
  45--65.

\bibitem{AC2}
Arag\'{o}n Peri\~{n}\'{a}n M.J., Calder\'{o}n~Mart\'{i}n A.J., Split regular
  Hom-Lie algebras, \textit{J.~Lie Theory} \textbf{25} (2015), 875--888.

\bibitem{ABM-HomLie}
Ardizzoni A., Bulacu D., Menini C., Quasi-bialgebra structures and torsion-free
  abelian groups, \textit{Bull. Math. Soc. Sci. Math. Roumanie (N.S.)}
  \textbf{56} (2013), 247--265, \href{http://arxiv.org/abs/1302.2453}{arXiv:1302.2453}.

\bibitem{AM}
Ardizzoni A., Menini C., {M}ilnor--{M}oore categories and monadic
  decomposition, \href{http://arxiv.org/abs/1401.2037}{arXiv:1401.2037}.

\bibitem{bakayoko}
Bakayoko I., {$L$}-modules, {$L$}-comodules and {H}om-{L}ie quasi-bialgebras,
  \textit{Afr. Diaspora~J. Math.} \textbf{17} (2014), 49--64.

\bibitem{said}
Benayadi S., Makhlouf A., Hom-{L}ie algebras with symmetric invariant
  nondegenerate bilinear forms, \href{http://dx.doi.org/10.1016/j.geomphys.2013.10.010}{\textit{J.~Geom. Phys.}} \textbf{76} (2014),
  38--60, \href{http://arxiv.org/abs/1009.4226}{arXiv:1009.4226}.

\bibitem{stef}
Caenepeel S., Goyvaerts I., Monoidal {H}om-{H}opf algebras, \href{http://dx.doi.org/10.1080/00927872.2010.490800}{\textit{Comm.
  Algebra}} \textbf{39} (2011), 2216--2240, \href{http://arxiv.org/abs/0907.0187}{arXiv:0907.0187}.

\bibitem{Cap}
Cap A., Schichl H., Van{\v{z}}ura J., On twisted tensor products of algebras,
  \href{http://dx.doi.org/10.1080/00927879508825496}{\textit{Comm. Algebra}} \textbf{23} (1995), 4701--4735.

\bibitem{ChaiIsKuLuk}
Chaichian M., Kulish P., Lukierski J., {$q$}-deformed {J}acobi identity,
  {$q$}-oscillators and {$q$}-deformed inf\/inite-dimensional algebras,
  \href{http://dx.doi.org/10.1016/0370-2693(90)91196-I}{\textit{Phys. Lett.~B}} \textbf{237} (1990), 401--406.

\bibitem{chenwangzhang}
Chen Y., Wang Z., Zhang L., Integrals for monoidal {H}om-{H}opf algebras and
  their applications, \href{http://dx.doi.org/10.1063/1.4813447}{\textit{J.~Math. Phys.}} \textbf{54} (2013), 073515,
  22~pages.

\bibitem{CurtrZachos1}
Curtright T.L., Zachos C.K., Deforming maps for quantum algebras, \href{http://dx.doi.org/10.1016/0370-2693(90)90845-W}{\textit{Phys.
  Lett.~B}} \textbf{243} (1990), 237--244.

\bibitem{GV}
Goyvaerts I., Vercruysse J., Lie monads and dualities, \href{http://dx.doi.org/10.1016/j.jalgebra.2014.05.021}{\textit{J.~Algebra}}
  \textbf{414} (2014), 120--158, \href{http://arxiv.org/abs/1302.6869}{arXiv:1302.6869}.

\bibitem{giacomo}
Graziani G., Group {H}om-categories, Thesis, University of Ferrara,
  Italy, 2013.

\bibitem{HLS}
Hartwig J.T., Larsson D., Silvestrov S.D., Deformations of {L}ie algebras using
  {$\sigma$}-derivations, \href{http://dx.doi.org/10.1016/j.jalgebra.2005.07.036}{\textit{J.~Algebra}} \textbf{295} (2006), 314--361,
  \href{http://arxiv.org/abs/math.QA/0408064}{math.QA/0408064}.

\bibitem{hassanzadeh}
Hassanzadeh M., Shapiro I., S\"{u}tl\"{u} S., Cyclic homology for
  Hom-associative algebras, \href{http://dx.doi.org/10.1016/j.geomphys.2015.07.026}{\textit{J.~Geom. Phys.}} \textbf{98} (2015), 40--56,
  \href{http://arxiv.org/abs/1504.03019}{arXiv:1504.03019}.

\bibitem{Kassel}
Kassel C., Quantum groups, \href{http://dx.doi.org/10.1007/978-1-4612-0783-2}{\textit{Graduate Texts in Mathematics}}, Vol.~155,
  Springer-Verlag, New York, 1995.

\bibitem{LS1}
Larsson D., Silvestrov S., Quasi-hom-{L}ie algebras, central extensions and
  2-cocycle-like identities, \href{http://dx.doi.org/10.1016/j.jalgebra.2005.02.032}{\textit{J.~Algebra}} \textbf{288} (2005), 321--344,
  \href{http://arxiv.org/abs/math.RA/0408061}{math.RA/0408061}.

\bibitem{Liu1}
Liu K.Q., Characterizations of the quantum {W}itt algebra, \href{http://dx.doi.org/10.1007/BF00420485}{\textit{Lett. Math.
  Phys.}} \textbf{24} (1992), 257--265.

\bibitem{LB}
Liu L., Shen B., Radford's biproducts and {Y}etter--{D}rinfeld modules for
  monoidal {H}om-{H}opf algebras, \href{http://dx.doi.org/10.1063/1.4866760}{\textit{J.~Math. Phys.}} \textbf{55} (2014),
  031701, 16~pages.

\bibitem{lpvo}
L{\'o}pez~Pe{\~n}a J., Panaite F., Van~Oystaeyen F., General twisting of
  algebras, \href{http://dx.doi.org/10.1016/j.aim.2006.10.003}{\textit{Adv. Math.}} \textbf{212} (2007), 315--337,
  \href{http://arxiv.org/abs/math.QA/0605086}{math.QA/0605086}.

\bibitem{mp1}
Makhlouf A., Panaite F., Yetter--{D}rinfeld modules for {H}om-bialgebras,
  \href{http://dx.doi.org/10.1063/1.4858875}{\textit{J.~Math. Phys.}} \textbf{55} (2014), 013501, 17~pages,
  \href{http://arxiv.org/abs/1310.8323}{arXiv:1310.8323}.


\bibitem{mp2}
Makhlouf A., Panaite F., Twisting operators, twisted tensor products and smash
  products for {H}om-associative algebras, \href{http://dx.doi.org/10.1017/S0017089515000294}{\textit{Glasg. Math.~J.}}, {to}
  appear, \href{http://arxiv.org/abs/1402.1893}{arXiv:1402.1893}.

\bibitem{ms1}
Makhlouf A., Silvestrov S., Hom-algebra structures, \href{http://dx.doi.org/10.4303/jglta/S070206}{\textit{J.~Gen. Lie Theory
  Appl.}} \textbf{2} (2008), 51--64, \href{http://arxiv.org/abs/math.RA/0609501}{math.RA/0609501}.

\bibitem{ms3}
Makhlouf A., Silvestrov S., Hom-{L}ie admissible {H}om-coalgebras and
  {H}om-{H}opf algebras, in \href{http://dx.doi.org/10.1007/978-3-540-85332-9_17}{Generalized {L}ie Theory in Mathematics, Physics
  and Beyond}, Editors S.~Silvestrov, E.~Paal, V.~Abramov, A.~Stolin,
  Springer-Verlag, Berlin, 2009, 189--206, \href{http://arxiv.org/abs/0709.2413}{arXiv:0709.2413}.

\bibitem{ms4}
Makhlouf A., Silvestrov S., Hom-algebras and {H}om-coalgebras,
  \href{http://dx.doi.org/10.1142/S0219498810004117}{\textit{J.~Algebra Appl.}} \textbf{9} (2010), 553--589, \href{http://arxiv.org/abs/0811.0400}{arXiv:0811.0400}.

\bibitem{sheng}
Sheng Y., Representations of Hom-{L}ie algebras, \href{http://dx.doi.org/10.1007/s10468-011-9280-8}{\textit{Algebr. Represent.
  Theory}} \textbf{15} (2012), 1081--1098, \href{http://arxiv.org/abs/1005.0140}{arXiv:1005.0140}.

\bibitem{VanDaele}
Van~Daele A., Van~Keer S., The {Y}ang--{B}axter and pentagon equation,
  \textit{Compositio Math.} \textbf{91} (1994), 201--221.

\bibitem{Y}
Yau D., Enveloping algebras of {H}om-{L}ie algebras, \href{http://dx.doi.org/10.4303/jglta/S070209}{\textit{J.~Gen. Lie Theory
  Appl.}} \textbf{2} (2008), 95--108, \href{http://arxiv.org/abs/0709.0849}{arXiv:0709.0849}.

\bibitem{yau2}
Yau D., Hom-bialgebras and comodule {H}om-algebras, \textit{Int. Electron.~J.
  Algebra} \textbf{8} (2010), 45--64, \href{http://arxiv.org/abs/0810.4866}{arXiv:0810.4866}.

\bibitem{yau1}
Yau D., Module {H}om-algebras, \href{http://arxiv.org/abs/0812.4695}{arXiv:0812.4695}.

\bibitem{homquantum3}
Yau D., Hom-quantum groups~III: Representations and module {H}om-algebras,
  \href{http://arxiv.org/abs/0911.5402}{arXiv:0911.5402}.

\end{thebibliography}
\end{document}